\documentclass[11pt]{amsart}
\usepackage[margin=1in]{geometry}
\usepackage{amsmath,amssymb,amsthm,mathtools,booktabs,enumitem,array}
\usepackage{xcolor}
\usepackage[colorlinks,linkcolor=blue!50!black,citecolor=blue!50!black,urlcolor=blue!50!black]{hyperref}

\theoremstyle{plain}
\newtheorem{theorem}{Theorem}[section]
\newtheorem{proposition}[theorem]{Proposition}
\newtheorem{lemma}[theorem]{Lemma}
\newtheorem{corollary}[theorem]{Corollary}
\newtheorem{fact}[theorem]{Fact}
\newtheorem{question}[theorem]{Question}

\theoremstyle{definition}
\newtheorem{definition}[theorem]{Definition}
\newtheorem{remark}[theorem]{Remark}
\newtheorem{example}[theorem]{Example}

\newcommand{\F}{\mathbb{F}}
\newcommand{\Z}{\mathbb{Z}}
\newcommand{\Q}{\mathbb{Q}}
\newcommand{\PP}{\mathbb{P}}
\newcommand{\A}{\mathbb{A}}

\newcommand{\tw}[2]{{}^{#1}\!#2}
\newcommand{\gen}{\mathrm{gen}}
\newcommand{\tgen}{\tau_{\gen}}
\newcommand{\free}{\mathrm{free}}
\newcommand{\Gam}{\Gamma}
\newcommand{\Dec}{\mathrm{Dec}}
\newcommand{\bDec}{\overline{\mathrm{Dec}}}
\DeclareMathOperator{\ed}{ed}
\newcommand{\edd}[1]{\ed^{[#1]}}
\DeclareMathOperator{\rdim}{rdim}
\DeclareMathOperator{\trdeg}{trdeg}

\DeclareMathOperator{\im}{im}

\DeclareMathOperator{\Aut}{Aut}
\DeclareMathOperator{\Hom}{Hom}
\DeclareMathOperator{\Gal}{Gal}
\DeclareMathOperator{\Ind}{Ind}
\DeclareMathOperator{\Spec}{Spec}

\DeclareMathOperator{\PGL}{PGL}

\DeclareMathOperator{\Pic}{Pic}

\DeclareMathOperator{\id}{id}
\DeclareMathOperator{\rk}{rk}
\DeclareMathOperator{\Frac}{Frac}

\newcommand{\inn}{\operatorname{in}}
\newcommand{\Fix}{\mathrm{Fix}}
\newcommand{\kb}{\bar k}
\newcommand{\Om}{\Omega}
\usepackage{longtable}
\theoremstyle{plain}
\newtheorem*{thmA}{Theorem A}
\newtheorem*{thmB}{Theorem B}
\newtheorem*{thmC}{Theorem C}

\title[The finite-degree profile of essential dimension]{The finite-degree profile of essential dimension I: general theory and characteristic prime to $|G|$}
\author{Abhishek Shukla}
\address{Department of Statistics \& Actuarial Science, Simon Fraser University, Burnaby, BC, Canada}
\email{shukla@sfu.ca}
\subjclass[2020]{Primary 14L30, 12G05; Secondary 11R32, 14H37, 20C15}
\keywords{essential dimension, torsors, generic torsor, jump degrees, symmetric groups, resolvent degree, Abel--Jacobi}
\date{\today}

\begin{document}

\begin{abstract}
Essential dimension counts the parameters needed to define a torsor, but it can drop sharply over a field extension, and it does not say how large an extension is needed. For a $G$-torsor $\tau$ over a field $F\supseteq k$ we study the \emph{finite-degree profile} $\edd d_k(\tau)=\min\{\ed_k(\tau\otimes_FF'):[F':F]\le d\}$, a variant of invariants of Farb--Kisin--Wolfson and Farb--Wolfson, and its jump degrees $d_j(\tau)$, the least degree of an extension over which $\tau$ needs only $j$ parameters. The profile is geometric over every field: $\edd d_k(\tau)\le j$ iff some generically free $G$-variety of dimension $\le j$ has a closed point of degree $\le d$ in the free locus of its twist by $\tau$. We prove a Sylow divisibility theorem for jump degrees and a valuation-theoretic rank--index theorem, $\edd d_k(\tgen)\ge\min\{\rk B:B\le A,\ [A:B]\le d\}$ for abelian $A$ of order prime to $\operatorname{char}k$, which determines the profile of every abelian group over a field with enough roots of unity, for instance $n-\lfloor\log_2d\rfloor$ for $(\Z/2)^n$. For $\Z/p$ in characteristic $0$ we show $\lceil m/2\rceil\le d_1\le[k(\zeta_p+\zeta_p^{-1}):k]$, $m=[k(\zeta_p):k]$, with equality for $m$ even. For $S_5$ in characteristic $0$ we prove $d_1=4$, so Klein's reduction of the generic quintic is optimal; the lower bound comes from Abel--Jacobi rigidity, not from cohomological invariants. For $n\ge6$, $6\mid d_1(S_n)$.
\end{abstract}
\maketitle
\tableofcontents

% ---- begin intro.tex ----
\section{Introduction}\label{sec:intro}

\subsection*{Essential dimension and base change}
Let $k$ be a field, $G$ a finite group and $\tau$ a $G$-torsor over a field $F\supseteq k$. The \emph{essential dimension} $\ed_k(\tau)$ is the least transcendence degree over $k$ of a subfield of $F$ over which $\tau$ is defined, and $\ed_k(G)=\sup_\tau\ed_k(\tau)$ is attained by the generic torsor $\tgen$ \cite{BR97,BF03,Mer13}. The invariant is very sensitive to the base field. It never increases under base change, and after suitable extensions it collapses: over a field containing the roots of unity of order the exponent, a torsor for an abelian group descends to transcendence degree $\le\rk G$, over a $q$-closed field the essential dimension is the essential dimension at $q$, and in characteristic $p$ every $p$-group torsor over a large field has $\ed_k\le1$ \cite{RS22,RV18}. Ledet's conjecture $\ed_k(\Z/p^n)=n$ for $k$ algebraically closed of characteristic $p$ \cite{Ledet04} is the sharpest instance of this instability: the invariant would grow with $n$ over the base field and be $\le1$ over every large extension. Such statements say that the invariant drops after some extension. They do not say how large an extension is needed. This paper studies that question degree by degree.

\subsection*{The profile}
For $d\ge1$ put
\[
\edd d_k(\tau):=\min\{\ed_k(\tau\otimes_FF')\;:\;F'/F\text{ a field extension},\ [F':F]\le d\}.
\]
The function $d\mapsto\edd d_k(\tau)$ is a non-increasing step function, the \emph{finite-degree profile} of $\tau$ (Definition~\ref{def:profile}). It starts at $\ed_k(\tau)$ and reaches $0$ at the degree of a splitting field (Proposition~\ref{prop:d0}). Its steps are the \emph{jump degrees}
\[
d_j(\tau)=\min\{d:\edd d_k(\tau)\le j\},
\]
the least degree of an extension over which $\tau$ needs only $j$ parameters. For a prime $q$ the \emph{prime-to-$q$ profile} restricts to extensions of degree prime to $q$. The profile of $\tgen$ does not depend on the choice of faithful representation and dominates the profile of every torsor over every field containing $k$ (Proposition~\ref{prop:vers}); so $\edd d_k(\tgen)$ is an invariant of $(G,k)$ refining $\ed_k(G)$. Examples: for $\Z/2$ it is $(1,0)$; for $(\Z/2)^3$ over a field of characteristic $\ne2$ it is $(3,2,2,1,1,1,1,0)$ (Corollary~\ref{cor:A8}); for the symmetric groups $S_n$, $n\le5$, it is listed in Table~\ref{tab:A}.

\subsection*{Results}
Geometry and Sylow divisibility. The profile has a geometric meaning: $\edd d_k(\tau)\le j$ iff some generically free $G$-variety of dimension $\le j$ has a closed point of degree $\le d$ in the free locus of its twist by $\tau$ (Theorem~\ref{thm:geom}). Sylow divisibility (Theorem~\ref{thm:div}) says that, for $k$ perfect, every prime $q$ with $\ed_{k,q}(G)>j$ divides $d_j(\tgen)$. It is blind to $q$-power degrees. The following results are lower bounds that do see them.

\begin{thmA}[Theorem~\ref{thm:rankindex}]
Let $k$ be any field, $G$ a finite group and $A\le G$ abelian with $\operatorname{char}k\nmid|A|$. For $\rho_A(d):=\min\{\rk B:B\le A,\ [A:B]\le d\}$ we have $\ed_k(\tgen\otimes_KF')\ge\rho_A([F':K]_s)$ for every finite extension $F'/K$; in particular $\edd d_k(\tgen)\ge\rho_A(d)$.
\end{thmA}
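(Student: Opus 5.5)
We reduce to $A$ and then run a valuation (toric degeneration) argument. The generic torsor $\tgen$ for $G$ over $K=k(V)^G$ restricts to a versal $A$-torsor, so we may assume $G=A$. Concretely, pass to $K_A=k(V)^A$, an extension of $K$ of degree $[G:A]$. Then $\tgen\otimes_K K_A$ is the generic $A$-torsor attached to $V|_A$. Given $F'/K$ finite, let $F''=F'K_A$ be a compositum (a field factor of $F'\otimes_K K_A$). Its separable degree over $K_A$ is at most $[F':K]_s$, and essential dimension does not increase under base change, so it suffices to treat $A$. Since $\rho_A$ is nonincreasing, the bound for $A$ gives the bound for $G$.

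Next we make the torsor explicit. Replacing $V$ by a faithful representation of $A$ does not change the profile (Proposition~\ref{prop:vers}), and after adjoining roots of unity we may diagonalize. Adjoining roots of unity is a separable base extension of degree prime to nothing essential, and we must track separable degree carefully; the cleanest approach is to work over $k(\zeta_e)$ throughout, since $\ed$ only drops and the separable degree over the enlarged base is still at most $[F':K]_s$.

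Then $A$ acts on $k[x_1,\dots,x_n]$ through characters $\chi_i$ that generate $A^\vee$. Using the monomial valuation (the torus $T=\mathbb{G}_m^n$ containing $A$), we obtain a discrete valuation of rank $n$ on $K$. Its value group is the lattice $L=\{m\in\Z^n:\sum m_i\chi_i=0\}$ of $A$-invariant monomials. The torsor corresponds to the class in $\Hom(A^\vee, K^\times/K^{\times e})$, and residually it is the "ramified" torsor with ramification lattice $\Z^n/L\cong A^\vee$.

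The key invariant is the following. For a torsor $\tau$ over a field $E$ with a $k$-trivial valuation $v$ whose residue field has transcendence degree $\operatorname{trdeg}(E)-r$ with value group of rank $r$, define the inertia image $I_v(\tau)\le A$: the image of tame inertia, via the character pairing $\Gamma_v/e\Gamma_v\to A$. The main lemma is that if $\tau$ is defined over a subfield $E_0\subseteq E$ of transcendence degree $j$, then $\rk I_v(\tau)\le j$. Indeed, restricting $v$ to $E_0$ gives a valuation whose value group has rational rank $\le j$ (Abhyankar's inequality). The tame inertia of $\tau_{E_0}$ is a quotient of $\widehat{\Z}'(1)^{\,j}$, hence an abelian group generated by $j$ elements. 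The inertia of $\tau_E$ is contained in it, since the torsor is pulled back. The assumption $\operatorname{char}k\nmid|A|$ ensures that tameness applies.

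It remains to control inertia under the finite extension $F'/K$. Extend $v$ to $v'$ on $F'$. Then $e(v'/v)\cdot f(v'/v)\le[F':K]$, and more precisely the tame part of the ramification index divides the separable degree contribution, while inseparable (defect or purely inseparable) parts have $p$-power order and act trivially on the prime-to-$p$ group $A$. Originally the inertia image is all of $A$. After base change, the inertia image is $B=$ the image of $\Gamma_{v'}$, which is a subgroup of index at most $e_{\mathrm{tame}}(v'/v)\le[F':K]_s$, since $\Gamma_v\subseteq\Gamma_{v'}$ with $[\Gamma_{v'}:\Gamma_v]_{\text{prime-to-}p}\le[F':K]_s$. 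Hence $\ed_k(\tgen\otimes F')\ge\rk B\ge\rho_A([F':K]_s)$.

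The main obstacle is the last step. We must check precisely that the inertia image shrinks by index at most $[\Gamma_{v'}:\Gamma_v]$: the map $\Gamma\to A$ is induced by pairing with the torsor class, and enlarging $\Gamma$ only multiplies by the index. We must also bound the prime-to-$p$ index by the separable degree, using the fundamental inequality and the fact that purely inseparable extensions have $p$-power ramification. I would first verify this for a rank-$n$ monomial valuation on a rational function field with residue field $k$, where everything is explicit.
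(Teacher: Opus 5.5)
Your argument is correct. Its core mechanism is the paper's: a monomial (flag) valuation on $k(V)$ built from $A$-eigencoordinates, for which $A$ lies in the inertia/decomposition image over $K$; a bound of $[F':K]_s$ on how much that image can shrink over $F'$; and the bound ``rank of the tame image $\le$ rational rank of the value group of $E\le\trdeg_kE$'' via Abhyankar. The differences are in the reductions and in the index step.

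\textbf{How the paper proceeds.} It does not reduce to $G=A$. It works with $\varphi\colon\Gamma_K\to G$ directly, so images are defined only up to conjugacy, which is the role of $h$ in Proposition~\ref{prop:A4}. It reduces to $k=\bar k$, so that every residue field is $\bar k$ and decomposition groups equal inertia groups. The index bound is then pure group theory: $\Dec_{F'}=\Dec_K\cap\Gamma_{F'}$, so $\Dec_K/\Dec_{F'}\hookrightarrow\Gamma_K/\Gamma_{F'}$, a set with $[F':K]_s$ elements.

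\textbf{Your index step.} You go through Kummer theory over $k(\mu_e)$ instead. The inertia image $B$ over $F'$ has the same order as the image of $A^\vee\to\Gamma_{v'}/e\Gamma_{v'}$, and $A^\vee\to\Gamma_v/e\Gamma_v$ is injective. Hence $[A:B]$ is at most the order of the kernel of $\Gamma_v/e\Gamma_v\to\Gamma_{v'}/e\Gamma_{v'}$. That kernel is isomorphic to $(\Gamma_{v'}/\Gamma_v)[e]$, so its order is at most the prime-to-$\operatorname{char}k$ part of $[\Gamma_{v'}:\Gamma_v]$. This is $\le[F':K]_s$ by the fundamental inequality for the separable part, together with the $p$-power value-group index of purely inseparable extensions.

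This is valid, and it is how your phrase ``$B$ is the image of $\Gamma_{v'}$'' should be read. It needs $\mu_e$ in the residue field and ramification theory. The coset injection needs neither, and it works verbatim for inertia groups, since $I_{F'}=I_K\cap\Gamma_{F'}$. In exchange, working with inertia rather than decomposition groups means you never need an algebraically closed residue field, so $k(\mu_e)$ suffices in place of $\bar k$.

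\textbf{Two corrections to your reductions.}
\begin{itemize}
\item $\tgen\otimes_KK_A$ is not the generic $A$-torsor; it is $\Ind_A^G$ of it. Passing to $A$ therefore rests on the nontrivial inequality $\ed_k(\Ind_A^G\tau)\ge\ed_k(\tau)$ of Lemma~\ref{lem:ind}, applied to $\tau=(L/K_A)\otimes_{K_A}F''$ after commuting induction with base change. Versality is not the relevant fact here.
\item Proposition~\ref{prop:vers} only controls the profile, not the per-extension statement (a). You do not need it, since $V|_A$ is already faithful.
\end{itemize}
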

The proof is valuation-theoretic: for a flag valuation on $L=k(V)$ whose decomposition group contains $A$, the decomposition group over $F'$ meets $A$ in a subgroup of index $\le[F':K]_s$, while over a field of transcendence degree $j$ with algebraically closed residue field the abelian subgroups of order prime to the residue characteristic in the image of a decomposition group have rank $\le j$. Together with Kummer theory this determines the profile of every abelian group $G$ of exponent $e$ over a field of characteristic prime to $e$ containing $\mu_e$, namely $\edd d_k(\tgen)=\min\{\rk B:[G:B]\le d\}$ (Corollary~\ref{cor:A7}), and that of $(\Z/q)^n$ whenever $\ed_k(\Z/q)\le1$, in particular of $(\Z/2)^n$ over every field of characteristic $\ne2$: $\edd d_k(\tgen)=n-\lfloor\log_2d\rfloor$ (Corollary~\ref{cor:A8}). For cyclic groups Theorem~A gives nothing beyond the last jump degree (Remark~\ref{rem:limits}).

\begin{thmB}[Theorem~\ref{thm:Zp}]
Let $\operatorname{char}k=0$, $p$ an odd prime, $\zeta$ a primitive $p$-th root of unity, $c=\zeta+\zeta^{-1}$ and $m=[k(\zeta):k]$. Then $\lceil m/2\rceil\le d_{1,k}(\tgen(\Z/p))\le[k(c):k]$; if $m$ is even the bounds meet, $d_1=m/2$. In particular $d_{1,\Q}(\tgen(\Z/p))=(p-1)/2$.
\end{thmB}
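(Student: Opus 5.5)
We prove the upper bound by an explicit construction and the lower bound by a Galois-theoretic degree count on a curve of genus $0$.

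\emph{Upper bound.} Put $K=k(V)^{\Z/p}$ with $V$ a faithful representation. It suffices to find a one-dimensional generically free $\Z/p$-variety $X$ over $k$ whose twist by $\tgen$ has a point of degree $[k(c):k]$ in the free locus; Theorem~\ref{thm:geom} then gives the bound. The natural candidate is the action of $\Z/p$ on $\PP^1$ through a matrix in $\PGL_2(k(c))$ of order $p$ with trace$^2$/det $=c+2$. Since $\zeta+\zeta^{-1}+2$ is the relevant invariant, such a matrix exists over $k(c)$, for instance $\begin{pmatrix}0&-1\\1&c\end{pmatrix}$. This gives $\ed_{k(c)}(\Z/p)\le1$, which is the classical fact behind $\ed_\Q(\Z/p)$ versus Kummer theory. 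Then $\tgen\otimes_K Kk(c)$ descends to a field of transcendence degree $1$, and $[Kk(c):K]\le[k(c):k]$, so $d_1\le[k(c):k]$.

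\emph{Lower bound.} Let $F'/K$ have degree $d$ with $\ed_k(\tgen\otimes F')\le1$. By Theorem~\ref{thm:geom} there is a generically free $\Z/p$-curve $C$, which we may take smooth projective over some finite extension $k'$ of $k$ inside $F'$, with $F'$-point of the twist. The plan is to show that the constant field $k'=\bar k\cap F'$ must satisfy $[k'(\zeta):k']\le2$, whence $[k':k]\ge m/2$ and $d\ge[k':k]\ge\lceil m/2\rceil$.

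The route to this is as follows. First, the generic torsor over $F'$ yields a $\Z/p$-torsor over a field of transcendence degree $1$ over $k'$. Specialising along a suitable valuation whose residue field is $k'$-rational (a flag valuation as in the proof of Theorem~\ref{thm:rankindex}), we obtain a curve with a $\Z/p$-action having a fixed point with residue field $k'$, or a closed point of small degree. The tangent character at a fixed point is then defined over $k'$ up to the inertia of that point, forcing $\zeta$ into an extension of $k'$ of degree $\le2$. Here one uses the classical fact that the two fixed points of an order-$p$ automorphism of a genus-$0$ curve have conjugate characters $\zeta^{\pm1}$.

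\emph{Main obstacle.} Making the fixed-point step rigorous for a general base field $F'$ is the hardest part. In particular we must control the case where the curve has higher genus, where the fixed points may have larger orbits. I would try first to reduce to genus $0$ by quotienting, using that a $\Z/p$-curve whose quotient is $\PP^1$ with branch divisor defined over $k'$ has branch characters permuted by $\Gal(\bar k/k')$ within $\{\zeta^{a}\}$ and closed under $a\mapsto -a$ only. If this fails, I would fall back on a residue argument on $\tgen$ directly. Finally, if $m$ is even then $[k(c):k]=m/2$, and the two bounds meet; for $k=\Q$ this gives $(p-1)/2$.
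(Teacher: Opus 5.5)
\textbf{The upper bound is correct, but the lower bound has a genuine gap.} Your upper bound is the paper's argument: Lemma~\ref{lem:B1}, then Proposition~\ref{prop:bfe} with witness $Kk(c)$. (A small slip: for your matrix $\mathrm{tr}^2/\det=c^2=\zeta^2+2+\zeta^{-2}$, not $c+2$. This is harmless, since $\zeta^2$ is also primitive.)

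In the lower bound, your argument never uses the size of $d=[F':K]$ except through $d\ge[k':k]$, and the target statement cannot hold without it. As stated it is false. Take $F'=L$: it has degree $p$, $\tgen\otimes_KL$ is split, so $\ed_k=0\le1$. Yet the constant field of $L$ is $k$, and $[k(\zeta):k]=m$ can be $p-1$. Even restricted to $d<p$, your claim would settle an open question. For $m$ odd, $[k'(\zeta):k']$ divides $m$, so your claim would force $\zeta\in k'$ and hence $d_1=m$. That is exactly the open Question~\ref{q:sharpodd}.

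The proposed mechanism also does not produce the claim. The flag valuation of Lemma~\ref{lem:flag} lives over $\kb$. Via Proposition~\ref{prop:A4} it yields a geometric fixed point on $C_{\kb}$ only when $d<p$, because the index bound is $\le d$ inside $\Z/p$. It says nothing about that point's field of definition. In fact, rotation characters satisfy $\theta_{sP}=s(\theta_P)$ for $s\in\Gal(\kb/k')$. So the closed point of $C$ under any geometric fixed point $P$ has residue field containing $k'(\theta_P(\sigma))=k'(\zeta)$. Hence ``a fixed point of degree $\le2$ over $k'$'' is equivalent to your conclusion, not a route to it.

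The reduction to genus $0$ is also unavailable, since $C/G$ carries no $G$-action. Galois stability in fact pushes the character set away from $\{\zeta^{\pm1}\}$: it must be a union of full $\Gal(k'(\zeta)/k')$-orbits.

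\textbf{The missing idea} is a global count of fixed points in which $d$ enters. The paper combines three ingredients:
\begin{enumerate}
\item The valuation-produced fixed point, together with the Galois-equivariant map $P\mapsto\theta_P(\sigma)$, gives $r=|\Fix|\ge[k_0(\zeta):k_0]$. Here $k_0$ is the constant field of $F'$ and $m_0=[k_0:k]$.
\item After base change along a generic line to an algebraically closed field $k_1$, one obtains a function $\eta$ on $C_{k_1}$ of degree $n_2\le d/m_0$. It is the norm of $y_1-b$ for a suitable linear coordinate $y_1$.
\item Riemann--Hurwitz for $C\to C/G$ and Castelnuovo's inequality for $M_1=M_1^G\cdot k_1(\eta)$, with $M_1$ the function field of $C_{k_1}$, together give $r\le2n_2$.
\end{enumerate}
Hence $[k_0(\zeta):k_0]\le 2d/m_0$, i.e.\ $d\ge\tfrac12[k_0(\zeta):k]\ge m/2$. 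The constant-field part $m_0$ and the geometric part $d/m_0$ of the degree are traded against each other. Your plan puts all of the weight on $m_0$.
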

The upper bound holds for every $\Z/p$-torsor over every field containing $k$ (Corollary~\ref{cor:pertorsor}); the lower bound combines a fixed point produced by the valuation argument of Theorem~A with Riemann--Hurwitz and Castelnuovo's inequality. The case $m\ge3$ odd remains open (Question~\ref{q:sharpodd}).

\begin{thmC}[Theorem~\ref{thm:S5}]
Let $\operatorname{char}k=0$. Then $\edd d_k(\tgen(S_5))=2$ for $d\le3$, $=1$ for $4\le d\le119$ and $=0$ for $d\ge120$. In particular $d_1(\tgen(S_5))=4$.
\end{thmC}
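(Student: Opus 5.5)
We have to establish four things: the value $\ed_k(S_5)=2$, the splitting degree $120$, the upper bound $d_1\le 4$ (Klein's reduction), and the lower bound $\edd 3_k(\tgen)\ge 2$. The last is the hard part.

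\emph{Known inputs.} The value $\ed_k(S_5)=2$ in characteristic $0$ is classical (Buhler--Reichstein), so the profile starts at $2$. Proposition~\ref{prop:d0} gives $\edd d_k(\tgen)=0$ exactly when $d\ge|S_5|=120$. Indeed, the generic torsor is a field extension $L/K$ with Galois group $S_5$. Being split is the only way to reach $\ed=0$ over $k$ after base change, since $\ed_k=0$ forces descent to $\bar k\cap F'$ and $k$ may be assumed algebraically closed in $K$. A splitting field therefore has degree $\ge120$.

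\emph{Upper bound $d_1\le4$.} This is Klein's reduction. Adjoin one root $x_1$ of the generic quintic, a degree-$5$ extension. Better, adjoin $\sqrt{\mathrm{disc}}$ together with the solution of the quadratic defining a line on the Clebsch diagonal surface, i.e.\ the Brioschi form, which gives degree $2\cdot2=4$. Over that extension the torsor reduces to $A_5$ acting on a conic $\PP^1$, a $1$-dimensional generically free $A_5$-variety with an $S_5$-twist. By Theorem~\ref{thm:geom}, it suffices to exhibit a generically free $S_5$-curve whose twist by $\tgen$ has a closed point of degree $\le4$ in the free locus. I would realize this with the $S_5$-action on $\PP^1\times\PP^1$ (equivalently a genus-$0$ curve with $A_5$-action plus the conjugation swap), or more directly via the quadric $\sum x_i=\sum x_i^2=0$ in $\PP^4$. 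This quadric surface is twisted by $\tgen$, and its rulings are defined over $K(\sqrt{\mathrm{disc}})$. A ruling line is a twisted $\PP^1$ carrying the $A_5$ torsor, and a degree-$2$ point on it is obtained from a conic. Composing these steps gives degree $4$.

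\emph{Lower bound $\edd 3_k(\tgen)\ge2$.} Suppose $[F':K]\le3$ and $\tgen\otimes F'$ descends to a curve. Then by Theorem~\ref{thm:geom} there is a generically free $S_5$-curve $C$, which we may take smooth projective, whose twist ${}^{\tgen}C$ has a free point of degree $\le3$. Sylow divisibility (Theorem~\ref{thm:div}) only shows that primes $q$ with $\ed_{k,q}(S_5)=2$ (namely $q=2$) divide $d_1$, which excludes degrees $1$ and $3$. The remaining case is $d=2$, and here the Abel--Jacobi argument is needed.

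For $d=2$, a degree-$2$ point on the twisted curve $C'={}^{\tgen}C$ over $K$ gives a $K$-point of $\mathrm{Sym}^2C'$ and hence, via Abel--Jacobi, a point of $\mathrm{Pic}^2_{C'}$. The genericity of $K$ means this rational point varies in a family over the versal base $V/S_5$, which has dimension $\ge4$ (take $V$ the $5$-dimensional permutation representation, or its $4$-dimensional reduced version). A nonconstant map from a rational variety into an abelian variety is impossible, so the induced map into the Jacobian torsor is rigid. I would then split into cases on the genus of $C$:
\begin{itemize}
\item If $g(C)\ge1$, rigidity forces the degree-$2$ divisor class to be $S_5$-equivariantly constant. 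The point then comes from a $g^1_2$ or an invariant class, and Riemann--Hurwitz analysis of $C/S_5$ shows this is incompatible with free orbits, because the fibers of a $g^1_2$ are of size $2$ and stable under too small a stabilizer.
\item If $g(C)=0$, then $S_5$ would have to act faithfully on $\PP^1$, which is impossible since the finite subgroups of $\PGL_2$ are cyclic, dihedral, $A_4$, $S_4$ and $A_5$. So $g\ge1$ is forced.
\end{itemize}
The main obstacle is making this rigidity precise: the point is defined over $K$, not over a variety, so one must spread out over an open subset of $V/S_5$ and apply the fact that a rational variety admits no nonconstant map to an abelian scheme torsor after trivializing. I expect this step to need the most care.
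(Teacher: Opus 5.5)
\textbf{Gap: the discriminant case of degree $2$.} Your Sylow step is fine. By Lemma~\ref{lem:edqSn}, $\ed_{k,2}(S_5)=2$, and Theorem~\ref{thm:div}(a) then makes every $F'$ with $\ed_k(\tgen\otimes F')\le1$ of even degree, so only $[F':K]=2$ remains. But degree $2$ splits into two subcases, and your curve argument reaches only one of them.

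If $L\otimes_KF'$ is a field, the witness curve can be taken irreducible with a faithful $S_5$-action. If instead $F'=K(\sqrt\Delta)=L^{A_5}$, then $\tgen\otimes F'=\Ind_{A_5}^{S_5}$ of the generic $A_5$-torsor. In that case the curve $C$ from Theorem~\ref{thm:geom} may be disconnected: two components carrying only an $A_5$-action, swapped by odd permutations. For such a $C$ your step ``if $g(C)=0$ then $S_5$ acts faithfully on $\PP^1$'' is false. Its Jacobian is trivial, so Abel--Jacobi rigidity says nothing, and $A_5\subset\PGL_2$ removes any group-theoretic obstruction.

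This configuration is not hypothetical. Over $\bar k$ (after the reduction of Lemma~\ref{lem:S5bc}), $\PP^1\sqcup\PP^1$ with the icosahedral action on each component is exactly the curve behind your own degree-$4$ witness. By Lemma~\ref{lem:ind}, excluding this subcase is precisely the statement $\ed_{\bar k}(A_5)\ge2$, and that has to be imported. The paper imports it via Theorem~\ref{thm:rankindex} with $A=V_4$, i.e.\ the Buhler--Reichstein/Reichstein--Youssin bound. With inputs you already use, $\ed_k(A_5)\ge\ed_{k,2}(A_5)\ge\ed_{k,2}(V_4)=2$ would also close it.

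\textbf{Two smaller points.}

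First, in the subcase where $L\otimes_KF'$ is a field, your ``$g(C)\ge1$'' step is not yet an argument. After base change to $L$, where the twist becomes $C_L$, rigidity gives an $S_5$-invariant class $D$ of degree $2$ (Lemma~\ref{lem:AJ}). You also need $\dim|D|\ge1$. This holds because the divisor lies over the generic point of $C$, so it cannot be the only member of its class; your phrase ``or an invariant class'' leaves $\dim|D|=0$ open. The contradiction then comes from the pencil, not from Riemann--Hurwitz on $C/S_5$. $S_5$ acts on $|D|\cong\PP^1$ through $\PGL_2$, so $A_5$ acts trivially on it, and every member is an $A_5$-stable effective divisor of degree $2$. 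But point stabilisers are cyclic, so $A_5$-orbits have at least $12$ points (Lemma~\ref{lem:S5curves}). This handles every genus at once, including $g=1$, which your case split never treats.

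Second, your upper bound via the quadric surface is a legitimate and somewhat cleaner alternative to the paper's semilinear construction. It works once the $S_5$-curve is taken to be the curve of lines on $\{\sum x_i=\sum x_i^2=0\}$, not $\PP^1\times\PP^1$ or a single ruling line. That curve is geometrically $\PP^1\sqcup\PP^1$, with the icosahedral $A_5$-action on each component and odd elements swapping them; over $k$ it is a conic over $k(\sqrt5)$ viewed as a $k$-curve. Its twist is the curve of lines on $\{\operatorname{tr}y=\operatorname{tr}y^2=0\}$. The components of that twist are defined over $K(\sqrt{5\Delta})$, because the trace form on trace-zero elements has discriminant $\Delta/5$. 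This field equals $K(\sqrt\Delta)$ only when $\sqrt5\in k$, and it is exactly the field the paper uses when $\sqrt5\notin k$. A further quadratic extension splits the component conic and gives a free point. The Clebsch cubic and the Brioschi form play no role in this degree-$4$ step.
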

Thus the generic quintic needs $2$ parameters over every extension of degree $\le3$ and $1$ parameter over a suitable extension of degree $4$: Klein's reduction of the quintic through $K(\sqrt\Delta)$ and a conic \cite{Kl84} (see \cite[Thm.~4.2.2]{FKW23} and \cite[Thm.~1.3]{FW25}, which require $\sqrt5\in k$) is optimal. Over fields without $\sqrt5$ we give a variant through $K(\sqrt{5\Delta})$ and a twisted conic, using a semilinear action of $S_5$ on a conic over $k(\sqrt5)$ viewed as a $k$-curve. Theorem~A cannot give the lower bound (for $S_5$ it only recovers $\ed(S_5)\ge2$ at $d=1$). The lower bound is obtained from Abel--Jacobi rigidity (Section~\ref{sec:const}) rather than from cohomological invariants (Remark~\ref{rem:S5coh}): over $L=k(V)$ the twist becomes constant, a point of degree $d$ on a twisted curve becomes a divisor class on a curve over $k$, and since $L$ is purely transcendental over $k$ that class is constant, hence $G$-invariant. For $n\ge6$, Sylow divisibility gives $6\mid d_1(\tgen(S_n))$ (Corollary~\ref{cor:SnSylow}); so adjoining a root of an equation of degree $\le5$ never reduces the generic polynomial of degree $n$ to one parameter. For $S_6$ only partial information is known (Example~\ref{ex:Sn}).

\begin{table}[tbp]
\centering\small
\renewcommand{\arraystretch}{1.25}
\begin{tabular}{@{}>{\raggedright\arraybackslash}p{0.13\textwidth}>{\raggedright\arraybackslash}p{0.27\textwidth}>{\raggedright\arraybackslash}p{0.37\textwidth}>{\raggedright\arraybackslash}p{0.16\textwidth}@{}}
\toprule
$G$ & $k$ & profile of $\tgen$ & reference\\
\midrule
abelian, exponent $e$ & $\operatorname{char}k\nmid e$, $\mu_e\subset k$ & $\edd d_k=\min\{\rk B:B\le G,\ [G:B]\le d\}$ & Cor.~\ref{cor:A7}\\
$(\Z/q)^n$, $q$ prime & $\operatorname{char}k\ne q$, $\ed_k(\Z/q)\le1$ & $\edd d_k=n-\lfloor\log_qd\rfloor$ for $d\le q^n$; $d_j=q^{n-j}$ & Cor.~\ref{cor:A8}(b)\\
$(\Z/2)^n$ & any $k$, $\operatorname{char}k\ne2$ & $\edd d_k=n-\lfloor\log_2d\rfloor$ for $d\le2^n$ & Cor.~\ref{cor:A8}(c)\\
$p$-group & $\operatorname{char}k\ne p$, $\zeta_p\in k$, $k$ perfect & $\edd d_k=\ed_k(G)$ for $d\le p-1$; drops only at multiples of $p$ & Prop.~\ref{prop:KMplateau}(c)\\
$\Z/p$, $p$ odd & $\operatorname{char}k=0$, $\zeta_p\in k$ & $d_1=1$ & Cor.~\ref{cor:sharp}(i)\\
$\Z/p$, $p$ odd & $\operatorname{char}k=0$, $m$ even & $d_1=m/2=[k(c):k]$ & Cor.~\ref{cor:sharp}(ii)\\
$\Z/p$, $p$ odd & $\operatorname{char}k=0$, $m\ge3$ odd & $\frac{m+1}2\le d_1\le m$ & Cor.~\ref{cor:sharp}(iii)\\
$\Z/p$, $p$ odd & $k=\Q$ & $d_1=(p-1)/2$, $d_0=p$ & Thm.~\ref{thm:Zp}\\
$A_5$ & $\operatorname{char}k=0$, $\sqrt5\in k$ & $\edd1=2$, $\edd d=1$ for $2\le d\le59$, $\edd{60}=0$; $d_1=2$ & Cor.~\ref{cor:A5}\\
$S_2=\Z/2$ & $\operatorname{char}k=0$ & $(1,0)$ & Prop.~\ref{prop:Sn4}\\
$S_3$ & $\operatorname{char}k=0$ & $\edd d=1$ for $d\le5$, $\edd6=0$ & Prop.~\ref{prop:Sn4}\\
$S_4$ & $\operatorname{char}k=0$ & $\edd1=2$, $\edd d=1$ for $2\le d\le23$, $\edd{24}=0$; $d_1=2$ & Prop.~\ref{prop:Sn4}\\
$S_5$ & $\operatorname{char}k=0$ & $\edd d=2$ for $d\le3$, $1$ for $4\le d\le119$, $\edd{120}=0$; $d_1=4$ & Thm.~\ref{thm:S5}\\
\bottomrule
\end{tabular}
\caption{The finite-degree profile of the generic torsor in characteristic prime to $|G|$ ($q$ and $p$ primes; $d_j$ taken relative to $k$). }\label{tab:A}
\end{table}

\subsection*{Relation to earlier work}
Measuring essential dimension after passing to extensions of bounded degree is not a new idea, and the profile defined here is a variant of an existing invariant; we do not claim it as new. Farb--Kisin--Wolfson \cite{FKW23} study essential dimension relative to a class of extensions. Farb--Wolfson \cite{FW25}, over a perfect field $k$, define for a faithful $G$-variety $X$ the quantity $\ed_k(X;\le n)$ as the least $\dim Y$ over faithful $G$-varieties $\tilde X$, $Y$ for which there are a branched cover $\tilde X\dashrightarrow X$ of degree $\le n$ (a generically finite dominant rational map) and a $G$-compression $\tilde X\dashrightarrow Y$, and take the supremum over $X$ to obtain $\ed_k(G;\le n)$ (the reduction to linear $X$ is \cite[footnote~4]{FW25}, citing \cite[Ex.~4.6, Lemma~4.9]{FKW23}). In torsor language such a cover is an extension $F'/K$ of degree $\le n$ for which $L\otimes_KF'$ is a field, an irreducible cover; our $\edd d_k(\tgen)$ also allows extensions for which $L\otimes_KF'$ is a product of fields. Hence $\edd d_k(\tgen)\le\ed_k(G;\le d)$, with equality when the optimum is attained by a connected cover. This dictionary is not used in any proof below. Klein's witness for $S_5$ is of the reducible kind, so Theorem~C does not determine $\ed_k(S_5;\le4)$ in the sense of \cite{FW25}; what the proof of Proposition~\ref{prop:S5lower} gives, in the case $H=S_5$, is $\ed_k(S_5;\le n)\ge2$ for $n\le3$.

The geometric characterisation (Theorem~\ref{thm:geom}), the versality of the generic torsor (Proposition~\ref{prop:vers}), Sylow divisibility (Theorem~\ref{thm:div}) and the Karpenko--Merkurjev plateau (Proposition~\ref{prop:KMplateau}) are formal consequences of results of Duncan--Reichstein \cite{DR15}, Reichstein--Scavia \cite{RS22} and Karpenko--Merkurjev \cite{KM}; Sylow divisibility is close to the divisibility bound of \cite[Rem.~1.9]{FW25}. We record them for use below. The contributions of this paper are Theorems~A, B and C and the explicit profiles computed from them. The case $d=1$ of Theorem~A, $\ed\ge\rk A$, is classical \cite{BR97,BF03,RY}; the control of the index is new, and we do not find an analogue of it in \cite{FW25}. Theorem~B treats $\Z/p$, which \cite{FW25} does not treat; it uses the same Castelnuovo-type gonality argument as \cite{FW25} (their Lemma~2.2 and Corollary~2.3), together with Riemann--Hurwitz and a valuation-produced fixed point. For Theorem~C, \cite[Thm.~1.6]{FW25} requires, among other things, that $G$ have no proper subgroup of index $\le n$ and that $G$ not act nontrivially on a smooth curve of small genus; $S_5$ fails the first condition ($A_5$ has index $2$) and $A_5$ fails the third (it acts on $\PP^1$), so the value $d_1(S_5)=4$ is not in \cite{FW25}.

The profile is also close in spirit to the resolvent degree of a finite group \cite{Br75,FW19,BRT} and to the classical reductions of Klein, Hermite and Bring--Jerrard of the generic quintic. The profile fixes the torsor $\tau$ and bounds the degree of a single extension of its base field, measuring the result by essential dimension; resolvent degree, roughly, allows towers of extensions. The jump degree $d_1$ is an invariant of a single step, and we do not compare it with the resolvent degree of the quintic.

\subsection*{Why a profile}
The profile does not compute $\ed_k(G)$: by monotonicity its slots with $d>1$ only bound $\edd1$ from below. It offers a graded way to refine a global question. ``Is $\ed_k(\tgen)\le j$?'' is one question; ``is there an extension of degree $\le d$ over which $\tgen$ needs $\le j$ parameters?'' is a family of questions, each geometric by Theorem~\ref{thm:geom} and bounded: for $j=1$ the varieties are curves and a point of degree $d$ is a divisor of degree $d$, to which Riemann--Roch, Clifford's inequality and the structure of Jacobians apply. It also gives information that Sylow divisibility cannot see, namely the $q$-power part of the profile: Theorems~A, B and C are lower bounds exactly there.

\subsection*{Roadmap}
Section~\ref{sec:conv} fixes conventions. Sections~\ref{sec:profile}--\ref{sec:sylow} develop the formal properties of the profile, its geometric meaning and Sylow divisibility, and they hold in every characteristic. Sections~\ref{sec:const}--\ref{sec:S5} prove Theorems~A, B and C: Section~\ref{sec:const} collects the rigidity tools, Section~\ref{sec:thmA} proves the rank--index theorem and its consequences for abelian groups, Section~\ref{sec:thmB} proves Theorem~B, and Section~\ref{sec:S5} proves Theorem~C together with the Sylow bound and the geometric lower bound for $S_n$, $n\ge5$, the profile of $A_5$, and the profiles of $S_n$, $n\le4$. Section~\ref{sec:open} lists open problems.

% ---- end intro.tex ----

% ---- begin conv.tex ----
\section{Conventions and background}\label{sec:conv}

\subsection{Essential dimension}\label{ss:ed}
Throughout, $k$ is a field, $G$ a finite group (constant group scheme over $k$), $F\supseteq k$ a field and $\tau\colon T\to\Spec F$ a $G$-torsor. Some statements below need $k$ perfect or algebraically closed, and say so.

The classifying homomorphism $\varphi_\tau\colon\Gam_F\to G$ (Section~\ref{ss:galois}) is defined up to conjugation, its image $H=\im\varphi_\tau$ up to conjugacy, and $T\cong\coprod_{G/H}\Spec E$ with $E/F$ Galois with group $H$ (the \emph{splitting field} of $\tau$). Following \cite{BR97,BF03,Mer13}, $\ed_k(\tau)$ is the least $\trdeg_kF_0$ over subfields $k\subseteq F_0\subseteq F$ such that $\tau\cong\tau_0\otimes_{F_0}F$ for a $G$-torsor $\tau_0$ over $F_0$, and $\ed_k(G)=\sup_\tau\ed_k(\tau)$. We use, in every characteristic:
\begin{itemize}[leftmargin=3em]
\item[(B1)] $\ed_k(\tau\otimes_FF')\le\ed_k(\tau)$ for every field extension $F'/F$ \cite[Lemma~2.1(a)]{RS22};
\item[(B2)] $\ed_k(\tau)\ge\ed_{k'}(\tau)$ for $k\subseteq k'\subseteq F$ \cite[Lemma~2.1(b)]{RS22}, with equality if $k'/k$ is algebraic: a descent of $\tau$ to $F_0\supseteq k$ gives a descent to $F_0k'\subseteq F$, and $\trdeg_{k'}F_0k'=\trdeg_kF_0$;
\item[(B3)] $\ed_k(\tau)=0$ iff $\tau$ descends to the algebraic closure of $k$ in $F$; for $k$ algebraically closed, iff $\tau$ is split. For general $k$ this is not ``descends to $k$'': $k=\Q$, $F=\Q(i)(t)$, $\tau=F(\sqrt{1+i})$ has $\ed_\Q=0$ but is not $F(\sqrt a)$ with $a\in\Q$, since $N_{\Q(i)/\Q}(1+i)=2$ is not a square;
\item[(B4)] purely transcendental base change over an infinite field does not change $\ed_k$ (Lemma~\ref{lem:constant}(i)).
\end{itemize}
For a prime $q$, the \emph{essential dimension at $q$} is
\[
\ed_{k,q}(\tau)=\min\{\ed_k(\tau\otimes F'):F'/F\text{ finite separable},\ q\nmid[F':F]\},\qquad\ed_{k,q}(G)=\sup_\tau\ed_{k,q}(\tau);
\]
equivalently $\ed_{k,q}(\tau)=\ed_k(\tau\otimes F^{(q)})$ for a $q$-closure $F^{(q)}$ of $F$ \cite[(2.5)]{RS22} (Lemma~\ref{lem:qcl}).

Let $Y$ be a generically free primitive $G$-variety over $k$ (primitive: $G$ permutes the irreducible components transitively). A $G$-compression is a dominant $G$-equivariant rational map $Y\dashrightarrow X$ to a generically free primitive $G$-variety, and $\ed_k(Y)$ is the minimal $\dim X$ over compressions (for finite $G$). Shrinking $Y$ to a $G$-torsor $Y^0\to B$ with $k(B)=k(Y)^G=:F$ and pulling back to the generic point gives a $G$-torsor $\tau_Y$ over $F$, and
\begin{equation}\label{eq:23}
\ed_k(Y)=\ed_k(\tau_Y)\qquad\text{\cite[(2.3)]{RS22}, \cite[Lemma~3.9]{Mer13}.}
\end{equation}
The \emph{generic torsor} $\tgen\colon\Spec L\to\Spec K$, $L=k(V)$, $K=L^G$, is taken for a faithful representation $V$ of $G$ over $k$; $\ed_k(\tgen)=\ed_k(G)$ \cite{BF03,Mer13}, and the profile of $\tgen$ does not depend on the choice of faithful representation $V$ (Proposition~\ref{prop:vers}). A correspondence $X\rightsquigarrow Z$ of degree $d$ between $G$-varieties is a diagram $X\xleftarrow{\ \pi\ }X'\dashrightarrow Z$ of $G$-equivariant rational maps with $\pi$ dominant of degree $d$ \cite[(2.6)]{RS22}.

\subsection{Galois groups, torsors and valuations}\label{ss:galois}
Fields and closures. All fields occurring in one argument are placed inside one algebraically closed field $\Om$, and for a subfield $M\subseteq\Om$ we put $\Gam_M:=\Aut(\Om/M)$, so that $M\subseteq M'$ gives $\Gam_{M'}\subseteq\Gam_M$. If $\Om$ is algebraic over $M$, restriction gives $\Gam_M\cong\Gal(M^s/M)$, where $M^s$ is the separable closure of $M$ in $\Om$, because $\Om/M^s$ is purely inseparable. In general (for instance for a subfield $E$ of a finitely generated field $F'\subseteq\Om$, where $\Om$ is not algebraic over $E$) restriction is only a surjection $\rho_E\colon\Gam_E\to\Gal(E^s/E)$; the classifying homomorphism of a torsor over $E$ factors through it.

Torsors \cite[I~\S5]{Se}. A $G$-torsor over a field $M$ is $\Spec P$ for a $G$-Galois $M$-algebra $P$. Choosing an $M$-algebra map $x\colon P\to\Om$ gives a homomorphism $c_x\colon\Gam_M\to G$, defined by $\gamma\circ x=x\circ c_x(\gamma)$. Changing $x$ conjugates $c_x$. Moreover $c_x=\bar c_x\circ\rho_M$ for a continuous homomorphism $\bar c_x\colon\Gal(M^s/M)\to G$, because $x$ takes values in $M^s$, and isomorphism classes of $G$-torsors over $M$ correspond bijectively to continuous homomorphisms $\Gal(M^s/M)\to G$ modulo $G$-conjugacy (via $\bar c_x$); the classifying homomorphism $\varphi_\tau$ of Section~\ref{ss:ed} is $c_x$ for any choice of $x$. If $M\subseteq M'\subseteq\Om$, the torsor $P\otimes_MM'$ corresponds to $c_x|_{\Gam_{M'}}$. Below only images of subgroups up to conjugacy matter, and these do not depend on left/right conventions.

The generic torsor as a homomorphism. Let $V$ be a faithful finite-dimensional representation of $G$ over $k$, $L=k(V)$ with the induced action $(gf)(v)=f(g^{-1}v)$, and $K=L^G$, so that $\tgen=(L/K)$ is a $G$-torsor by Artin's theorem. Embed $L$ in an algebraic closure $\Om$. The inclusion $L\hookrightarrow\Om$ gives the surjective homomorphism
\[
\varphi:=c_{\mathrm{incl}}\colon\Gam_K\to G,\qquad \varphi(\gamma)=\gamma|_L\in\Aut(L/K)=G.
\]

Rank. For a finite abelian group $B$, $\rk B$ is the minimal number of generators of $B$. It equals $\max_\ell\dim_{\F_\ell}B[\ell]$ and $\max_\ell\dim_{\F_\ell}B/\ell B$ ($\ell$ prime), and subgroups and quotients of $B$ have rank $\le\rk B$.

Valuations. Valuations are Krull valuations, written additively. For a valuation $\nu$ on $\Om$ and a subfield $M\subseteq\Om$ we write $\nu|_M$ for the restriction, $\kappa(M)$ for its residue field, $\nu(M^\times)$ for its value group, and
\[
\Dec_M:=\{\gamma\in\Gam_M:\ \nu\circ\gamma=\nu\}
\]
for the decomposition group. For $M\subseteq M'\subseteq\Om$ one has $\Dec_{M'}=\Dec_M\cap\Gam_{M'}$. Decomposition groups of all the fields occurring in an argument are taken with respect to restrictions of the same valuation $\nu$ on $\Om$; this identity of decomposition groups is what allows them to be compared (Proposition~\ref{prop:A4}) without choosing extensions of valuations to intermediate fields and conjugating. The order in which $\nu$ and the fields are chosen plays no role.

\subsection{Twists, free locus, versality, large fields}\label{ss:twist}
For a $G$-variety $X$ over $k$ and a $G$-torsor $\tau\colon T\to\Spec F$, the twist is $\tw{\tau}{X}=(T\times_kX)/G$, an algebraic space over $F$ (a variety if $X$ is quasi-projective, \cite{DR15}; for algebraic spaces see \cite[\S7]{RS22}). It commutes with base change in $F$, and for a $G$-stable open $U\subseteq X$, $\tw{\tau}{U}\subseteq\tw{\tau}{X}$ is an open subspace \cite[Cor.~3.4]{DR15}. We quote from \cite{RS22}:

\begin{fact}[{\cite[\S7, Lemmas 7.3, 7.4]{RS22}}]\label{fact:rs}
Let $X$ be a $G$-variety, $\tau\colon T\to\Spec F$ a $G$-torsor. $X$ is \emph{$\tau$-weakly versal} if there is a $G$-equivariant morphism $T\to X_F$ over $F$, and \emph{$\tau$-versal} if every dense $G$-invariant open subvariety is $\tau$-weakly versal.
\begin{enumerate}[label=\textup{(\alph*)}]
\item $X$ is $\tau$-weakly versal iff $\tw{\tau}{X}(F)\ne\emptyset$. More precisely $\tw{\tau}{X}(F)=\Hom_G(T,X_F)=\Hom_G^k(T,X)$.
\item If $X$ is smooth and $F$ is large, $X$ is weakly $\tau$-versal iff $X$ is $\tau$-versal.
\item If $X$ is generically free and primitive and $\tau$-versal, then $\ed_k(X)\ge\ed_k(\tau)$.
\end{enumerate}
A field $F$ is \emph{large} if every irreducible $F$-variety with a smooth $F$-point has dense $F$-points; $F((t))$, henselian nontrivially valued fields, and $q$-closed fields are large \cite[\S7]{RS22}, \cite{Pop96,Pop14}.
\end{fact}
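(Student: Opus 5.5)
For (a), I would treat the twist as the quotient in the diagram
\[
T\times_kX\;=\;T\times_FX_F\;\xrightarrow{\ q\ }\;\tw{\tau}{X}\;\longrightarrow\;\Spec F,
\]
where $G$ acts diagonally and freely through the $T$-factor, so $q$ is itself a $G$-torsor. A $G$-equivariant morphism $f\colon T\to X_F$ over $F$ has a graph $(\id,f)\colon T\to T\times_FX_F$, which is a $G$-equivariant section of the projection to $T$. Passing to $G$-quotients turns it into a section $\Spec F=T/G\to\tw{\tau}{X}$, that is, an $F$-point. Conversely, an $F$-point $x$ pulls back along $q$ to a $G$-stable closed subscheme $q^{-1}(x)\subseteq T\times_FX_F$. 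This subscheme is a $G$-torsor over $\Spec F$, and since the map $q^{-1}(x)\to T$ is $G$-equivariant between $G$-torsors over $F$, it is an isomorphism. Its inverse composed with the projection to $X_F$ is the desired $f$. These two constructions are mutually inverse, which gives $\tw{\tau}{X}(F)=\Hom_G(T,X_F)$. The last equality $\Hom_G(T,X_F)=\Hom^k_G(T,X)$ is just the universal property of $X_F=X\times_k\Spec F$. I would take the representability of the twist (as a variety or algebraic space) and the fact that it is compatible with open subsets from \cite{DR15} and \cite[\S7]{RS22}, as the paper already does.

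For (b), versal trivially implies weakly versal, so only the converse needs work. Let $U\subseteq X$ be a dense $G$-invariant open and $p\in\tw{\tau}{X}(F)$. Since $q$ is étale and $X_F$ is smooth, $\tw{\tau}{X}$ is smooth over $F$. Hence $p$ lies on a unique irreducible component $Z$, which is smooth at $p$. Because $F$ is large, $Z(F)$ is dense in $Z$. Now $\tw{\tau}{U}$ is an open subspace of $\tw{\tau}{X}$, and it is dense because $q^{-1}(\tw{\tau}{U})=T\times U$ is dense in $T\times X$. So $\tw{\tau}{U}$ meets $Z$ in a nonempty open set, which therefore contains an $F$-point. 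By (a), that point says $U$ is $\tau$-weakly versal. The step I expect to need the most care is making sure the twist is a variety, so that the definition of a large field applies. One reduces to quasi-projective $G$-stable opens, or uses the algebraic-space version of largeness in \cite[\S7]{RS22}.

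For (c), I would choose a compression $X\dashrightarrow Y$ with $\dim Y=\ed_k(X)$. Let $Y^0\subseteq Y$ be a free open on which $Y^0\to B$ is a $G$-torsor. Let $U\subseteq X$ be the intersection of the domain of definition with the preimage of $Y^0$. This $U$ is dense, open and $G$-invariant because the map is dominant. Versality gives a $G$-morphism $T\to U_F\to Y^0$. Passing to quotients gives a morphism $\Spec F\to B$; let $b$ be its image point, with residue field $F_0=k(b)\subseteq F$. Then $\tau\cong(Y^0_b)\otimes_{F_0}F$, because $T$ and the pullback of $Y^0$ are both $G$-torsors over $F$ with an equivariant map between them. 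Hence
\[
\ed_k(\tau)\le\trdeg_kF_0\le\dim B=\dim Y=\ed_k(X).
\]
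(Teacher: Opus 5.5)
Your proof is correct. The paper gives no argument of its own for this Fact: it is quoted from \cite[\S7]{RS22}, which rests on \cite{DR15}. What you propose is the standard argument behind those lemmas. In particular, your part (c) is the same torsor-descent step the paper carries out in Lemma~\ref{lem:freepoint}, applied to the free locus of a compression.

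Two places need one more sentence.

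In (c), dominance only makes $U$ non-empty. It is dense because it is $G$-stable and $X$ is primitive, so it meets every irreducible component.

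In (b), only the second of your two remedies for the algebraic-space issue works in general. For non-quasi-projective $X$, the $G$-orbit underlying an $F$-point of $\tw{\tau}{X}$ need not lie in any quasi-projective open (Hironaka-type examples), so the reduction to quasi-projective opens can fail. The algebraic-space route does work. The twist is a separated algebraic space, so $p$ has an \'etale neighbourhood $(W,w)\to(\tw{\tau}{X},p)$ by a scheme with $\kappa(w)=F$. Apply largeness to the component of $W$ through the smooth point $w$. This gives an $F$-point in the dense open preimage of $\tw{\tau}{U}$, and its image is the point you need.

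Wherever this paper actually uses the Fact, the $G$-variety is quasi-projective and the twist is a variety, so this issue does not arise there.
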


For a $G$-variety $X$ put $X^{\free}:=X\smallsetminus\bigcup_{1\ne h\in G}X^h$, a $G$-stable open subset. The \emph{free locus of the twist} is $\tw{\tau}{(X^{\free})}\subseteq\tw{\tau}{X}$; an $F$-point of $\tw{\tau}{X}$ is free if it lies there, i.e.\ (by Fact~\ref{fact:rs}(a) applied to $X^{\free}$) if the corresponding equivariant morphism $T\to X$ factors through $X^{\free}$. A closed point $x$ of $\tw{\tau}{X}$ has \emph{degree} $[\kappa(x):F]$.

\begin{lemma}\label{lem:freepoint}
Let $Z$ be a quasi-projective generically free primitive $G$-variety over $k$ and $\tau$ a $G$-torsor over $F$. If $\tw{\tau}{Z}$ has a free $F$-point, then $\ed_k(\tau)\le\dim Z$.
\end{lemma}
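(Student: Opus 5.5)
The plan is to convert the free $F$-point into a $G$-equivariant map from the torsor to $Z$, and then read off a descent of $\tau$ from the image of that map. By Fact~\ref{fact:rs}(a), applied to the $G$-stable open subvariety $Z^{\free}$, a free point $z\in\tw{\tau}{Z}(F)$ is the same thing as a $G$-equivariant $k$-morphism $f\colon T\to Z^{\free}$. Write $T=\Spec P$, with $P$ a $G$-Galois $F$-algebra. The image of $f$ consists of finitely many points of $Z^{\free}$, and $G$ permutes them transitively because $G$ acts transitively on $T$, which is $\coprod_{G/H}\Spec E$.

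Take one of these points, $y\in Z^{\free}$, and let $H'$ be its stabilizer in $G$. Set $Y=\overline{G\cdot y}$ and put $F_0:=\kappa(y)^{H'}$. Since $y$ lies in the free locus, the inertia of $y$ is trivial, so $G$ acts freely on the orbit $G\cdot y$. Hence the orbit $G\cdot y$ is a $G$-torsor over $\Spec F_0$: it is the generic fibre of the quotient $Y\cap Z^{\free}\to (Y\cap Z^{\free})/G$, a torsor because the action is free. Quasi-projectivity of $Z$ guarantees that this quotient exists as a variety. Call this torsor $\tau_0$.

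Next I show that $\tau$ descends to $F_0$. The map $f$ factors as $T\to G\cdot y$. This is a $G$-equivariant map from $T$ to the torsor $\tau_0$, lying over a morphism $\Spec F\to\Spec F_0$, which is an embedding $F_0\hookrightarrow F$ over $k$. Any $G$-equivariant map of $G$-torsors covering a base map is an isomorphism after base change. So $\tau\cong\tau_0\otimes_{F_0}F$.

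Finally I bound the transcendence degree. We have $\trdeg_kF_0=\trdeg_k\kappa(y)=\dim\overline{\{y\}}\le\dim Z$, so $\ed_k(\tau)\le\dim Z$.

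The step I expect to be the main obstacle is the torsor claim in the second paragraph. One has to check that $G$-freeness at the point $y$, and not merely generic freeness, gives a torsor structure on the orbit over $\kappa(y)^{H'}$. This amounts to showing that $H'$ acts faithfully on $\kappa(y)$, so that Artin's theorem applies and $\kappa(y)/F_0$ is $H'$-Galois. The point is that an element of $H'$ acting trivially on $\kappa(y)$ fixes $y$ scheme-theoretically, hence lies in some $Z^h$ with $h\neq 1$, which contradicts $y\in Z^{\free}$. Freeness is exactly what rules this out.
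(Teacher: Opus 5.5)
Your proof is correct and is essentially the paper's argument. Your orbit $G\cdot y$ is exactly the fibre $\pi^{-1}(b)$ of the quotient torsor $\pi\colon Z^{\free}\to Z^{\free}/G$ over the image point $b$ of $\Spec F$, with $\kappa(b)=\kappa(y)^{H'}$. The paper pulls back this global quotient torsor directly and applies (T2), so it never needs your hand check (faithfulness of $H'$ on $\kappa(y)$ plus Artin's theorem) that the orbit is a torsor.
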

\begin{proof}
$G$ acts freely on the quasi-projective variety $Z^{\free}$, so the quotient $\pi\colon Z^{\free}\to B:=Z^{\free}/G$ exists and is a $G$-torsor \cite[Exp.~V, \S\S1--2]{SGA1}.
A free $F$-point is a $G$-equivariant $f\colon T\to Z^{\free}$; it induces $\bar f\colon\Spec F\to B$ with $f^*(\pi)\cong\tau$ (a $G$-equivariant map of torsors over $\Spec F$ is an isomorphism). If $b$ is the image point of $\bar f$, then $\tau$ is the base change of the torsor $\pi^{-1}(b)\to\Spec\kappa(b)$ along $\kappa(b)\subseteq F$, and $\trdeg_k\kappa(b)\le\dim B=\dim Z$.
\end{proof}

\subsection{Torsors: decomposition and induction}\label{ss:tors}
A $G$-torsor over a field $F$ is a $G$-Galois $F$-algebra $P$. We use three standard facts.
\begin{enumerate}[label=(T\arabic*)]
\item $P=\prod_{i\in I}P^{(i)}$ is a product of fields permuted transitively by $G$; if $H_0$ is the stabiliser of $i_0$, then $P^{(i_0)}/F$ is Galois with group $H_0$ and $P\cong\operatorname{Ind}_{H_0}^GP^{(i_0)}$.
\item A $G$-equivariant $F$-algebra homomorphism between $G$-torsors over $F$ is an isomorphism.
\item Induction commutes with base change and with descent: if $T\cong T_0\otimes_EF$ then $\operatorname{Ind}_H^GT\cong\operatorname{Ind}_H^GT_0\otimes_EF$; so $\ed_k(\operatorname{Ind}_H^GT)\le\ed_k(T)$.
\end{enumerate}
For a finite extension $F'/K$ with $L/K$ a $G$-Galois extension (e.g.\ the generic torsor) write $L\otimes_KF'=\prod_{G/H}N_0$ as in (T1): $N_0$ is a field containing $L$ and $F'$, $N_0/F'$ is Galois with group $H$, and
\[[N_0:L]=[F':K]/[G:H].\]
In particular $[G:H]$ divides $[F':K]$.

% ---- end conv.tex ----

% ---- begin profile.tex ----
\section{The profile and its formal properties}\label{sec:profile}

\begin{definition}\label{def:profile}
Let $\tau$ be a $G$-torsor over a field $F\supseteq k$ and $d\ge1$. Put
\begin{gather*}
\edd d_k(\tau):=\min\{\ed_k(\tau\otimes_FF')\;:\;F'/F\text{ a field extension},\ [F':F]\le d\},\\
d_j(\tau):=\min\{d\ge1:\edd d_k(\tau)\le j\}\qquad(0\le j\le\ed_k(\tau)).
\end{gather*}
The sequence $(\edd d_k(\tau))_{d\ge1}$ is the \emph{finite-degree profile} of $\tau$ and the $d_j(\tau)$ are its \emph{jump degrees}; they depend on $k$, and when the base field varies we write $d_{j,k}(\tau)$. For a prime $q$ we also write $\edd d_k{}'(\tau):=\min\{\ed_k(\tau\otimes F'):[F':F]\le d,\ q\nmid[F':F]\}$, the \emph{prime-to-$q$ profile}, so that $\edd d_k\le\edd d_k{}'$, and $d_j^{(q')}(\tau):=\min\{d:\edd d_k{}'(\tau)\le j\}$ ($=\infty$ if no such $d$ exists), the \emph{prime-to-$q$ jump degrees}: $d_j^{(q')}(\tau)$ is the least degree prime to $q$ of an extension over which $\tau$ needs at most $j$ parameters. (The minimum of $d$ prime to $q$ with $\edd d_k(\tau)\le j$ would be a different and uninformative number, since $\edd d_k$ minimises over all extensions of degree $\le d$, including those of degree divisible by $q$.) 
\end{definition}

Only field extensions $F'/F$ enter the definition; étale algebras appear below only as total spaces of torsors.

\begin{proposition}[shape]\label{prop:shape}
Let $\tau$ be a $G$-torsor over $F\supseteq k$ and $H=\im\varphi_\tau$.
\begin{enumerate}[label=\textup{(\alph*)}]
\item $d\mapsto\edd d_k(\tau)$ is non-increasing with values in $\{0,\dots,\ed_k(\tau)\}$, $\edd1_k(\tau)=\ed_k(\tau)$, and $d_{\ed_k(\tau)}=1\le d_{\ed_k(\tau)-1}\le\dots\le d_0\le|H|$.
\item If $F'/F$ is a field extension of degree $e$ and $d\ge1$, then $\edd{de}_k(\tau)\le\edd d_k(\tau\otimes F')\le\edd d_k(\tau)$; hence $d_j(\tau)\le[F':F]\,d_j(\tau\otimes F')$ for every finite $F'/F$ and every $j$.
\item $\edd d_k(\tau)=\min\{j:d_j(\tau)\le d\}$: the jump degrees determine the profile.
\item If $k\subseteq k'\subseteq F$ then $\edd d_{k'}(\tau)\le\edd d_k(\tau)$ for all $d$, with equality if $k'/k$ is algebraic.
\item $\edd d_k(\tau)=0$ iff some $F'/F$ of degree $\le d$ makes $\tau\otimes F'$ descend to the algebraic closure of $k$ in $F'$; for $k$ algebraically closed, iff $\tau$ is split by an extension of degree $\le d$.
\end{enumerate}
\end{proposition}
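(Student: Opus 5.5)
The plan is to prove each part directly from Definition~\ref{def:profile} and the basic facts (B1)--(B3) and (T1), in the order (a), (b), (c), (d), (e).

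For (a), the minimum defining $\edd d_k(\tau)$ is taken over a set of extensions that grows with $d$, so the profile is non-increasing. Taking $F'=F$ shows $\edd1_k(\tau)=\ed_k(\tau)$, since the only field extension of degree $1$ is $F$ itself. By (B1), every value lies in $\{0,\dots,\ed_k(\tau)\}$. Monotonicity of the profile gives monotonicity of the $d_j$ in $j$, and $d_{\ed_k(\tau)}=1$. For $d_0\le|H|$, I take $F'=E$, the splitting field of $\tau$ from (T1), which has degree $|H|$. Over $E$ the torsor $\tau\otimes_FE$ is split, so it is defined over $k$ and has $\ed_k=0$.

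For (b), let $F''/F'$ be an extension realizing $\edd d_k(\tau\otimes F')$. Then $F''/F$ has degree $\le de$, and $\tau\otimes_FF''=(\tau\otimes F')\otimes_{F'}F''$, which gives the first inequality. For the second, take $F''/F$ of degree $\le d$ realizing $\edd d_k(\tau)$. The one genuine point is that $F'\otimes_FF''$ need not be a field. I would choose a factor $M$ of it, which is a compositum of $F'$ and $F''$ satisfying $[M:F']\le[F'':F]\le d$. Since $M\supseteq F''$, (B1) gives $\ed_k(\tau\otimes M)\le\ed_k(\tau\otimes F'')$. The inequality on jump degrees then follows: if $d=d_j(\tau\otimes F')$, we get $\edd{de}_k(\tau)\le j$.

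Part (c) is the standard inversion of a non-increasing integer step function. $d_j\le d$ holds iff $\edd d_k\le j$, by monotonicity, so $\min\{j:d_j\le d\}=\edd d_k$.

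For (d), I apply (B2) termwise to each $\tau\otimes F'$. The condition $k'\subseteq F\subseteq F'$ is preserved, and so is algebraicity of $k'/k$. Taking minima then gives the claim.

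For (e), $\edd d_k(\tau)=0$ means some $F'$ of degree $\le d$ has $\ed_k(\tau\otimes F')=0$, and (B3) translates this. For $k$ algebraically closed, (B3) says $\tau\otimes F'$ is split, which is exactly that $F'$ splits $\tau$.

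I expect no real obstacle. The only care needed is the compositum step in (b), where one must pick a field factor rather than use the tensor product.
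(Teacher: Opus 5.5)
Your proposal is correct and follows the paper's proof essentially line by line: the splitting field $E$ with $[E:F]=|H|$ gives $d_0\le|H|$, a composite field of $F'$ and $F''$ gives the second inequality in (b), and (B1)--(B3) are applied termwise for the remaining parts. One small precision: when $F''/F$ is inseparable, $F'\otimes_FF''$ can be non-reduced, so instead of a ``factor'' you should take a residue field of it (i.e.\ any composite field, as the paper does), which still has degree $\le[F'':F]$ over $F'$.
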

\begin{proof}
(a) Over the splitting field $E$, $[E:F]=|H|$, the torsor is split, so $d_0\le|H|$; the rest is clear from the definition and (B1). (b) The first inequality: an extension $F''/F'$ of degree $\le d$ has $[F'':F]\le de$. The second: if $[F_1:F]\le d$ then the compositum $F_1F'$ (any composite field) has $[F_1F':F']\le d$ and $\ed_k(\tau\otimes F_1F')\le\ed_k(\tau\otimes F_1)$. If $[F'':F']=d_j(\tau\otimes F')$ realises $\ed_k(\tau\otimes F'')\le j$ then $[F'':F]=[F':F]\,d_j(\tau\otimes F')$. (c) is a restatement, (d) is (B2) applied to each $\tau\otimes F'$, (e) is (B3).
\end{proof}

The last jump degree is the only slot of the profile that is understood for every $G$.

\begin{proposition}[the last jump degree]\label{prop:d0}
Let $H=\im\varphi_\tau$ and $E$ the splitting field of $\tau$.
\begin{enumerate}[label=\textup{(\alph*)}]
\item $d_0(\tau)\le\min\{[F':F]:\tau\otimes F'\text{ split}\}=|H|$, with equality if $k$ is algebraically closed.
\item Let $k$ be perfect and $E/k$ regular. If $[F':F]<\infty$ and $\tau\otimes F'$ descends to the algebraic closure $k_1$ of $k$ in $F'$, then $[k_1:k]\le[F':F]$ is finite and $[F':F]\ge|H|\cdot[k_1:k]$. In particular $d_0(\tau)=|H|$.
\item For $\tgen$ over a perfect $k$: $d_0(\tgen)=|G|$, and $\ed_k(\tgen\otimes F')=0$ forces $[F':K]\ge|G|\,[k_1:k]$.
\end{enumerate}
\end{proposition}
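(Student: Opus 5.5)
The plan is to argue throughout with the classifying homomorphism $\varphi_\tau\colon\Gam_F\to G$ inside the fixed algebraically closed field $\Om$ (Section~\ref{ss:galois}), using the fact that $\tau\otimes F'$ corresponds to $\varphi_\tau|_{\Gam_{F'}}$.

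\emph{Part (a).} Let $F'/F$ be finite with $\tau\otimes F'$ split, i.e.\ $\varphi_\tau(\Gam_{F'})=1$. Since $\Gam_{F'}$ has index $[F':F]_s\le[F':F]$ in $\Gam_F$, we get
\[
|H|=|\varphi_\tau(\Gam_F)|\le[\Gam_F:\Gam_{F'}]\le[F':F].
\]
The splitting field $E$ attains the bound, since $[E:F]=|H|$. Hence the minimum equals $|H|$, and $d_0\le|H|$ by Proposition~\ref{prop:shape}(a). If $k$ is algebraically closed, then $\ed_k(\tau\otimes F')=0$ iff $\tau\otimes F'$ is split, by (B3). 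Therefore $d_0$ is exactly this minimum, namely $|H|$.

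\emph{Part (b).} First we show that $k_1$ is controlled by $F'$. The hypothesis that $E/k$ is regular implies that $F/k$ is regular. Since $k$ is perfect, $k_1/k$ is separable algebraic, so $F\otimes_kk_1$ is a field. This gives $[Fk_1:F]=[k_1:k]$ for every finite subextension of $k_1/k$. As $Fk_1\subseteq F'$, we conclude that $k_1/k$ is finite with $[k_1:k]\le[F':F]$. Next, suppose $\tau\otimes F'\cong\tau_0\otimes_{k_1}F'$. Then $\varphi_\tau|_{\Gam_{F'}}$ factors through restriction $\Gam_{F'}\to\Gal(\kb/k_1)$, where $\kb$ is the algebraic closure of $k$ in $\Om$. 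In particular $\varphi_\tau$ kills $\Gam_{F'\kb}$.

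On the other hand, regularity of $E/k$ makes $E$ and $F\kb$ linearly disjoint over $F$. So $\varphi_\tau|_{\Gam_{F\kb}}$ still has image $H$. The index of $\Gam_{F'\kb}$ in $\Gam_{F\kb}$ is at most $[F'\kb:F\kb]$. This degree is at most $[F':Fk_1]$, because $F'\kb$ is generated over $F\kb\supseteq Fk_1$ by $F'$. Combining,
\[
|H|\le[F':Fk_1],\qquad\text{hence}\qquad [F':F]=[F':Fk_1]\,[Fk_1:F]\ge|H|\,[k_1:k].
\]
By Proposition~\ref{prop:shape}(e), $\edd d_k(\tau)=0$ forces $d\ge|H|$, and with (a) this gives $d_0(\tau)=|H|$.

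\emph{Part (c).} The generic torsor has $E=L=k(V)$, which is purely transcendental and hence regular over $k$, and $H=G$ because $\varphi$ is surjective. So (c) is the special case of (b).

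The main obstacle is the linear-disjointness step in (b): one must check carefully that regularity of $E/k$ gives $\varphi_\tau(\Gam_{F\kb})=H$. This amounts to showing that $E\otimes_F F\kb=E\otimes_k\kb$ is a field. That in turn follows from $E\otimes_F(F\otimes_k\kb)\cong E\otimes_k\kb$ together with the regularity of $E/k$.
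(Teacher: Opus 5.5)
Your proof is correct. Parts (a) and (c) are essentially the paper's argument, but for (b) you take a genuinely different route.

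\textbf{The paper's route for (b).} The paper works with explicit fields. It takes the splitting field $l/k_1$ of the descended torsor $\tau_0$, with group $H_0$. It moves $l$ by an automorphism of $\Om$ so that $EF'=lF'$. Regularity of $E/k$ then gives $[lE:E]=[l:k]=|H_0|[k_1:k]$, and counting degrees along $lE\subseteq EF'=lF'$ yields $|H_0|[k_1:k]\,|H|\le|H_0|\,[F':F]$.

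\textbf{Your route for (b).} You never introduce $l$ or $H_0$. Base change to $\kb$ kills any torsor defined over $k_1$, so $\varphi_\tau$ is trivial on $\Gam_{F'\kb}$. Regularity, in the form of linear disjointness of $E$ and $F\kb$ over $F$, keeps $\varphi_\tau(\Gam_{F\kb})=H$. The index bound
$[\Gam_{F\kb}:\Gam_{F'\kb}]\le[F'\kb:F\kb]\le[F':Fk_1]$
then gives $|H|\le[F':Fk_1]$ directly.

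\textbf{What each approach buys.} Your version avoids the identification step $EF'=lF'$ and the auxiliary group $H_0$. It also isolates the cleaner equivalent statement that $F'$ must have degree at least $|H|$ over $Fk_1$, and it makes clear why: $\tau$ keeps its full image $H$ over $F\kb$ but becomes split over $F'\kb$. The paper's version stays at the level of field degrees. It does not need the index formula for the groups $\Gam$ (Lemma~\ref{lem:index}) or the compatibility of classifying homomorphisms with restriction to $\Gam_{F\kb}$.

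Your use of regularity is also justified as written. $F/k$ is regular as a subextension of $E/k$, so $F\kb\cong F\otimes_k\kb$. Hence $E\otimes_F F\kb\cong E\otimes_k\kb$ is a field, and restriction $\Gal(E\kb/F\kb)\to\Gal(E/F)$ is bijective.
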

\begin{proof}
(a) $T\otimes_FF'$ is split iff $E$ embeds $F$-linearly into $F'$, which forces $[F':F]\ge|H|$; $F'=E$ works; for $k$ algebraically closed use (B3).
(b) First, $[k_1:k]\le[F':F]$: as $F\subseteq E$ and $E/k$ is regular, $k$ is algebraically closed in $F$, so for every finite $k'/k$ inside $k_1$ the ring $k'\otimes_kF$ is a field, equal to $k'F\subseteq F'$, of degree $[k':k]$ over $F$. Now let $\tau\otimes F'\cong\tau_0\otimes_{k_1}F'$ with $\tau_0$ over $k_1$, of splitting field $l/k_1$ Galois with group $H_0$. The total spaces of $\tau\otimes F'$ and $\tau_0\otimes_{k_1}F'$ are isomorphic $F'$-algebras, each a product of copies of one field, namely a compositum $EF'$ (for some $F$-embedding of $E$ into a fixed algebraic closure $\Omega$ of $F'$), respectively $lF'$ (for some $k_1$-embedding of $l$). An $F'$-isomorphism $EF'\to lF'$ extends to an $F'$-automorphism of $\Omega$; it fixes $k_1\subseteq F'$, so after replacing $l$ by its image we may assume $EF'=lF'$. Since $l/k$ is finite separable and $E/k$ regular, $l\otimes_kE$ is a field and $[lE:E]=[l:k]=|H_0|[k_1:k]$; as $lE\subseteq EF'$,
\[
|H_0|[k_1:k]\,|H|=[lE:E][E:F]\le[EF':F]=[lF':F'][F':F]\le|H_0|\,[F':F].
\]
(c) $E=L$ is regular over $k$ and $H=G$.
\end{proof}

\begin{remark}
Without regularity $d_0<|H|$ is possible ($k=\Q$, $\tau=\Q(t)(\sqrt2)$ is constant, $d_0=1$). For $k$ algebraically closed, $\edd d_k(\tau)\ge1$ for $d<|H|$. For abelian $G$ with $\operatorname{char}k\nmid|G|$ the equality $d_0(\tgen)=|G|$ of (c) holds without assuming $k$ perfect (Theorem~\ref{thm:rankindex} with $A=G$: $\rho_G(d)\ge1$ for $d<|G|$).
\end{remark}

\begin{lemma}[purely inseparable invariance]\label{lem:pi}
Let $\operatorname{char}k=p$, $k$ perfect, $G$ any finite group, and $F'/F$ finite purely inseparable. Then $\ed_k(\tau\otimes_FF')=\ed_k(\tau)$. Consequently $\edd d_k(\tau)$ is the minimum over separable $F'/F$ of degree $\le d$.
\end{lemma}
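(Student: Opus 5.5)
The inequality $\ed_k(\tau\otimes_FF')\le\ed_k(\tau)$ is (B1), so the content is the reverse inequality. The plan is to use the Frobenius of the perfect field $k$ to move any descent over $F'$ down to a descent over $F$ of the same transcendence degree.

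Since $F'/F$ is finite purely inseparable, there is $r\ge0$ with $(F')^{p^r}\subseteq F$. Suppose $\tau\otimes_FF'\cong\tau_0\otimes_{F_0}F'$ for a subfield $k\subseteq F_0\subseteq F'$ with $\trdeg_kF_0=\ed_k(\tau\otimes F')$ and a $G$-torsor $\tau_0$ over $F_0$. Put $\phi=\mathrm{Frob}^r\colon F'\to F$, $x\mapsto x^{p^r}$, an injective ring map. Because $k$ is perfect, $\phi(k)=k$, so $\phi$ restricts to an isomorphism of fields $F_0\to F_1:=F_0^{p^r}$ with $k\subseteq F_1\subseteq F$. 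This isomorphism is not $k$-linear, but it maps $k$ onto $k$, so $\trdeg_kF_1=\trdeg_kF_0$. Transport $\tau_0$ along it to a $G$-torsor $\tau_1$ over $F_1$, i.e.\ base change $\tau_0$ along the abstract isomorphism $F_0\cong F_1$.

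The key step is to show $\tau\cong\tau_1\otimes_{F_1}F$. Work with classifying homomorphisms. Finite separable extensions are unaffected by purely inseparable base change, so restriction identifies $\Gal(F^s/F)\cong\Gal(F'^s/F')$, with $F'^s=F^sF'$. The torsor $\tau\otimes F'$ therefore corresponds to the same homomorphism as $\tau$ under this identification, and $\tau$ is recovered from $\tau\otimes F'$ up to isomorphism. Equivalently, an étale $F$-algebra is determined by its base change to $F'$, since $\mathrm{Et}(F)\to\mathrm{Et}(F')$ is an equivalence, and this is compatible with $G$-actions.

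Next compare with $\phi$. Iterated Frobenius on $F'^s$ maps $F'^s$ into $F^s$ and induces the identity on Galois groups under the identification above: for $\gamma$ in the Galois group and $x\in F'^s$ we have $\gamma(x)^{p^r}=\gamma(x^{p^r})$. Hence for any $G$-Galois $F'$-algebra $P$, the $\phi$-twist $P\otimes_{F',\phi}F$ is isomorphic to the algebra obtained by descending $P$ to $F$ via the equivalence. Apply this to $P=\tau\otimes F'$, which gives $\tau$, and to $P=\tau_0\otimes_{F_0}F'$, which gives $\tau_1\otimes_{F_1}F$ because $\phi$ restricts to $F_0\to F_1$. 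This yields $\tau\cong\tau_1\otimes_{F_1}F$, and so $\ed_k(\tau)\le\trdeg_kF_1=\ed_k(\tau\otimes F')$.

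The main obstacle is making this Frobenius-twist compatibility precise. In particular one must check that the relative Frobenius of an étale algebra is an isomorphism $P\otimes_{F',\phi}F'\cong P$ and adapt it to the inclusion $F\subseteq F'$. I would use the standard fact that for étale $P$ over a ring $R$ of characteristic $p$, the map $P\otimes_{R,\mathrm{Frob}}R\to P$ is an isomorphism. This is applied on $F'$, with the result then descended, using $\phi(F')\subseteq F$ and $\phi(F)\subseteq F$.

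For the consequence, any finite $F'/F$ has a separable closure $F'_s$ of $F$ in $F'$ with $F'/F'_s$ purely inseparable and $[F'_s:F]\le[F':F]$. By the first part $\ed_k(\tau\otimes F')=\ed_k(\tau\otimes F'_s)$, so restricting the minimum defining $\edd d_k(\tau)$ to separable extensions does not change it.
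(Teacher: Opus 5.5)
Your argument is correct and is essentially the paper's. Both use the $p^r$-power map, which preserves $k$ because $k$ is perfect, to move a descent over $F'$ to one over a subfield of $F$ of the same transcendence degree, and both identify the result with $\tau$ because Frobenius commutes with the Galois action. The paper sidesteps your ``main obstacle'' by enlarging $F'$ to $F^{1/p^m}$, so that the power map becomes an isomorphism onto $F$; your route through the non-surjective map $\phi\colon F'\to F$ closes just as easily, since $(P_F\otimes_FF')\otimes_{F',\phi}F=P_F\otimes_{F,\mathrm{Frob}^r}F\cong P_F$ by the ($G$-equivariant) relative Frobenius isomorphism for \'etale algebras.
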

\begin{proof}
Choose $m$ with $F'\subseteq F^{1/p^m}$ (inside an algebraic closure). Then $\ed_k(\tau\otimes F^{1/p^m})\le\ed_k(\tau\otimes F')\le\ed_k(\tau)$, and it suffices to prove $\ed_k(\tau\otimes F^{1/p^m})\ge\ed_k(\tau)$. Let $\sigma\colon F^{1/p^m}\to F$, $x\mapsto x^{p^m}$; it is a field isomorphism with $\sigma(k)=k$ since $k$ is perfect. For any $G$-torsor $\tau'$ over $F^{1/p^m}$ let $\sigma_*\tau'$ be the torsor over $F$ obtained by transport of structure. The map $F_0\mapsto\sigma(F_0)$ is a bijection between subfields of $F^{1/p^m}$ containing $k$ and subfields of $F$ containing $k$, preserving $\trdeg_k$, and $\tau'$ descends to $F_0$ iff $\sigma_*\tau'$ descends to $\sigma(F_0)$; so $\ed_k(\sigma_*\tau')=\ed_k(\tau')$. Finally $\sigma_*(\tau\otimes F^{1/p^m})\cong\tau$: the $p^m$-power map extends to $F^{1/p^m}F^{\rm sep}\to F^{\rm sep}$ and commutes with all automorphisms, so the induced isomorphism $\Gam_{F^{1/p^m}}\to\Gam_F$ is the canonical one (restriction along $F\subseteq F^{1/p^m}$, an isomorphism of absolute Galois groups), and the classifying homomorphism of $\sigma_*(\tau\otimes F^{1/p^m})$ is $\varphi_\tau$. The last statement: any finite $F'/F$ is purely inseparable over its separable closure $F'_s$ in $F'$, $[F'_s:F]\le[F':F]$, and $\ed_k(\tau\otimes F')=\ed_k(\tau\otimes F'_s)$.
\end{proof}

The standard statement is the subgroup inequality $\ed(H)\le\ed(G)$ \cite[Thm.~6.19]{BF03}, \cite[Prop.~3.15]{Mer13}; we need the following torsor-wise equality, over an arbitrary base field $k$, which implies it (take $\tau_H$ generic).

\begin{lemma}[induced torsors]\label{lem:ind}
For finite groups $H\le G$ and an $H$-torsor $\tau_H$ over $F$, $\ed_k(\Ind_H^G\tau_H)=\ed_k(\tau_H)$.
\end{lemma}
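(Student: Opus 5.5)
We prove the two inequalities separately, working with the description of torsors by classifying homomorphisms from Section~\ref{ss:galois}. Put $\tau=\Ind_H^G\tau_H$.

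\emph{The inequality $\ed_k(\tau)\le\ed_k(\tau_H)$.} This is (T3). If $\tau_H\cong\tau_{H,0}\otimes_{F_0}F$ with $\trdeg_kF_0=\ed_k(\tau_H)$, then $\Ind_H^G\tau_{H,0}$ is a $G$-torsor over $F_0$ whose base change to $F$ is $\tau$.

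\emph{The inequality $\ed_k(\tau_H)\le\ed_k(\tau)$.} Suppose $\tau\cong\tau_0\otimes_{F_0}F$ with $k\subseteq F_0\subseteq F$ and $\trdeg_kF_0=\ed_k(\tau)$. Let $\psi\colon\Gam_F\to H$ classify $\tau_H$. Then $\tau$ is classified by $\iota\circ\psi\colon\Gam_F\to G$, where $\iota\colon H\hookrightarrow G$ is the inclusion. Let $\psi_0\colon\Gam_{F_0}\to G$ classify $\tau_0$. Since $\psi_0|_{\Gam_F}$ is $G$-conjugate to $\iota\circ\psi$, we may conjugate $\psi_0$ and assume $\psi_0|_{\Gam_F}=\iota\circ\psi$. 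The image of $\psi_0$ need not lie in $H$, so we shrink $\Gam_{F_0}$. Put $\Delta:=\psi_0^{-1}(H)$. This is an open subgroup of $\Gam_{F_0}$ containing $\Gam_F$, because $\psi_0(\Gam_F)\subseteq H$. Let $F_1\subseteq\Om$ be its fixed field. More precisely, $\psi_0=\bar\psi_0\circ\rho_{F_0}$, and $F_1$ is the subfield of $F_0^s$ fixed by $\bar\psi_0^{-1}(H)$. Then $F_1/F_0$ is finite separable with $[F_1:F_0]\le[G:H]$, so $\trdeg_kF_1=\trdeg_kF_0$. Also $\Gam_{F_1}$ is the preimage of that subgroup under $\rho_{F_0}$, so it contains $\Gam_F$.

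\emph{The main point: $F_1\subseteq F$.} The elements of $F_1$ are separable algebraic over $F_0$ and are fixed by $\Gam_F=\Aut(\Om/F)$. An element of $F^s$ fixed by $\Gal(F^s/F)$ lies in $F$, so $F_1\subseteq F$. With this, the $H$-valued homomorphism $\psi_1:=\psi_0|_{\Gam_{F_1}}\colon\Gam_{F_1}\to H$ factors through $\Gal(F_1^s/F_1)$, since it is a restriction of $\bar\psi_0\circ\rho_{F_0}$. It therefore defines an $H$-torsor $\tau_{H,1}$ over $F_1$, and its restriction to $\Gam_F$ is exactly $\psi$. Hence $\tau_{H,1}\otimes_{F_1}F\cong\tau_H$, and so $\ed_k(\tau_H)\le\trdeg_kF_1=\ed_k(\tau)$.

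The step that needs care is the conjugation normalisation. Classifying homomorphisms of $H$-torsors are only defined up to $H$-conjugacy, whereas those of the induced $G$-torsors are defined up to $G$-conjugacy. Two $H$-homomorphisms that are $G$-conjugate need not be $H$-conjugate. We avoid this because we never compare $H$-torsors through their induced torsors. We fix the actual homomorphism $\psi$, choose the representative $\psi_0$ restricting to $\iota\circ\psi$ on the nose, and then read off $\psi_1$ directly. Equality of $H$-classes then holds because $\psi_1|_{\Gam_F}$ literally equals $\psi$.
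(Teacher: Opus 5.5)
Your proof is correct and is essentially the paper's argument in the language of classifying homomorphisms. Your $F_1$, the fixed field of $\bar\psi_0^{-1}(H)$ in $F_0^s$, plays the role of the paper's $F_1=(T_0')^H$, and your observation that $\Gam_F\subseteq\Gam_{F_1}$ forces $F_1\subseteq F$ replaces the paper's check that the relevant factor $F_{i_0}$ of $F_1\otimes_{F_0}F$ equals $F$. Normalising $\psi_0$ so that $\psi_0|_{\Gam_F}=\iota\circ\psi$ exactly also lets you skip the paper's reduction to $T_H$ being a field and its tracking of connected components and stabilisers.
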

\begin{proof}
``$\le$'': if $\tau_H$ is defined over $F_0\subseteq F$ by an $H$-torsor $\tau_0$, then $\Ind_H^G\tau_0$ is a $G$-torsor over $F_0$ that becomes $\Ind_H^G\tau_H$ over $F$.

``$\ge$''. Step 1: we may assume $T_H$ is a field. Write $T_H$ for the total space of $\tau_H$; it is a product of fields permuted transitively by $H$, so $\tau_H=\Ind_{H'}^H\tau'$ with $\tau'$ an $H'$-torsor whose total space $T'$ is a field. Then $\Ind_H^G\tau_H=\Ind_{H'}^G\tau'$ and, by ``$\le$'', $\ed_k(\tau_H)\le\ed_k(\tau')$; so it suffices to show $\ed_k(\Ind_{H'}^G\tau')\ge\ed_k(\tau')$.

Step 2: a descent of $\Ind_H^G\tau_H$ and its stabiliser. Let $T_0\to\Spec F_0$ be a $G$-torsor, $F_0\subseteq F$, with $\trdeg_kF_0=\ed_k(\Ind_H^G\tau_H)$ and $T_0\otimes_{F_0}F\cong\Ind_H^G T_H$. Let $T_0'$ be a connected component of $T_0$, a field, with stabiliser $H_0\le G$. The components of $T_0'\otimes_{F_0}F$ are among the $gT_H$ (the components of $\Ind_H^GT_H$) and are permuted transitively by $H_0$. After replacing $H$ by a conjugate $gHg^{-1}$ and $T_H$ by $gT_H$ (which does not change $\Ind_H^GT_H$ up to isomorphism) we may therefore assume $T_H\subseteq T_0'\otimes_{F_0}F$ is one of them, and then $H\le H_0$, since $H$ stabilises $T_H$.

Step 3: $\tau_H$ descends to $F_1:=(T_0')^H$. The extension $F_1/F_0$ is separable ($T_0'$ is \'etale over $F_0$), so $F_1\otimes_{F_0}F=\prod_iF_i$ is a product of fields. The factor $T_0'\otimes_{F_1}F_{i_0}$ through which $T_H$ factors is an $H$-torsor over $F_{i_0}$ whose total space is a product of fields permuted transitively by $H$; $T_H$ is $H$-stable and is one of these components, so $T_H$ is that factor. Now $F_{i_0}$ is an $F$-algebra through the second factor of $F_1\otimes_{F_0}F$, so $F\subseteq F_{i_0}$; and $F_{i_0}\subseteq T_H$ is fixed by $H$ (which acts $F_1$-linearly, $F_1=(T_0')^H$), so $F_{i_0}\subseteq T_H^H=F$. Hence $F_{i_0}=F$. Thus $\tau_H$ is defined over $F_1\subseteq T_0'$, and $\trdeg_kF_1=\trdeg_kT_0'=\trdeg_kF_0$, because $T_0'/F_1$ is finite. Hence $\ed_k(\tau_H)\le\ed_k(\Ind_H^G\tau_H)$.
\end{proof}

\begin{corollary}[subgroup profile]\label{cor:sub}
For every subgroup $H'\le G$: $\edd{[G:H']}_k(\tgen)\le\ed_k(\tgen\otimes L^{H'})=\ed_k(H')$. Hence $d_j(\tgen)\le\min\{[G:H']:\ed_k(H')\le j\}$.
\end{corollary}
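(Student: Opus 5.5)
The plan is to take $F'=L^{H'}$, which is an extension of $K=L^G$ of degree $[G:H']$, and compute the torsor $\tgen\otimes_KF'$. Once we show it is induced from a generic $H'$-torsor, the corollary follows from Lemma~\ref{lem:ind}.

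\emph{Step 1: decompose the base change.} Since $L/K$ is Galois with group $G$, there is a $G$-equivariant isomorphism
\[
L\otimes_KL^{H'}\cong\prod_{G/H'}L,\qquad a\otimes b\mapsto \bigl(a\cdot g(b)\bigr)_{gH'}.
\]
The factor belonging to the trivial coset is $L$ viewed as an $L^{H'}$-algebra. Its stabiliser is $H'$, and over $L^{H'}$ it is an $H'$-torsor. By (T1) and (T2) this identifies $\tgen\otimes_KL^{H'}$ with $\Ind_{H'}^G(L/L^{H'})$. The point that needs care is that the $G$-action on the product matches the induced action. To check this, apply the formalism of Section~\ref{ss:tors} to $N_0$: here $N_0=L$, $H=H'$, and $[N_0:L]=1$.

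\emph{Step 2: compute the essential dimension.} Lemma~\ref{lem:ind} gives
\[
\ed_k(\tgen\otimes L^{H'})=\ed_k(L/L^{H'}).
\]
Now $V$ restricted to $H'$ is still faithful, so $L=k(V)$ with $L^{H'}$ is a generic $H'$-torsor. Its essential dimension is therefore $\ed_k(H')$, since $\ed_k(\tgen)=\ed_k(G)$ holds for any finite group and any faithful representation (Section~\ref{ss:ed}).

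\emph{Step 3: read off the profile.} The extension $F'=L^{H'}$ has degree $[G:H']$ over $K$. The definition of the profile then gives $\edd{[G:H']}_k(\tgen)\le\ed_k(H')$. For the jump-degree statement, take any $H'$ with $\ed_k(H')\le j$. Then $\edd{[G:H']}_k(\tgen)\le j$, so $d_j(\tgen)\le[G:H']$, and we minimise over all such $H'$.

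The only step with real content is Step 1, and it is standard Galois theory. I expect no serious obstacle: the substantive work, the inequality ``$\ge$'' in the induced-torsor equality, is already done in Lemma~\ref{lem:ind}.
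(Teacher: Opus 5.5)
Your proposal is correct and is essentially the paper's proof: identify $\tgen\otimes_K L^{H'}$ with $\Ind_{H'}^G(L/L^{H'})$, recognise $L/L^{H'}$ as the generic $H'$-torsor of $V|_{H'}$, and apply Lemma~\ref{lem:ind}. You also write out the explicit decomposition $L\otimes_K L^{H'}\cong\prod_{G/H'}L$, the degree $[L^{H'}:K]=[G:H']$, and the jump-degree deduction, which the paper leaves implicit.
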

\begin{proof}
$\tgen\otimes L^{H'}=\Ind_{H'}^G(L/L^{H'})$, and $L/L^{H'}$ is the generic $H'$-torsor of $V|_{H'}$ over $k$; apply Lemma~\ref{lem:ind}.
\end{proof}

\begin{proposition}[base-field extension bound]\label{prop:bfe}
Let $k'/k$ be finite of degree $d$. Then $\edd d_k(\tau)\le\ed_{k'}(G)$. More precisely, every field factor $F'$ of $F\otimes_kk'$ has $[F':F]\le d$ and
\[
\ed_k(\tau\otimes F')=\ed_{k'}(\tau\otimes F')\le\ed_{k'}(G),
\]
with equality on the left because $k'/k$ is algebraic.
\end{proposition}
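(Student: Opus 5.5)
The plan is to work factor by factor in $F\otimes_kk'$, proving the displayed chain for each field factor $F'$. The bound $\edd d_k(\tau)\le\ed_{k'}(G)$ then follows from the definition of the profile, since each $F'$ is a field extension of $F$ of degree $\le d$.

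\emph{Degree.} The algebra $F\otimes_kk'$ is finite of dimension $[k':k]=d$ over $F$. It is artinian, hence a finite product of local artinian $F$-algebras. I read ``field factor'' as a residue field $F'$ of one of these local factors, equivalently $F'=F\otimes_kk'/\mathfrak m$ for a maximal ideal $\mathfrak m$. Then $F'$ is a quotient of an $F$-algebra of dimension $d$, so $[F':F]\le d$. Moreover $F'$ is generated over $F$ by the image of $k'$, so it contains a copy of $k'$ and a copy of $k$, compatibly with $k\subseteq F$. The composite $k'\to F\otimes_kk'\to F'$ is injective because $k'$ is a field. Thus $k\subseteq k'\subseteq F'$ inside $F'$, and $F'$ is a compositum $Fk'$. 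This step is routine. The only care needed is that in the inseparable case $F\otimes_kk'$ need not be reduced, and passing to the residue field handles this.

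\emph{Equality.} Since $k'/k$ is finite, hence algebraic, and $k\subseteq k'\subseteq F'$, property (B2) applied to the torsor $\tau\otimes_FF'$ over $F'$ gives
\[
\ed_k(\tau\otimes F')=\ed_{k'}(\tau\otimes F').
\]

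\emph{Inequality.} The torsor $\tau\otimes F'$ is a $G$-torsor over the field $F'\supseteq k'$. By definition $\ed_{k'}(G)=\sup_\tau\ed_{k'}(\tau)$, where the supremum runs over $G$-torsors over fields containing $k'$. Hence $\ed_{k'}(\tau\otimes F')\le\ed_{k'}(G)$.

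\emph{Conclusion.} Combining the three steps, $\edd d_k(\tau)\le\ed_k(\tau\otimes F')\le\ed_{k'}(G)$.

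There is no real obstacle; the argument is a direct application of (B2) and the definitions. The one point to check is the embedding of $k'$ into $F'$ over $k$ compatibly with $F$, so that (B2) applies with $k'$ as an intermediate subfield of $F'$. The residue-field description above provides exactly this.
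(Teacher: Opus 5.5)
Your proposal is correct and is essentially the paper's argument: the paper also notes $F'\supseteq k'$ and $[F':F]\le d$, gets the inequality by observing that a descent of $\tau\otimes F'$ over a field $F_0\supseteq k'$ is also a descent over $k$ with $\trdeg_kF_0=\trdeg_{k'}F_0$, and cites (B2) for the equality. Your reading of ``field factor'' as a residue field of $F\otimes_kk'$ covers the case where $k'/k$ is inseparable and the algebra is not reduced, a point the paper leaves implicit.
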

\begin{proof}
$F'\supseteq k'$ and $[F':F]\le d$. A descent $\tau\otimes F'\cong\tau_0\otimes_{F_0}F'$ with $k'\subseteq F_0\subseteq F'$ and $\trdeg_{k'}F_0\le\ed_{k'}(G)$ is also a descent over $k$, and $\trdeg_kF_0=\trdeg_{k'}F_0$ since $k'/k$ is algebraic; the equality is (B2).
\end{proof}

\begin{corollary}[roots-of-unity defect]\label{cor:rou}
Let $e$ be the exponent of $G$, $\operatorname{char}k\nmid e$, and $d_e=[k(\mu_e):k]$. For every $G$-torsor $\tau$ over every $F\supseteq k$,
\[
\edd{d_e}_k(\tau)\le\edd{[F(\mu_e):F]}_k(\tau)\le\ed_{k(\mu_e)}(G).
\]
In particular: (i) if $G$ is abelian, $\edd{d_e}_k(\tau)\le\rk G$ (Lemma~\ref{lem:kummer} over $k(\mu_e)$); (ii) if $G$ is a $p$-group and $\operatorname{char}k\ne p$, $\edd{p-1}_k(\tau)\le\edd{[k(\mu_p):k]}_k(\tau)\le\edd{[F(\mu_p):F]}_k(\tau)\le\ed_{k(\mu_p)}(G)$, the minimal dimension of a faithful representation of $G$ over $k(\mu_p)$ (Theorem~\ref{thm:KM}).
\end{corollary}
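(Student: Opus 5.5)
The plan is to take $F'$ to be the field $F(\mu_e)$, a field factor of $F\otimes_k k(\mu_e)$, and apply Proposition~\ref{prop:bfe} to it. The statement then reduces to three things: comparing degrees, the base-field comparison (B2), and the cited computations over $k(\mu_e)$.

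First, the degree comparison. Since $\operatorname{char}k\nmid e$, the field $k(\mu_e)$ is Galois over $k$ with abelian group embedded in $(\Z/e)^\times$. The compositum $F(\mu_e)=F\cdot k(\mu_e)$ inside an algebraic closure is Galois over $F$, and its group is the subgroup of $\Gal(k(\mu_e)/k)$ fixing $F\cap k(\mu_e)$. Hence $[F(\mu_e):F]$ divides, and in particular is $\le$, $d_e$. Monotonicity (Proposition~\ref{prop:shape}(a)) then gives the first inequality $\edd{d_e}_k(\tau)\le\edd{[F(\mu_e):F]}_k(\tau)$.

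Second, the inequality $\edd{[F(\mu_e):F]}_k(\tau)\le\ed_{k(\mu_e)}(G)$. Use the extension $F'=F(\mu_e)$ itself, which has the right degree. Since $k(\mu_e)\subseteq F'$ and $k(\mu_e)/k$ is algebraic, (B2) gives $\ed_k(\tau\otimes F')=\ed_{k(\mu_e)}(\tau\otimes F')$. The right-hand side is at most $\ed_{k(\mu_e)}(G)$ by definition of $\ed(G)$ as a supremum over torsors over fields containing $k(\mu_e)$. This is exactly the argument of Proposition~\ref{prop:bfe}, applied to this factor. The only care needed is that $F(\mu_e)$ really is a factor of $F\otimes_k k(\mu_e)$, and that its degree over $F$ is the degree of the compositum rather than $d_e$.

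Third, the special cases.
\begin{enumerate}[label=(\roman*)]
\item For abelian $G$: over $k(\mu_e)$, Kummer theory (Lemma~\ref{lem:kummer}) gives $\ed_{k(\mu_e)}(G)\le\rk G$.
\item For a $p$-group with $\operatorname{char}k\ne p$: take $e$ replaced by $p$ in the same argument, giving $\edd{[k(\mu_p):k]}_k(\tau)\le\edd{[F(\mu_p):F]}_k(\tau)\le\ed_{k(\mu_p)}(G)$. Since $[k(\mu_p):k]$ divides $p-1$, monotonicity gives $\edd{p-1}_k\le\edd{[k(\mu_p):k]}_k$. Theorem~\ref{thm:KM} (Karpenko--Merkurjev, valid because $\mu_p\subseteq k(\mu_p)$) identifies $\ed_{k(\mu_p)}(G)$ with the minimal faithful representation dimension.
\end{enumerate}

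There is a subtlety in (ii): the corollary is phrased with $\mu_p$ rather than $\mu_e$, and one must check that it is the $\mu_p$ version that is meant, not the $\mu_e$ version for the exponent. The main obstacle is not this but the degree bookkeeping, namely ensuring that $[F(\mu_e):F]\le d_e$ holds even when $F$ already contains some roots of unity. The Galois argument above handles this.
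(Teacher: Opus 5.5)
Your proposal is correct and is the paper's own proof. The paper applies Proposition~\ref{prop:bfe} with $k'=k(\mu_e)$ (resp.\ $k(\mu_p)$) to the field factor $F(\mu_e)$ of $F\otimes_kk(\mu_e)$, gets the first inequality from monotonicity since $[F(\mu_e):F]\le d_e$, and reads off (i) and (ii) from Lemma~\ref{lem:kummer} and Theorem~\ref{thm:KM}. Your Galois-theoretic check that $[F(\mu_e):F]$ divides $d_e$ only makes explicit the degree bound that the paper gets from $F(\mu_e)$ being a factor of an algebra of dimension $d_e$.
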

\begin{proof}
Proposition~\ref{prop:bfe} with $k'=k(\mu_e)$ (resp.\ $k'=k(\mu_p)$ in (ii)): $F(\mu_e)$ is a field factor of $F\otimes_kk(\mu_e)$, of degree $[F(\mu_e):F]\le d_e$ over $F$, so $\edd{[F(\mu_e):F]}_k(\tau)\le\ed_k(\tau\otimes F(\mu_e))\le\ed_{k(\mu_e)}(G)$, and the middle inequality is monotonicity. The values in (i), (ii) are Lemma~\ref{lem:kummer} and Theorem~\ref{thm:KM}.
\end{proof}

\begin{remark}\label{rem:rou}
For $\tgen$ and $F'=K(\mu_e)=Kk(\mu_e)$ the torsor $\tgen\otimes F'$ is the generic torsor of $V$ over $k(\mu_e)$ (Lemma~\ref{lem:regular}(c), proved in Section~\ref{sec:sylow}), so Proposition~\ref{prop:bfe} gives $\ed_k(\tgen\otimes K(\mu_e))=\ed_{k(\mu_e)}(\tgen\otimes K(\mu_e))=\ed_{k(\mu_e)}(G)$: the bound of Corollary~\ref{cor:rou} for $\tgen$ is realised by the extension $K(\mu_e)$. It is not optimal in general. Corollary~\ref{cor:rou} is the case $k'=k(\mu_e)$ of Proposition~\ref{prop:bfe}, and for $G=\Z/p$, $p$ odd, the smaller field $k'=k(\zeta_p+\zeta_p^{-1})$ already gives $\ed_{k'}(\Z/p)\le1$ (Lemma~\ref{lem:B1}, proved in Section~\ref{sec:thmB}); this halves the degree whenever $\zeta_p\notin k(\zeta_p+\zeta_p^{-1})$. Exactly when Corollary~\ref{cor:rou} is sharp for $\Z/p$ in characteristic $0$ is determined in Corollary~\ref{cor:sharp}, with one family of cases left open.
\end{remark}

\begin{example}[$\Z/n$]\label{ex:cyc}
Let $\mu_n\subset k$, $\operatorname{char}k\nmid n$. The generic torsor $K(\sqrt[n]a)/K$, $K=k(a)$, has $\ed_k=1$ and $d_0=n$ (Proposition~\ref{prop:d0} if $k$ is perfect, Corollary~\ref{cor:A7} in general), so its profile is $(1,\dots,1,0)$.
\end{example}

% ---- end profile.tex ----

% ---- begin geom.tex ----
\section{Geometric meaning of the profile}\label{sec:geom}

\begin{theorem}[geometric meaning of the profile]\label{thm:geom}
Let $G$ be a finite group, $k$ any field, $\tau\colon T\to\Spec F$ a $G$-torsor, $F\supseteq k$, and $m,d\ge0$. The following are equivalent:
\begin{enumerate}[label=\textup{(\arabic*)}]
\item $\edd d_k(\tau)\le m$;
\item there is a quasi-projective generically free primitive $G$-variety $Z$ over $k$ with $\dim Z\le m$ such that $\tw{\tau}{Z}$ has a closed point of degree $\le d$ in the free locus $\tw{\tau}{(Z^{\free})}$;
\item there are $Z$ as in \textup{(2)} and a field $F'/F$ with $[F':F]\le d$ such that $\tw{\tau\otimes F'}{Z}$ has a free $F'$-point, i.e.\ a $G$-equivariant morphism $T\times_FF'\to Z^{\free}$.
\end{enumerate}
For $\tau=\tgen$ (generic torsor of $V$), \textup{(3)} says: there is a $G$-equivariant rational map $W'\dashrightarrow Z^{\free}$ from the normalisation $W'$ of $V$ in $L\otimes_KF'$, a (possibly reducible) $G$-variety dominating $V$ with degree $[F':K]\le d$; when $L\otimes_KF'$ is a field this is a $G$-equivariant correspondence $V\rightsquigarrow Z$ of degree $\le d$ in the sense of \cite[(2.6)]{RS22} landing in $Z^{\free}$, and $Z$ can then be taken irreducible with faithful $G$-action. In particular, by (2), $d_j(\tau)$ is the least degree of a closed point in the free locus of $\tw{\tau}{Z}$ over all such $Z$ with $\dim Z\le j$.
\end{theorem}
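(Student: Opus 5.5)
I will prove the cycle $(1)\Rightarrow(3)\Rightarrow(2)\Rightarrow(1)$ and then read off the statements about $\tgen$.

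\emph{$(1)\Rightarrow(3)$.} Choose $F'/F$ with $[F':F]\le d$ and $\ed_k(\tau\otimes F')\le m$. Let $\tau'=\tau\otimes F'$ descend to a $G$-torsor $\tau_0\colon T_0\to\Spec F_0$ with $k\subseteq F_0\subseteq F'$ and $\trdeg_kF_0\le m$. Spread $\tau_0$ out to a $G$-torsor $Z^0\to B$ over an affine $k$-variety $B$ with $k(B)=F_0$, so that $\dim B\le m$. Then $Z^0$ is a quasi-projective (indeed affine) $k$-variety on which $G$ acts freely; in particular $Z^0=(Z^0)^{\free}$. It is primitive because $G$ permutes the components of $T_0$ transitively by (T1), and hence also the components of $Z^0$. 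Set $Z:=Z^0$. The generic point $\Spec F_0\to B$ composed with $\Spec F'\to\Spec F_0$ pulls $Z\to B$ back to $\tau_0\otimes F'\cong\tau'$. This gives a $G$-equivariant morphism $T\times_FF'\to Z=Z^{\free}$, which by Fact~\ref{fact:rs}(a) is a free $F'$-point of $\tw{\tau\otimes F'}{Z}$. The only care needed is that $F'$ be a field, which holds by the definition of the profile; for disconnected $T_0$ one spreads out each component and induces, using (T3).

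\emph{$(3)\Rightarrow(2)$.} A free $F'$-point of $\tw{\tau\otimes F'}{Z}=(\tw{\tau}{Z})_{F'}$ lies over a point $x$ of $\tw{\tau}{(Z^{\free})}$. Its residue field satisfies $\kappa(x)\subseteq F'$, so $[\kappa(x):F]\le d$ is finite and $x$ is closed. Here I use that twisting commutes with base change and preserves the open subspace $\tw{\tau}{(Z^{\free})}$ \cite[Cor.~3.4]{DR15}.

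\emph{$(2)\Rightarrow(1)$.} Let $x$ be a closed point in the free locus and set $F'=\kappa(x)$, so $[F':F]\le d$. Then $x$ defines an $F'$-point of $(\tw{\tau}{(Z^{\free})})_{F'}=\tw{\tau\otimes F'}{(Z^{\free})}$. Lemma~\ref{lem:freepoint}, applied to $\tau\otimes F'$, gives $\ed_k(\tau\otimes F')\le\dim Z\le m$.

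The final assertion about $d_j$ is (2) together with Proposition~\ref{prop:shape}(c).

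\emph{The case $\tau=\tgen$.} Here $T\times_KF'=\Spec(L\otimes_KF')$. The normalisation $W'$ of $V$ in $L\otimes_KF'$ has function ring $L\otimes_KF'$, and a $G$-morphism $\Spec(L\otimes_KF')\to Z^{\free}$ is exactly a $G$-equivariant rational map $W'\dashrightarrow Z^{\free}$. The degree of $W'\to V$ is $\dim_L(L\otimes_KF')=[F':K]$. When $L\otimes_KF'$ is a field, $W'$ is irreducible and the diagram $V\leftarrow W'\dashrightarrow Z$ is a correspondence of degree $\le d$. In that case I replace $Z$ by the closure of the image of $W'$: this closure is irreducible, and $G$ acts on it faithfully because it meets $Z^{\free}$.

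The main obstacle is the bookkeeping in $(1)\Rightarrow(3)$. One must check that the spread-out variety is primitive and generically free, and that the identification of points of the twist with equivariant maps works over $F'$ rather than over $F$. I will handle both by invoking Fact~\ref{fact:rs}(a) over the base field $F'$.
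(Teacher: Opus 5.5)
Your proposal is correct and follows the paper's proof essentially step for step: you spread out a descent $\tau_0$ to a free $G$-torsor $Z\to B$ for $(1)\Rightarrow(3)$, you pass between closed points and $F'$-points using the compatibility of the twist with base change, and you invoke Lemma~\ref{lem:freepoint} for the bound on $\ed_k$; your $(2)\Rightarrow(1)$ is simply the paper's $(2)\Rightarrow(3)$ followed by $(3)\Rightarrow(1)$. The differences are minor: before spreading out you should first replace $F_0$ by a finitely generated subfield over which $\tau_0$ is defined, as the paper does. In the $\tgen$ case you get irreducibility by taking the closure of the image of $W'$ in $Z$, whereas the paper notes that $T_0$, and hence the spread-out $Z$, is already connected; both arguments work.
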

\begin{proof}
(2)$\Leftrightarrow$(3): a closed point $x$ of $\tw{\tau}{Z}$ with residue field $F'=\kappa(x)$ is an $F'$-point of $\tw{\tau}{Z}\otimes_FF'=\tw{\tau\otimes F'}{Z}$, free iff $x$ lies in the open free locus (which commutes with base change); conversely the image of $\Spec F'\to\tw{\tau\otimes F'}{Z}\to\tw{\tau}{Z}$ is a point with residue field contained in $F'$, hence finite over $F$, hence closed, of degree $\le[F':F]$, and free if the $F'$-point is. The identification of free $F'$-points with equivariant maps $T_{F'}\to Z^{\free}$ is Fact~\ref{fact:rs}(a).

(3)$\Rightarrow$(1): Lemma~\ref{lem:freepoint} gives $\ed_k(\tau\otimes F')\le\dim Z\le m$.

(1)$\Rightarrow$(3): let $[F':F]\le d$ and $\tau\otimes F'\cong\tau_0\otimes_{F_0}F'$ with $\trdeg_kF_0\le m$. Replacing $F_0$ by the subfield generated over $k$ by the finitely many coefficients that define $\tau_0$ and the isomorphism $\tau\otimes F'\cong\tau_0\otimes_{F_0}F'$, we may assume $F_0$ finitely generated over $k$. Spread $\tau_0$ out to a $G$-torsor $Z\to B$ over an integral affine $k$-variety $B$ with $k(B)=F_0$, $\dim B\le m$. Then $Z$ is quasi-projective, $G$ acts freely (so $Z^{\free}=Z$) and $Z$ is primitive (its generic fibre $T_0$ is a torsor over the field $F_0$, on whose components $G$ acts transitively). The composite $T_{F'}\to T_0\to Z$ is $G$-equivariant.

For $\tgen$: $F'$ is a finite extension of $K=k(V/G)$, so it is the function field of a variety $B'$ with a finite dominant map $B'\to V/G$ of degree $[F':K]$, and $W':=$ normalisation of $V\times_{V/G}B'$ is a $G$-variety (action on the first factor) with $k(W')=L\otimes_KF'$ (a product of fields) and $W'\to V$ dominant of degree $[F':K]$. An equivariant $\Spec(L\otimes_KF')\to Z^{\free}$ is an equivariant rational map $W'\dashrightarrow Z^{\free}$. If $L\otimes_KF'$ is a field, $T_{F'}$ is connected, hence so is $T_0$, so $\tau_0$ is a field and $Z$ is irreducible with faithful action.

The ingredients are: the existence of the twist $\tw{\tau}{Z}=(T\times_kZ)/G$ as a quasi-projective $F$-scheme compatible with base change and open immersions \cite[\S3]{DR15}; the identification of $F$-points of $\tw{\tau}{Z}$ with equivariant maps $T\to Z_F$ (elementary for finite $G$); the fact that $Z^{\free}\to Z^{\free}/G$ is a $G$-torsor \cite[Exp.~V, \S\S1--2]{SGA1}; and spreading out. None of these depends on the characteristic or on $G$.
\end{proof}

\begin{remark}[relation with \cite{RS22}, Theorem~10.1]\label{rem:rs101}
Theorem~10.1 of \cite{RS22} says: for generically free primitive $G$-varieties $X$, $Y$ with $X$ smooth, a $G$-equivariant correspondence $Y\rightsquigarrow X$ of degree prime to $q$ gives $\ed_{k,q}(X)\ge\ed_{k,q}(Y)$; the proof passes through the $q$-closure, which is large, and Fact~\ref{fact:rs}(b). In the language of Theorem~\ref{thm:geom}, a free closed point of degree $d$ on $\tw{\tgen}{Z}$ is a correspondence $V\rightsquigarrow Z$ of degree $d$ into the free locus. The prime-to-$q$ part of the profile is thus governed by \cite[Thm.~10.1]{RS22} (Theorem~\ref{thm:div}); the $q$-power part is not.
\end{remark}

The next proposition shows that the profile of the generic torsor is the universal profile.

\begin{proposition}[the generic torsor dominates every profile]\label{prop:vers}
Let $G\ne1$, $k$ be any field, $V$ a faithful representation of $G$ over $k$, and $\tau$ a $G$-torsor over a field $F\supseteq k$. Then for every $d\ge1$ and every prime $q$
\[
\edd d_k(\tau)\le\edd d_k(\tgen),\qquad
\edd d_k{}'(\tau)\le\edd d_k{}'(\tgen)
\]
(the second for the prime-to-$q$ profiles), hence $d_j(\tau)\le d_j(\tgen)$ and $d_j^{(q')}(\tau)\le d_j^{(q')}(\tgen)$ for all $j$. Consequently $\sup_\tau\edd d_k(\tau)=\edd d_k(\tgen)$, and the profile of $\tgen$ does not depend on the faithful representation $V$.
\end{proposition}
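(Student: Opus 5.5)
The plan is to reduce, via Theorem~\ref{thm:geom}, to transporting free points from $\tw{\tgen}{Z}$ to $\tw{\tau}{Z}$. Suppose $\edd d_k(\tgen)\le m$. By (1)$\Rightarrow$(2), there is a quasi-projective generically free primitive $G$-variety $Z$ of dimension $\le m$ such that $\tw{\tgen}{Z}$ has a closed point of degree $\le d$ in the free locus $\tw{\tgen}{(Z^{\free})}$. The goal is to produce a free closed point of degree $\le d$ on $\tw{\tau}{Z}$. Then (2)$\Rightarrow$(1) gives $\edd d_k(\tau)\le m$.

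The key step is a specialisation argument using $V$.
\begin{itemize}
\item[(i)] The twist $\tw{\tgen}{(Z^{\free})}$ is the generic fibre of the $V^{\free}/G$-scheme $Y:=(V^{\free}\times Z^{\free})/G\to V^{\free}/G$. A closed point of degree $\le d$ on the generic fibre spreads out to a closed subvariety $C\subseteq Y$, finite and flat of degree $e\le d$ over a dense open $U\subseteq V^{\free}/G$.
\item[(ii)] The torsor $\tau$ corresponds (Fact~\ref{fact:rs}(a)) to a $G$-equivariant $T\to V_F$. Here one uses that $V$ is $\tau$-versal: $\tw{\tau}{V}$ is a vector space over $F$ (Hilbert 90 for $\mathrm{GL}$), so its $F$-points are dense when $F$ is infinite.
\item[(iii)] Choose such an $F$-point landing in the preimage of $U$. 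Pulling $C$ back along the resulting $\Spec F\to U$ gives a finite $F$-scheme of degree $e$ inside $\tw{\tau}{(Z^{\free})}$, because twisting is compatible with base change: the fibre of $Y$ over this point is $\tw{\tau}{(Z^{\free})}$.
\item[(iv)] Any closed point of that finite scheme has degree $\le e\le d$ and lies in the free locus.
\end{itemize}

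For the prime-to-$q$ version, the degrees must be controlled as well as bounded. Spread out so that $C\to U$ is finite étale of degree $e$ with $q\nmid e$. The fibre is then an étale $F$-algebra of degree $e$, so some field factor has degree prime to $q$ and $\le d$. This is where one must be careful: in the prime-to-$q$ profile the definition asks for a field extension $F'$ of degree prime to $q$. Via (2)$\Leftrightarrow$(3) with the residue field of that point, this is exactly what is needed.

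The main obstacle is the case of finite $F$, where $F$-points of $\tw{\tau}{V}$ need not avoid the bad locus. I would handle it by first passing to $F(t)$, which is purely transcendental. This requires a version of (B4) for the profile, namely $\edd d_k(\tau\otimes F(t))=\edd d_k(\tau)$; the inequality $\le$ is the one needed for the reverse direction. I would prove it by specialising $t$ to a rational value on the extension $F'(t)$, or, more simply, by noting that for finite $F$ every torsor already has $\ed_k(\tau)=0$ relative to the algebraic closure of $k$, since $F$ is algebraic over the prime field. I expect this bookkeeping to be the delicate point.

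The remaining consequences are formal:
\begin{itemize}
\item the jump-degree inequalities follow from Proposition~\ref{prop:shape}(c);
\item $\sup_\tau\edd d_k(\tau)=\edd d_k(\tgen)$ holds since $\tgen$ is itself a torsor over $K\supseteq k$;
\item independence of $V$ follows by applying the inequality with $\tau$ the generic torsor of another faithful $V'$, in both directions.
\end{itemize}
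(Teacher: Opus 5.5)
\textbf{Assessment.} Your argument is essentially the paper's, repackaged. The paper spreads the witness $F'/K$ out as the normalisation $W'\to V$ of $V$ in $L\otimes_KF'$, twists this finite flat $G$-map by $\tau$, and takes the fibre over an $F$-point of $\tw{\tau}{V_1}$. You instead spread the closed point out inside $(V^{\free}\times Z^{\free})/G\to V^{\free}/G$ and specialise along the $F$-point of $\tw{\tau}{V}$ that classifies $\tau$. Both routes rest on Hilbert~90 for $\tw{\tau}{V}$ and on $F$ being infinite. For finite $F$, your second option ($F$ is algebraic over $k$, so $\ed_k=0$ by (B3)) is exactly what the paper does, so the $F(t)$ detour is unnecessary.

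\textbf{One step needs repair.} In the prime-to-$q$ case you cannot in general spread out to a finite \'etale $C\to U$. If $\operatorname{char}k=p\ne q$ and $k$ is imperfect, the witness $F'$, and hence $\kappa(x)\subseteq F'$, may be inseparable over $K$, and then $C\to U$ is \'etale nowhere. The fix, as in the paper:
\begin{enumerate}
\item Keep only ``finite flat of degree $e$''. Here $e=[\kappa(x):K]$ divides $[F':K]$, so $q\nmid e$.
\item Write the fibre as $\prod_iA_i$, with $A_i$ local of length $\ell_i$ and residue field $F_i$.
\item Since $\sum_i\ell_i[F_i:F]=e$ and $q\nmid e$, some $[F_i:F]$ is prime to $q$.
\end{enumerate}
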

\begin{proof}
If $F$ is finite, then $F$ is algebraic over $k$, so $\ed_k(\tau\otimes F'')=0$ for every $F''/F$ by (B3) and there is nothing to prove; let $F$ be infinite. Let $F'/K$ realise $\edd d_k(\tgen)=m$ (resp.\ $\edd d_k{}'(\tgen)=m$), with $[F':K]=d'\le d$ (resp.\ $q\nmid d'$). By Theorem~\ref{thm:geom} there are a quasi-projective generically free primitive $G$-variety $Z$ of dimension $\le m$ and a $G$-equivariant morphism $\Spec{}(L\otimes_KF')\to Z^{\free}$; note that $L\otimes_KF'$ is reduced because $L/K$ is separable, whether or not $F'/K$ is. Let $W'$ be the normalisation of $V$ in $L\otimes_KF'$, i.e.\ $\Spec$ of the integral closure of $k[V]$ in $L\otimes_KF'$. Then $\pi\colon W'\to V$ is finite, $G$-equivariant and generically of degree $d'$, and the equivariant map $W'\dashrightarrow Z^{\free}$ is defined on a dense $G$-stable open $U$. Choose a dense $G$-stable open $V_1\subseteq V$ such that $\pi^{-1}(V_1)\subseteq U$ and $\pi$ is finite and flat of degree $d'$ over $V_1$ (generic flatness; $\pi$ need not be \'etale). To see that $V_1$ may be taken $G$-stable: $\pi(W'\smallsetminus U)$ is a $G$-stable proper closed subset of $V$, since $\pi$ is finite and equivariant and $W'\smallsetminus U$ has dimension $<\dim V$; by generic freeness $\pi_*\mathcal O_{W'}$ is free over $V_f$ for some $f\ne0$, hence over every $gV_f$ by equivariance; take $V_1=V\smallsetminus\bigl(\pi(W'\smallsetminus U)\cup\bigcup_{g\in G}V(gf)\bigr)$. Twisting by $\tau$ (functoriality of twists, \cite[\S3]{DR15}) gives a finite flat $F$-morphism $\tw{\tau}{\pi^{-1}(V_1)}\to\tw{\tau}{V_1}$ of degree $d'$ and an $F$-morphism $\tw{\tau}{\pi^{-1}(V_1)}\to\tw{\tau}{(Z^{\free})}$. Now $\tw{\tau}{V}$ is an $F$-form of the vector space $V_F$, hence isomorphic to affine space by Hilbert~90, so $\tw{\tau}{V_1}(F)\ne\emptyset$ as $F$ is infinite; pick $x\in\tw{\tau}{V_1}(F)$. The fibre over $x$ is $\Spec{}A$ for a finite $F$-algebra $A=\prod_iA_i$ of dimension $d'$, with $A_i$ local of length $\ell_i$ and residue field $F_i$, so $\sum_i\ell_i[F_i:F]=d'$. Each $\Spec{}F_i\to\tw{\tau}{(Z^{\free})}$ has as image a closed point of $\tw{\tau}{Z}$ in the free locus whose degree divides $[F_i:F]\le d'\le d$ (its residue field is a subfield of $F_i$); so $\edd d_k(\tau)\le m$ by Theorem~\ref{thm:geom}. If $q\nmid d'$, then some $\ell_i[F_i:F]$ is prime to $q$, hence so is $[F_i:F]$ and the degree of the corresponding point, which gives $\edd d_k{}'(\tau)\le m$ (with $F''=\kappa(y_i)$ the field of definition of that point). The last two statements: the supremum is attained at $\tgen$, and generic torsors for two faithful representations $V,V'$ are torsors over infinite fields, so each dominates the other.
\end{proof}
\begin{remark}
The only property of $V$ used is that $\tw{\tau}{V_1}(F)\ne\emptyset$ for every torsor $\tau$ over an infinite field $F\supseteq k$ and every dense open $V_1\subseteq V$; for a linear representation this is Hilbert~90.
\end{remark}

% ---- end geom.tex ----

% ---- begin sylow.tex ----
\section{Sylow divisibility and essential dimension at a prime}\label{sec:sylow}

\begin{lemma}[$q$-closure]\label{lem:qcl}
Let $q$ be a prime, $S\le\Gam_F$ a Sylow pro-$q$ subgroup and $F^{(q)}=(F^{\rm sep})^S$. Then $F^{(q)}$ is a union of finite subextensions of degree prime to $q$, every finite separable extension of $F^{(q)}$ has $q$-power degree, and $\ed_{k,q}(\tau)=\ed_k(\tau\otimes F^{(q)})$ \cite[(2.5)]{RS22}. If $F'/F$ is finite separable with $q\nmid[F':F]$, then there is an $F$-embedding $F'\hookrightarrow F^{(q)}$, and hence $\ed_k(\tau\otimes F')\ge\ed_{k,q}(\tau)$.
\end{lemma}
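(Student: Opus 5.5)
The plan is to treat the lemma in three parts: the field-theoretic properties of $F^{(q)}$, the embedding statement, and the identity for $\ed_{k,q}$. The last follows from the first two together with (B1).

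\emph{Structure of $F^{(q)}$.} Write $F^{\rm sep}$ for the separable closure of $F$ and identify $\Gam_F$ with $\Gal(F^{\rm sep}/F)$ as in Section~\ref{ss:galois}.
\begin{itemize}[leftmargin=3em]
\item Since $S$ is closed, $F^{(q)}$ is the union of the fields $(F^{\rm sep})^U$ over open subgroups $U\supseteq S$, and each has degree $[\Gam_F:U]$ over $F$.
\item Each such $U$ contains the Sylow subgroup $S$, so the supernatural index $[\Gam_F:S]$ is prime to $q$ and $[\Gam_F:U]$ is prime to $q$.
\item A finite separable extension of $F^{(q)}$ corresponds to an open subgroup $S'\le S$, and $[S:S']$ is a power of $q$ because $S$ is pro-$q$.
\end{itemize}

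\emph{Embedding.} Let $F'/F$ be finite separable with $q\nmid[F':F]$. Choose an $F$-embedding $F'\hookrightarrow F^{\rm sep}$, so $F'=(F^{\rm sep})^U$ with $U$ open and $[\Gam_F:U]=[F':F]$ prime to $q$.
\begin{itemize}[leftmargin=3em]
\item A Sylow pro-$q$ subgroup $S_U$ of $U$ has index in $\Gam_F$ equal to $[\Gam_F:U]\cdot[U:S_U]$, which is prime to $q$. Hence $S_U$ is also a Sylow pro-$q$ subgroup of $\Gam_F$.
\item By the Sylow conjugacy theorem for profinite groups, $S_U=\gamma S\gamma^{-1}$ for some $\gamma\in\Gam_F$, so $\gamma S\gamma^{-1}\le U$.
\item Then $\gamma^{-1}(F')=(F^{\rm sep})^{\gamma^{-1}U\gamma}\subseteq(F^{\rm sep})^S=F^{(q)}$, so composing with $\gamma^{-1}$ gives the required $F$-embedding.
\item By (B1), $\ed_k(\tau\otimes F')\ge\ed_k(\tau\otimes F^{(q)})$.
\end{itemize}

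\emph{The equality $\ed_{k,q}(\tau)=\ed_k(\tau\otimes F^{(q)})$.} The inequality $\ge$ is the embedding statement just proved, applied to every admissible $F'$. For $\le$:
\begin{itemize}[leftmargin=3em]
\item Suppose $\tau\otimes F^{(q)}$ descends to $F_0\subseteq F^{(q)}$. As in the proof of Theorem~\ref{thm:geom}, we may take $F_0$ finitely generated over $k$.
\item The torsor $\tau_0$ over $F_0$, and the isomorphism $\tau_0\otimes_{F_0}F^{(q)}\cong\tau\otimes F^{(q)}$, involve only finitely many elements of $F^{(q)}$.
\item These elements lie in some finite subextension $F_1$ of $F^{(q)}/F$ with $F_0\subseteq F_1$. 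This uses the description of $F^{(q)}$ as a direct limit and the fact that torsors and their isomorphisms are finitely presented.
\item By the first part, $[F_1:F]$ is prime to $q$. Then $\ed_k(\tau\otimes F_1)\le\trdeg_kF_0$, and this gives $\le$.
\end{itemize}
Alternatively, one can simply quote \cite[(2.5)]{RS22} for this equality.

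The only delicate point is the Sylow conjugacy step, i.e.\ that every Sylow pro-$q$ subgroup of the open subgroup $U$ is a Sylow subgroup of $\Gam_F$ and is therefore conjugate to $S$. This is standard profinite Sylow theory, via indices as supernatural numbers.
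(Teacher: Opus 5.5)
Your proposal is correct, but it reaches the embedding by a different route from the paper. The paper quotes the first three assertions (the structure of $F^{(q)}$ and the identity $\ed_{k,q}(\tau)=\ed_k(\tau\otimes F^{(q)})$) from \cite[(2.5)]{RS22}. It proves only the embedding, by counting. The algebra $F'\otimes_FF^{(q)}$ is a product of finite separable extensions of $F^{(q)}$, each of $q$-power degree, and their degrees sum to $[F':F]$. Since $q\nmid[F':F]$, some factor has degree $1$, and projecting onto it gives $F'\hookrightarrow F^{(q)}$.

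You instead prove the structural statements via the Galois correspondence. You get the embedding from profinite Sylow theory: a Sylow pro-$q$ subgroup of $U$ is Sylow in $\Gam_F$, hence conjugate to $S$. You also replace the citation for $\ed_{k,q}(\tau)\le\ed_k(\tau\otimes F^{(q)})$ by a direct-limit argument. All of this is sound.

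The paper's route is more elementary. It uses only the fact that finite separable extensions of $F^{(q)}$ have $q$-power degree. Its factor count is really the orbit-counting argument behind Sylow conjugacy: $S$ acts on the $[F':F]$ embeddings $F'\to F^{\rm sep}$, i.e.\ on $\Gam_F/U$, with orbits of $q$-power size, so it has a fixed point. So the paper proves directly what you import as a black box.

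Your route has two advantages. It exhibits the embedding explicitly as a Galois conjugate $\gamma^{-1}$ of a given one, which matches the paper's remark that the given embedding need not land in $F^{(q)}$. It also makes the lemma self-contained rather than dependent on \cite{RS22} for the equality.
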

\begin{proof}
The first three statements are quoted. $F'\otimes_FF^{(q)}$ is a product of finite separable extensions of $F^{(q)}$, each of $q$-power degree, whose degrees sum to $[F':F]$; if none had degree $1$ the sum would be divisible by $q$. So some factor is $F^{(q)}$, giving the embedding, and (B1) gives the inequality. (For a given embedding $F'\subseteq F^{\rm sep}$ the compositum $F'F^{(q)}$ need not be $F^{(q)}$: $F=\Q$, $q=2$, $F'=\Q(\sqrt[3]2)$ has $F'\otimes_\Q\Q^{(2)}\cong\Q^{(2)}\times\Q^{(2)}(\sqrt{-3})$.)
\end{proof}

\begin{lemma}[prime-to-$q$ extensions of the base field]\label{lem:qinv}
Let $q$ be a prime and $k'/k$ a finite separable extension with $q\nmid[k':k]$. Then $\ed_{k',q}(G)=\ed_{k,q}(G)$ (compare \cite[Prop.~2.4]{Mer13}, which has the corresponding statement for ed at a prime under base change; we give the short proof for completeness).
\end{lemma}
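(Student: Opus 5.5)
We compare $\ed_{k',q}(G)$ and $\ed_{k,q}(G)$ by proving the two inequalities separately, working torsor by torsor via the $q$-closure description of Lemma~\ref{lem:qcl}. Throughout we use $\ed_{k,q}(G)=\sup_\tau\ed_{k,q}(\tau)$, and note that $\ed_{k',q}$ only makes sense for torsors over fields $F\supseteq k'$.

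\emph{Inequality $\ed_{k',q}(G)\le\ed_{k,q}(G)$.} Let $\tau$ be a $G$-torsor over $F\supseteq k'$. By (B2), applied to each $\tau\otimes F'$ with $k\subseteq k'\subseteq F'$, we have $\ed_{k'}(\tau\otimes F')=\ed_k(\tau\otimes F')$; equality holds because $k'/k$ is algebraic. Taking the minimum over finite separable $F'/F$ of degree prime to $q$ gives $\ed_{k',q}(\tau)=\ed_{k,q}(\tau)\le\ed_{k,q}(G)$. Taking the supremum over $\tau$ gives the claim.

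\emph{Inequality $\ed_{k,q}(G)\le\ed_{k',q}(G)$.} Let $\tau$ be a $G$-torsor over an arbitrary $F\supseteq k$; it suffices to treat $\tau=\tgen$ over $K$.
\begin{enumerate}[label=\textup{(\roman*)}]
\item Form $F\otimes_kk'$. This is a finite étale $F$-algebra, since $k'/k$ is separable, of total degree $[k':k]$, which is prime to $q$.
\item Its field factors have degrees summing to $[k':k]$. Hence some factor $F'$ has $q\nmid[F':F]$, and this $F'$ contains $k'$.
\item By Lemma~\ref{lem:qcl} there is an $F$-embedding $F'\hookrightarrow F^{(q)}$. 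So $F^{(q)}=F'^{(q)}$ for a suitable choice of $q$-closure: $F^{(q)}$ is $q$-closed and contains $F'$, and $[F^{(q)}:F']$ is a union of degrees prime to $q$.
\item Then $\ed_{k,q}(\tau)=\ed_k(\tau\otimes F^{(q)})=\ed_{k'}(\tau\otimes F^{(q)})$, using (B2) and $k'\subseteq F'\subseteq F^{(q)}$.
\item Finally $\ed_{k'}(\tau\otimes F^{(q)})=\ed_{k',q}(\tau\otimes F')\le\ed_{k',q}(G)$.
\end{enumerate}

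\emph{Main obstacle.} The delicate step is the identification $F^{(q)}=(F')^{(q)}$ in (iii)–(v). We need a Sylow pro-$q$ subgroup of $\Gam_{F'}$ that is also Sylow in $\Gam_F$. This holds because $[\Gam_F:\Gam_{F'}]$ is prime to $q$: a Sylow pro-$q$ subgroup of the open subgroup $\Gam_{F'}$ is then Sylow in $\Gam_F$. If this is awkward to set up, there is a more elementary alternative. Use only that $\ed_{k,q}(\tau)$ is the minimum over prime-to-$q$ extensions, and that degrees multiply. A prime-to-$q$ extension $F''/F'$ is also prime-to-$q$ over $F$, which gives $\ed_{k,q}(\tau)\le\ed_{k,q}(\tau\otimes F')=\ed_{k',q}(\tau\otimes F')$. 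Only this direction is needed, so the alternative avoids $q$-closures entirely.
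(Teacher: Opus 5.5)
Your proof is correct and follows the paper's: the first inequality is the same (B2) argument, and the paper's proof of the second is exactly your ``more elementary alternative''. It takes a field factor $F_1\supseteq k'$ of $F\otimes_kk'$ with $q\nmid[F_1:F]$, and then $\ed_{k,q}(\tau)\le\ed_{k,q}(\tau\otimes F_1)=\ed_{k',q}(\tau\otimes F_1)\le\ed_{k',q}(G)$, using that prime-to-$q$ separable degrees are transitive. Your primary route through $q$-closures in (iii)--(v) is also valid but unnecessary, and so is the reduction to $\tgen$, which you never actually use.
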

\begin{proof}
For a $G$-torsor $\tau$ over a field $F\supseteq k'$, $\ed_{k,q}(\tau)$ and $\ed_{k',q}(\tau)$ are minima over the same set of extensions $F'/F$ of $\ed_k(\tau\otimes F')$ and $\ed_{k'}(\tau\otimes F')$, which agree by (B2) since $k'/k$ is algebraic; so $\ed_{k,q}(\tau)=\ed_{k',q}(\tau)$, and taking suprema, $\ed_{k',q}(G)\le\ed_{k,q}(G)$. Conversely let $\tau$ be a $G$-torsor over a field $F\supseteq k$. The algebra $F\otimes_kk'$ is \'etale of dimension $[k':k]$, so it has a field factor $F_1$ with $q\nmid[F_1:F]$; $F_1/F$ is separable and $F_1\supseteq k'$. Every finite separable $F''/F_1$ with $q\nmid[F'':F_1]$ is finite separable over $F$ with $q\nmid[F'':F]$, so $\ed_{k,q}(\tau)\le\ed_{k,q}(\tau\otimes F_1)=\ed_{k',q}(\tau\otimes F_1)\le\ed_{k',q}(G)$. Hence $\ed_{k,q}(G)\le\ed_{k',q}(G)$.
\end{proof}

\begin{proposition}\label{prop:genq}
For every prime $q$, $\ed_{k,q}(\tgen)=\ed_{k,q}(G)$.
\end{proposition}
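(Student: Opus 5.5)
The inequality $\ed_{k,q}(\tgen)\le\ed_{k,q}(G)$ is immediate from the definition $\ed_{k,q}(G)=\sup_\tau\ed_{k,q}(\tau)$, so the plan is to prove $\ed_{k,q}(\tau)\le\ed_{k,q}(\tgen)$ for every $G$-torsor $\tau$ over every $F\supseteq k$. The argument is the prime-to-$q$ half of Proposition~\ref{prop:vers}, with the bound $d$ on the degree removed.

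First, if $F$ is finite then $\ed_k(\tau)=0$ by (B3), and there is nothing to prove; so assume $F$ infinite. Put $m=\ed_{k,q}(\tgen)$. By Lemma~\ref{lem:qcl} there is a finite separable $F'/K$ with $q\nmid d':=[F':K]$ and $\ed_k(\tgen\otimes F')=m$. Indeed, $\ed_k(\tgen\otimes K^{(q)})=m$, and a descent over $K^{(q)}$ is defined over some finite subextension, since $K^{(q)}$ is a union of finite prime-to-$q$ subextensions and the descent data involve finitely many elements. (Alternatively, use the definition of $\ed_{k,q}$ directly.)

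Now apply $\edd{d'}_k{}'$: we get $\edd{d'}_k{}'(\tgen)\le m$. Proposition~\ref{prop:vers}, for the prime-to-$q$ profile, then gives $\edd{d'}_k{}'(\tau)\le m$. Hence there is $F''/F$ with $q\nmid[F'':F]$ and $\ed_k(\tau\otimes F'')\le m$.

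One more point is needed, because $\ed_{k,q}$ is defined with finite \emph{separable} extensions. The point produced in the proof of Proposition~\ref{prop:vers} has residue field $F''$ with $q\nmid[F'':F]$, but $F''/F$ may be inseparable. If $\operatorname{char}k=p\ne q$, the separable closure $F''_s$ of $F$ in $F''$ still has degree prime to $q$: the inseparable degree is a power of $p$. We then pass from $F''$ to $F''_s$, and here I expect the main obstacle. Lemma~\ref{lem:pi} needs $k$ perfect, so in general I would instead work in $F^{(q)}$ directly. $F''\otimes_FF^{(q)}$ has dimension prime to $q$, and every finite extension of $F^{(q)}$ has degree a $q$-power times a $p$-power. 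So some residue field $F^*$ of $F''\otimes_FF^{(q)}$ is purely inseparable over $F^{(q)}$. Then $\ed_k(\tau\otimes F^{(q)})$ has to be compared with $\ed_k(\tau\otimes F^*)\le m$.

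A cleaner route is to quote \cite[(2.5)]{RS22}: $\ed_{k,q}(\tau)$ is the minimum over all finite extensions of degree prime to $q$. If that citation is unavailable, I would handle $q=p$ separately: then $q\nmid[F'':F]$ forces $F''/F$ to be separable, so this case is free. For $q\ne p$, I would use that the torsor $\tau\otimes F^*$ is classified by the same homomorphism as $\tau\otimes F^{(q)}$, since a purely inseparable extension does not change $\Gam$. A descent of $\tau\otimes F^*$ to $F_0\subseteq F^*$ then gives a descent to $F_0^{p^n}\cdot k\subseteq F^{(q)}$ for large $n$, of the same transcendence degree, up to the issue that $k$ need not be perfect.

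Finally, take the supremum over all $\tau$ to conclude $\ed_{k,q}(G)\le\ed_{k,q}(\tgen)$.
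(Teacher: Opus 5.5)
Your argument is correct in outline and uses the same mechanism as the paper: transport a prime-to-$q$ witness for $\tgen$ to an arbitrary $\tau$ via the fibre argument of Proposition~\ref{prop:vers}. The only divergence is at the separability step, where the paper's fix is simpler and your fix, though left open, does go through.

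The paper does not use Proposition~\ref{prop:vers} as a black box. Because the witness $K'/K$ is separable, $\pi\colon W'\to V$ is \'etale over a dense $G$-stable open $V_1$. Hence the fibre over an $F$-point of $\tw{\tau}{V_1}$ is an \'etale $F$-algebra $\prod_iF_i$ of dimension $[K':K]$. The factor with $q\nmid[F_i:F]$ is then automatically separable, and inseparability never arises.

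You instead accept a possibly inseparable $F''$ and repair it afterwards. This needs the fact that a finite purely inseparable extension does not lower $\ed_k$. You leave that step hedged, but your own $F_0^{p^n}k$ construction settles it over every $k$, perfect or not. The detour through $F^{(q)}$ is unnecessary, since it only reproduces the same comparison. In characteristic $0$, or when $q=\operatorname{char}k$, there is nothing to do, as you note. Otherwise let $F''_s$ be the separable closure of $F$ in $F''$; then $q\nmid[F''_s:F]$, because $[F'':F''_s]$ is a power of $p\ne q$. The argument runs as follows.
\begin{enumerate}
\item Choose $n$ with $(F'')^{p^n}\subseteq F''_s$.
\item If $\tau\otimes F''$ descends to $F_0\subseteq F''$, put $F_1=F_0^{p^n}k\subseteq F''_s$.
\item $F_0/F_1$ is purely inseparable, so $\trdeg_kF_1=\trdeg_kF_0$.
\item The torsor over $F_0$ descends to $F_1$, because $H^1(F_1,G)\to H^1(F_0,G)$ is bijective.
\item The descended torsor becomes $\tau\otimes F''_s$ over $F''_s$, because $H^1(F''_s,G)\to H^1(F'',G)$ is injective.
\end{enumerate}
Nothing here uses that Frobenius preserves $k$, so the perfectness hypothesis of Lemma~\ref{lem:pi} is not needed for this direction.

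In comparison, the paper's route is shorter and never meets inseparability. Yours uses Proposition~\ref{prop:vers} verbatim, at the cost of this extra inseparable-descent lemma; as a side benefit, it shows that lemma holds without perfectness.

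Do not rely on your ``cleaner route'' via \cite[(2.5)]{RS22}. The paper cites it only for the $q$-closure description, not for a definition of $\ed_{k,q}$ that admits inseparable extensions.
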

\begin{proof}
``$\le$'' is the definition. For ``$\ge$'' let $e=\ed_{k,q}(\tgen)$, realised by a finite separable extension $K'/K$ with $q\nmid[K':K]$ and $\ed_k(\tgen\otimes K')=e$. Let $\tau$ be a $G$-torsor over a field $F\supseteq k$. If $F$ is finite, $F$ is algebraic over $k$ and $\ed_k(\tau)=0$. If $F$ is infinite, the proof of Proposition~\ref{prop:vers}, applied with $F'=K'$ (which is separable over $K$, so that $\pi$ may be chosen \'etale over a dense $G$-stable open $V_1$: the \'etale locus of $\pi$ is open, dense and $G$-stable, $\pi$ is finite, and one shrinks as in the proof of Proposition~\ref{prop:vers}), produces a quasi-projective generically free primitive $G$-variety $Z$ with $\dim Z\le e$ and an \'etale $F$-algebra $\prod_iF_i$ of dimension $[K':K]$ such that $\tw{\tau\otimes F_i}{Z}$ has a free $F_i$-point for each $i$. Some $F_i$ has $q\nmid[F_i:F]$; it is separable over $F$, and $\ed_k(\tau\otimes F_i)\le\dim Z\le e$ by Lemma~\ref{lem:freepoint}. Hence $\ed_{k,q}(\tau)\le e$ for every $\tau$.
\end{proof}

\begin{theorem}[Sylow divisibility]\label{thm:div}
Let $k$ be perfect (for instance of characteristic $0$). Let $\tau$ be a $G$-torsor over $F$.
\begin{enumerate}[label=\textup{(\alph*)}]
\item For every prime $q$ and every $j<\ed_{k,q}(\tau)$, $q$ divides $d_j(\tau)$: every $F'/F$ with $\ed_k(\tau\otimes F')\le j$ has $q\mid[F':F]$.
\item Every prime $q$ with $\ed_{k,q}(G)>j$ divides $d_j(\tgen)$; hence $d_j(\tgen)\ge\prod_{q:\,\ed_{k,q}(G)>j}q$.
\item For every $d\ge1$,
$\displaystyle\edd d_k(\tgen)\ \ge\ \min_{1\le d'\le d}\ \max_{q\nmid d'}\ed_{k,q}(G)\ \ge\ \max_{q>d}\ed_{k,q}(G)$ ($q$ prime).
\item The prime-to-$q$ profile $\edd d_k{}'(\tau)=\min\{\ed_k(\tau\otimes F'):[F':F]\le d,\ q\nmid[F':F]\}$ is non-increasing with terminal value $\ed_{k,q}(\tau)$, reached at a finite prime-to-$q$ degree; for $\tgen$ the terminal value is $\ed_{k,q}(G)$.
\end{enumerate}
\end{theorem}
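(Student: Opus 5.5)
The whole theorem rests on one observation, part (a); (b)--(d) are formal consequences of it together with Lemma~\ref{lem:qcl}, Proposition~\ref{prop:genq} and Proposition~\ref{prop:vers}. For (a), let $F'/F$ be finite with $\ed_k(\tau\otimes F')\le j$, and suppose $q\nmid[F':F]$. Since $k$ is perfect, Lemma~\ref{lem:pi} lets us replace $F'$ by its separable closure $F'_s$ over $F$: the essential dimension is unchanged and $[F'_s:F]$ divides $[F':F]$, so it is still prime to $q$. Lemma~\ref{lem:qcl} then gives $\ed_k(\tau\otimes F'_s)\ge\ed_{k,q}(\tau)>j$, a contradiction. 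Hence $q\mid[F':F]$ for every such $F'$. Applying this to an extension realising $d_j(\tau)$ shows that $q\mid d_j(\tau)$. A degree-$d_j$ extension with $\ed\le j$ exists because the minimum defining $\edd d$ is attained and the profile is non-increasing.

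For (b), Proposition~\ref{prop:genq} gives $\ed_{k,q}(\tgen)=\ed_{k,q}(G)$, so (a) applied to $\tgen$ shows that every prime $q$ with $\ed_{k,q}(G)>j$ divides $d_j(\tgen)$. Distinct primes dividing an integer give the product bound.

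For (c), take $F'/K$ with $[F':K]=d'\le d$ realising $\edd d_k(\tgen)$. As in (a), reduce to the separable closure; this only shrinks $d'$, and the minimum in the bound is over all $d'\le d$. For each prime $q\nmid d'$, Lemma~\ref{lem:qcl} and Proposition~\ref{prop:genq} give $\ed_k(\tgen\otimes F')\ge\ed_{k,q}(G)$. Taking the maximum over such $q$ and then the minimum over $d'$ gives the first inequality. The second follows because every prime $q>d$ satisfies $q\nmid d'$ for every $d'\le d$.

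For (d), monotonicity is immediate from the definition. Every admissible $F'$ satisfies $\ed_k(\tau\otimes F')\ge\ed_{k,q}(\tau)$, again by the reduction to the separable case and Lemma~\ref{lem:qcl}. For attainment, recall that the $q$-closure $F^{(q)}$ is a union of finite prime-to-$q$ extensions. Essential dimension of $\tau\otimes F^{(q)}$ is witnessed by a finitely generated descent datum. That descent datum, together with the isomorphism, is defined over some finite prime-to-$q$ subextension $F_1$, so $\ed_k(\tau\otimes F_1)\le\ed_{k,q}(\tau)$. This value is therefore reached at degree $[F_1:F]$. For $\tgen$ the terminal value is identified by Proposition~\ref{prop:genq}.

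The only delicate point is this attainment claim in (d), that is, spreading the descent over $F^{(q)}$ down to a finite layer. The sketch above handles it by a finiteness argument: the coefficients are finitely many and $F^{(q)}$ is a directed union. Alternatively, it is exactly the definition of $\ed_{k,q}$ as a minimum over finite prime-to-$q$ extensions, combined with the equality in Lemma~\ref{lem:qcl}, and I would cite it that way.
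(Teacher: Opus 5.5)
Your proposal is correct and follows the paper's proof essentially step for step. For (a) you reduce to the separable closure via Lemma~\ref{lem:pi} and apply Lemma~\ref{lem:qcl}; for (b) and (c) you add Proposition~\ref{prop:genq}; and for (d) you combine the lower bound from (a) with attainment. For that attainment the paper simply uses that $\ed_{k,q}(\tau)$ is defined as a minimum over finite separable prime-to-$q$ extensions, which is your second option; your spreading-out argument over $F^{(q)}$ is also valid but unnecessary.
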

\begin{proof}
(a) Let $q\nmid[F':F]$. By Lemma~\ref{lem:pi} (in characteristic $p>0$) we may replace $F'$ by its separable closure $F'_s$ over $F$, whose degree divides $[F':F]$; then Lemma~\ref{lem:qcl} gives $\ed_k(\tau\otimes F')\ge\ed_{k,q}(\tau)>j$. (b) adds Proposition~\ref{prop:genq}. (c) A witness for $\edd d_k(\tgen)$ has some degree $d'\le d$, and (a) and (b) apply to every $q\nmid d'$. (d) By the argument of (a), $\edd d_k{}'(\tau)\ge\ed_{k,q}(\tau)$ for all $d$; by definition $\ed_{k,q}(\tau)$ is attained by a finite separable $F'/F$ of degree prime to $q$, so $\edd d_k{}'(\tau)=\ed_{k,q}(\tau)$ for $d\ge[F':F]$. The last statement is Proposition~\ref{prop:genq}.
\end{proof}

\begin{remark}
In the language of Theorem~\ref{thm:geom} and Remark~\ref{rem:rs101}, (a) is Reichstein--Scavia's Theorem~10.1 applied to the correspondence of degree prime to $q$ furnished by a free closed point; the proof above via the $q$-closure is the same mechanism.
\end{remark}

For the generic torsor the field factor in Proposition~\ref{prop:bfe} is unique, and base change along a constant-field extension produces the generic torsor over the larger field.

\begin{lemma}[$K$ is regular over $k$; base change of $\tgen$]\label{lem:regular}
Let $V$ be a faithful representation of $G$ over $k$, $L=k(V)$, $K=L^G$, all inside an algebraically closed field $\Om$.
\begin{enumerate}[label=\textup{(\alph*)}]
\item $K/k$ is regular.
\item For every algebraic extension $k'/k$ inside $\Om$, $K\otimes_kk'$ is a field, so $[Kk':K]=[k':k]$ when $k'/k$ is finite; and $Kk'=k'(V)^G$.
\item $L\otimes_KKk'\cong Lk'=k'(V)$ as $G$-algebras. Hence $\tgen\otimes_KKk'$ is the generic torsor of $V_{k'}$ over $k'$.
\end{enumerate}
\end{lemma}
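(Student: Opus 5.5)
The plan is to prove (a) first and then deduce (b) and (c) from it by comparing degrees and using the separable/regular tensor-product criterion.

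\emph{(a).} The field $L=k(V)$ is purely transcendental over $k$, so it is a regular extension of $k$. Concretely, $L$ is separable over $k$ because it has a separating transcendence basis (the coordinates), and $k$ is algebraically closed in $k(x_1,\dots,x_n)$. Regularity passes to subextensions: $K\subseteq L$, so $K\otimes_k\bar k\subseteq L\otimes_k\bar k$. The latter is a domain, hence so is the former, which is the criterion for regularity. This gives (a).

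\emph{(b).} Regularity of $K/k$ means that $K\otimes_kk'$ is a domain for every algebraic $k'/k$. Being integral over the field $K$, it is a field, and it therefore maps isomorphically onto the compositum $Kk'\subseteq\Om$; its dimension over $K$ is $[k':k]$ when $k'/k$ is finite.
\begin{itemize}[leftmargin=2em]
\item For $Kk'=k'(V)^G$: inside $\Om$ we have $Lk'=k'(V)$, since $L\otimes_kk'$ is likewise a field, equal to $k'(V)$. The $G$-action on $k'(V)$ is $k'$-linear and extends the action on $L$.
\item Clearly $Kk'\subseteq k'(V)^G$. For the reverse inclusion, by Artin $[k'(V):k'(V)^G]=|G|$ because $G$ acts faithfully.
\item We also have $[Lk':Kk']\le[L:K]=|G|$, since $Lk'$ is generated over $Kk'$ by $L$.
\item Hence $k'(V)^G\supseteq Kk'$ with $[Lk':Kk']\le|G|=[Lk':k'(V)^G]$, which forces equality.
\end{itemize}
For infinite algebraic $k'$ the same argument applies verbatim, since all degrees involved are over $Kk'$ and stay finite.

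\emph{(c).} The natural map $L\otimes_KKk'\to Lk'$, $x\otimes y\mapsto xy$, is $G$-equivariant, with $G$ acting on the first factor and on $Lk'=k'(V)$. It is surjective, because its image is a subring containing $L$ and $Kk'$ that is finite over the field $Kk'$, hence a field, hence equal to the compositum. Both sides have dimension $|G|$ over $Kk'$: the left by base change, the right by (b) and Artin. So the map is an isomorphism. Thus $\tgen\otimes_KKk'$ is $k'(V)/k'(V)^G$, the generic torsor of $V_{k'}$.

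The main obstacle is the regularity input in (a), namely that subfields of regular extensions are regular. The cleanest route is the tensor criterion with $\bar k$ (equivalently with $k^{1/p^\infty}$ together with the separable closure), and this holds in all characteristics. Everything else is degree counting.
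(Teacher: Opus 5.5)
Your proposal is correct and follows the paper's proof essentially step for step. The shared steps are: regularity of the purely transcendental $L$ passed down to $K$; linear disjointness of $K$ from algebraic $k'$; the squeeze $[Lk':Kk']\le|G|=[Lk':(Lk')^G]$ via Artin; and the dimension count for the surjective equivariant map in (c). The differences are only in how much is written out. You justify the passage to subextensions by the injection $K\otimes_k\bar k\hookrightarrow L\otimes_k\bar k$ where the paper cites Lang. You get $Lk'=k'(V)$ from $L\otimes_kk'$ being a field rather than from a transcendence-degree count. You also spell out the surjectivity in (c).
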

\begin{proof}
(a) $L=k(x_1,\dots,x_n)$ is purely transcendental, hence regular over $k$; subextensions of regular extensions are regular \cite[VIII~\S4]{La}.

(b) A regular extension is linearly disjoint from every algebraic extension \cite[VIII, Thm.~4.12]{La}, so $K\otimes_kk'$ is a field, of degree $[k':k]$ over $K$ when this is finite. For the second claim: $Lk'$ is generated by $k'$ and the $x_i$, which remain algebraically independent over $k'$ for transcendence-degree reasons, so $Lk'=k'(V)$. Since $G$ acts $k'$-linearly, $Kk'\subseteq(Lk')^G$. Now $Lk'=L\cdot Kk'$ gives $[Lk':Kk']\le[L:K]=|G|$, while $[Lk':(Lk')^G]=|G|$ by Artin's theorem. Hence $Kk'=(Lk')^G$.

(c) The natural map $L\otimes_KKk'\to Lk'$ is surjective and $G$-equivariant, and both sides have dimension $|G|$ over $Kk'$.
\end{proof}

\subsection{$q$-groups and minimal faithful representations}

\begin{proposition}[$q$-groups]\label{prop:qgp}
Let $G$ be a $q$-group of order $q^n$, $\operatorname{char}k\ne q$ ($k$ perfect if positive), $c=[k(\mu_q):k]$. Then:
\begin{enumerate}[label=\textup{(\alph*)}]
\item $\edd d_k(\tgen)=\ed_{k,q}(G)$ for $c\le d\le q-1$, and $\ed_{k,q}(G)\le\edd d_k(\tgen)\le\ed_k(G)$ for $d<c$;
\item $\ed_{k,q}(G)=\ed_{k(\mu_q)}(G)$, and this is the minimal dimension of a faithful $k(\mu_q)$-representation of $G$;
\item $q\mid d_j(\tgen)$ for every $j<\ed_{k,q}(G)$, and $d_0(\tgen)=q^n$;
\item if $\mu_q\subset k$: $\edd d_k(\tgen)=\ed_k(G)$ for $d<q$ and all steps of the profile occur at multiples of $q$, i.e.\ $\edd d_k(\tgen)=\edd{q\lfloor d/q\rfloor}_k(\tgen)$ for $d\ge q$.
\end{enumerate}
\end{proposition}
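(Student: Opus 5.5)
I would prove (b) first, since (a), (c) and (d) all reduce to it together with results already recorded.

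For (b), put $k'=k(\mu_q)$. Then $[k':k]=c$ divides $q-1$, so $q\nmid c$, and $k'/k$ is separable because $\operatorname{char}k\ne q$. Lemma~\ref{lem:qinv} gives $\ed_{k,q}(G)=\ed_{k',q}(G)$. Over $k'$, which contains $\mu_q$, I would invoke the Karpenko--Merkurjev theorem (Theorem~\ref{thm:KM}, quoted in Corollary~\ref{cor:rou}(ii)). It gives $\ed_{k'}(G)=\ed_{k',q}(G)$, and this common value is the minimal dimension of a faithful representation of $G$ over $k'$. This is the only substantive input; everything else is bookkeeping.

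For (a), the upper bound for $d\ge c$ comes from Corollary~\ref{cor:rou}(ii), or from Proposition~\ref{prop:bfe} with $k'=k(\mu_q)$: $\edd c_k(\tgen)\le\ed_{k(\mu_q)}(G)=\ed_{k,q}(G)$ by (b). The lower bound holds for every $d\le q-1$ and uses Theorem~\ref{thm:div}(c). Any witness has degree $d'\le d<q$, so $q\nmid d'$, and therefore $\edd d_k(\tgen)\ge\ed_{k,q}(G)$. If $k$ is not perfect in positive characteristic, we are assuming perfect anyway, and in characteristic $0$ Theorem~\ref{thm:div} applies directly. Combining this with monotonicity (Proposition~\ref{prop:shape}(a)) gives equality for $c\le d\le q-1$, and the sandwich $\ed_{k,q}(G)\le\edd d_k(\tgen)\le\ed_k(G)$ for $d<c$.

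For (c), the divisibility statement is Theorem~\ref{thm:div}(b). The equality $d_0(\tgen)=q^n$ is Proposition~\ref{prop:d0}(c) when $k$ is perfect, which covers every case in our hypotheses. Alternatively, when $G$ is abelian, it follows from the Remark after Proposition~\ref{prop:d0}.

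For (d), take $c=1$. Then (a) and (b) give $\edd d_k(\tgen)=\ed_{k,q}(G)=\ed_k(G)$ for $d<q$. For the plateau statement with $d\ge q$, let $F'/K$ be a witness of degree $d'\le d$ realising $\edd d_k(\tgen)=m$. I want a witness of degree divisible by $q$ and at most $d$. By Lemma~\ref{lem:pi} we may assume $F'/K$ is separable. Two cases arise.
\begin{itemize}
\item If $q\mid d'$, then $d'\le q\lfloor d/q\rfloor$ and we are done.
\item If $q\nmid d'$, Lemma~\ref{lem:qcl} gives $m\ge\ed_{k,q}(\tgen)=\ed_{k,q}(G)=\ed_k(G)$, using Proposition~\ref{prop:genq} and (b). So $m=\ed_k(G)$ is already attained at degree $1$.
\end{itemize}
In both cases $\edd{q\lfloor d/q\rfloor}_k(\tgen)\le m$. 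The reverse inequality is monotonicity.

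The main obstacle is correctly citing Karpenko--Merkurjev in the form needed, namely that $\ed=\ed_q$ equals the minimal faithful dimension over a field containing $\mu_q$, and matching its characteristic hypotheses.
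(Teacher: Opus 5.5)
Your proposal is correct and follows the paper's proof essentially step for step: (b) via Lemma~\ref{lem:qinv} and Theorem~\ref{thm:KM} over $k(\mu_q)$, (a) via Theorem~\ref{thm:div}(c) for the lower bound and Corollary~\ref{cor:rou} plus (b) for the upper bound, and (c) via Theorem~\ref{thm:div}(b) and Proposition~\ref{prop:d0}(c). For (d) you apply the $q$-closure argument directly to a witness, whereas the paper combines (c) (every $d_j$ with $j<\ed_k(G)$ is a multiple of $q$) with Proposition~\ref{prop:shape}(c); this is the same Sylow mechanism in a different packaging.
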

\begin{proof}
(b) The extension $k(\mu_q)/k$ is separable of degree $c\mid q-1$, so $\ed_{k,q}(G)=\ed_{k(\mu_q),q}(G)$ by Lemma~\ref{lem:qinv}, and $\ed_{k(\mu_q),q}(G)=\ed_{k(\mu_q)}(G)=\rdim_{k(\mu_q)}(G)$ by Theorem~\ref{thm:KM} over $k(\mu_q)$. (a) Lower bounds: every degree $<q$ is prime to $q$, so Theorem~\ref{thm:div}(c) gives $\edd d_k(\tgen)\ge\ed_{k,q}(G)$ for $d\le q-1$. Upper bound for $d\ge c$: Corollary~\ref{cor:rou} and (b). (c) is Theorem~\ref{thm:div}(b) and Proposition~\ref{prop:d0}(c). (d) With $c=1$, (a) gives $\edd d_k(\tgen)=\ed_{k,q}(G)$ for $1\le d\le q-1$, and $\edd1_k(\tgen)=\ed_k(G)$; so $\ed_{k,q}(G)=\ed_k(G)$, every $d_j(\tgen)$ with $j<\ed_k(G)$ is a multiple of $q$ by (c), and Proposition~\ref{prop:shape}(c) gives the statement.
\end{proof}

In characteristic $p$ the prime-to-$p$ tail of a $p$-group collapses to $\le1$ \cite{RV18}; in characteristic $\ne q$ the prime-to-$q$ tail of a $q$-group is constant, equal to $\ed_{k,q}(G)$.

\label{ss:rdim}

For a finite group $G$ and a field $k$ let $\rdim_k(G)$ be the minimal dimension of a faithful $k$-representation of $G$. Always $\ed_k(G)\le\rdim_k(G)$, since a faithful representation is a generically free versal $G$-variety \cite{BR97,BF03}. The theorem of Karpenko and Merkurjev turns this into an equality for $p$-groups in the presence of $\zeta_p$, and thereby identifies the plateau of Proposition~\ref{prop:qgp} with a representation-theoretic quantity.

\begin{theorem}[Karpenko--Merkurjev {\cite[Thm.~4.1]{KM}}]\label{thm:KM}
Let $G$ be a finite $p$-group and $k$ a field with $\operatorname{char}k\ne p$ containing a primitive $p$-th root of unity. Then
\[
\ed_k(G)=\ed_{k,p}(G)=\rdim_k(G).
\]
\end{theorem}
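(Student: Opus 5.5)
The theorem is quoted from Karpenko--Merkurjev, so the plan is to reconstruct the structure of their argument rather than give a new one. We will establish the chain
\[
\rdim_k(G)\ \ge\ \ed_k(G)\ \ge\ \ed_{k,p}(G)\ \ge\ \rdim_k(G).
\]
The first inequality is the standard one: a faithful representation is a generically free versal $G$-variety, as recalled just above. The second inequality follows from the definitions, since $\ed_{k,p}(\tau)\le\ed_k(\tau)$ for every torsor.

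All the content lies in the lower bound $\ed_{k,p}(G)\ge\rdim_k(G)$. We will prove it through the Karpenko--Merkurjev comparison of essential $p$-dimension with canonical $p$-dimension of a Severi--Brauer type variety. Let $C=\Omega_1(Z(G))\cong(\Z/p)^r$ be the socle of the centre. Since $\zeta_p\in k$, we have $C^*\cong\mu_p^r$ as Galois modules, i.e.\ trivially.

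\textbf{Step 1.} Choose a generic $G/C$-torsor $E$ over a field $F$. The connecting map $H^1(F,G/C)\to H^2(F,C)$ sends $E$ to a class. Pairing this class with characters $\chi\in C^*$ gives Brauer classes $\beta(\chi)\in\operatorname{Br}(F)[p]$. These are the classes of the twisted algebras $\tw{E}{\operatorname{End}(V_\chi)}$, where $V_\chi$ runs over the representations of $G$ restricting to $\chi$ on $C$.

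\textbf{Step 2.} Show that $\ed_{k,p}(G)\ge\operatorname{cdim}_p(X_E)-\dim X_E+\dots$, more precisely $\ed_{k,p}(G)\ge\operatorname{cdim}_p(\prod_i\mathrm{SB}(A_i))+r$. This uses the fibration of $G$-torsors lifting $E$, which is a $C$-gerbe whose essential $p$-dimension splits as $r$ plus the canonical $p$-dimension of the corresponding product of Severi--Brauer varieties.

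\textbf{Step 3.} Compute the canonical $p$-dimension of a product of Severi--Brauer varieties via Chow-group and degree arguments (Karpenko's incompressibility). The result is $\min\sum_i(\operatorname{ind}\beta(\chi_i)-1)$ over bases $\chi_1,\dots,\chi_r$ of $C^*$.

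\textbf{Step 4.} Identify $\operatorname{ind}\beta(\chi)$ with the minimal dimension of a representation of $G$ with central character $\chi$. This uses the genericity of $E$ and the fact that for $p$-groups with $\zeta_p\in k$ the Schur index over the relevant field reduces appropriately.

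\textbf{Step 5.} Assemble the bound. A faithful representation of a $p$-group is faithful iff its restriction to $C$ is faithful, so a minimal faithful representation is a sum of $r$ irreducibles whose central characters form a basis of $C^*$. Hence the sum in Step 3 plus $r$ equals $\rdim_k(G)$.

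We expect the main obstacle to be Step 3, the incompressibility computation of canonical $p$-dimension. This is the deep input of Karpenko's theorem on incompressibility of $p$-primary Severi--Brauer varieties. We would try first to quote it rather than reprove it.
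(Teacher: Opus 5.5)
Your outline is a correct sketch of the Karpenko--Merkurjev argument: the fiber inequality $\ed_{k,p}(G)\ge\ed_p(\mathcal X_E)$ for the gerbe of liftings of a generic $G/C$-torsor, the formula giving $\ed_p(\mathcal X_E)$ as $r$ plus the canonical $p$-dimension of a product of Severi--Brauer varieties, the incompressibility formula for that product, the index of generic twisted algebras, and the assembly via the socle of the centre. The paper takes a different route in the sense that it does not reprove the theorem at all. It quotes \cite[Thm.~4.1]{KM}, and its only argument is that KM's $\ed_p$, defined with arbitrary finite extensions of degree prime to $p$, can be replaced by the separable version used in the paper. Your chain $\rdim_k(G)\ge\ed_k(G)\ge\ed_{k,p}(G)\ge\rdim_k(G)$ absorbs this point, but you should say so explicitly. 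The results you quote in Steps~2--3 are lower bounds for KM's quantity, which is the smaller of the two, so they bound the separable one as well; and $\ed_{k,p}\le\ed_k$ holds for both versions. What your route buys is an explanation of where the hypotheses enter: $\zeta_p\in k$ gives $C\cong\mu_p^r$, so the obstruction classes lie in $\operatorname{Br}(K)[p]$. It gains no self-containedness, since every step is again a quoted deep result. Two of your attributions also need fixing. In Step~3 you need the formula for a \emph{product} of Severi--Brauer varieties over a minimising basis of $C^*$. That formula is proved in Karpenko--Merkurjev's paper on top of Karpenko's single-variety incompressibility theorem, so quoting the latter alone does not suffice. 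In Step~4, the inequality $\operatorname{ind}\beta_E(\chi)\le\min\dim\rho$ is formal. The reverse inequality is where genericity enters, and it is $K$-theoretic rather than a Schur-index statement: for generic $E$, the twisted modules coming from representations with central character $\chi$ generate $K_0$ of the twisted algebra, in the spirit of Merkurjev's maximal indexes of Tits algebras. This gives $\operatorname{ind}\beta_E(\chi)=\gcd\dim\rho$, which equals $\min\dim\rho$ because irreducible representations of a $p$-group over $k$ have $p$-power dimension.
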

Here $\ed_{k,p}$ is defined with separable extensions of degree prime to $p$ (Section~\ref{sec:conv}); \cite{KM} allow all finite extensions of degree prime to $p$. This does not matter: the minimum over all such extensions is $\le$ the minimum over separable ones, which is $\le\ed_k(G)$ (take $F'=F$), and \cite{KM} show that the former already equals $\ed_k(G)$, so all three agree.

\begin{proposition}[the Karpenko--Merkurjev plateau]\label{prop:KMplateau}
Let $G$ be a $p$-group, $\operatorname{char}k\ne p$ ($k$ perfect if positive), and $c=[k(\mu_p):k]$, a divisor of $p-1$.
\begin{enumerate}[label=\textup{(\alph*)}]
\item For $c\le d\le p-1$: $\edd d_k(\tgen)=\rdim_{k(\mu_p)}(G)=\ed_{k,p}(G)$; the profile is constant on $[c,p-1]$.
\item For $1\le d<c$: $\rdim_{k(\mu_p)}(G)\le\edd d_k(\tgen)\le\ed_k(G)$, with equality on the right for $d=1$.
\item If $\zeta_p\in k$: $\edd d_k(\tgen)=\rdim_k(G)=\ed_k(G)$ for $1\le d\le p-1$, and every drop of the profile occurs at a multiple of $p$: $\edd d_k(\tgen)=\edd{p\lfloor d/p\rfloor}_k(\tgen)$ for $d\ge p$.
\end{enumerate}
\end{proposition}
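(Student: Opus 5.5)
The plan is to deduce all three parts from Proposition~\ref{prop:qgp} with $q=p$, using Theorem~\ref{thm:KM} to name the plateau value. By Proposition~\ref{prop:qgp}(b), $\ed_{k,p}(G)=\ed_{k(\mu_p)}(G)$. Theorem~\ref{thm:KM} over $k(\mu_p)$, which has characteristic $\ne p$ and contains $\zeta_p$, gives $\ed_{k(\mu_p)}(G)=\rdim_{k(\mu_p)}(G)$. So the quantities $\rdim_{k(\mu_p)}(G)$ and $\ed_{k,p}(G)$ in (a) and (b) coincide. The degree $c=[k(\mu_p):k]$ divides $p-1$ because $\Gal(k(\mu_p)/k)$ embeds in $(\Z/p)^\times$.

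For (a), take $c\le d\le p-1$. The lower bound $\edd d_k(\tgen)\ge\ed_{k,p}(G)$ comes from Theorem~\ref{thm:div}(c): every $d'\le d<p$ is prime to $p$, so the minimum over $d'$ of the maximum over $q\nmid d'$ is at least $\ed_{k,p}(G)$. This is where perfectness of $k$ in positive characteristic enters. For the upper bound, take $d\ge c$ and apply Corollary~\ref{cor:rou}(ii): monotonicity gives $\edd d_k\le\edd c_k\le\ed_{k(\mu_p)}(G)$. Alternatively, use Proposition~\ref{prop:bfe} with $k'=k(\mu_p)$, noting by Lemma~\ref{lem:regular}(b) that $Kk(\mu_p)$ has degree $c$ over $K$. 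Since $\edd d_k$ is non-increasing, the two bounds squeeze it, so it is constant on $[c,p-1]$. This is exactly Proposition~\ref{prop:qgp}(a) rewritten via Theorem~\ref{thm:KM}.

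For (b), take $d<c$. Then $d<p$, so the same lower bound applies. The upper bound $\edd d_k(\tgen)\le\edd1_k(\tgen)=\ed_k(\tgen)=\ed_k(G)$ comes from Proposition~\ref{prop:shape}(a), and equality holds at $d=1$ by definition.

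For (c), $\zeta_p\in k$ gives $c=1$ and $k(\mu_p)=k$. Part (a) then yields $\edd d_k(\tgen)=\rdim_k(G)$ for $1\le d\le p-1$. Theorem~\ref{thm:KM} over $k$ gives $\rdim_k(G)=\ed_k(G)$, which is consistent with $\edd1_k=\ed_k(G)$. For the multiples statement, Proposition~\ref{prop:qgp}(c) (that is, Theorem~\ref{thm:div}(b)) shows $p\mid d_j(\tgen)$ for every $j<\ed_{k,p}(G)=\ed_k(G)$. Then by Proposition~\ref{prop:shape}(c), $\edd d_k=\min\{j:d_j\le d\}$. For $j<\ed_k(G)$ the condition $d_j\le d$ is equivalent to $d_j\le p\lfloor d/p\rfloor$, since $d_j$ is a multiple of $p$. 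For $j=\ed_k(G)$ we have $d_j=1\le p\lfloor d/p\rfloor$ because $d\ge p$. Hence $\edd d_k(\tgen)=\edd{p\lfloor d/p\rfloor}_k(\tgen)$.

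No step is a real obstacle, since everything is a repackaging of earlier results. The only care needed is the matching of conventions: \cite{KM} use all prime-to-$p$ extensions while the paper uses separable ones, and the paragraph after Theorem~\ref{thm:KM} already resolves this. One must also check that the hypotheses of Proposition~\ref{prop:qgp} (perfect $k$ in positive characteristic) are exactly those assumed here.
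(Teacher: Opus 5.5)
Your proposal is correct and is essentially the paper's proof. The paper uses the same ingredients: the lower bound $\ed_{k,p}(G)$ from Theorem~\ref{thm:div}(c) for $d\le p-1$, and the upper bound from Corollary~\ref{cor:rou}(ii) with monotonicity. It identifies the plateau value as $\rdim_{k(\mu_p)}(G)$ via Lemma~\ref{lem:qinv} and Theorem~\ref{thm:KM} over $k(\mu_p)$, which you reach through Proposition~\ref{prop:qgp}(b). It then treats (c) as the case $c=1$ together with $p\mid d_j(\tgen)$ and Proposition~\ref{prop:shape}(c); your explicit multiples-of-$p$ argument just unpacks Proposition~\ref{prop:qgp}(c),(d).
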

\begin{proof}
This is Proposition~\ref{prop:qgp}(a),(b),(d) with $q=p$, in which $\ed_{k(\mu_p)}(G)=\ed_{k(\mu_p),p}(G)=\ed_{k,p}(G)$ is rewritten as $\rdim_{k(\mu_p)}(G)$ by Theorem~\ref{thm:KM} over $k(\mu_p)$. Explicitly: every degree $\le p-1$ is prime to $p$, so Theorem~\ref{thm:div}(c) gives the lower bound $\ed_{k,p}(G)$ for all $d\le p-1$; Corollary~\ref{cor:rou}(ii) gives $\edd c_k(\tgen)\le\ed_{k(\mu_p)}(G)$; and the two values agree by Theorem~\ref{thm:KM} over $k(\mu_p)$ and Lemma~\ref{lem:qinv} applied to the separable extension $k(\mu_p)/k$ of degree $c$ prime to $p$. (c) is (a) with $c=1$ together with Proposition~\ref{prop:qgp}(c).
\end{proof}

\begin{remark}\label{rem:KMscope}
The lower bound of Theorem~\ref{thm:KM} is a bound on the canonical $p$-dimension of the classifying variety, and canonical $p$-dimension is insensitive to correspondences of degree prime to $p$; in the language of Theorem~\ref{thm:geom} and Remark~\ref{rem:rs101}, the Karpenko--Merkurjev theorem controls exactly the Sylow-governed, prime-to-$p$ part of the profile, which is the plateau of Proposition~\ref{prop:KMplateau}. The $p$-power part, beginning with $\edd p_k(\tgen)$, is beyond the method: a correspondence of degree $p$ can lower the canonical $p$-dimension. In (b), the profile can drop below $\ed_k(G)$ before degree $c$: for $G=\Z/p$ in characteristic $0$ with $c$ even and $c\ge4$, $\edd{c/2}_k(\tgen)=1<2\le\ed_k(\Z/p)$ (Corollary~\ref{cor:sharp}(ii)).
\end{remark}

\begin{proposition}[subgroup bound]\label{prop:subrdim}
For every subgroup $H\le G$ and every field $k$,
\[
\edd{[G:H]}_k(\tgen)\le\ed_k(H)\le\rdim_k(H).
\]
If $G$ is a $p$-group, $\operatorname{char}k\ne p$ and $\zeta_p\in k$, then for every $j\ge0$
\[
\edd{p^j}_k(\tgen)\ \le\ r_j(G,k):=\min\{\rdim_k(H):H\le G,\ [G:H]\le p^j\},
\]
and $\edd{p^j}_k(\tgen)\le\ed_k(H)=\rdim_k(H)$ is realised by the fixed field $L^H$ of any $H$ of index $\le p^j$.
\end{proposition}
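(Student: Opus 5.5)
The first inequality is essentially Corollary~\ref{cor:sub}, and I will derive it from there. Take $F'=L^{H}$. It is a field extension of $K$ of degree $[G:H]$. By the proof of Corollary~\ref{cor:sub}, $\tgen\otimes L^H\cong\Ind_H^G(L/L^H)$, and $L/L^H$ is the generic $H$-torsor of the faithful representation $V|_H$. Lemma~\ref{lem:ind} then gives $\ed_k(\tgen\otimes L^H)=\ed_k(L/L^H)=\ed_k(H)$, the last equality because the generic torsor attains $\ed_k(H)$. By definition of the profile this yields $\edd{[G:H]}_k(\tgen)\le\ed_k(H)$. The inequality $\ed_k(H)\le\rdim_k(H)$ is the standard bound recalled before Theorem~\ref{thm:KM}: a faithful representation $W$ of $H$ of minimal dimension is a generically free versal $H$-variety, so $\ed_k(H)=\ed_k(\tgen(H))\le\dim W$. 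Alternatively, apply the first part to the generic $H$-torsor of $W$ itself, which is defined over $k(W)^H$ of transcendence degree $\dim W$.

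For the $p$-group statement I will fix $j\ge0$ and let $H\le G$ be any subgroup with $[G:H]\le p^j$. By monotonicity (Proposition~\ref{prop:shape}(a)) and the first part, $\edd{p^j}_k(\tgen)\le\edd{[G:H]}_k(\tgen)\le\ed_k(H)$. Since $H$ is again a $p$-group and $\zeta_p\in k$ with $\operatorname{char}k\ne p$, Theorem~\ref{thm:KM} applied to $H$ gives $\ed_k(H)=\rdim_k(H)$. Minimising over all such $H$ gives $\edd{p^j}_k(\tgen)\le r_j(G,k)$. The realisation claim is exactly the computation above: the extension $L^H/K$ has degree $[G:H]\le p^j$, and $\ed_k(\tgen\otimes L^H)=\ed_k(H)=\rdim_k(H)$.

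There is no real obstacle here, since every step is a citation. The only points to check are that $V|_H$ is faithful, so that $L/L^H$ really is a generic $H$-torsor and Lemma~\ref{lem:ind} applies, and that Theorem~\ref{thm:KM} is invoked for the subgroup $H$ over the same $k$. Both are immediate.
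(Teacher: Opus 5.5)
Your proposal is correct and follows the paper's proof essentially verbatim: Corollary~\ref{cor:sub} (via Lemma~\ref{lem:ind} applied to $L/L^H$) together with $\ed_k(H)\le\rdim_k(H)$ for the first chain, then monotonicity of the profile and Theorem~\ref{thm:KM} applied to the $p$-group $H$ for the second statement and the realisation claim.
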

\begin{proof}
The first chain is Corollary~\ref{cor:sub} and $\ed_k(H)\le\rdim_k(H)$. For the second, $[G:H]$ is a power of $p$, at most $p^j$, so Corollary~\ref{cor:sub} and monotonicity give $\edd{p^j}_k(\tgen)\le\ed_k(H)$, and $\ed_k(H)=\rdim_k(H)$ by Theorem~\ref{thm:KM} applied to the $p$-group $H$.
\end{proof}

\begin{example}[consistency with equality]\label{ex:rdim}
\begin{enumerate}[label=\textup{(\roman*)}]
\item $G=\Z/p^n$, $\operatorname{char}k\ne p$. For every such $k$, $\rdim_k(\Z/p^n)=[k(\zeta_{p^n}):k]$: a faithful representation has an irreducible summand on which the generator has order exactly $p^n$, and the irreducible representations with this property are the irreducible factors of $\Phi_{p^n}$ over $k$, all of dimension $[k(\zeta_{p^n}):k]$. For $\zeta_p\in k$ we have $\ed_k(\Z/p^n)=[k(\zeta_{p^n}):k]$: this is the case $G=\Z/p^n$ of Theorem~\ref{thm:KM}; Florence \cite{Fl08} proved it earlier under further conditions on the roots of unity in $k$. Now let $\zeta_p\in k$ and $[k(\zeta_{p^n}):k]=p^{n-1}$ (e.g.\ $k=\Q(\zeta_p)$ for $p$ odd, or $k=\Q$ for $p=2$); then every step of the tower $k\subset k(\zeta_{p^2})\subset\dots\subset k(\zeta_{p^n})$ has degree $p$, and
\[
\ed_k(\Z/p^n)=\rdim_k(\Z/p^n)=p^{n-1},\qquad \edd{p^j}_k(\tgen)\le p^{\,n-1-j}\quad(0\le j\le n-1),
\]
the bound being realised in two ways of the same degree $p^j$: by the fixed field $L^H$ of the index-$p^j$ subgroup $H=\langle g^{p^j}\rangle\cong\Z/p^{n-j}$, for which $\rdim_k(H)=[k(\zeta_{p^{n-j}}):k]=p^{n-1-j}$ (Proposition~\ref{prop:subrdim}), and by adjoining $\zeta_{p^{j+1}}$: $k'=k(\zeta_{p^{j+1}})$ has $[k':k]=p^j$ and $\ed_{k'}(\Z/p^n)=[k'(\zeta_{p^n}):k']=p^{n-1-j}$, so Proposition~\ref{prop:bfe} gives the same bound. By Proposition~\ref{prop:KMplateau}(c) the profile is constant $=p^{n-1}$ on $[1,p-1]$ and drops only at multiples of $p$: a staircase $p^{n-1},p^{n-2},\dots,1,0$ at the degrees $1,p,\dots,p^{n-1},p^n$ is consistent with everything known, and $d_0=p^n$ is Proposition~\ref{prop:d0}(c). For cyclic $G$ the rank--index theorem gives nothing beyond $d_0$ (Remark~\ref{rem:limits}).
\end{enumerate}
\end{example}

The subgroup formula. For a $p$-group $G$, $\operatorname{char}k\ne p$, $\zeta_p\in k$, Proposition~\ref{prop:subrdim} gives
\begin{equation}\label{eq:subgroupformula}
\edd d_k(\tgen)\ \le\ \min\{\rdim_k(H):H\le G,\ [G:H]\le d\}\qquad(d\ge1),
\end{equation}
with equality for $d<p$ and for $d\ge|G|$ (Propositions~\ref{prop:KMplateau}(c) and~\ref{prop:d0}(c)). Whether equality holds for all $d$ is open (Question~\ref{q:subgroupformula}). Equality holds for every $d$ when $G$ is abelian of exponent $p^a$ and $\mu_{p^a}\subset k$ (Corollary~\ref{cor:subgroupabelian}).

% ---- end sylow.tex ----

% ---- begin const.tex ----
\section{Constant classes over purely transcendental fields}\label{sec:const}

This section collects the geometric tools that turn a hypothetical witness for $\ed\le1$ over a field extension into a rigid object on a curve. Everything here is used in Section~\ref{sec:S5}. Throughout, ``curve'' means smooth projective curve over $k$.

\begin{lemma}[purely transcendental base change]\label{lem:constant}
\textup{(i)} Let $G$ be a finite group, $\tau$ a $G$-torsor over an infinite field $F\supseteq k$, and $t$ transcendental over $F$. Then $\ed_k(\tau\otimes_FF(t))=\ed_k(\tau)$. \textup{(ii)} Let $A$ be an abelian variety over a field $F$ (for instance an elliptic curve, or the Jacobian of a curve), or more generally a proper $F$-variety such that $A_{\bar F}$ is isomorphic to an abelian variety, and let $F'=F(t_1,\dots,t_m)$ be purely transcendental. Then $A(F')=A(F)$.
\end{lemma}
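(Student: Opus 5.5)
For (i) I would argue the two inequalities separately. The bound $\ed_k(\tau\otimes F(t))\le\ed_k(\tau)$ is (B1). For the reverse, suppose $\tau\otimes F(t)\cong\tau_0\otimes_{F_0}F(t)$ with $k\subseteq F_0\subseteq F(t)$ and $\trdeg_kF_0=e$. As in the proof of Theorem~\ref{thm:geom}, shrink $F_0$ to be finitely generated. The finitely many elements of $F(t)$ defining $F_0$, $\tau_0$ and the isomorphism are then regular on a dense open $U\subseteq\A^1_F$.

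Since $F$ is infinite, $U(F)\ne\emptyset$, and I would pick $a\in U(F)$. Specialising $t\mapsto a$ gives a place $F[t]_{(t-a)}\to F$ that is the identity on $F$. Restricted to the finitely generated $k$-algebra $R\subseteq F_0$ generated by the defining data, it gives a $k$-homomorphism $R\to F$. Its image generates a subfield $F_1\subseteq F$ with $\trdeg_kF_1\le\trdeg_k\Frac(R)=e$, because the transcendence degree of a quotient domain does not exceed that of the source. Spreading $\tau_0$ and the isomorphism out over $R$ (after inverting finitely many elements that remain units at $a$) and specialising yields a $G$-torsor $\tau_1$ over $F_1$. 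We get $\tau_1\otimes_{F_1}F\cong(\tau\otimes F(t))|_{t=a}=\tau$, the last step because $\tau$ is constant in $t$. Hence $\ed_k(\tau)\le e$.

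The main technical point is ensuring the torsor, and not just its algebra, specialises. I would handle this by spreading out a $G$-torsor over $\Spec R$, which is finite étale with $G$-action, and base changing along $R\to F$; $G$-torsors are stable under base change, so the specialisation is again a torsor.

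For (ii), by induction it suffices to treat $m=1$, since $F(t_1,\dots,t_m)=F(t_1,\dots,t_{m-1})(t_m)$. A point $x\in A(F(t))$ is a rational map $\PP^1_F\dashrightarrow A$. Because $A$ is proper and $\PP^1_F$ is a regular curve, it extends to a morphism $\PP^1_F\to A$. Over $\bar F$ this is a morphism $\PP^1_{\bar F}\to A_{\bar F}$, an abelian variety, and every such morphism is constant: it factors through the Jacobian of $\PP^1$, which is trivial, or equivalently rational curves do not map nonconstantly to abelian varieties since pulling back the trivial tangent bundle's invariant $1$-forms gives zero. So the morphism is constant over $\bar F$, hence its image is a single closed point $y$ of $A$. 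Since the morphism has a section over $\Spec F$ via any $F$-point of $\PP^1$, $y$ is an $F$-point, and $x$ comes from $A(F)$. Thus $A(F')=A(F)$; the containment $A(F)\subseteq A(F')$ is clear.
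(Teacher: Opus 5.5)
Your proposal is correct and is essentially the paper's proof: in (i) you spread out the descent data and specialise at an $F$-point of a dense open subset of $\A^1_F$ (which exists because $F$ is infinite), and in (ii) you extend the rational map to a morphism $\PP^1_F\to A$ and use that it becomes constant over $\bar F$. The differences are cosmetic. You show the image point is $F$-rational via an $F$-point of $\PP^1_F$, where the paper uses $H^0(\PP^1_F,\mathcal O)=F$; either way one needs $\PP^1_F$ reduced to conclude that the morphism is the constant map to that point. You also cite triviality of the Jacobian of $\PP^1$ rather than the absence of rational curves on abelian varieties; your alternative via invariant $1$-forms would need an extra separability step in characteristic $p$.
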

\begin{proof}
(i) $\le$ is \cite[Lemma~2.1(a)]{RS22}. Conversely let $\tau\otimes F(t)\cong\tau_0\otimes_{F_0}F(t)$ with $F_0\subseteq F(t)$, $\trdeg_kF_0=e$; replacing $F_0$ by the subfield generated over $k$ by the finitely many coefficients of equations defining $\tau_0$, we may assume $F_0$ finitely generated. Spread $\tau_0$ out to a $G$-torsor $Z\to B_0$ over an integral $k$-variety $B_0$ with $k(B_0)=F_0$, $\dim B_0=e$. The inclusion $F_0\subseteq F(t)$ is a dominant rational map $\A^1_F\dashrightarrow B_0$, defined on a dense open $U$, and the isomorphism of torsors over $F(t)$ spreads out to an isomorphism between the pullback of $Z$ and the constant torsor $\tau\times U'$ over a dense open $U'\subseteq U$. Since $F$ is infinite, $U'$ has an $F$-point $c$; restricting to $c$, $\tau$ is the pullback of $Z$ along $\Spec F\to B_0$, whose image point $b$ has $\trdeg_k\kappa(b)\le e$. Hence $\ed_k(\tau)\le e$.

(ii) By induction on $m$ (replacing $F$ by $F(t_1,\dots,t_{m-1})$) it suffices to treat $m=1$. A point of $A(F(t))$ is a rational map $\PP^1_F\dashrightarrow A$, which extends to a morphism $\phi\colon\PP^1_F\to A$ since $A$ is proper and $\PP^1_F$ is a regular curve. The base change $\phi_{\bar F}\colon\PP^1_{\bar F}\to A_{\bar F}$ is constant: otherwise its image would be a rational curve on the abelian variety $A_{\bar F}$, and an abelian variety contains no rational curves \cite[Ch.~I, Prop.~3.9]{Mil}.
Hence $\phi(\PP^1_F)$ is a single point $a$. As $\PP^1_F$ is reduced, $\phi$ factors through $\Spec\kappa(a)$, and an $F$-morphism $\PP^1_F\to\Spec\kappa(a)$ is an $F$-algebra map $\kappa(a)\to H^0(\PP^1_F,\mathcal O)=F$; so $\kappa(a)=F$ and $\phi$ is the constant map to the $F$-point $a$.
\end{proof}

\begin{lemma}[compression]\label{lem:compression}
Let $k=\bar k$, $L/K$ a $G$-Galois extension of fields containing $k$ with $G$ finite, $\tau=\Spec L$, and $F'/K$ finite with $\ed_k(\tau\otimes F')\le1$; let $N_0,H$ be as in Section~\ref{ss:tors} and assume $H\ne1$. Then there is a subfield $k\subseteq M\subseteq N_0$, stable under $H$, with $H$ acting faithfully on $M$ and $\operatorname{trdeg}_kM=1$.
\end{lemma}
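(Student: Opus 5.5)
The plan is to unwind the torsor decomposition of Section~\ref{ss:tors} and take $M$ to be the compositum of a descended field with $k$.

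First, write $L\otimes_KF'=\prod_{G/H}N_0$, so that $\tau\otimes F'\cong\Ind_H^G(N_0/F')$ with $N_0/F'$ Galois with group $H$. By Lemma~\ref{lem:ind},
\[
\ed_k(N_0/F')=\ed_k(\tau\otimes F')\le 1 .
\]
So there are a subfield $k\subseteq F_0\subseteq F'$ with $\trdeg_kF_0\le1$ and an $H$-torsor $\tau_0=\Spec P_0$ over $F_0$ with $P_0\otimes_{F_0}F'\cong N_0$ as $H$-algebras.

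Second, pass to a field inside $N_0$. Since $N_0$ is a field, the $H$-equivariant map $P_0\to N_0$ is injective on each component. By (T1), $P_0=\Ind_{H_1}^H P_1$ with $P_1$ a field. Tensoring up, $N_0$ is a product of $[H:H_1]$ factors, and since $N_0$ is a field this forces $H_1=H$. Hence $P_0$ is itself a field, Galois over $F_0$ with group $H$, and $P_0\hookrightarrow N_0$ is $H$-equivariant. Let $M$ be the image of $P_0$, which contains $k$ because $F_0\supseteq k$.
\begin{itemize}
\item $M$ is $H$-stable, and $H$ acts faithfully on it because $M/F_0$ is $H$-Galois.
\item $\trdeg_kM=\trdeg_kF_0\le1$, since $M/F_0$ is finite.
\end{itemize}

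Third, rule out $\trdeg_kM=0$. In that case $M$ is algebraic over $k=\bar k$, so $M=k$. But $H\ne1$ acts faithfully on $M$, a contradiction. Therefore $\trdeg_kM=1$.

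The only delicate step is the second: making sure that the descended torsor is a field embedding equivariantly into $N_0$, rather than a product of fields. This is the connectedness argument given there.
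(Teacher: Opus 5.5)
Your proof is correct, but it takes a different route from the paper's.

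\textbf{The paper's route.} The paper never passes to the $H$-torsor $N_0/F'$. It takes a descent $P_0/E$ of the $G$-torsor $\tau\otimes F'\cong\prod_{G/H}N_0$ itself. It composes the injection $j\colon P_0\to\prod_{G/H}N_0$ with the projection $\pi$ onto the $H$-stable factor $N_0$, and notes that $\ker(\pi j)$ is a prime ideal. That kernel therefore kills every component of $P_0=\prod_iP^{(i)}$ except one, $P^{(i_0)}$. Since the kernel is $H$-stable, $H$ lies in the stabiliser $H_0$ of $i_0$, which acts faithfully on $P^{(i_0)}$. Then $M:=\pi j(P^{(i_0)})$.

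\textbf{Your route.} You use the nontrivial inequality $\ed_k(\tau_H)\le\ed_k(\Ind_H^G\tau_H)$ of Lemma~\ref{lem:ind} to descend $N_0/F'$ directly as an $H$-torsor. After that you only need the easy fact that a torsor whose base change to $F'$ is a field is itself a field. Your final step implicitly uses that $H$ fixes $k$; this holds because $H$ acts $F_0$-linearly and $F_0\supseteq k$.

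\textbf{Comparison.} The component bookkeeping the paper does inline is essentially what Steps~2--3 of the proof of Lemma~\ref{lem:ind} do. So your argument is shorter on the page, but it outsources the real work to that lemma, whereas the paper's proof is self-contained. In exchange you get slightly more: your $M$ is an $H$-Galois descent of $N_0$, i.e.\ $M\otimes_{F_0}F'\cong N_0$ and $N_0=MF'$. The paper only produces an $H$-stable subfield with faithful $H$-action and transcendence degree $1$, which is all that Lemma~\ref{lem:AJ} and Proposition~\ref{prop:witness} use.
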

\begin{proof}
Let $E\subseteq F'$, $\operatorname{trdeg}_kE\le1$, and $P_0/E$ a $G$-torsor with $P_0\otimes_EF'\cong\prod_{G/H}N_0$; the induced map $j:P_0\to\prod_{G/H}N_0$ is injective and $G$-equivariant. Let $\pi$ be the projection to the $H$-stable factor $N_0$ and write $P_0=\prod_{i\in I}P^{(i)}$. The kernel of $\pi j$ is a prime ideal, hence the product of all factors but one, $P^{(i_0)}$; it is $H$-stable, so $H$ fixes $i_0$. The stabiliser $H_0\supseteq H$ of $i_0$ acts faithfully on the field $P^{(i_0)}$, which is finite over $E$. Put $M=\pi j(P^{(i_0)})\cong P^{(i_0)}$; it is finitely generated over $k$, being finite over $E$, which is finitely generated over $k$ as a subfield of transcendence degree $\le1$ of the finitely generated field $F'$. If $\operatorname{trdeg}_kM=0$ then $M=k$ and $H$ acts trivially, a contradiction.
\end{proof}

\begin{lemma}[Abel--Jacobi]\label{lem:AJ}
Let $k=\bar k$, $L=k(x_1,\dots,x_n)$ and $N_0\supseteq L$ finite separable of degree $e$ (automatic in characteristic $0$). Let a finite group $G$ act on $N_0$ by $k$-automorphisms preserving $L$ and a subfield $M\subseteq N_0$ with $k\subseteq M$, $\operatorname{trdeg}_kM=1$, $G$ acting faithfully on $M$. Let $C$ be the smooth projective curve with $k(C)=M$, let $h_g\in\Aut_k(C)$ be given by $h_g^*=g|_M$, and $G_C=\{h_g\}$ (the map $g\mapsto h_g$ is an anti-homomorphism, so $G_C$ is a group isomorphic to $G$). Then there is a divisor $D$ on $C$, defined over $k$, of degree $e$, such that \textup{(a)} $r:=\dim|D|\ge1$ and \textup{(b)} $h(D)\sim D$ for all $h\in G_C$.
\end{lemma}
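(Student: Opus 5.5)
The approach is to view $N_0$ as the function field of a variety over $C$ whose generic fibre is a curve over the purely transcendental field $L$ up to a finite extension. We then produce a point of degree $e$ on the constant curve $C_L$ and push its class through the Jacobian, where Lemma~\ref{lem:constant}(ii) forces it to be constant.

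\emph{Step 1: a divisor of degree $e$ on $C_L$.} Consider the compositum $LM\subseteq N_0$. Since $\operatorname{trdeg}_kM=1$ and $M\not\subseteq L$ in general, the field $N_0$ contains both $L$ and $M$. We regard $N_0$ as a finite extension of $L$ of degree $e$, and the inclusion $M\hookrightarrow N_0$ as an $N_0$-point of the constant curve $C_L=C\times_kL$. That point is the generic point of $C$ composed with $\Spec N_0\to\Spec M$, base-changed over $L$. Its image in $C_L$ is a closed point $P$ whose residue field is the compositum $LM$, a subfield of $N_0$. We set $D_L:=[N_0:LM]\cdot P$, a divisor on $C_L$ of degree $[N_0:LM][LM:L]=e$. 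The residue field $LM$ is not $L$ itself: if $M\subseteq L$, the point would be $L$-rational but non-constant, which Lemma~\ref{lem:constant}(ii) still allows to be handled. Alternatively, we avoid this case by noting that the pushforward of $\Spec N_0$ still gives a degree-$e$ divisor. Equivariance is automatic: $g\in G$ acts on $N_0$ preserving $L$ and $M$, so its action on $C_L$ is $h_g\times g|_L$, and it sends $D_L$ to itself.

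\emph{Step 2: constancy via Abel--Jacobi.} Fix a base point $c_0\in C(k)$ and let $J=\Pic^0_C$. The class $[D_L-e c_0]$ lies in $J(L)$, since $C(k)\ne\emptyset$ and $\Pic_{C_L}=\Pic_C$-points for curves with a rational point. Because $L=k(x_1,\dots,x_n)$ is purely transcendental, Lemma~\ref{lem:constant}(ii) gives $J(L)=J(k)$. Hence $D_L\sim D\otimes L$ for some divisor $D$ on $C$ over $k$ of degree $e$, where linear equivalence descends since $\Pic(C)\to\Pic(C_L)$ is injective. For (b), applying $g$ and using that $g|_L$ fixes $J(k)$-classes, we get $h_g(D)_L\sim g(D_L)=D_L\sim D_L$. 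Injectivity of $\Pic(C)\to\Pic(C_L)$ then yields $h_g(D)\sim D$.

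\emph{Step 3: $\dim|D|\ge1$.} This is the main obstacle. The divisor $D_L$ moves non-trivially: as the coordinates of $L$ vary, the point $P$ is non-constant. More precisely, $P$ is not the base change of a closed point of $C$, since its residue field $LM$ is not of the form $L\otimes\kappa$ with $\kappa$ finite over $k$ ($M$ has transcendence degree $1$). So $D_L$ is an effective divisor in $|D_L|=|D|\otimes L$ that is not defined over $k$. If $\dim|D|=0$, then $|D|$ is a single $k$-point, the unique effective divisor $D$, and $|D_L|=|D|(L)$ would force $D_L=D\otimes L$, which is constant, a contradiction. Hence $r\ge1$. The care needed is to verify rigorously that $D_L$ is not the base change of a $k$-divisor: the support of a constant divisor consists of points with residue field $L$ over $k=\bar k$, i.e.\ $L$-points $c\times L$, $c\in C(k)$, whose residue field $L$ does not contain a transcendental copy of $M$ mapping isomorphically as required. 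We would check this by comparing the morphism $\Spec N_0\to C$, which is dominant, with the constant morphisms.
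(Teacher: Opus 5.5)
Your proposal is correct and follows the paper's proof: you build the same degree-$e$ divisor $[N_0:LM]\cdot P$ from the closed point $\Spec N_0\to C_L$, get constancy of its class from $J(L)=J(k)$ (Lemma~\ref{lem:constant}(ii)), and obtain $r\ge1$ because $P$ lies over the generic point of $C$ while every divisor $D_0\otimes L$ lies over closed points. For (b) you use the $k$-semilinear automorphism $h_g\times g|_L$ of $C_L$, which fixes $P$, where the paper lifts $g|_L$ to some $\gamma\in\Aut(\Omega/k)$; this difference is cosmetic. One small correction: argue non-constancy of $P$ through its image in $C$ (the generic point), as your last sentence suggests, and not through its residue field, since $LM=L$ when $M\subseteq L$.
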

\begin{proof}
Let $\Omega$ be an algebraic closure of $N_0$ (hence of $L$), $\iota:M\hookrightarrow N_0$, and $\varepsilon_1,\dots,\varepsilon_e:N_0\to\Omega$ the $L$-embeddings. A $k$-embedding $\varphi:M\to\Omega$ is an $\Omega$-point $x_\varphi$ of $C$ lying over the generic point. Put $\mathcal D=\sum_ix_{\varepsilon_i\iota}$, an effective divisor of degree $e$ on $C_\Omega$.

The class is constant. Let $J=\Pic^0_C$, $P_0\in C(k)$, $\alpha:C\to J$, $x\mapsto[x-P_0]$, and $\beta=\sum_i\alpha(x_{\varepsilon_i\iota})\in J(\Omega)$. Since $N_0/L$ is separable, the $\varepsilon_i$ are the $e$ distinct $L$-embeddings, $\Aut(\Omega/L)$ permutes them, and $\mathcal D$ is the base change to $\Omega$ of an $L$-rational effective divisor on $C_L$ (the image of the closed point $\Spec N_0\to C_L$, with multiplicity $[N_0:L\cdot M]$); so $\beta\in J(L)$. As $L$ is purely transcendental over $k$ (and $J=0$ if $g(C)=0$), $J(L)=J(k)$ by Lemma~\ref{lem:constant}(ii). Hence $\beta=c\in J(k)$, and $[\mathcal D]=c+eP_0$ is the class of a $k$-rational divisor $D$; thus $\mathcal D\in|D|(\Omega)$.

(a). Since $N_0/L$ is separable, $\mathcal D$ is the base change to $\Omega$ of the effective $L$-rational divisor on $C_L$ obtained from the closed point $\Spec N_0\to C_L$ (with multiplicity $[N_0:L\cdot M]$), so it is an $L$-point of $|D|\cong\PP^r_k$. If $r=0$, $|D|$ is a single divisor $D_0$ over $k$, whose support lies over closed points of $C$; but each $x_{\varepsilon_i\iota}$ lies over the generic point. So $r\ge1$.

(b). Let $g\in G$ and choose $\gamma\in\Aut(\Omega/k)$ with $\gamma|_L=g|_L$. Then $\varepsilon_i'=\gamma\varepsilon_ig^{-1}$ is an $L$-embedding, $i\mapsto i'$ is a bijection, and $\gamma\varepsilon_i\iota=\varepsilon'_i\iota\,g|_M$. On generic-type points $\gamma(x_\varphi)=x_{\gamma\varphi}$ and $h_g(x_\varphi)=x_{\varphi\circ g|_M}$, so $\gamma(\mathcal D)=h_g(\mathcal D)$. Since $\gamma$ fixes $k$ and $D$ and preserves linear equivalence, $h_g(\mathcal D)=\gamma(\mathcal D)\sim D$; also $h_g(\mathcal D)\sim h_g(D)$. Hence $h_g(D)\sim D$.
\end{proof}

\begin{proposition}[the witness curve]\label{prop:witness}
Let $k=\bar k$ and let $L/K$ be a $G$-Galois extension, $G$ finite, with $L=k(x_1,\dots,x_n)$ purely transcendental over $k$. Let $F'/K$ be a finite separable extension with $\ed_k(\tau\otimes_KF')\le1$, $\tau=\Spec L$, and let $N_0,H$ be as in Section~\ref{ss:tors}, $H\ne1$; here $N_0/L$ is separable, being a factor of the \'etale algebra $L\otimes_KF'$ over $L$. Then there are a curve $C$ with a faithful action of $H$ and an effective divisor $D$ on $C$ of degree $e=[N_0:L]$ with $\ell(D)\ge2$ and $h^*D\sim D$ for every $h\in H$.
\end{proposition}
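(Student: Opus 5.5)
The plan is to chain Lemma~\ref{lem:compression} and Lemma~\ref{lem:AJ}, applied to the group $H$ acting on the field $N_0$. The lemmas need to be fed the right data, and the output has to be translated into the form stated: $\ell(D)\ge2$, with the $h^*$ convention.

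First, apply Lemma~\ref{lem:compression} to $L/K$, $F'$ and $\tau=\Spec L$. The hypotheses $k=\bar k$, $\ed_k(\tau\otimes F')\le1$ and $H\ne1$ are exactly those of the proposition. The lemma gives a subfield $k\subseteq M\subseteq N_0$ that is $H$-stable, has $\trdeg_kM=1$, and on which $H$ acts faithfully.

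Next, check the hypotheses of Lemma~\ref{lem:AJ} with $G$ replaced by $H$.
\begin{itemize}
\item $L=k(x_1,\dots,x_n)$ is purely transcendental by assumption.
\item $N_0\supseteq L$ is finite separable of degree $e=[N_0:L]$, since $N_0$ is a field factor of the étale $L$-algebra $L\otimes_KF'$.
\item $H$ acts on $N_0$ by $k$-automorphisms. This is because $H\le G$ is the stabiliser of the factor $N_0$, and $G$ acts on $L\otimes_KF'$ through $L$, fixing $F'\supseteq K\supseteq k$.
\item $H$ preserves $L$: $L\subseteq N_0$ is the image of $L\otimes1$, and $G$ preserves $L$.
\item $H$ preserves $M$ and acts faithfully on it, by the first step.
\end{itemize}
Lemma~\ref{lem:AJ} then gives the curve $C$ with $k(C)=M$ and the group $H_C=\{h_g\}$, which is isomorphic to $H$ and acts faithfully on $C$. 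It also gives a $k$-divisor $D$ of degree $e$ with $\dim|D|\ge1$ and $h(D)\sim D$ for all $h\in H_C$.

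Finally, translate to the stated form.
\begin{itemize}
\item The linear system $|D|$ is nonempty, since $\dim|D|\ge1$. So $D$ may be replaced by a linearly equivalent effective divisor; this changes neither the degree nor the invariance of the class.
\item For effective $D$ we have $\ell(D)=\dim|D|+1\ge2$.
\item Identify $H$ with $H_C$ via $g\mapsto h_g$; the faithful $H$-action on $C$ is then the one given by $h_g^{-1}$ or $h_g$, according to convention.
\item The condition $h^*D\sim D$ follows because $h^*D=(h^{-1})(D)$ for an automorphism $h$, and $h^{-1}$ again lies in $H_C$.
\end{itemize}

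I expect no real obstacle. The one point needing care is verifying that $H$ acts on $N_0$ preserving $L$ by $k$-automorphisms, and keeping the anti-homomorphism and pullback-versus-pushforward conventions consistent. Both are bookkeeping.
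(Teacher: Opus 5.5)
Your proposal is correct and is essentially the paper's proof. Lemma~\ref{lem:compression} gives the $H$-stable subfield $M\subseteq N_0$ of transcendence degree $1$ with faithful $H$-action; Lemma~\ref{lem:AJ} applied to $H$ gives the invariant class of degree $e$ with $\dim|D|\ge1$; replacing $D$ by a $k$-point of $|D|$ makes it effective, and your extra checks (that $H$ acts on $N_0$ by $k$-automorphisms preserving $L$, and the $h^*$ versus $h_g$ conventions) are accurate and only spell out what the paper leaves implicit.
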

\begin{proof}
Lemma~\ref{lem:compression} gives $M\subseteq N_0$, stable under $H$, with $H$ acting faithfully and $\operatorname{trdeg}_kM=1$. Now $H$ acts on $N_0\supseteq L$, preserving $L$ and $M$, and $L$ is purely transcendental over $k$, so Lemma~\ref{lem:AJ} applies to $H$ and gives a class $D$; replacing $D$ by any $k$-point of $|D|$ (a non-empty set, as $k$ is algebraically closed and $r\ge1$) we may take $D$ effective.
\end{proof}

\begin{remark}
Proposition~\ref{prop:witness} is the first half of the lower bound for $S_5$ (Proposition~\ref{prop:S5lower}); the second half is a statement about $H$-invariant linear systems on a faithful $H$-curve, which depends on $H$.
\end{remark}

% ---- end const.tex ----

% ---- begin thmA.tex ----
\section{Valuation-theoretic lower bounds: the rank--index theorem}\label{sec:thmA}

For an abelian subgroup $A\le G$ and $d\ge1$ put
\[
\rho_A(d):=\min\{\rk B:\ B\le A,\ [A:B]\le d\}.
\]
The main result of this section, Theorem~\ref{thm:rankindex}, says that $\edd d_k(\tgen)\ge\rho_A(d)$ whenever $\operatorname{char}k\nmid|A|$: an extension of degree $d$ lowers the essential dimension of $\tgen$ at most to the rank of a subgroup of $A$ of index $\le d$. The mechanism is valuation-theoretic. A flag valuation on $L$ whose decomposition group contains $A$ (Lemma~\ref{lem:flag}) has, over every finite extension $F'$ of $K$, a decomposition group meeting $A$ in a subgroup of index $\le[F':K]_s$; and over a field of transcendence degree $j$ with algebraically closed residue field, the abelian subgroups of order prime to the residue characteristic in the image of a decomposition group have rank $\le j$ (Lemmas~\ref{lem:tame} and~\ref{lem:abhyankar}). Together with the subgroup bound and Kummer theory this determines the profile of every abelian group of exponent $e$ over a field containing $\mu_e$ (Corollary~\ref{cor:A7}). We use the conventions of Section~\ref{ss:galois}.

\subsection{Preliminaries}\label{sec:prelim}

\begin{lemma}[index of $\Gam_{F'}$]\label{lem:index}
For $K\subseteq F'\subseteq\Om$ with $[F':K]$ finite, $[\Gam_K:\Gam_{F'}]=[F':K]_s\le[F':K]$.
\end{lemma}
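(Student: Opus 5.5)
The approach is to prove the equality $[\Gam_K:\Gam_{F'}]=[F':K]_s$ by orbit counting. Here $\Gam_M=\Aut(\Om/M)$ with $\Om$ algebraically closed, but $\Om$ need not be algebraic over $K$. The inequality $[F':K]_s\le[F':K]$ is the standard fact that the separable degree is at most the degree, so only the equality needs an argument.

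First, let $\Gam_K$ act on the set $\Hom_K(F',\Om)$ of $K$-embeddings of $F'$ into $\Om$ by composition, $\gamma\cdot\sigma=\gamma\circ\sigma$. The stabiliser of the inclusion $\iota\colon F'\hookrightarrow\Om$ is $\{\gamma:\gamma|_{F'}=\id\}=\Gam_{F'}$. Hence $[\Gam_K:\Gam_{F'}]$ equals the size of the orbit of $\iota$.

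Second, I show the action is transitive. Every $K$-embedding $\sigma\colon F'\to\Om$ lands in the algebraic closure $\bar K$ of $K$ in $\Om$, because $F'/K$ is algebraic. Let $\bar K'$ be the algebraic closure of $F'$ in $\Om$, which is $\bar K$. Then $\sigma$ extends to a $K$-automorphism $\bar\sigma$ of $\bar K$, since $\bar K$ is an algebraic closure of both $F'$ and $\sigma(F')$. Next choose a transcendence basis $S$ of $\Om$ over $\bar K$. Extend $\bar\sigma$ to $\bar K(S)$ by fixing $S$, and then to $\Om$. This is possible because $\Om$ is an algebraic closure of $\bar K(S)$, so the extension is an embedding $\Om\to\Om$. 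Its image is algebraically closed and $\Om$ is algebraic over it, so the embedding is surjective. This produces $\gamma\in\Gam_K$ with $\gamma\circ\iota=\sigma$. Thus the orbit is all of $\Hom_K(F',\Om)$.

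Finally, $|\Hom_K(F',\Om)|=|\Hom_K(F',\bar K)|=[F':K]_s$ by the definition (or standard characterisation) of the separable degree. This gives the equality, and combined with the first observation it proves the lemma.

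The only step with any content is the extension of an automorphism of $\bar K$ to all of $\Om$ when $\Om$ is not algebraic over $K$. I expect it to be routine via the transcendence basis argument, with the one point to check being that the extended embedding $\Om\to\Om$ is surjective, which holds because its image is algebraically closed and $\Om$ is algebraic over that image.
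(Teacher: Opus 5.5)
Your proposal is correct and follows the same route as the paper: $\Gam_K$ acts transitively on the $K$-embeddings $F'\to\Om$, the stabiliser of the inclusion is $\Gam_{F'}$, and there are $[F':K]_s$ such embeddings. The paper only asserts that each embedding extends to an automorphism of $\Om$; your transcendence-basis argument, including the surjectivity check, supplies that step when $\Om$ is not algebraic over $K$.
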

\begin{proof}
$\Gam_K$ acts transitively on the $K$-embeddings $F'\to\Om$, since each extends to an automorphism of $\Om$. The stabiliser of the inclusion is $\Gam_{F'}$, and there are $[F':K]_s$ embeddings.
\end{proof}

\begin{lemma}[compositum degree]\label{lem:compositum}
If $K\subseteq F'$ and $K\subseteq K_1$ inside $\Om$, with $F'/K$ finite, then $[F'K_1:K_1]\le[F':K]$.
\end{lemma}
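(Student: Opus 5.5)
The plan is to reduce the statement to a degree count for a linearly disjoint piece. Put $K_0:=F'\cap K_1$, where both fields sit inside $\Om$. This is a subfield of $F'$ containing $K$.

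First, I would reduce to the separable case. Let $F'_s$ be the separable closure of $K$ in $F'$, so that $F'/F'_s$ is purely inseparable. Then $F'K_1/F'_sK_1$ is purely inseparable as well, since it is generated by elements purely inseparable over $F'_s$. For any finite extension $E/K$ we have $[EK_1:K_1]\le[E:K]$ precisely when the minimal polynomials over $K_1$ of a generating set have degrees bounded by those over $K$. So I would argue directly with generators and avoid the separability reduction altogether.

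The direct route is to write $F'=K(a_1,\dots,a_r)$ and form the tower $K_i=K(a_1,\dots,a_i)$. At each step $a_{i+1}$ is algebraic over $K_i$ with minimal polynomial $f_i\in K_i[X]$ of degree $[K_{i+1}:K_i]$. Over the field $K_iK_1$ the element $a_{i+1}$ is a root of $f_i$, which has coefficients in $K_i\subseteq K_iK_1$. Hence
\[
[K_{i+1}K_1:K_iK_1]\le\deg f_i=[K_{i+1}:K_i],
\]
because $K_{i+1}K_1=(K_iK_1)(a_{i+1})$. This equality holds since the compositum inside $\Om$ is the field generated, and adjoining an algebraic element to a field gives the ring generated.

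Multiplying over the tower, we get $[F'K_1:K_1]=\prod_i[K_{i+1}K_1:K_iK_1]\le\prod_i[K_{i+1}:K_i]=[F':K]$. Here $K_0K_1=K_1$ and $K_rK_1=F'K_1$, with $K_0:=K$ in this step. Finiteness of $F'K_1/K_1$ follows at the same time.

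I expect no serious obstacle. The only point needing care is the identification $K_{i+1}K_1=(K_iK_1)(a_{i+1})$ as the compositum inside the fixed $\Om$, which the conventions of Section~\ref{ss:galois} make unambiguous.
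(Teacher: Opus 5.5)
Your tower argument is correct. Since $a_{i+1}$ is a root of $f_i\in K_i[X]\subseteq(K_iK_1)[X]$ and $K_{i+1}K_1=(K_iK_1)(a_{i+1})$, each step has degree at most $[K_{i+1}:K_i]$, and the tower law gives the bound.

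The paper does the same count in one step instead of along a tower. Because $F'/K$ is algebraic, $F'K_1=K_1[F']$, and this ring is spanned over $K_1$ by any $K$-basis of $F'$. Both proofs use the same fact: adjoining algebraic elements to a field gives the ring they generate. The spanning-set version avoids choosing generators and tracking minimal polynomials. Yours is longer but makes the finiteness of each intermediate step explicit.

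Your first two paragraphs, introducing $F'\cap K_1$ and sketching a separable reduction, are never used and should be deleted. Removing them also avoids the clash where you later reuse the name $K_0$ for $K$.
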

\begin{proof}
$F'K_1=K_1[F']$ because $F'/K$ is algebraic, and it is spanned over $K_1$ by a $K$-basis of $F'$.
\end{proof}

\begin{lemma}[a valuation with algebraically closed residue field and $A$ in the decomposition group]\label{lem:flag}
Assume $k$ is algebraically closed, let $\Om$ be an algebraic closure of $L$ (so $\Om$ is algebraic over $K$ and contains, after an embedding over $K$, every finite extension $F'$ of $K$), and let $A\le G$ be abelian with $\operatorname{char}k\nmid|A|$. Then there is a valuation $\nu$ on $\Om$, trivial on $k$, such that
\begin{enumerate}[label=\textup{(\roman*)}]
\item $\kappa(M)=k$ for every field $k\subseteq M\subseteq\Om$;
\item $A\subseteq\varphi(\Dec_K)$.
\end{enumerate}
\end{lemma}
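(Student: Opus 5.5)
We diagonalise $A$ on $V$ and build a flag (monomial) valuation adapted to the resulting eigen-coordinates. Since $k$ is algebraically closed and $\operatorname{char}k\nmid|A|$, the restriction $V|_A$ is a direct sum of characters. We choose a basis $x_1,\dots,x_n$ of $V^*$ of $A$-eigenvectors, so that $a\cdot x_i=\chi_i(a)x_i$ with $\chi_i\colon A\to k^\times$. Then $L=k(x_1,\dots,x_n)$, and $A$ acts on each $x_i$ by a scalar.

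On $L$ we take the rank-$n$ monomial valuation $\nu_L$ with value group $\Z^n$ ordered lexicographically. On monomials it is $\nu_L(\lambda x_1^{a_1}\cdots x_n^{a_n})=(a_1,\dots,a_n)$ for $\lambda\in k^\times$. On a polynomial it is the minimum over its monomials, and it extends to $L$ by quotients. Equivalently, $\nu_L$ is the composite of the $x_1$-adic, then $x_2$-adic, \dots valuations, a flag valuation along the coordinate flag. Its residue field is $k$: the residue of $f/g$ is a ratio of leading coefficients, which lies in $k$.

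Every $a\in A$ sends each monomial to a nonzero scalar multiple of itself. It therefore preserves the leading monomial and $\nu_L\circ a=\nu_L$, so $A$ lies in the decomposition group of $\nu_L$ in $\Aut(L/K)=G$.

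Next we extend $\nu_L$ to a valuation $\nu$ on the algebraic closure $\Om$ of $L$; Chevalley's extension theorem provides such an extension. For (i), the residue field of $\Om$ is algebraic over $\kappa(L)=k$, since $\Om/L$ is algebraic. As $k$ is algebraically closed, $\kappa(\Om)=k$. Any intermediate field $k\subseteq M\subseteq\Om$ has $k\subseteq\kappa(M)\subseteq\kappa(\Om)=k$, so $\kappa(M)=k$.

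For (ii), let $a\in A$. The valuations $\nu|_L$ and $\nu|_L\circ a=\nu|_L$ coincide. Every $\gamma\in\Gam_K$ with $\varphi(\gamma)=a$ sends $\nu$ to $\nu\circ\gamma$, which again extends $\nu_L$. Extensions of $\nu_L$ from $L$ to the normal extension $\Om/L$ are conjugate under $\Gam_L=\ker\varphi$; this is the conjugacy theorem for extensions of valuations in normal algebraic extensions, valid for Krull valuations and infinite extensions. So there is $\delta\in\Gam_L$ with $\nu\circ\gamma\delta=\nu$. Then $\gamma\delta\in\Dec_K$ and $\varphi(\gamma\delta)=a$, which proves $A\subseteq\varphi(\Dec_K)$.

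The main care point is this last step. It needs the conjugacy of extensions for arbitrary Krull valuations along the possibly non-separable algebraic extension $\Om/L$. This holds because $\Aut(\Om/L)$ acts transitively on extensions for normal extensions; the purely inseparable part has a unique extension. A secondary check is that the lexicographic monomial valuation is well defined with residue field exactly $k$. This is standard: it is the restriction of the $t$-adic-type valuation on iterated Laurent series $k((x_n))\cdots((x_1))$, which contains $L$.
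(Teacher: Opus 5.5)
Your proposal is correct and follows essentially the same route as the paper: you take $A$-eigencoordinates on $V^*$ and the lexicographic monomial (flag) valuation, which has residue field $k$ and is preserved by $A$. You then extend it arbitrarily to $\Om$, get (i) from algebraicity of $\Om/L$, and for (ii) use the conjugation theorem for the normal extension $\Om/L$ to correct a lift $\gamma$ of $a$ by some $\delta\in\Gam_L$. The differences are cosmetic: you bound $\kappa(\Om)$ via $\kappa(L)$ rather than $\kappa(K)$, and you add the iterated Laurent series description of the valuation.
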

\begin{proof}
Coordinates. $A$ is finite abelian of order prime to $\operatorname{char}k$ and $k$ is algebraically closed, so $V^*$ has a basis $x_1,\dots,x_n$ of $A$-eigenvectors: $a\,x_i=\chi_i(a)x_i$ with $\chi_i(a)\in k^\times$.

The flag valuation. For $0\ne f=\sum c_mx^m\in k[x]$, let $\nu_0(f)\in\Z^n$ be the lexicographically smallest exponent $m$ with $c_m\ne0$, and let $\inn(f):=c_{\nu_0(f)}$. The lexicographic order is a total order compatible with addition, so $\nu_0(fg)=\nu_0(f)+\nu_0(g)$, $\inn(fg)=\inn(f)\inn(g)$, and $\nu_0(f+g)\ge\min(\nu_0 f,\nu_0 g)$. Hence $\nu_0$ extends to a valuation of $L=k(x)$ with value group $\Z^n$ (ordered lexicographically), trivial on $k$.

Residue field of $\nu_0$. If $\nu_0(f)=\nu_0(g)$, then
\[
\frac fg-\frac{\inn(f)}{\inn(g)}=\frac{\inn(g)f-\inn(f)g}{\inn(g)\,g},
\]
and the numerator either vanishes or has value $>\nu_0(g)$. So the residue of $f/g$ is $\inn(f)/\inn(g)\in k$, and the residue field of $\nu_0$ is $k$.

$A$ preserves $\nu_0$. $a\cdot f=\sum c_m\chi(a)^mx^m$, with $\chi(a)^m=\prod_i\chi_i(a)^{m_i}$, has the same support as $f$, so $\nu_0\circ a=\nu_0$.

Extension to $\Om$. Let $\nu$ be any extension of $\nu_0$ to $\Om$ \cite[\S3.1]{EP}.

Proof of (i). $\kappa(K)\subseteq\kappa(L)=k$ and $\kappa(K)\supseteq k$, so $\kappa(K)=k$. The residue field of $\Om$ is algebraic over $\kappa(K)$ \cite[Cor.~3.2.3]{EP}, hence equals $k$, as $k$ is algebraically closed. For any $k\subseteq M\subseteq\Om$ this gives $k\subseteq\kappa(M)\subseteq k$.

Proof of (ii). Let $a\in A$ and extend $a$ to some $\gamma\in\Gam_K$. Both $\nu\circ\gamma$ and $\nu$ extend $\nu_0\circ a=\nu_0$ from $L$ to the normal extension $\Om/L$. By the conjugation theorem \cite[Thm.~3.2.15]{EP}
there is $\delta\in\Gam_L$ with $\nu\circ\gamma=\nu\circ\delta$. Then $\gamma\delta^{-1}\in \Dec_K$ and $\varphi(\gamma\delta^{-1})=(\gamma\delta^{-1})|_L=a$.
\end{proof}

Because every $\kappa(M)$ is algebraically closed, inertia groups equal decomposition groups for this valuation; this is why only decomposition groups occur below.

\begin{lemma}[tame structure]\label{lem:tame}
Let $E\subseteq\Om$ and let $\nu$ be a valuation on $\Om$ such that $\kappa(E)$ is algebraically closed and $r:=\dim_\Q(\nu(E^\times)\otimes\Q)$ is finite. Let $\varepsilon$ be the characteristic exponent of $\kappa(E)$ ($\varepsilon=1$ if $\operatorname{char}\kappa(E)=0$, $\varepsilon=\operatorname{char}\kappa(E)$ otherwise), let $\bDec_E:=\{\sigma\in\Gal(E^s/E):\nu\circ\sigma=\nu\text{ on }E^s\}$, and let $\bar\psi\colon\Gal(E^s/E)\to G$ be a continuous homomorphism. Then every abelian subgroup $B\le\bar\psi(\bDec_E)$ of order prime to $\varepsilon$ satisfies $\rk B\le r$. The same holds for $B\le\psi(\Dec_E)$ whenever $\psi=\bar\psi\circ\rho_E$ (in particular for the classifying homomorphism of a torsor over $E$), because $\rho_E(\Dec_E)\subseteq\bDec_E$.
\end{lemma}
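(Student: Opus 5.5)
The plan is to reduce to the structure of the Galois group of a valued field with algebraically closed residue field. First I reduce to $\bar\psi(\bDec_E)$, since the last sentence of the statement follows from $\rho_E(\Dec_E)\subseteq\bDec_E$. That inclusion is immediate: an element of $\Gam_E$ preserving $\nu$ on $\Om$ preserves it on $E^s$.

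Next, fix the extension of $\nu|_E$ to $E^s$ given by $\nu$. Its decomposition group $\bDec_E$ is the absolute Galois group of a henselization $E^h$. Because $\kappa(E)$ is algebraically closed, the residue extension is trivial, so the decomposition group coincides with the inertia group. The standard ramification exact sequence \cite[\S5.3]{EP} then reads
\[
1\to P\to \bDec_E\to \Hom\bigl(\nu(E^s{}^\times)/\nu(E^\times),\,\mu_{\infty}'\bigr)\to 1,
\]
where $P$ is the ramification group. $P$ is a pro-$\varepsilon$ group, and trivial if $\varepsilon=1$. The quotient $\bDec_E/P$ is abelian and isomorphic to $\prod_{\ell\ne\varepsilon}\Hom(\Gamma_\ell,\Z_\ell(1))$, with $\Gamma=\nu(E^\times)$. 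Here $\Gamma\otimes\Z_\ell$ involves the divisible hull: $\nu(E^s{}^\times)/\Gamma\cong(\Gamma\otimes\Z[1/\varepsilon])/\Gamma$, and its $\ell$-part is $(\Gamma\otimes\Q_\ell/\Z_\ell)$, which has corank $r$. Hence for each prime $\ell\ne\varepsilon$ the pro-$\ell$ quotient of $\bDec_E/P$ is a quotient of $\Z_\ell^{\,r}$. This uses finiteness of $r$: $\Gamma/\ell\Gamma$ might be larger, but $\Hom$ of a divisible-torsion group of corank $r$ into $\Z_\ell(1)$-torsion gives $\Z_\ell^r$.

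Now let $B\le\bar\psi(\bDec_E)$ be abelian of order prime to $\varepsilon$. Put $D'=\bar\psi(\bDec_E)$, a finite quotient of $\bDec_E$. The image $\bar\psi(P)$ is a normal $\varepsilon$-subgroup of $D'$, and $D'/\bar\psi(P)$ is a quotient of $\bDec_E/P$. Since $|B|$ is prime to $\varepsilon$, $B\cap\bar\psi(P)=1$, so $B$ embeds into $D'/\bar\psi(P)$. This quotient is finite abelian, and its $\ell$-primary part is a quotient of $\Z_\ell^r$, so it has rank at most $r$. The rank of $B$ is $\max_\ell\dim B[\ell]$, and $B_\ell$ embeds into the $\ell$-part of that quotient, so $\rk B\le r$.

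The main obstacle is justifying the exact sequence and the identification of the tame quotient in the general Krull-valuation setting, with a non-discrete value group of finite rational rank. I would quote it from \cite[Thm.~5.3.3 and \S5.2]{EP}, using that the tame inertia is $\Hom(\nu(E^s{}^\times)/\nu(E^\times),\kappa^\times)$ with $\kappa=\kappa(E)$ algebraically closed. I would also check that for each $\ell\ne\varepsilon$ the $\ell$-torsion of $\Gamma\otimes\Q/\Gamma$ has $\F_\ell$-dimension at most $r$. This holds because $\Gamma$ is torsion-free of rational rank $r$, so $\Gamma/\ell\Gamma$ is dual to that $\ell$-torsion and has dimension at most $r$; this step is where the finiteness of $r$ enters.
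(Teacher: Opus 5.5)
Your argument is correct and is essentially the paper's proof. You pass to the henselisation, identify $\bDec_E$ with inertia, and use that it is an extension of the tame quotient $\Hom(\nu((E^s)^\times)/\nu(E^\times),\kappa(E)^\times)$ by a pro-$\varepsilon$ ramification group $P$. You then kill the $\varepsilon$-group $\bar\psi(P)$ so that $B$ embeds in a finite quotient of the tame part, and bound the $\ell$-rank by $\dim_{\F_\ell}\Gamma/\ell\Gamma\le r$.

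There are two slips to fix.
\begin{itemize}
\item $\nu((E^s)^\times)/\Gamma$ is not $(\Gamma\otimes\Z[1/\varepsilon])/\Gamma$; for $\varepsilon=1$ that group is zero. It is a subgroup of $(\Gamma\otimes\Q)/\Gamma$, which follows from $E^s/E$ being algebraic, and this inclusion is all the argument needs.
\item The $\ell$-part has corank $\dim_{\F_\ell}\Gamma/\ell\Gamma$, which is $\le r$ but need not equal $r$. In particular $\Gamma/\ell\Gamma$ is never larger than $r$.
\end{itemize}
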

\begin{proof}
Step 1: reduction to the henselisation. If $\nu|_E$ is trivial then $r=0$, $E=\kappa(E)$ is algebraically closed, $E^s=E$ and there is nothing to prove. Otherwise let $Z\subseteq E^s$ be the fixed field of $\bDec_E$. Then $(Z,\nu|_Z)$ is a henselisation of $(E,\nu|_E)$ \cite[\S5.2]{EP}; it is an immediate extension, so $\nu(Z^\times)=\nu(E^\times)=:\Delta_E$ and $\kappa(Z)=\kappa(E)$ \cite[\S5.2]{EP}; and $\bDec_E=\Gal(E^s/Z)$. (If $\sigma\in\Dec_E$ then $\nu|_{E^s}\circ\sigma|_{E^s}=\nu|_{E^s}$, so $\rho_E(\Dec_E)\subseteq\bDec_E$.)

Step 2: inertia and ramification. $\kappa(E^s)$ is algebraic over the algebraically closed field $\kappa(Z)=\kappa(E)$, so the two are equal, and the inertia group of $\nu|_{E^s}$ over $Z$ is all of $\bDec_E$ (inertia equals decomposition because the residue field is algebraically closed; this is what lets us apply the description of inertia modulo ramification below to $\bDec_E$). Let $R\trianglelefteq \bDec_E$ be the ramification group; it is a pro-$\varepsilon$ group (trivial if $\varepsilon=1$), and, with $\Delta:=\nu((E^s)^\times)$,
\[
\bDec_E/R\cong\Hom(\Delta/\Delta_E,\kappa(E)^\times)
\]
as profinite groups, the standard description of tame inertia over a henselian field \cite[Thm.~5.3.3]{EP}.
Moreover $\Delta/\Delta_E$ is torsion because $E^s/E$ is algebraic \cite[Cor.~3.2.3]{EP}, i.e.\ $\Delta\subseteq\Delta_E\otimes\Q$. Hence every homomorphism $\Delta/\Delta_E\to\kappa(E)^\times$ takes values in the group $\mu_\infty(\kappa(E))$ of roots of unity of $\kappa(E)$, and $\Hom(\Delta/\Delta_E,\kappa(E)^\times)=\Hom(\Delta/\Delta_E,\mu_\infty(\kappa(E)))$.

Step 3: removing the $\varepsilon$-part. Put $P:=\bar\psi(R)$ and $Q:=\bar\psi(\bDec_E)$. As $\bar\psi$ is continuous and $G$ is finite, $P$ is a finite quotient of the pro-$\varepsilon$ group $R$, hence an $\varepsilon$-group, and it is normal in $Q$ because $R\trianglelefteq \bDec_E$. Since $|B|$ is prime to $\varepsilon$, $B\cap P=1$, so $B$ embeds in $Q/P$, and $\bar\psi$ induces a continuous surjection $\bDec_E/R\to Q/P$.

Step 4: finite continuous quotients of $\bDec_E/R$ have rank $\le r$. Let $X:=\Delta/\Delta_E$, a discrete torsion group, and $X'$ its prime-to-$\varepsilon$ part. As $\kappa(E)$ is algebraically closed of characteristic exponent $\varepsilon$, $\mu_\infty(\kappa(E))$ is isomorphic to the prime-to-$\varepsilon$ part of $\Q/\Z$; so $\bDec_E/R\cong\Hom(X',\mu_\infty(\kappa(E)))$ is the Pontryagin dual of $X'$. A finite continuous quotient of the dual of the discrete torsion group $X'$ is the dual $\hat S$ of a finite subgroup $S\subseteq X'$ (an open subgroup $U$ of the dual is the annihilator of the finite subgroup $S=U^\perp$, and the quotient by $U$ is $\hat S$), and $\hat S\cong S$. For each prime $\ell$, inside $\Delta_E\otimes\Q$,
\[
S[\ell]\subseteq X[\ell]\subseteq(\ell^{-1}\Delta_E)/\Delta_E\cong\Delta_E/\ell\Delta_E,
\]
the last isomorphism being multiplication by $\ell$ ($\Delta_E$ is torsion-free). Finally $\dim_{\F_\ell}\Delta_E/\ell\Delta_E\le r$: given $r+1$ elements $\gamma_i$ of $\Delta_E$ there is a relation $\sum n_i\gamma_i=0$ with integers $n_i$ not all zero, and dividing by their greatest common divisor (using that $\Delta_E$ is torsion-free) we may assume $\gcd(n_i)=1$, so some $n_i\not\equiv0\pmod\ell$. Hence $\rk S=\max_\ell\dim_{\F_\ell}S[\ell]\le r$.

Step 5. $Q/P$ is a finite continuous quotient of $\bDec_E/R$, hence abelian of rank $\le r$, and $B$ is isomorphic to a subgroup of $Q/P$; so $\rk B\le r$.
\end{proof}

\begin{lemma}[Abhyankar's inequality]\label{lem:abhyankar}
Let $E/k$ be a field extension of finite transcendence degree, not necessarily finitely generated, and $w$ a valuation on $E$ trivial on $k$. Then
\[
\dim_\Q(w(E^\times)\otimes\Q)+\trdeg_k\kappa(w)\le\trdeg_kE.
\]
\end{lemma}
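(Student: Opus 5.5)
The plan is to reduce to the classical Abhyankar inequality for finitely generated extensions. The non-finitely-generated case then follows by approximating with finitely generated subfields. Put $n:=\trdeg_kE<\infty$. It suffices to show that any finitely many elements $\gamma_1,\dots,\gamma_r\in w(E^\times)$ that are $\Q$-linearly independent, together with any $s$ elements $\bar y_1,\dots,\bar y_s\in\kappa(w)$ algebraically independent over $k$, satisfy $r+s\le n$. Both quantities on the left are suprema of such finite counts, so this gives the inequality, including finiteness of each term.

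First choose $x_i\in E^\times$ with $w(x_i)=\gamma_i$, and $y_j$ in the valuation ring with residues $\bar y_j$. Let $E_0:=k(x_1,\dots,x_r,y_1,\dots,y_s)\subseteq E$. This field is finitely generated over $k$, with $\trdeg_kE_0\le n$. The restriction $w_0=w|_{E_0}$ has value group containing the $\gamma_i$ and residue field containing the $\bar y_j$. So it suffices to prove $r+s\le\trdeg_kE_0$.

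For this I would give the standard direct argument rather than cite it, since it is short. I claim the $r+s$ elements $x_1,\dots,x_r,y_1,\dots,y_s$ are algebraically independent over $k$; this gives $r+s\le\trdeg_kE_0$.

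Suppose $P=\sum_{m}c_m(y)\,x^m$ is a nonzero polynomial relation. Here $m$ ranges over exponents in $\Z_{\ge0}^r$, and the $c_m\in k[y_1,\dots,y_s]$ are nonzero polynomials. Each nonzero $c_m(y)$ has value $0$: its residue $c_m(\bar y)$ is nonzero, by algebraic independence of the $\bar y_j$ and because $w$ is trivial on $k$. Hence $w(c_m(y)x^m)=\sum_i m_i\gamma_i$. These values are pairwise distinct for distinct $m$, by $\Q$-linear independence of the $\gamma_i$. A finite sum of terms with pairwise distinct values has value equal to the minimum, so it is nonzero. This contradicts $P=0$.

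The main (mild) obstacle is the step asserting that a nonzero $c(y)$ has value exactly $0$. This needs $w$ trivial on $k$ and the residue map to be a ring homomorphism on the valuation ring, restricting to the identity on $k$. I expect that to be routine. No finiteness of $\trdeg_k\kappa(w)$ or of the rational rank needs to be assumed in advance, since the argument bounds every finite independent family.
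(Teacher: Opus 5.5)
Your proof is correct and follows the same reduction as the paper: choose $x_i$ with $\Q$-independent values and $y_j$ with algebraically independent residues, then pass to $E_0=k(x,y)$. Where the paper cites the finitely generated case from Bourbaki and Engler--Prestel, you instead prove it with the standard algebraic-independence lemma; your argument in fact shows that $x_1,\dots,x_r,y_1,\dots,y_s$ are algebraically independent over $k$ inside $E$ itself, so it gives $r+s\le\trdeg_kE$ directly and the passage to $E_0$ is not needed.
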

\begin{proof}
For $E$ finitely generated over $k$ this is \cite[VI~\S10.3, Cor.~1]{Bou}, \cite[Ch.~3]{EP}.
In general, let $x_1,\dots,x_s\in E^\times$ have $\Q$-linearly independent values and let $y_1,\dots,y_t$ be elements of the valuation ring whose residues are algebraically independent over $k$. The finitely generated field $E_0=k(x,y)\subseteq E$ has, for $w|_{E_0}$, rational rank $\ge s$ and residue transcendence degree $\ge t$, so $s+t\le\trdeg_kE_0\le\trdeg_kE$. Taking $s$ and $t$ as large as possible gives the claim.
\end{proof}

\subsection{The rank--index theorem}

\begin{theorem}[Rank--index bound]\label{thm:rankindex}
Let $k$ be any field, $G$ a finite group, $V$ a faithful representation of $G$ over $k$ with generic torsor $\tgen$ over $K$, and $A\le G$ an abelian subgroup with $\operatorname{char}k\nmid|A|$.
\begin{enumerate}[label=\textup{(\alph*)}]
\item For every finite field extension $F'/K$, $\ed_k(\tgen\otimes_KF')\ge\rho_A([F':K]_s)$.
\item For every $d\ge1$, $\edd d_k(\tgen)\ge\rho_A(d)$. Equivalently, $d_j(\tgen)\ge\min\{[A:B]:\ B\le A,\ \rk B\le j\}$ for $0\le j\le\ed_k(\tgen)$.
\end{enumerate}
\end{theorem}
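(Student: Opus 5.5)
We first reduce to $k$ algebraically closed, then compare decomposition groups of one flag valuation over $K$, over $F'$ and over a descent field $E$.

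\emph{Reduction.} Replace $k$ by an algebraic closure $\bar k$ and $K$ by $K\bar k$. By Lemma~\ref{lem:regular}(b),(c), $\tgen\otimes_KK\bar k$ is the generic torsor of $V_{\bar k}$ over $\bar k$. Given a finite $F'/K$, choose a field factor $F''$ of $F'\otimes_KK\bar k$. Then $[F'':K\bar k]_s\le[F':K]_s$, by Lemma~\ref{lem:compositum} applied to separable closures. Also $\ed_k(\tgen\otimes F')\ge\ed_k(\tgen\otimes F'')\ge\ed_{\bar k}(\tgen\otimes F'')$ by (B1) and (B2). Since $\rho_A$ is non-increasing, it suffices to prove (a) over $\bar k$. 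Part (b) follows from (a), because $[F':K]_s\le[F':K]\le d$. The reformulation in terms of jump degrees is Proposition~\ref{prop:shape}(c).

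\emph{Setup for $k=\bar k$.} Take $\Om$ an algebraic closure of $L$ containing $F'$, and take the valuation $\nu$ of Lemma~\ref{lem:flag}. Put $D:=\varphi(\Dec_K)\supseteq A$. Since $\Dec_{F'}=\Dec_K\cap\Gam_{F'}$ and $[\Gam_K:\Gam_{F'}]=[F':K]_s$ by Lemma~\ref{lem:index}, we get $[\Dec_K:\Dec_{F'}]\le[F':K]_s$. Hence $B:=A\cap\varphi(\Dec_{F'})$ has $[A:B]\le[D:\varphi(\Dec_{F'})]\le[F':K]_s$. (Here one uses that intersecting with $A$ does not increase index, i.e.\ $[A:A\cap X]\le[Y:X]$ for $X\le Y$ and $A\le Y$.) So $\rk B\ge\rho_A([F':K]_s)$.

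\emph{Upper bound on $\rk B$.} Let $j=\ed_k(\tgen\otimes F')$ and choose $E\subseteq F'$ with $\trdeg_kE=j$ and a torsor $\tau_0$ over $E$ with $\tau_0\otimes F'\cong\tgen\otimes F'$. The classifying homomorphism $\psi$ of $\tau_0$ satisfies $\psi|_{\Gam_{F'}}=\varphi|_{\Gam_{F'}}$ up to $G$-conjugacy. Therefore $\varphi(\Dec_{F'})$ is conjugate to $\psi(\Dec_{F'})\subseteq\psi(\Dec_E)$, using $\Dec_{F'}\subseteq\Dec_E$. By Lemma~\ref{lem:flag}(i), $\kappa(E)=k$ is algebraically closed. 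Abhyankar's inequality (Lemma~\ref{lem:abhyankar}) gives rational rank $\le j$. Lemma~\ref{lem:tame} then bounds the rank of any abelian subgroup of $\psi(\Dec_E)$ of order prime to $\operatorname{char}k$ by $j$. Since $B$ is a conjugate of such a subgroup (its order divides $|A|$), $\rho_A([F':K]_s)\le\rk B\le j$.

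\emph{Main obstacle.} The delicate step is that $\Om$ is not algebraic over $E$. Lemma~\ref{lem:tame} must therefore be applied through $\rho_E$, as its final sentence allows. One must also check that the classifying homomorphisms of $\tau_0\otimes F'$ and $\tgen\otimes F'$ agree on $\Gam_{F'}\subseteq\Gam_E$ up to a single conjugation. This needs the uniform convention $c_x$ of Section~\ref{ss:galois}, choosing $x$ compatibly for $E\subseteq F'$; the rank is invariant under conjugation, so the bound survives.
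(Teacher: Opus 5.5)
Your proposal is correct and follows the paper's proof essentially step for step. The paper proceeds the same way: reduction to $\bar k$ using Lemma~\ref{lem:compositum} on the separable part, then the flag valuation of Lemma~\ref{lem:flag}, then the index comparison and conjugation of decomposition groups (packaged there as Proposition~\ref{prop:A4}), and finally Abhyankar's inequality with Lemma~\ref{lem:tame} applied through $\rho_E$. The only difference is cosmetic: you phrase the reduction via (B1)--(B2) and a field factor of $F'\otimes_KK\bar k$, while the paper forms the compositum $\tilde E=E\bar k$ explicitly, which amounts to the same thing.
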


The engine is the following proposition, which Theorem~\ref{thm:Zp} also uses.

\begin{proposition}\label{prop:A4}
Assume $k$ is algebraically closed; let $A$ and $\nu$ be as in Lemma~\ref{lem:flag}. Let $F'\supseteq K$ be a field inside $\Om$, finite over $K$ with $[F':K]_s\le d$, and $k\subseteq E\subseteq F'$ a subfield with $\tgen\otimes_KF'\cong\tau_0\otimes_EF'$ for a $G$-torsor $\tau_0$ over $E$ classified by $\psi\colon\Gam_E\to G$. Then there is $h\in G$ such that $B':=A\cap\varphi(\Dec_{F'})$ satisfies
\begin{enumerate}[label=\textup{(\alph*)}]
\item $[A:B']\le d$;
\item $h^{-1}B'h\subseteq\psi(\Dec_E)$.
\end{enumerate}
\end{proposition}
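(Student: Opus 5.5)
Both parts reduce to comparing decomposition groups inside $\Gam_K$, using the fixed valuation $\nu$ on $\Om$ and the identity $\Dec_{M'}=\Dec_M\cap\Gam_{M'}$ of Section~\ref{ss:galois}.

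For (a), I would use that $\Dec_{F'}=\Dec_K\cap\Gam_{F'}$ has index at most $[\Gam_K:\Gam_{F'}]=[F':K]_s\le d$ in $\Dec_K$, by Lemma~\ref{lem:index}. Applying $\varphi$ gives $[\varphi(\Dec_K):\varphi(\Dec_{F'})]\le d$, since images of subgroups of index $\le d$ have index $\le d$. Lemma~\ref{lem:flag}(ii) gives $A\subseteq\varphi(\Dec_K)$. Intersecting with $A$ yields $[A:A\cap\varphi(\Dec_{F'})]\le[\varphi(\Dec_K):\varphi(\Dec_{F'})]\le d$, which is (a).

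For (b), I would compare the two classifying homomorphisms on $\Gam_{F'}$. The torsor $\tgen\otimes_KF'$ is classified by $\varphi|_{\Gam_{F'}}$. The torsor $\tau_0\otimes_EF'$ is classified by $\psi|_{\Gam_{F'}}$, because $\Gam_{F'}\subseteq\Gam_E$. Their isomorphism over $F'$ means these homomorphisms are $G$-conjugate; the paper's convention that the classifying homomorphism is $c_x$ for a chosen point $x$, with a change of $x$ acting by conjugation, makes this precise. Concretely I would argue as follows: choose $M'$-algebra points $x$ of the two total spaces compatible with the isomorphism of $G$-algebras, so that one class becomes $c_x$ and the other $c_{x'}$. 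The images agree because both torsors have isomorphic total spaces and the inclusion $L\hookrightarrow\Om$ gives one point; any other point differs by an element $h\in G$. Either way there is $h\in G$ with $\varphi(\gamma)=h\,\psi(\gamma)\,h^{-1}$ for all $\gamma\in\Gam_{F'}$; the direction of conjugation depends on conventions, and the statement allows either. Then
\[
\varphi(\Dec_{F'})=h\,\psi(\Dec_{F'})\,h^{-1}\subseteq h\,\psi(\Dec_E)\,h^{-1},
\]
because $\Dec_{F'}=\Dec_E\cap\Gam_{F'}\subseteq\Dec_E$. Hence $h^{-1}B'h\subseteq h^{-1}\varphi(\Dec_{F'})h\subseteq\psi(\Dec_E)$.

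The main point needing care is exactly this conjugacy step: that an isomorphism of $G$-torsors over $F'$ forces the restricted classifying homomorphisms on $\Gam_{F'}$ to be conjugate by a single $h$, for the non-Galois-closure group $\Gam_{F'}=\Aut(\Om/F')$. This should follow from the Torsors paragraph of Section~\ref{ss:galois}: the restriction $P\otimes_MM'\leftrightarrow c_x|_{\Gam_{M'}}$ together with the bijection between isomorphism classes and conjugacy classes of homomorphisms, applied with $M'=F'$. Since both homomorphisms factor through $\rho_{F'}$, conjugacy of the factored maps lifts to $\Gam_{F'}$.
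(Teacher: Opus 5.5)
Your proposal is correct and is essentially the paper's proof: for (a) you bound $[\Dec_K:\Dec_{F'}]$ by $[\Gam_K:\Gam_{F'}]=[F':K]_s$ (Lemma~\ref{lem:index}), push forward by $\varphi$ and intersect with $A\subseteq\varphi(\Dec_K)$; for (b) you use that $\varphi|_{\Gam_{F'}}$ and $\psi|_{\Gam_{F'}}$ are conjugate by a single $h$, together with $\Dec_{F'}=\Dec_E\cap\Gam_{F'}\subseteq\Dec_E$. Your closing worry about lifting conjugacy through $\rho_{F'}$ does not arise here: $\Om$ is an algebraic closure of $L$, hence algebraic over $F'$, so $\rho_{F'}$ is already an isomorphism.
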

\begin{proof}
(a) By Lemma~\ref{lem:index}, $[\Gam_K:\Gam_{F'}]=[F':K]_s\le d$. Since $\Dec_{F'}=\Dec_K\cap\Gam_{F'}$, the coset map $\Dec_K/\Dec_{F'}\to\Gam_K/\Gam_{F'}$ is injective, so $[\Dec_K:\Dec_{F'}]\le d$. Applying $\varphi$, the groups $I:=\varphi(\Dec_K)$ and $I':=\varphi(\Dec_{F'})$ satisfy $[I:I']\le[\Dec_K:\Dec_{F'}]\le d$. Now $A\subseteq I$ by Lemma~\ref{lem:flag}(ii), and $A/(A\cap I')\hookrightarrow I/I'$ as coset sets. So $[A:B']\le d$.

(b) The torsor isomorphism means that $\varphi|_{\Gam_{F'}}$ and $\psi|_{\Gam_{F'}}$ are conjugate (Section~\ref{ss:galois}; note $\Gam_{F'}\subseteq\Gam_E$ since $E\subseteq F'$): there is $h\in G$ with $\varphi(\gamma)=h\psi(\gamma)h^{-1}$ for all $\gamma\in\Gam_{F'}$. As both decomposition groups are taken for the same $\nu$, $\Dec_{F'}=\Dec_E\cap\Gam_{F'}\subseteq \Dec_E$. Hence
\[
B'\subseteq\varphi(\Dec_{F'})=h\,\psi(\Dec_{F'})\,h^{-1}\subseteq h\,\psi(\Dec_E)\,h^{-1}. \qedhere
\]
\end{proof}

\begin{proof}[Proof of Theorem~\ref{thm:rankindex}]
Step 1: reduction to $k$ algebraically closed. Let $F'/K$ be finite and suppose $\ed_k(\tgen\otimes F')=j$, realised by $k\subseteq E\subseteq F'$ with $\trdeg_kE=j$ and $\tgen\otimes_KF'\cong\tau_0\otimes_EF'$. Put everything in an algebraic closure $\Om$ of $F'$ (with $L$ embedded over $K$), let $\kb\subseteq\Om$ be the algebraic closure of $k$, and $\tilde K=K\kb$, $\tilde F=F'\kb$, $\tilde E=E\kb$. Let $F'_s$ be the separable closure of $K$ in $F'$. Then $F'_s\kb$ is separable over $\tilde K$ with $[F'_s\kb:\tilde K]\le[F'_s:K]$ (Lemma~\ref{lem:compositum}; no linear disjointness is needed), and $\tilde F$ is generated over $F'_s\kb$ by elements purely inseparable over $F'_s$; so $[\tilde F:\tilde K]_s=[F'_s\kb:\tilde K]\le[F':K]_s$. Also $\trdeg_{\kb}\tilde E=\trdeg_kE=j$. By Lemma~\ref{lem:regular}(c), $\tgen\otimes_K\tilde K$ is the generic torsor of $V_{\kb}$ over $\kb$, and
\[
(\tgen\otimes_K\tilde K)\otimes_{\tilde K}\tilde F=\tgen\otimes_K\tilde F\cong(\tau_0\otimes_E\tilde E)\otimes_{\tilde E}\tilde F,
\]
so $\ed_{\kb}((\tgen\otimes_K\tilde K)\otimes_{\tilde K}\tilde F)\le j$. The hypothesis $\operatorname{char}\kb\nmid|A|$ is unchanged and $\rho_A$ is non-increasing, so (a) over $\kb$ gives $j\ge\rho_A([\tilde F:\tilde K]_s)\ge\rho_A([F':K]_s)$. It therefore suffices to prove (a) for $k$ algebraically closed.

Step 2: the case $k$ algebraically closed. Let $\nu$ be the valuation of Lemma~\ref{lem:flag}, embed $F'$ in $\Om$ over $K$ (possible since $F'/K$ is finite and $\Om$ is algebraically closed), and let $E$, $\tau_0$, $\psi$ be as in Proposition~\ref{prop:A4}, with $d=[F':K]_s$ and $\trdeg_kE=j$; $\psi=\bar\psi\circ\rho_E$ with $\bar\psi$ continuous, as $\tau_0$ is a torsor. Let $B=h^{-1}B'h$. By Proposition~\ref{prop:A4}, $B\le h^{-1}Ah$ has index $\le d$ and $B\subseteq\psi(\Dec_E)$. By Lemma~\ref{lem:flag}(i), $\kappa(E)=k$ is algebraically closed, of characteristic exponent prime to $|A|$. Lemma~\ref{lem:abhyankar} applied to $\nu|_E$, which is trivial on $k$, gives
\[
r:=\dim_\Q(\nu(E^\times)\otimes\Q)\le j-\trdeg_k\kappa(E)=j
\]
($r=0$ if $\nu|_E$ is trivial). By Lemma~\ref{lem:tame} (the last sentence: $\psi$ factors through $\rho_E$, since the total space of $\tau_0$ is finite \'etale over $E$), $\rk B\le r\le j$, since $B$ is abelian of order dividing $|A|$. Therefore $B'=hBh^{-1}\le A$ has $[A:B']\le d$ and $\rk B'\le j$, so $j\ge\rho_A(d)$. This proves (a).

Step 3: (b). If $[F':K]\le d$ then $[F':K]_s\le d$ and $\rho_A([F':K]_s)\ge\rho_A(d)$, so (a) gives the profile bound. If $\edd d_k(\tgen)\ge\rho_A(d)$ for all $d$ and $d=d_j(\tgen)$, then $\rho_A(d)\le\edd d_k(\tgen)\le j$, so some $B\le A$ has $[A:B]\le d$ and $\rk B\le j$. Conversely, if the $d_j$-bound holds for all $j$, then for $j:=\edd d_k(\tgen)$ Proposition~\ref{prop:shape}(c) gives $d_j(\tgen)\le d$, so some $B\le A$ with $\rk B\le j$ has $[A:B]\le d$, i.e.\ $\rho_A(d)\le\edd d_k(\tgen)$.
\end{proof}

\begin{remark}\label{rem:RY}
(i) For $d=1$ the theorem gives $\ed_k(G)\ge\rk A$ for every abelian $A\le G$ with $\operatorname{char}k\nmid|A|$. This lower bound is classical: for finite groups it follows from \cite[Thm.~6.1 and Lemma~4.1(a)]{BR97} when $k$ contains enough roots of unity, and for every field with $\operatorname{char}k\nmid|A|$ from \cite[Prop.~3.7, Rem.~3.8 and Thm.~6.19]{BF03}; the algebraic-group version over an algebraically closed field of characteristic $0$ is \cite[Cor.~7.3, Ex.~7.4]{RY}. No novelty is claimed for $d=1$. What the argument adds is the control of the index, i.e.\ the dependence on $d$.
(ii) No hypothesis on $k$ beyond $\operatorname{char}k\nmid|A|$ is needed, and $V$ is an arbitrary faithful representation. The reduction to $\kb$ uses only the inequality of Lemma~\ref{lem:compositum}, not linear disjointness of $F'$ and $\kb$; after the reduction all residue fields are $\kb$ (Lemma~\ref{lem:flag}(i)), so the question of a non-algebraically-closed residue field of $E$ does not arise.
(iii) Part (a) is a statement about each extension, and it involves only the separable degree. In characteristic $\ell>0$, $\ell\nmid|A|$, it shows for instance that a purely inseparable extension $F'/K$ never lowers $\ed_k(\tgen\otimes F')$ below $\rk A$, over any $k$; for perfect $k$, Lemma~\ref{lem:pi} gives the stronger equality $\ed_k(\tgen\otimes F')=\ed_k(\tgen)$.
\end{remark}

\subsection{Abelian groups}\label{ss:abelian}
The subgroup bound (Corollary~\ref{cor:sub}) supplies the matching upper bound once the subgroups $B$ attaining $\rho_A(d)$ have essential dimension $\le\rk B$.

\begin{lemma}[Kummer]\label{lem:kummer}
Let $B$ be a finite abelian group of exponent $e$, $\operatorname{char}k\nmid e$, $\mu_e\subset k$. Then $\ed_k(B)\le\rk B$.
\end{lemma}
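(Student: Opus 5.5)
Since $B$ is abelian, we write it as $B\cong\Z/e_1\times\dots\times\Z/e_r$ with $r=\rk B$ and each $e_i\mid e$. We then show that $B$ admits a faithful representation of dimension $r$ over $k$ by a diagonal torus action, and compress to a product of $r$ lines.

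\emph{Step 1 (faithful $r$-dimensional representation).} Since $\mu_e\subset k$ and $\operatorname{char}k\nmid e$, the group $\mu_{e_i}(k)$ is cyclic of order $e_i$. Choose a primitive $e_i$-th root of unity $\zeta_i\in k$. Let the $i$-th generator of $B$ act on the $i$-th coordinate of $k^r$ by multiplication by $\zeta_i$, and trivially on the other coordinates. This gives a faithful representation $W$ of $B$ of dimension $r$.

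\emph{Step 2 ($\ed_k(B)\le\dim W$).} This is the standard bound $\ed_k(B)\le\rdim_k(B)$ recalled in Section~\ref{ss:rdim}: a faithful representation is a generically free versal $B$-variety. In the language of the paper, take the generic torsor of $W$ with $\ed_k$ of that torsor at most $\trdeg_k k(W)=r$. Then $\ed_k(B)=\ed_k(\tgen)\le r$, using that $\tgen$ attains $\ed_k(B)$ \cite{BF03,Mer13}.

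Alternatively, one can argue directly by Kummer theory. Every $B$-torsor over $F\supseteq k$ is classified by a continuous homomorphism $\Gam_F\to\prod\Z/e_i$. Each coordinate $\Gam_F\to\Z/e_i\cong\mu_{e_i}$ corresponds, via Hilbert~90, to a class $a_i\in F^\times/F^{\times e_i}$. The torsor is then defined over $k(a_1,\dots,a_r)$, which has transcendence degree at most $r$.

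There is no real obstacle in this argument. The only point needing care is that the Kummer identification $H^1(F,\Z/e_i)\cong F^\times/F^{\times e_i}$ uses $\mu_{e_i}\subset k$. It also requires the torsor over $k(a_1,\dots,a_r)$ given by the same classes to base-change correctly, which holds by functoriality of the Kummer isomorphism.
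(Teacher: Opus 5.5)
Your proposal is correct. Your second, ``alternative'' argument is the paper's proof. The paper uses the invariant-factor decomposition $B\cong\bigoplus_{i=1}^r\Z/e_i$ with $r=\rk B$ and identifies each $\Z/e_i$ with $\mu_{e_i}\subset k$. It then writes every $B$-torsor over $F$ as a tensor product of Kummer algebras $F[t_i]/(t_i^{e_i}-a_i)$, which is the base change of the corresponding torsor over $k(a_1,\dots,a_r)$.

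Your main route is different. You construct the diagonal faithful representation $W$ of dimension $r$ and invoke $\ed_k(B)\le\rdim_k(B)$. This step is legitimate and not circular. In the paper it is the case $d=1$ of Proposition~\ref{prop:vers}, proved via Theorem~\ref{thm:geom}, Lemma~\ref{lem:freepoint} and Hilbert~90, which does not use the lemma. For $B\ne1$ it gives $\ed_k(\tau)\le\ed_k(\tgen(W))\le\trdeg_k k(W)^B=r$ for every $B$-torsor $\tau$; the case $B=1$ is trivial.

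The two arguments are the same mathematics packaged differently. The generic torsor of your $W$ is the Kummer extension $k(x_1,\dots,x_r)/k(x_1^{e_1},\dots,x_r^{e_r})$. Its versality is exactly the Kummer/Hilbert~90 description of $H^1(F,B)$ that the paper uses.

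What each buys:
\begin{itemize}
\item The paper's version is self-contained and exhibits the descent field explicitly, torsor by torsor.
\item Yours hands the descent to the general versality machinery. As a by-product it gives $\rdim_k(B)\le\rk B$, which is one half of the equality $\rdim_k(H)=\rk H$ that the paper proves separately in Corollary~\ref{cor:subgroupabelian}.
\end{itemize}
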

\begin{proof}
By the invariant-factor decomposition, $B\cong\bigoplus_{i=1}^r\Z/e_i$ with $1<e_1\mid e_2\mid\dots\mid e_r\mid e$; here $r=\rk B$, since $B$ is generated by $r$ elements and $B/\ell B\cong\F_\ell^r$ for a prime $\ell\mid e_1$. Fix isomorphisms $\Z/e_i\cong\mu_{e_i}\subset k^\times$. For a field $F\supseteq k$, Kummer theory gives $H^1(F,\Z/e_i)\cong H^1(F,\mu_{e_i})\cong F^\times/F^{\times e_i}$, and $H^1(F,B)=\prod_iH^1(F,\Z/e_i)$. So every $B$-torsor over $F$ is
\[
\textstyle\bigotimes_{F,i}F[t_i]/(t_i^{e_i}-a_i)\qquad(a_i\in F^\times),
\]
the tensor product over $F$ of the Kummer algebras (of dimension $\prod_ie_i=|B|$), with $\Z/e_i$ acting on $t_i$ through $\mu_{e_i}$. It is the base change of the $B$-torsor $\bigotimes_{E_0,i}E_0[t_i]/(t_i^{e_i}-a_i)$ over $E_0:=k(a_1,\dots,a_r)$, and $\trdeg_kE_0\le r$.
\end{proof}
With Theorem~\ref{thm:rankindex} for $d=1$ and $A=G=B$ this gives $\ed_k(B)=\rk B$ under the hypotheses of the lemma; in characteristic $0$ this equality is due to Buhler and Reichstein \cite[Thm.~6.1]{BR97}.

\begin{corollary}[profile of abelian groups]\label{cor:A7}
Let $G$ be abelian of exponent $e$, with $\operatorname{char}k\nmid e$ and $\mu_e\subset k$. For every faithful $V$,
\[
\edd d_k(\tgen)=\min\{\rk B:B\le G,\ [G:B]\le d\},\qquad d_j(\tgen)=\min\{[G:B]:B\le G,\ \rk B\le j\}.
\]
\end{corollary}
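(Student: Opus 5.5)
The plan is to prove the equality for $\edd d_k(\tgen)$ by matching a lower bound and an upper bound. The formula for $d_j$ then follows formally, exactly as in Step~3 of the proof of Theorem~\ref{thm:rankindex}.

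For the lower bound, apply Theorem~\ref{thm:rankindex}(b) with $A=G$. Since $G$ is abelian and $\operatorname{char}k\nmid e$, we have $\operatorname{char}k\nmid|G|$, because $|G|$ and $e$ have the same prime divisors. This gives $\edd d_k(\tgen)\ge\rho_G(d)=\min\{\rk B:B\le G,\ [G:B]\le d\}$ for every faithful $V$. No hypothesis on roots of unity is needed here.

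For the upper bound, fix $B\le G$ with $[G:B]\le d$ and $\rk B=\rho_G(d)$. Corollary~\ref{cor:sub} with $H'=B$ gives $\edd{[G:B]}_k(\tgen)\le\ed_k(B)$, and monotonicity (Proposition~\ref{prop:shape}(a)) upgrades this to $\edd d_k(\tgen)\le\ed_k(B)$. The exponent of $B$ divides $e$, so $\mu_{\exp B}\subseteq\mu_e\subset k$ and $\operatorname{char}k\nmid\exp B$. Lemma~\ref{lem:kummer} then gives $\ed_k(B)\le\rk B$. Together with the lower bound, this proves the first formula.

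It remains to derive the jump degrees. By definition $d_j(\tgen)=\min\{d:\rho_G(d)\le j\}$. Now $\rho_G(d)\le j$ holds if and only if some $B\le G$ has $\rk B\le j$ and $[G:B]\le d$. The least such $d$ is therefore $\min\{[G:B]:\rk B\le j\}$, which is the second formula.

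There is no serious obstacle, since all the content lies in Theorem~\ref{thm:rankindex} and Lemma~\ref{lem:kummer}. The one point to check is that Lemma~\ref{lem:kummer} applies to the subgroup $B$, which needs $\mu_{\exp B}\subset k$; this holds because $\exp B\mid e$.
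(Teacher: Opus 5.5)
Your proposal is correct and follows essentially the same route as the paper's proof. The lower bound comes from Theorem~\ref{thm:rankindex} with $A=G$. The upper bound is the chain $\edd d_k(\tgen)\le\edd{[G:B]}_k(\tgen)\le\ed_k(B)\le\rk B$, obtained from monotonicity, Corollary~\ref{cor:sub} and Lemma~\ref{lem:kummer} using $\exp B\mid e$. The formula for $d_j$ then follows from the definition.
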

\begin{proof}
``$\ge$'' is Theorem~\ref{thm:rankindex} with $A=G$. For ``$\le$'', choose $B$ attaining the minimum; then $\edd d_k(\tgen)\le\edd{[G:B]}_k(\tgen)\le\ed_k(B)\le\rk B$ by monotonicity, Corollary~\ref{cor:sub} and Lemma~\ref{lem:kummer} ($B$ has exponent dividing $e$). The formula for $d_j$ follows from the definition.
\end{proof}

\begin{corollary}[elementary abelian groups]\label{cor:A8}
Let $G=(\Z/q)^n$, $q$ prime, $\operatorname{char}k\ne q$.
\begin{enumerate}[label=\textup{(\alph*)}]
\item For every such $k$: $\edd d_k(\tgen)\ge n-\lfloor\log_qd\rfloor$ for $1\le d\le q^n$, and $d_j(\tgen)\ge q^{n-j}$ for $0\le j\le n$.
\item If $\ed_k(\Z/q)\le1$, then $d_j(\tgen)=q^{n-j}$ for $0\le j\le n$, and the full profile is $\edd d_k(\tgen)=n-\lfloor\log_qd\rfloor$ for $1\le d\le q^n$ and $0$ for $d\ge q^n$; in particular $\ed_k((\Z/q)^n)=n$. The hypothesis $\ed_k(\Z/q)\le1$ holds if $\mu_q\subset k$ (Lemma~\ref{lem:kummer}), hence always for $q=2$, and, for $q$ odd, if $\zeta_q+\zeta_q^{-1}\in k$ (Lemma~\ref{lem:B1}).
\item In particular $d_j((\Z/2)^n)=2^{n-j}$ for every $j$, over every field of characteristic $\ne2$, algebraically closed or not.
\end{enumerate}
\end{corollary}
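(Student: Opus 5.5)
The plan is to compute $\rho_A$ explicitly for $A=G=(\Z/q)^n$ and then apply Theorem~\ref{thm:rankindex} for the lower bound. The upper bound comes from Corollary~\ref{cor:sub} together with a subadditivity estimate for $\ed_k$ of elementary abelian groups.

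\emph{Step 1: the value of $\rho_A$.} Every subgroup $B\le(\Z/q)^n$ is an $\F_q$-subspace, so $\rk B=\dim B$ and $[G:B]=q^{n-\dim B}$. The condition $[G:B]\le d$ is therefore $n-\dim B\le\lfloor\log_qd\rfloor$, which gives
\[
\rho_G(d)=n-\lfloor\log_qd\rfloor\qquad(1\le d\le q^n).
\]
In the same way, $\min\{[G:B]:\rk B\le j\}=q^{n-j}$.

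\emph{Step 2: part (a).} Theorem~\ref{thm:rankindex}(b) with $A=G$ applies, since $\operatorname{char}k\nmid q^n$. It gives $\edd d_k(\tgen)\ge n-\lfloor\log_qd\rfloor$ and $d_j(\tgen)\ge q^{n-j}$.

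\emph{Step 3: part (b), the upper bound.} Take $B\le G$ of dimension $j$, so that $[G:B]=q^{n-j}$. Corollary~\ref{cor:sub} gives $\edd{q^{n-j}}_k(\tgen)\le\ed_k(B)=\ed_k((\Z/q)^j)$. It therefore suffices to show $\ed_k((\Z/q)^j)\le j$, given $\ed_k(\Z/q)\le1$. I would prove the general subadditivity $\ed_k(G_1\times G_2)\le\ed_k(G_1)+\ed_k(G_2)$ torsor-wise. A $G_1\times G_2$-torsor over $F$ is the product $\tau_1\times_F\tau_2$ of its pushforwards. If $\tau_i$ descends to $F_i\subseteq F$, then the product torsor descends to the compositum $F_1F_2$, and
\[
\trdeg_k(F_1F_2)\le\trdeg_kF_1+\trdeg_kF_2.
\]
Induction on $j$ then gives $\ed_k((\Z/q)^j)\le j$. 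Combining this with (a) yields $d_j=q^{n-j}$. Proposition~\ref{prop:shape}(c) then gives the stated profile: for $d\ge q^n$ the value is $0$ by (a) of that proposition together with $d_0\le q^n$. The case $d=1$ gives $\ed_k((\Z/q)^n)=n$.

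For the sufficient conditions on $\ed_k(\Z/q)\le1$: if $\mu_q\subset k$, Lemma~\ref{lem:kummer} gives $\ed_k(\Z/q)\le\rk(\Z/q)=1$. For $q=2$ this always holds, since $\mu_2=\{\pm1\}\subset k$ when $\operatorname{char}k\ne2$. The case $\zeta_q+\zeta_q^{-1}\in k$ is Lemma~\ref{lem:B1}, which the statement quotes forward.

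\emph{Step 4: part (c).} This is (b) with $q=2$.

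The only point needing care is the subadditivity in Step 3. One must check that the product torsor over $F_1F_2$ really base-changes to the given torsor; this holds because the pushforwards determine the classifying homomorphism into $G_1\times G_2$. Everything else is bookkeeping.
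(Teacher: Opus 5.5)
Your proposal is correct and follows the paper's proof. You compute $\rho_G(d)=n-\lfloor\log_qd\rfloor$ from $|B|=q^{\rk B}$, apply Theorem~\ref{thm:rankindex} with $A=G$ for (a), and for (b) combine Corollary~\ref{cor:sub} with $\ed_k((\Z/q)^m)\le m$, obtained by descending each factor torsor and passing to the compositum. The only difference is cosmetic: you read off the profile from the jump degrees via Proposition~\ref{prop:shape}(c), whereas the paper plugs $m=n-\lfloor\log_qd\rfloor$ into Corollary~\ref{cor:sub} and uses monotonicity.
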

\begin{proof}
A subgroup $B\le(\Z/q)^n$ is elementary abelian, so $|B|=q^{\rk B}$. Hence $\rk B\le j$ implies $[G:B]\ge q^{n-j}$, with equality for $B\cong(\Z/q)^j$; and for $1\le d\le q^n$, $[G:B]\le d$ implies $|B|\ge q^n/d$, so $\rk B\ge\lceil n-\log_qd\rceil=n-\lfloor\log_qd\rfloor$, with equality for $B\cong(\Z/q)^{n-\lfloor\log_qd\rfloor}$, whose index $q^{\lfloor\log_qd\rfloor}$ is $\le d$. Thus $\rho_G(d)=n-\lfloor\log_qd\rfloor$, and (a) is Theorem~\ref{thm:rankindex} with $A=G$. For (b), let $B\cong(\Z/q)^m\le G$. A $B$-torsor over $F\supseteq k$ is an $m$-tuple of $\Z/q$-torsors ($H^1(F,B)=H^1(F,\Z/q)^m$); each descends to a subfield of transcendence degree $\le\ed_k(\Z/q)\le1$, and all of them to the compositum of these subfields, so $\ed_k(B)\le m$. By Corollary~\ref{cor:sub}, $\edd{q^{n-m}}_k(\tgen)\le m$; with $m=n-\lfloor\log_qd\rfloor$ and monotonicity this is the upper bound $\edd d_k(\tgen)\le n-\lfloor\log_qd\rfloor$, and with $m=0$ it gives $\edd{q^n}_k(\tgen)=0$. Together with (a) this proves (b), and (c) is the case $q=2$.
\end{proof}

\begin{remark}\label{rem:A8}
(i) Corollary~\ref{cor:A8} holds over every field of characteristic $\ne2$. Corollary~\ref{cor:A8} also improves the case $G=(\Z/q)^n$ of Proposition~\ref{prop:d0}(c), which needs $k$ perfect: (a) gives $\edd d_k(\tgen)\ge1$ for all $d<q^n$ over every $k$ with $\operatorname{char}k\ne q$ (if $[G:B]\le d<q^n$ then $B\ne1$). There is no cohomological input: the lower bound is valuation-theoretic and the upper bound is Kummer theory, or Lemma~\ref{lem:B1}.
(ii) If $q$ is odd and $\zeta_q+\zeta_q^{-1}\notin k$, put $k'=k(\zeta_q+\zeta_q^{-1})$. By Proposition~\ref{prop:shape}(b) with $F'=Kk'$, Lemma~\ref{lem:regular}(c), Proposition~\ref{prop:shape}(d) and (b) over $k'$,
\[
q^{n-j}\ \le\ d_{j,k}(\tgen)\ \le\ [k':k]\cdot d_{j,k}(\tgen\otimes Kk')=[k':k]\cdot d_{j,k'}(\tgen(V_{k'}))=[k':k]\,q^{n-j}.
\]
For $n=1$ the left side is trivial and Theorem~\ref{thm:Zp} gives the sharper lower bound for $d_1$; for $n\ge2$ the true value is not known in general (the smallest case is $(\Z/5)^2$ over $\Q$, where $5\mid d_1$ by Theorem~\ref{thm:div}(b), so $d_{1,\Q}(\tgen)\in\{5,10\}$).
\end{remark}

\begin{corollary}[the subgroup formula for abelian groups]\label{cor:subgroupabelian}
Let $G$ be abelian of exponent $e$, $\operatorname{char}k\nmid e$, $\mu_e\subset k$. Then $\rdim_k(H)=\rk H$ for every $H\le G$, and
\[
\edd d_k(\tgen)=\min\{\rdim_k(H):H\le G,\ [G:H]\le d\}\qquad\text{for all }d\ge1.
\]
In particular equality holds in \eqref{eq:subgroupformula} for every abelian $p$-group $G$ of exponent $p^a$ when $\mu_{p^a}\subset k$.
\end{corollary}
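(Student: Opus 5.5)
The plan is to prove the two claims separately. First we show $\rdim_k(H)=\rk H$ for every $H\le G$. Then we substitute this into Corollary~\ref{cor:A7}.

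For the first claim, the inequality $\rdim_k(H)\ge\rk H$ comes from the chain $\rdim_k(H)\ge\ed_k(H)\ge\rk H$. Here $\ed_k(H)\le\rdim_k(H)$ is the standard fact recalled in Section~\ref{ss:rdim}. The bound $\ed_k(H)\ge\rk H$ is Theorem~\ref{thm:rankindex} with $d=1$, applied to the generic $H$-torsor with $A=H$, which is allowed because $\operatorname{char}k\nmid|H|$. For the reverse inequality we build a faithful representation of dimension $\rk H$ directly. The group $H$ has exponent dividing $e$, so its invariant-factor decomposition is $H\cong\bigoplus_{i=1}^r\Z/e_i$ with $r=\rk H$ and $e_i\mid e$. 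Since $\mu_e\subset k$, each $\mu_{e_i}\subset k^\times$. We choose a faithful character $\chi_i\colon\Z/e_i\hookrightarrow k^\times$ for each $i$ and let $H$ act on $k^r$ diagonally by $(\chi_1,\dots,\chi_r)$. The kernel is $\bigcap_i\ker(\chi_i\circ\mathrm{pr}_i)$, which is trivial. Hence $\rdim_k(H)\le r$, and so $\rdim_k(H)=\rk H$.

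For the second claim, Corollary~\ref{cor:A7} gives $\edd d_k(\tgen)=\min\{\rk B:B\le G,\ [G:B]\le d\}$. Replacing $\rk B$ by $\rdim_k(B)$ using the first claim yields exactly the displayed formula for every $d\ge1$. For the final sentence, take $G$ an abelian $p$-group of exponent $p^a$ with $\mu_{p^a}\subset k$. Then $\operatorname{char}k\ne p$, because $\mu_{p^a}$ has $p^a$ elements in $k$. Hence the right side of \eqref{eq:subgroupformula} coincides with the formula just proved, so equality holds there.

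Nothing here is a real obstacle. The only care needed is the lower bound $\ed_k(H)\ge\rk H$ over a non-closed $k$. Theorem~\ref{thm:rankindex} covers this over any field with $\operatorname{char}k\nmid|A|$, with the reduction to $\bar k$ already carried out in its proof.
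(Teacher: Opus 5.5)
Your proof is correct. Two of its three steps match the paper: the upper bound $\rdim_k(H)\le\rk H$ via an explicit diagonal representation built from the invariant factors, and the final substitution into Corollary~\ref{cor:A7}.

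The difference is the lower bound $\rdim_k(H)\ge\rk H$. You obtain it through essential dimension, $\rdim_k(H)\ge\ed_k(H)\ge\rho_H(1)=\rk H$, using the rank--index theorem at $d=1$. This is legitimate: $\operatorname{char}k\nmid e$ gives $\operatorname{char}k\nmid|H|$, and there is no circularity.

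The paper stays inside representation theory. Every $k$-representation of $H$ is a direct sum of characters $H\to\mu_e\subset k^\times$, because the operators commute and are annihilated by the split separable polynomial $X^e-1$. Such a representation is faithful iff these characters generate $\Hom(H,\mu_e)\cong H$. Hence $\rdim_k(H)$ is the minimal number of generators of $H$, and both inequalities come at once.

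The paper's route is elementary and shows that $\rdim_k(H)=\rk H$ is a purely representation-theoretic fact. Your route uses a heavy theorem (valuation-theoretic here, classically Buhler--Reichstein and Berhuy--Favi) for a linear-algebra fact. In exchange, it records $\ed_k(H)=\rdim_k(H)=\rk H$ as a by-product.

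You can keep your structure without the heavy input. For a faithful $n$-dimensional representation, simultaneous diagonalisation embeds $H$ into $\mu_e^n\cong(\Z/e)^n$, and every subgroup of that group has rank $\le n$.
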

\begin{proof}
Let $H\le G$. A representation of $H$ over $k$ is a direct sum of characters $H\to\mu_e\subset k^\times$, since the operators commute and are annihilated by $X^e-1$, which splits over $k$ with distinct roots. It is faithful iff the intersection of the kernels of its characters is trivial, iff these characters generate the character group $\hat H=\Hom(H,\mu_e)$ (by duality of finite abelian groups, a subgroup of $\hat H$ whose elements have trivial common kernel is all of $\hat H$). As $\mu_e$ is cyclic of order $e$ and the exponent of $H$ divides $e$, $\hat H\cong H$, so $\rdim_k(H)$ is the minimal number of generators of $\hat H$, namely $\rk H$. Now apply Corollary~\ref{cor:A7}.
\end{proof}

\begin{remark}\label{rem:Z4Q}
The hypothesis $\mu_e\subset k$ in Corollaries~\ref{cor:A7} and~\ref{cor:subgroupabelian} cannot be weakened to $\zeta_p\in k$, the hypothesis of the subgroup formula: for $G=\Z/4$ and $k=\Q$ (where $\zeta_2=-1\in k$) one has $\rk G=1$ but $\ed_\Q(\Z/4)=\rdim_\Q(\Z/4)=[\Q(i):\Q]=2$ (Theorem~\ref{thm:KM}, Example~\ref{ex:rdim}), so the rank formula of Corollary~\ref{cor:A7} fails at $d=1$, and Theorem~\ref{thm:rankindex} is not sharp there. The subgroup formula itself does hold for $\Z/4$ over $\Q$: the right side of \eqref{eq:subgroupformula} is $2,1,1,0$ for $d=1$, $d\in\{2,3\}$, $d\ge4$; and $\edd1_\Q(\tgen)=\ed_\Q(\Z/4)=2$, $\edd d_\Q(\tgen)\le\ed_\Q(\Z/2)=1$ for $d\in\{2,3\}$ (Corollary~\ref{cor:sub}) with equality by Proposition~\ref{prop:d0}(c), and $\edd d_\Q(\tgen)=0$ for $d\ge4$.
\end{remark}

\subsection{Limits of the method}\label{ss:limits}

\begin{remark}\label{rem:limits}
(i) Theorem~\ref{thm:rankindex} says nothing beyond $d_0$ for cyclic groups. Every abelian subgroup $A$ of a cyclic group is cyclic, and for $d<|A|$ every $B\le A$ of index $\le d$ is non-trivial, so $\rho_A(d)=1$; the theorem gives only $\edd d_k(\tgen)\ge1$ for $d<|A|$. This is why the computation of $d_1(\tgen(\Z/p))$ in Section~\ref{sec:thmB} needs a different, curve-theoretic method, and why the first open case of the subgroup formula \eqref{eq:subgroupformula} is cyclic.
(ii) Even at $d=1$ the bound $\max_A\rho_A(1)=\max_A\rk A$ can be smaller than $\ed_k(G)$ for non-abelian $G$. For the quaternion group $Q_8$ and $k=\mathbb C$ every abelian subgroup is cyclic, so the theorem gives $\ed_{\mathbb C}(Q_8)\ge1$ only, while $\ed_{\mathbb C}(Q_8)=\rdim_{\mathbb C}(Q_8)=2$ by Theorem~\ref{thm:KM}: $Q_8$ is non-abelian, so it has no faithful one-dimensional representation, and its two-dimensional irreducible representation is faithful.
\end{remark}

% ---- end thmA.tex ----

% ---- begin thmB.tex ----
\section{Cyclic groups of prime order in characteristic 0}\label{sec:thmB}

Throughout this section $p$ is an odd prime, $G=\Z/p=\langle\sigma\rangle$, $V$ is a faithful representation of $G$ over $k$ with generic torsor $\tgen$ over $K$, $\zeta\in\kb$ is a primitive $p$-th root of unity, and
\[
c:=\zeta+\zeta^{-1},\qquad m:=[k(\zeta):k],\qquad \Theta:=\Gal(k(\zeta)/k)\subseteq(\Z/p)^\times,
\]
where $a\in(\Z/p)^\times$ corresponds to $\zeta\mapsto\zeta^a$. Lemma~\ref{lem:B1} and Corollary~\ref{cor:pertorsor} only need $\operatorname{char}k\ne p$; from Section~\ref{ss:lowerB} on, $\operatorname{char}k=0$.

\begin{lemma}\label{lem:cyclo}
Let $\operatorname{char}k\ne p$. Then $\Gal(k(\zeta)/k(c))=\Theta\cap\{\pm1\}$, and $-1\in \Theta$ iff $m$ is even. Hence: if $m$ is even, then $\zeta\notin k(c)$ and $[k(c):k]=m/2$; if $m$ is odd, then $k(c)=k(\zeta)$.
\end{lemma}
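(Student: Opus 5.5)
The plan is to use the Galois correspondence for the cyclic extension $k(\zeta)/k$. Its group $\Theta$ sits inside $(\Z/p)^\times$ via $a\mapsto(\zeta\mapsto\zeta^a)$, and $m=|\Theta|$. The steps are as follows.

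First, I will identify the subgroup of $\Theta$ fixing $c$. An element $a\in\Theta$ sends $c$ to $\zeta^a+\zeta^{-a}$. This equals $\zeta+\zeta^{-1}$ iff $a\equiv\pm1\pmod p$. To see this, write $\zeta^a+\zeta^{-a}=\zeta+\zeta^{-1}$, multiply by $\zeta^a$, and obtain $(\zeta^a-\zeta)(\zeta^a-\zeta^{-1})=0$. So $\zeta^a=\zeta^{\pm1}$, and since $\zeta$ has order exactly $p$ (here $\operatorname{char}k\ne p$ is used), $a=\pm1$. Conversely, both $a=1$ and $a=-1$ fix $c$. Since $k(c)\subseteq k(\zeta)$ and $k(\zeta)/k$ is Galois, $\Gal(k(\zeta)/k(c))$ is the stabiliser of $c$ in $\Theta$, namely $\Theta\cap\{\pm1\}$.

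Second, I will show that $-1\in\Theta$ iff $m$ is even. Because $p$ is odd, $(\Z/p)^\times$ is cyclic of even order $p-1$, and $-1$ is its unique element of order $2$. The subgroup $\Theta$ is cyclic of order $m$. If $m$ is even, $\Theta$ has an element of order $2$, which must be $-1$. If $-1\in\Theta$, then $2\mid m$ by Lagrange, using $-1\ne1$ since $p>2$.

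Third, I will read off the consequences. If $m$ is even, $\Gal(k(\zeta)/k(c))=\{\pm1\}$ has order $2$. Hence $\zeta\notin k(c)$, and $[k(c):k]=m/2$. If $m$ is odd, the group is trivial and $k(c)=k(\zeta)$.

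I do not expect a genuine obstacle. The only point needing care is that $\zeta^a=\zeta^{\pm1}$ forces $a\equiv\pm1$, which requires $\zeta$ to be a primitive $p$-th root; this holds because $\operatorname{char}k\ne p$, so $k(\zeta)/k$ is separable and Galois.
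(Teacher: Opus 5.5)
Your proof is correct and follows essentially the same route as the paper. You identify $\Gal(k(\zeta)/k(c))$ with the stabiliser of $c$ in $\Theta$ and show that $\zeta^a+\zeta^{-a}=\zeta+\zeta^{-1}$ forces $a=\pm1$, then use that $-1$ is the unique element of order $2$ in the cyclic group $(\Z/p)^\times$. The one cosmetic difference is that you factor $(\zeta^a-\zeta)(\zeta^a-\zeta^{-1})=0$, whereas the paper compares the quadratics $T^2-(\zeta^a+\zeta^{-a})T+1$ and $T^2-cT+1$.
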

\begin{proof}
$\zeta$ is a root of $T^2-cT+1$, so $[k(\zeta):k(c)]\le2$ and $\Gal(k(\zeta)/k(c))$ consists of the $a\in \Theta$ fixing $c$. Now $\zeta^a+\zeta^{-a}=\zeta+\zeta^{-1}$ iff $T^2-(\zeta^a+\zeta^{-a})T+1=T^2-cT+1$ iff $\{\zeta^a,\zeta^{-a}\}=\{\zeta,\zeta^{-1}\}$ iff $a=\pm1$. Since $(\Z/p)^\times$ is cyclic, $-1$ is its unique element of order $2$, and the subgroup $\Theta$ of order $m$ contains it iff $m$ is even.
\end{proof}

\begin{theorem}[the first jump degree of $\Z/p$]\label{thm:Zp}
Let $\operatorname{char}k=0$. Then
\[
\Big\lceil\tfrac m2\Big\rceil\ \le\ d_{1,k}(\tgen)\ \le\ [k(c):k].
\]
If $m$ is even (equivalently $\zeta\notin k(c)$), the two bounds coincide: $d_{1,k}(\tgen)=[k(c):k]=m/2$. In particular, over $k=\Q$, $d_{1,\Q}(\tgen(\Z/p))=(p-1)/2$.
\end{theorem}

The upper bound holds for every $\Z/p$-torsor (Corollary~\ref{cor:pertorsor}); the lower bound is the new content, and its proof (Section~\ref{ss:lowerB}) is curve-theoretic, since the rank--index theorem is vacuous for cyclic groups (Remark~\ref{rem:limits}).

\subsection{The upper bound}

By Proposition~\ref{prop:bfe} with $k'=k(c)$, the upper bound follows from $\ed_{k(c)}(\Z/p)\le1$. That $\ed_k(\Z/p)=1$ iff $\zeta+\zeta^{-1}\in k$, in characteristic $0$, is classical (a special case of Ledet's embedding criterion \cite{Ledet07}; see \cite[Thm.~1.3(1)]{CHKZ});
we give a self-contained proof of the direction needed, valid whenever $\operatorname{char}k\ne p$.

\begin{lemma}\label{lem:B1}
Let $\operatorname{char}k\ne p$ and $k':=k(c)$. Every $\Z/p$-torsor over a field $F\supseteq k'$ descends to a subfield $E$ with $k'\subseteq E\subseteq F$ and $\trdeg_{k'}E\le1$. Hence $\ed_{k'}(\Z/p)\le1$.
\end{lemma}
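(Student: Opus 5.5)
The plan is to realise every $\Z/p$-torsor over $F\supseteq k'$ as a twist of a one-dimensional generically free $\Z/p$-variety with a rational point, and then apply Lemma~\ref{lem:freepoint}. The natural candidate is a conic, or $\PP^1$, over $k'$ carrying a faithful $\Z/p$-action. Over $k'=k(c)$, let $\sigma$ act on $\PP^1_{k'}$ through the matrix
\[
M=\begin{pmatrix}c&-1\\1&0\end{pmatrix},
\]
which has characteristic polynomial $T^2-cT+1$ and eigenvalues $\zeta^{\pm1}$. Its image in $\PGL_2(k')$ has order exactly $p$: $M^p$ is diagonalisable over $k(\zeta)$ with eigenvalues $\zeta^{\pm p}=1$, so $M^p=1$. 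No smaller power is scalar, because $\zeta^j\ne\zeta^{-j}$ for $0<j<p$, as $p$ is odd. So $Z=\PP^1_{k'}$, viewed as a $k$-variety of dimension $1$, is a quasi-projective generically free primitive $\Z/p$-variety. Its non-free locus consists of the two fixed points, the eigenlines, which form a single closed point of degree $\le2$ defined over $k'$.

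It remains to show that for every $\Z/p$-torsor $\tau=\Spec P$ over $F\supseteq k'$ the twist $\tw{\tau}{Z}$ has a free $F$-point. I will do this with the linear representation $W=k'^2$ on which $\sigma$ acts by $M$. This $W$ is a faithful two-dimensional representation of $\Z/p$ over $k'$. The twist $\tw{\tau}{W}$ is a form of $\A^2_F$, hence trivial by Hilbert~90 (as in Proposition~\ref{prop:vers}), so it has $F$-points off any proper closed subset when $F$ is infinite. If $F$ is finite, it is algebraic over $k$ and there is nothing to prove by (B3).

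Choose an $F$-point of $\tw{\tau}{(W\smallsetminus\{\text{eigenlines}\})}$, which exists since the complement of the two eigenlines, twisted, is a dense open subset of affine space. Its image under $W\smallsetminus0\to\PP^1$ is an $F$-point of $\tw{\tau}{Z}$ lying in the twisted free locus, because the projection is equivariant. Lemma~\ref{lem:freepoint} then gives $\ed_k(\tau)\le1$. Unwinding its proof gives the descent to $E=\kappa(b)$ with $b\in Z^{\free}/\Z/p$. Since $Z$ lies over $k'$, we have $k'\subseteq\kappa(b)$, and $\trdeg_{k'}E\le1$. This proves $\ed_{k'}(\Z/p)\le1$.

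The main point to check is freeness. I need that $\sigma^j$, for $0<j<p$, acts on $\PP^1$ with only the two eigenlines as fixed points. This holds because each $\sigma^j$ is a nonscalar semisimple element with the same eigenvectors. I also need that freeness of the projected point is exactly membership in the twist of $Z^{\free}$, which follows from Fact~\ref{fact:rs}(a) applied to the equivariant morphism $T\to W\smallsetminus\{\text{eigenlines}\}\to Z^{\free}$. Everything else consists of the formal inputs already recorded in the paper.
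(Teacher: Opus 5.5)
Your proposal is correct and is essentially the paper's proof written in the language of twists. The paper's $U=(W\otimes_{k'}M)^G$ is your $\tw{\tau}{W}(F)$, and its choice of $w$ outside at most three proper $F$-subspaces (using $F$ infinite) is your choice of an $F$-point off the twisted eigenlines; its $u=f_w(x_1)/f_w(x_2)$ and $E=k'(u)^G$ are your projection to $\PP^1$ and the residue field $\kappa(b)$ from Lemma~\ref{lem:freepoint}. One harmless imprecision: when $\zeta\in k'$ the two fixed points are two $k'$-rational points, not a single closed point of degree $2$.
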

This is the classical upper bound \cite[Prop.~7.4]{BF03} (see also \cite[Thm.~1.3]{CHKZ}); the proof below is included for completeness.
\begin{proof}
Reductions. A split torsor descends to $k'$, and if $F$ is finite then $E=F$ will do. Otherwise, as $|G|=p$ is prime, the torsor is $\Spec M$ for a cyclic field extension $M/F$ of degree $p$ with $G=\Gal(M/F)$, and $F$ is infinite.

A rank-two representation. Let $W=k'^2$ with $\sigma$ acting by
\[
S=\begin{pmatrix}0&-1\\1&c\end{pmatrix},\qquad \det(T-S)=T^2-cT+1=(T-\zeta)(T-\zeta^{-1}).
\]
Since $p$ is odd, $\zeta\ne\zeta^{-1}$, so $S$ is diagonalisable over $k'(\zeta)$ with eigenvalues $\zeta^{\pm1}$, $S^p=1$ and $S\ne1$. Let $x_1,x_2\in W^*$ be the coordinate functions; $\sigma$ acts on $W^*$ (contragrediently) by $\sigma x_i=s_{i1}x_1+s_{i2}x_2$ for a matrix $(s_{ij})\in\mathrm{GL}_2(k')$, again diagonalisable with eigenvalues $\zeta^{\pm1}$. Let $\mu(u)=(s_{11}u+s_{12})/(s_{21}u+s_{22})$ be the corresponding M\"obius transformation of $\PP^1(\kb)$. Its fixed points are the eigenlines of $(s_{ij})$ written as $[u:1]$, so $\mu$ has exactly two fixed points $\alpha,\beta\in\PP^1(k'(\zeta))$; one of them may be $\infty$.

Equivariant points. Let $U:=(W\otimes_{k'}M)^G$, with $G$ acting diagonally. By Galois descent for the $G$-Galois extension $M/F$, the natural map $U\otimes_FM\to W\otimes_{k'}M$ is an isomorphism; so $\dim_FU=2$ and $U$ spans $W\otimes_{k'}M$ over $M$. Each $w\in U$ defines a $k'$-algebra map $f_w\colon k'[W]=\operatorname{Sym}(W^*)\to M$ by $f_w(x)=\langle x,w\rangle$ for $x\in W^*$ (the $M$-linear extension of the evaluation pairing), and $f_w$ is $G$-equivariant because $w$ is $G$-invariant.

Avoiding the fixed points. Consider the $F$-linear maps $U\to M$ given by $w\mapsto f_w(x_2)$ and, for each fixed point $\gamma\in\{\alpha,\beta\}$ that is finite and lies in $M$, $w\mapsto f_w(x_1)-\gamma f_w(x_2)$. They are nonzero, because the $M$-linear forms $x_2$ and $x_1-\gamma x_2$ on $W\otimes_{k'}M$ are nonzero and $U$ spans $W\otimes_{k'}M$ over $M$. So their kernels are at most three proper $F$-subspaces of $U$, and since $F$ is infinite some $w\in U$ lies in none of them.

A small invariant subfield. Put $u=f_w(x_1)/f_w(x_2)\in M$; this is defined since $f_w(x_2)\ne0$. By equivariance,
\[
\sigma(u)=\frac{f_w(\sigma x_1)}{f_w(\sigma x_2)}=\frac{s_{11}u+s_{12}}{s_{21}u+s_{22}}=\mu(u).
\]
The point $u\in\PP^1(\kb)$ is not fixed by $\mu$: $u\ne\infty$; $u\ne\gamma$ for a finite fixed point $\gamma\in M$ by the choice of $w$; and $u\ne\gamma$ for a fixed point $\gamma\notin M$, as $u\in M$. So $\sigma(u)\ne u$. The field $M_1:=k'(u)\subseteq M$ is $G$-stable ($\mu$ has coefficients in $k'$), $G$ acts faithfully on it ($\sigma$ moves $u$ and $|G|$ is prime), and $\trdeg_{k'}M_1\le1$.

Descent. $E:=M_1^G\subseteq M^G=F$ contains $k'$, and $M_1/E$ is $G$-Galois (Artin). The $F$-algebra map $M_1\otimes_EF\to M$ is a $G$-equivariant morphism of $G$-torsors over $F$, hence an isomorphism. So the torsor descends to $E$, and $\trdeg_{k'}E=\trdeg_{k'}M_1\le1$.
\end{proof}

\begin{corollary}[the per-torsor bound]\label{cor:pertorsor}
Let $\operatorname{char}k\ne p$. For every $\Z/p$-torsor $\tau$ over every field $F\supseteq k$,
\[
\edd{[k(c):k]}_k(\tau)\le1,\qquad\text{i.e. }\ d_{1,k}(\tau)\le[k(c):k]\ \text{ if }\ed_k(\tau)\ge1.
\]
For $\tau=\tgen$ the field $Kk(c)$, of degree $[k(c):k]$ over $K$ (Lemma~\ref{lem:regular}(b)), is a witness. This is the upper bound of Theorem~\ref{thm:Zp}.
\end{corollary}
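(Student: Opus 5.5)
The statement to prove is Corollary~\ref{cor:pertorsor}. I will obtain it by combining Lemma~\ref{lem:B1} with the base-field extension bound, Proposition~\ref{prop:bfe}, and the versality of $\tgen$.

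Put $k'=k(c)$ and $d=[k'\colon k]$. The extension $k'/k$ is finite separable: $c$ is a root of a factor of the minimal polynomial of $\zeta$, which is separable since $\operatorname{char}k\ne p$.

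Let $\tau$ be a $\Z/p$-torsor over $F\supseteq k$. I will first dispose of finite $F$: then $F$ is algebraic over $k$, so $\ed_k(\tau)=0$ by (B3), and there is nothing to prove. For infinite $F$, the algebra $F\otimes_kk'$ is étale of dimension $d$, so it has a field factor $F'$ with $[F'\colon F]\le d$ and $F'\supseteq k'$. Proposition~\ref{prop:bfe} then gives
\[
\ed_k(\tau\otimes F')=\ed_{k'}(\tau\otimes F')\le\ed_{k'}(\Z/p),
\]
and the right-hand side is $\le1$ by Lemma~\ref{lem:B1}. Hence $\edd d_k(\tau)\le1$. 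The reformulation in terms of jump degrees follows from Definition~\ref{def:profile}: if $\ed_k(\tau)\ge1$, then $d_1(\tau)$ is the least $d'$ with $\edd{d'}_k(\tau)\le1$, so $d_1(\tau)\le d$.

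For $\tgen$ I will identify the witness explicitly. By Lemma~\ref{lem:regular}(b), $K\otimes_kk'$ is already a field, namely $Kk'$, and it has degree exactly $[k'\colon k]$ over $K$. So the field factor in the argument above is unique and equals $Kk'$.

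By Lemma~\ref{lem:regular}(c), $\tgen\otimes_K Kk'$ is the generic torsor of $V_{k'}$ over $k'$. Its $\ed_{k'}$ is therefore $\ed_{k'}(\Z/p)\le1$, and Proposition~\ref{prop:bfe} converts this to $\ed_k(\tgen\otimes Kk')\le1$. This is the upper bound $d_{1,k}(\tgen)\le[k(c)\colon k]$ of Theorem~\ref{thm:Zp}.

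There is essentially no obstacle here. The only point needing care is that Proposition~\ref{prop:bfe} is applied to a field factor $F'$ that contains $k'$, so that the descent provided by Lemma~\ref{lem:B1} takes place over a subfield containing $k'$. Alternatively, the general case follows from the $\tgen$ case by Proposition~\ref{prop:vers}, which says $\edd d_k(\tau)\le\edd d_k(\tgen)$.
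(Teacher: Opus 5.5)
Your proof is correct and is the paper's argument: take a field factor $F'$ of $F\otimes_k k(c)$, apply Proposition~\ref{prop:bfe} and Lemma~\ref{lem:B1}, and use Lemma~\ref{lem:regular}(b) to see that for $\tgen$ this factor is $Kk(c)$; treating finite $F$ separately is harmless but unnecessary. One small slip: $c$ is not in general a root of a factor of the minimal polynomial of $\zeta$ (for $p=5$ over $\Q$ it satisfies $x^2+x-1$, which does not divide $\Phi_5$), so separability of $k(c)/k$ should instead be deduced from $k(c)\subseteq k(\zeta)$ and the separability of $x^p-1$ when $\operatorname{char}k\ne p$.
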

\begin{proof}
Let $F'$ be a field factor of $F\otimes_kk(c)$, so $[F':F]\le[k(c):k]$ and $F'\supseteq k(c)$. By Proposition~\ref{prop:bfe} and Lemma~\ref{lem:B1}, $\ed_k(\tau\otimes F')=\ed_{k(c)}(\tau\otimes F')\le\ed_{k(c)}(\Z/p)\le1$. For $\tgen$, $K\otimes_kk(c)=Kk(c)$ is a field by Lemma~\ref{lem:regular}(b).
\end{proof}

\subsection{The lower bound}\label{ss:lowerB}
Let $\operatorname{char}k=0$. Fix a finite field extension $F'\supseteq K$ of degree $d:=[F':K]$ and a subfield $E\subseteq F'$ containing $k$ with $\trdeg_kE\le1$ and $\tgen\otimes_KF'\cong\tau_0\otimes_EF'$ for a $G$-torsor $\tau_0$ over $E$. We show $d\ge\frac m2$. Since $\frac m2\le\frac{p-1}2<p$, we may assume $d<p$. Let $\Om$ be an algebraic closure of $F'$, embed $L$ in $\Om$ over $K$, and let $\kb\subseteq\Om$ be the algebraic closure of $k$.

\subsubsection*{Step 1: constant fields}
Let $k_0$ be the algebraic closure of $k$ in $F'$; it is finite over $k$ since $F'$ is finitely generated over $k$. Put $m_0=[k_0:k]$. By Lemma~\ref{lem:regular}(b), $[Kk_0:K]=m_0$, so $d':=[F':Kk_0]=d/m_0$. Replace $E$ by $Ek_0\subseteq F'$ and $\tau_0$ by $\tau_0\otimes_EEk_0$. Then $E\supseteq k_0$, $\trdeg_kE\le1$, and $k_0$ is algebraically closed in $E$ (as it is in $F'$). In characteristic $0$, $E/k_0$ is therefore regular.

\subsubsection*{Step 2: passing to $\kb$}
Let $\tilde K=K\kb$, $\tilde F=F'\kb$, $\tilde E=E\kb$, $\tilde L=L\kb$. Since $\tilde K=(Kk_0)\kb$, Lemma~\ref{lem:compositum} gives $d'':=[\tilde F:\tilde K]\le d'=d/m_0<p$. By Lemma~\ref{lem:regular}(b),(c), $\tilde L=\kb(V)$, $\tilde K=\kb(V)^G$ and $[\tilde L:\tilde K]=p$. The degree of $\tilde L\cap\tilde F$ over $\tilde K$ divides $p$ and is at most $d''<p$, so $\tilde L\cap\tilde F=\tilde K$. As $\tilde L/\tilde K$ is Galois, it follows that
\[
\Lambda:=\tilde L\tilde F\cong\tilde L\otimes_{\tilde K}\tilde F
\]
is a field with $[\Lambda:\tilde L]=[\tilde F:\tilde K]=d''$. By Lemma~\ref{lem:regular}(c), $\tgen\otimes_K\tilde F\cong\tilde L\otimes_{\tilde K}\tilde F=\Lambda$ as $G$-algebras; so $\tgen\otimes_K\tilde F$ is a non-trivial torsor, $G=\Gal(\Lambda/\tilde F)$, and $G$ acts on $\tilde L\subseteq\Lambda$ as on $\kb(V)$.

\subsubsection*{Step 3: the curve $C$ and its embedding in $\Lambda$}
Base change of $\tgen\otimes_KF'\cong\tau_0\otimes_EF'$ along $F'\subseteq\tilde F$ gives
\begin{equation}\label{eq:tauLambda}
\tau_0\otimes_E\tilde F\cong\tgen\otimes_K\tilde F\cong\Lambda .
\end{equation}
So $\tau_0$ is non-trivial: as $|G|$ is prime, it is $\Spec M_0$ for a cyclic field extension $M_0/E$ of degree $p$ with $G=\Gal(M_0/E)$.
\begin{itemize}
\item $\trdeg_kE=1$. If $\trdeg_kE=0$ then $E=k_0$, $\tau_0\otimes_{k_0}\kb$ is split, and so is $\tau_0\otimes_E\tilde F$, contradicting \eqref{eq:tauLambda}.
\item $k_0$ is algebraically closed in $M_0$. Suppose not, and let $k_0'\supsetneq k_0$ be the algebraic closure of $k_0$ in $M_0$. It is $G$-stable and $k_0'\cap E=k_0$, so $G$ acts non-trivially, hence faithfully, on $k_0'$; thus $[k_0':k_0]=p$ and $(k_0')^G=k_0$. As $E$ is regular over $k_0$, $Ek_0'\cong E\otimes_{k_0}k_0'$ has degree $p$ over $E$, so $M_0=Ek_0'\cong E\otimes_{k_0}k_0'$ as $G$-algebras. Then $\tau_0\otimes_E\tilde F\cong\tilde F\otimes_{k_0}k_0'$ is split, as $k_0'\otimes_{k_0}\kb\cong\kb^p$; this contradicts \eqref{eq:tauLambda}.
\end{itemize}
So $M_0$ is the function field of a smooth projective geometrically integral curve $C$ over $k_0$, on which $G$ acts faithfully by $k_0$-automorphisms. Let $D=C/G$, with function field $E$; it is smooth and geometrically integral as $E/k_0$ is regular. The genera $g(C)$, $g(D)$ are unchanged by extension of the base field to $\kb$, since smoothness and $h^1(\mathcal O)$ are preserved by flat base change \cite[III.9.3]{Hart77}. The field $\tilde M_0:=M_0\otimes_{k_0}\kb$ is the function field of $C_{\kb}$, with $G$ acting through $M_0$, and $\tilde M_0^G=E\otimes_{k_0}\kb=\tilde E$.

The embedding. Since $M_0\otimes_E\tilde E=M_0\otimes_E(E\otimes_{k_0}\kb)=\tilde M_0$, the left side of \eqref{eq:tauLambda} is $\tilde M_0\otimes_{\tilde E}\tilde F$, and \eqref{eq:tauLambda} is a $G$-equivariant $\tilde F$-algebra isomorphism $\iota\colon\tilde M_0\otimes_{\tilde E}\tilde F\to\Lambda$. Its restriction to $\tilde M_0\otimes1$ is a $G$-equivariant embedding of fields over $\tilde E$. We identify $\tilde M_0$ with its image: a $G$-stable subfield of $\Lambda\subseteq\Om$ containing $\tilde E$, with $\tilde M_0/\tilde E$ Galois with group $G$. (Only $\tilde L\otimes_{\tilde K}\tilde F$ is known to be a field, by Step~2; the argument does not use $L\otimes_KF'$.)

\subsubsection*{Step 4: $C_{\kb}$ has a $G$-fixed point}
Apply Proposition~\ref{prop:A4} over the algebraically closed field $\kb$ to the generic torsor $\tgen\otimes_K\tilde K$ of $V_{\kb}$ (Lemma~\ref{lem:regular}(c)), with total space $\tilde L\subseteq\Om$, to $A=G$, to the valuation $\nu$ of Lemma~\ref{lem:flag} for $V_{\kb}$ ($\Om$ is an algebraic closure of $\tilde L$), and to $\tilde F\supseteq\tilde K$ and $\tilde E\subseteq\tilde F$: by $\iota$, $\tgen\otimes_K\tilde F\cong(\tilde M_0/\tilde E)\otimes_{\tilde E}\tilde F$, and the torsor $\tilde M_0/\tilde E$ is classified by $\psi(\gamma)=\gamma|_{\tilde M_0}$ for $\gamma\in\Gam_{\tilde E}$. Then $A\cap\varphi(\Dec_{\tilde F})$ has index $\le d''<p$ in $A=\Z/p$, so it equals $G$, and Proposition~\ref{prop:A4}(b) gives $\psi(\Dec_{\tilde E})=G$. Thus every element of $G$ is the restriction of some $\gamma\in \Dec_{\tilde E}$, and so preserves the valuation $\nu|_{\tilde M_0}$.

The valuation $\nu|_{\tilde M_0}$ is trivial on $\kb$ and non-trivial on $\tilde E$: otherwise the residue map would embed $\tilde E$, of transcendence degree $1$ over $\kb$, into $\kappa(\tilde E)=\kb$ (Lemma~\ref{lem:flag}(i)). Let $w=\nu|_{\tilde M_0}$. It is non-trivial on $\kb(C)$ (it is non-trivial on $\tilde E\subseteq\tilde M_0$) and trivial on $\kb$. Abhyankar's inequality (Lemma~\ref{lem:abhyankar}) gives $\dim_\Q w(\kb(C)^\times)\otimes\Q+\trdeg_{\kb}\kappa(w)\le1$, so the rational rank is $1$ and $\kappa(w)$ is algebraic over $\kb$, hence equal to $\kb$; a valuation of a function field of one variable over $\kb$ that is trivial on $\kb$ is discrete \cite[Ch.~1]{St}, and its valuation ring is the local ring of a unique closed point $P\in C(\kb)$ \cite[I, \S6]{Hart77}.
Since every element of $G$ maps $\mathcal O_P$ to itself, $G$ fixes $P$.

\subsubsection*{Step 5: at least $[k_0(\zeta):k_0]$ fixed points}
Let $\Fix\subseteq C(\kb)$ be the set of $G$-fixed points; it is finite, and non-empty by Step~4.
\begin{enumerate}[label=(\arabic*)]
\item The rotation character is faithful. For $P\in\Fix$, $G$ acts on $\mathfrak m_P/\mathfrak m_P^2$ by a character $\theta_P\colon G\to\kb^\times$. In characteristic $0$, averaging
\[
t'=\frac1p\sum_{h\in G}\theta_P(h)^{-1}h(t)
\]
for a uniformiser $t$ gives a uniformiser $t'\equiv t\bmod\mathfrak m_P^2$ with $h(t')=\theta_P(h)t'$, so $G$ acts on $\widehat{\mathcal O}_P=\kb[[t']]$ through $\theta_P$. If $\theta_P(h)=1$ then $h$ fixes $\mathcal O_P\subseteq\widehat{\mathcal O}_P$, hence $\kb(C)$, so $h=1$. Thus $\theta_P(\sigma)$ is a primitive $p$-th root of unity.
\item Galois compatibility. Let $\Gam_{k_0}$ act (through its quotient $\Gal(\kb/k_0)$) on $\kb(C)=M_0\otimes_{k_0}\kb$ by $1\otimes s$ (semilinear over $\kb$, trivial on $M_0$), and on $C(\kb)$ by letting $sP$ be the point with $\mathcal O_{sP}=s(\mathcal O_P)$, so $\mathfrak m_{sP}=s(\mathfrak m_P)$. This action commutes with $G$, which acts through $M_0$; hence $\Fix$ is $\Gam_{k_0}$-stable. Applying $s$ to $h(t)\equiv\theta_P(h)t\pmod{\mathfrak m_P^2}$ and using $s\circ h=h\circ s$ gives $h(st)\equiv s(\theta_P(h))\,st\pmod{\mathfrak m_{sP}^2}$ with $st$ a uniformiser at $sP$, i.e.\ $\theta_{sP}(h)=s(\theta_P(h))$.
\item Counting. So $P\mapsto\theta_P(\sigma)$ is a $\Gam_{k_0}$-equivariant map from $\Fix$ to the set of primitive $p$-th roots of unity, with non-empty, $\Gam_{k_0}$-stable image. $\Gam_{k_0}$ acts on that set through $\Gal(k_0(\zeta)/k_0)\le(\Z/p)^\times$, freely, so every orbit has exactly $[k_0(\zeta):k_0]$ elements. The map need not be injective, but its image is a non-empty union of orbits, so $|\Fix|\ge|\text{image}|\ge[k_0(\zeta):k_0]$. Hence
\[
r:=|\Fix|\ge[k_0(\zeta):k_0].
\]
\end{enumerate}

\subsubsection*{Step 6: Riemann--Hurwitz}
$C_{\kb}\to D_{\kb}$ is Galois of prime degree $p$. Every stabiliser is $1$ or $G$, the ramification points are exactly the points of $\Fix$, each with ramification index $p$, and all ramification is tame (characteristic $0$). So \cite[IV.2.4]{Hart77}
\begin{equation}\label{eq:RH}
2g(C)-2=p\,(2g(D)-2)+r(p-1),\qquad\text{i.e.}\qquad g(C)=p\,g(D)-p+1+\tfrac{r(p-1)}2.
\end{equation}

\subsubsection*{Step 7: a generic line}
Write $\tilde L=\kb(x_1,\dots,x_n)$ with $x_1,\dots,x_n$ linear coordinates on $V$, and pick $u\in\tilde M_0\smallsetminus\kb$; it is transcendental over $\kb$. Here the embedding $\tilde M_0\subseteq\Lambda$ of Step~3 is used: $u$ is an element of $\Lambda$, and $[\Lambda:\tilde L]=d''$ by Step~2.
\begin{enumerate}[label=(\arabic*)]
\item Differentials. $\Lambda/\kb(x)$ is finite separable, so $dx_1,\dots,dx_n$ is a basis of the module of K\"ahler differentials $\Omega^1_{\Lambda/\kb}$. In characteristic $0$, elements $z_1,\dots,z_s\in\Lambda$ are algebraically independent over $\kb$ iff $dz_1,\dots,dz_s$ are $\Lambda$-linearly independent \cite[\S26]{Ma}, \cite[\S16.5--16.6]{Ei}. So $du=\sum_ia_i\,dx_i$ with $(a_1,\dots,a_n)\ne0$ in $\Lambda^n$.
\item Choosing the line direction. For linearly independent linear forms $y_2,\dots,y_n$ with coefficient rows $\ell_2,\dots,\ell_n\in\kb^n$,
\[
dy_2\wedge\dots\wedge dy_n\wedge du=\Big(\sum_i\lambda_ia_i\Big)\,dx_1\wedge\dots\wedge dx_n,
\]
where $\lambda\in\kb^n\smallsetminus0$ is the vector of signed maximal minors of the matrix with rows $\ell_2,\dots,\ell_n$; it spans the orthogonal of $\operatorname{span}(\ell_j)$, and every $\lambda\ne0$ arises in this way up to a scalar. The set $\{\lambda\in\kb^n:\sum\lambda_ia_i=0\}$ is a proper $\kb$-subspace, as some $a_i\ne0$. So we can choose $y_2,\dots,y_n$ with $dy_2,\dots,dy_n,du$ linearly independent, i.e.\ $y_2,\dots,y_n,u$ algebraically independent over $\kb$, and complete them to linear coordinates $y_1,\dots,y_n$ on $V$. (If $n=1$ this step is empty.)
\item Base change to $k_1$. Let $k_1$ be the algebraic closure of $\kb(y_2,\dots,y_n)$ in $\Om$; it is algebraically closed of characteristic $0$.
\begin{enumerate}[label=(\alph*)]
\item $y_1$ is transcendental over $k_1$. The field $\Lambda_1:=\Lambda k_1$ satisfies $[\Lambda_1:k_1(y_1)]\le[\Lambda:\tilde L]=d''$ by Lemma~\ref{lem:compositum}, since $k_1(y_1)=\tilde Lk_1$. So $\Lambda_1$ is a function field of one variable over $k_1$, with constant field $k_1$, and $y_1\in\Lambda_1$ has degree $\delta:=[\Lambda_1:k_1(y_1)]\le d''$.
\item $y_2,\dots,y_n,u$ are algebraically independent over $\kb$ (Step~7(2)), so $u$ is transcendental over $k_1$, the algebraic closure of $\kb(y_2,\dots,y_n)$. As $\tilde M_0$ is algebraic over $\kb(u)$, $\{u\}$ is a transcendence basis of $\tilde M_0/\kb$ that stays algebraically independent over $k_1$; thus $\tilde M_0$ and $k_1$ are free over $\kb$. They are linearly disjoint over $\kb$: $\kb(u)\otimes_{\kb}k_1$ embeds in $k_1(u)$, and $\tilde M_0\otimes_{\kb(u)}k_1(u)$ is a domain, being a localisation of the coordinate ring of the integral curve $C_{k_1}$ (a variety over $\kb=\bar\kb$ stays integral after any field extension), hence a field since it is finite over $k_1(u)$, so it maps injectively to $\Om$. Thus $M_1:=\tilde M_0k_1\cong\Frac(\tilde M_0\otimes_{\kb}k_1)$ is the function field of $C_{k_1}$, of genus $g(C)$; $G$ acts $k_1$-linearly on it through $\tilde M_0$, and $M_1^G=\tilde Ek_1$ is the function field of $D_{k_1}$, of genus $g(D)$. Moreover $M_1\subseteq\Lambda_1$ and $[\Lambda_1:M_1]<\infty$, both being function fields of one variable over $k_1$.
\end{enumerate}
\end{enumerate}

\subsubsection*{Step 8: the gonality descends}
(Compare \cite[Prop.~1.1(vii)]{Po} and \cite[Lemma~2.4]{FW25}; we argue directly.) Let $\chi(T)\in M_1[T]$ be the characteristic polynomial of multiplication by $y_1$ on the $M_1$-vector space $\Lambda_1$, of dimension $e_1:=[\Lambda_1:M_1]$. Then $\chi\notin k_1[T]$: otherwise $\chi(y_1)=0$ (Cayley--Hamilton) would make $y_1$ algebraic over $k_1$. If $\chi(b)\in k_1$ for $e_1+1$ distinct $b\in k_1$, Vandermonde inversion would put all coefficients of $\chi$ in $k_1$. So there is $b\in k_1$ with
\[
\eta:=(-1)^{e_1}\chi(b)=N_{\Lambda_1/M_1}(y_1-b)\notin k_1 .
\]
For a place $P$ of $M_1$, $v_P(N_{\Lambda_1/M_1}(z))=\sum_{Q\mid P}f(Q|P)\,v_Q(z)$ \cite[I~\S4]{Se2}, and $f(Q|P)=1$ since $k_1$ is algebraically closed. Hence the pole divisor of $\eta$ satisfies
\[
(\eta)_\infty\le\sum_P\sum_{Q\mid P}\max\bigl(0,-v_Q(y_1-b)\bigr)\,P,
\]
whose degree is $\deg(y_1-b)_\infty=\delta$. So $[M_1:k_1(\eta)]=\deg(\eta)_\infty\le\delta\le d''$ \cite[Thm.~1.4.11]{St} (here $k_1$ is algebraically closed, hence the full constant field of $M_1$).

\subsubsection*{Step 9: Castelnuovo's inequality}
Let $\mathcal F$ be a function field of one variable over an algebraically closed field, with subfields $\mathcal F_1,\mathcal F_2$ containing the constants such that $\mathcal F=\mathcal F_1\mathcal F_2$ and $[\mathcal F:\mathcal F_i]=n_i<\infty$. Then Castelnuovo's inequality \cite[Thm.~3.11.3]{St} (the constant field is algebraically closed, hence the full constant field of every field involved)
\[
g(\mathcal F)\le n_1g(\mathcal F_1)+n_2g(\mathcal F_2)+(n_1-1)(n_2-1).
\]
Take $\mathcal F=M_1$, $\mathcal F_1=M_1^G$ ($n_1=p$, genus $g(D)$) and $\mathcal F_2=k_1(\eta)$ (genus $0$, $n_2=[M_1:k_1(\eta)]\le d''$).

Non-factoring. $\mathcal F_1\mathcal F_2$ lies between $\mathcal F_1$ and $M_1$, and $[M_1:\mathcal F_1]=p$ is prime. If $\mathcal F_1\mathcal F_2=\mathcal F_1$, then $k_1(\eta)\subseteq\mathcal F_1$ and $n_2=p\,[\mathcal F_1:k_1(\eta)]\ge p>d''$, a contradiction. So $\mathcal F_1\mathcal F_2=M_1$, and
\[
g(C)\le p\,g(D)+(p-1)(n_2-1).
\]

\subsubsection*{Step 10: arithmetic}
Substituting \eqref{eq:RH},
\[
p\,g(D)-p+1+\tfrac{r(p-1)}2\le p\,g(D)+(p-1)(n_2-1)\iff\tfrac{r(p-1)}2\le(p-1)n_2\iff r\le2n_2.
\]
Combining with Step~5 and $n_2\le d''\le d/m_0$,
\[
[k_0(\zeta):k_0]\le r\le\frac{2d}{m_0}\quad\Longrightarrow\quad d\ge\tfrac12\,m_0\,[k_0(\zeta):k_0]=\tfrac12[k_0(\zeta):k]\ge\tfrac12[k(\zeta):k]=\tfrac m2 .
\]
So every $d$ with $\edd d_k(\tgen)\le1$ satisfies $d\ge\lceil\frac m2\rceil$, which is the lower bound of Theorem~\ref{thm:Zp}.

\begin{proof}[Proof of Theorem~\ref{thm:Zp}]
The upper bound is Corollary~\ref{cor:pertorsor} and the lower bound is Steps~1--10. If $m$ is even, Lemma~\ref{lem:cyclo} gives $[k(c):k]=m/2=\lceil m/2\rceil$. For $k=\Q$, $m=p-1$ is even.
\end{proof}

\subsection{Sharpness of the roots-of-unity bound}\label{ss:sharpB}
For $G=\Z/p$ the roots-of-unity bound of Corollary~\ref{cor:rou} is $d_{1,k}(\tgen)\le[k(\mu_p):k]=m$, since $\ed_{k(\zeta)}(\Z/p)=1$ (Example~\ref{ex:cyc}). Theorem~\ref{thm:Zp} decides when it is sharp, except in one family of cases.

\begin{corollary}[sharpness of Corollary~\ref{cor:rou} for $\Z/p$]\label{cor:sharp}
Let $\operatorname{char}k=0$.
\begin{enumerate}[label=\textup{(\roman*)}]
\item If $m=1$, then $d_{1,k}(\tgen)=1=m$: Corollary~\ref{cor:rou} is sharp.
\item If $m\ge2$ and $-1\in \Theta$ (equivalently $m$ is even, equivalently $\zeta\notin k(c)$), then $d_{1,k}(\tgen)=m/2$: Corollary~\ref{cor:rou} overshoots by exactly a factor $2$, and the optimum is attained by the constant-field extension $Kk(c)$.
\item If $m\ge2$ and $-1\notin \Theta$ (equivalently $m$ is odd and $m\ge3$; then $k(c)=k(\zeta)$), then $\frac{m+1}2\le d_{1,k}(\tgen)\le m$, and it is not known whether Corollary~\ref{cor:rou} is sharp (Question~\ref{q:sharpodd}).
\end{enumerate}
\end{corollary}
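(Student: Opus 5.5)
Each of the three cases will be read off Theorem~\ref{thm:Zp}, Lemma~\ref{lem:cyclo} and the roots-of-unity bound; no new argument is needed. Throughout I use Corollary~\ref{cor:rou} for $G=\Z/p$ in the form $d_{1,k}(\tgen)\le[k(\mu_p):k]=m$. This holds because $\ed_{k(\zeta)}(\Z/p)=1$ by Lemma~\ref{lem:kummer}, and because $\ed_k(\tgen)\ge1$, which follows from Theorem~\ref{thm:rankindex} with $A=G$.

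\emph{Case (i), $m=1$.} Here $\zeta\in k$, so $k(c)=k$. The upper bound of Theorem~\ref{thm:Zp} gives $d_1\le[k(c):k]=1$. Since $d_1\ge1$ always, we get $d_1=1=m$, so the bound is attained.

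\emph{Case (ii), $m$ even.} By Lemma~\ref{lem:cyclo}, $m$ is even exactly when $-1\in\Theta$, which is equivalent to $\zeta\notin k(c)$. In this case $[k(c):k]=m/2$. Theorem~\ref{thm:Zp} then gives $d_1=m/2$. The witness is $Kk(c)$, which has degree $[k(c):k]$ over $K$ by Lemma~\ref{lem:regular}(b) and Corollary~\ref{cor:pertorsor}. Since $m\ge2$, the value $m/2$ is strictly less than $m$, so Corollary~\ref{cor:rou} overshoots by exactly a factor of $2$.

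\emph{Case (iii), $m\ge3$ odd.} By Lemma~\ref{lem:cyclo}, $-1\notin\Theta$ and $k(c)=k(\zeta)$. The upper bound $[k(c):k]$ of Theorem~\ref{thm:Zp} is therefore $m$, the same as Corollary~\ref{cor:rou}. The lower bound $\lceil m/2\rceil$ equals $(m+1)/2$ because $m$ is odd. This yields $\frac{m+1}{2}\le d_1\le m$.

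The only point needing care is in (iii): nothing in the argument rules out equality with $m$. This is exactly what the open Question~\ref{q:sharpodd} records, so no further claim is made there. The remaining steps are direct bookkeeping.
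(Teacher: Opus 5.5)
Your proposal is correct and follows the paper's own proof: each case is read off Lemma~\ref{lem:cyclo} and Theorem~\ref{thm:Zp}, and (ii) uses the witness $Kk(c)$ of Corollary~\ref{cor:pertorsor}. The differences are cosmetic. In (i) you take $d_1\le1$ from $[k(c):k]=1$ rather than directly from Corollary~\ref{cor:rou} with $m=1$. Lemma~\ref{lem:kummer} gives only $\ed_{k(\zeta)}(\Z/p)\le1$, but that inequality is all your argument needs.
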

\begin{proof}
(i) $1\le d_{1,k}(\tgen)\le m$. (ii) Lemma~\ref{lem:cyclo} and Theorem~\ref{thm:Zp}, the witness being that of Corollary~\ref{cor:pertorsor}. (iii) By Lemma~\ref{lem:cyclo}, $k(c)=k(\zeta)$, so Theorem~\ref{thm:Zp} reads $\lceil m/2\rceil\le d_{1,k}(\tgen)\le m$, and $\lceil m/2\rceil=(m+1)/2$ for odd $m$.
\end{proof}

In case (iii) no constant-field extension does better than $Kk(\zeta)$: if $[k':k]<m$ then $c\notin k'$, as $k(c)=k(\zeta)$ has degree $m$, so $\ed_{k'}(\Z/p)\ge2$ by Remark~\ref{rem:checksB}(i), and $\ed_k(\tgen\otimes Kk')=\ed_{k'}(\Z/p)$ by Lemma~\ref{lem:regular}(c) and Proposition~\ref{prop:bfe}. The question is therefore whether an extension of $K$ that is not a constant-field extension can do better.

\begin{question}\label{q:sharpodd}
Let $\operatorname{char}k=0$ and let $m=[k(\zeta_p):k]$ be odd, $m\ge3$. Is $d_{1,k}(\tgen(\Z/p))=m$? The smallest case is $p=7$, $k=\Q(\sqrt{-7})$: this is the quadratic subfield of $\Q(\zeta_7)$ (as $7\equiv3\bmod4$), so $m=3$, and $d_{1,k}(\tgen)\in\{2,3\}$.
\end{question}

\begin{remark}[where the method stops for $m$ odd]\label{rem:stalls}
For an extension $F'/K$ with $[F':K]=d$ that is not a constant-field extension, the argument gives a $G$-curve with at least $m$ fixed points (one Galois orbit, Step~5) and Riemann--Hurwitz and Castelnuovo give $r\le2d$ (Step~10), hence $d\ge m/2$, i.e.\ $d\ge(m+1)/2$ for $m$ odd. To exclude $m/2<d<m$ one would need either more fixed points than one Galois orbit, or a sharper genus bound than Castelnuovo's for curves whose function field is generated by functions of degree $\le d$ over a base of genus $g(D)$; we know of no way to obtain either from non-constant quadratic extensions. For instance, at $p=7$, $k=\Q(\sqrt{-7})$ ($m=3$) the method excludes $d=1$ but not $d=2$.
\end{remark}

\begin{remark}[checks]\label{rem:checksB}
(i) For $d=1$ (so $m_0=1$, $d''=1$) Steps~1--10 give $r\le2$, hence $m\le2$ and $c\in k$ by Lemma~\ref{lem:cyclo}: if $\ed_k(\Z/p)=1$ then $\zeta+\zeta^{-1}\in k$. With Lemma~\ref{lem:B1} this is the criterion $\ed_k(\Z/p)=1\iff\zeta+\zeta^{-1}\in k$ in characteristic $0$.
(ii) Over $\Q$: for $p=3$, $c=-1\in\Q$ and $d_{1,\Q}(\tgen)=1$, i.e.\ $\ed_\Q(\Z/3)=1$. For $p=5$, $d_{1,\Q}(\tgen)=2$: thus $\ed_\Q(\Z/5)\ge2$, and the profile drops to $1$ at degree $2$. For $p=7$, $d_{1,\Q}(\tgen)=3$. The role of the constant field $k_0$ is visible at $p=7$: for $F'=K(\sqrt{-7})=Kk_0$ with $k_0=\Q(\sqrt{-7})$ one has $d=2$, $m_0=2$, and Step~10 would require $[k_0(\zeta_7):k_0]=3\le2d/m_0=2$; so $\ed_\Q(\tgen\otimes K(\sqrt{-7}))\ge2$.
\end{remark}

% ---- end thmB.tex ----

% ---- begin S5.tex ----
\section{The symmetric group \texorpdfstring{$S_5$}{S5}: the first jump degree is \texorpdfstring{$4$}{4}}\label{sec:S5}

Throughout, $G=S_5$, $V$ is a faithful $k$-representation, $L=k(V)$, $K=L^G$, $\tau=\tgen=\Spec L\to\Spec K$, and $F_1:=L^{A_5}=K(\delta)$ with $\delta^2=\Delta\in K$, so that $g(\delta)=\operatorname{sgn}(g)\delta$ for $g\in G$. Lemmas~\ref{lem:S5bc}--\ref{lem:S5free} are stated for an arbitrary finite group $G$ (and $k$ of characteristic $0$ in Lemma~\ref{lem:S5bc}).

\begin{theorem}\label{thm:S5}
Let $k$ be a field of characteristic $0$. Then $\edd d_k(\tgen)\ge2$ for $d\le3$ and $\edd4_k(\tgen)\le1$. Hence $d_1(\tgen(S_5))=4$. More precisely, $\edd d_k(\tgen)=1$ for $4\le d\le119$ and $\edd d_k(\tgen)=0$ for $d\ge120$; and $\edd d_k(\tgen)=2$ for $d\le3$, the lower bound being Proposition~\ref{prop:S5lower} and the upper bound $\ed_k(S_5)\le2$ \cite[Thm.~6.5(d)]{BR97}, valid over every $k$ of characteristic $0$.
\end{theorem}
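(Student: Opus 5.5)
The theorem splits into four assertions, and I would prove them in increasing order of difficulty.

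\emph{The top and bottom of the profile.} The value $\edd1_k=\ed_k(S_5)=2$ comes from two sides. The upper bound is \cite[Thm.~6.5(d)]{BR97}. The lower bound is Theorem~\ref{thm:rankindex} applied to the Klein four-subgroup $A=\langle(12)(34),(13)(24)\rangle$, which gives $\rho_A(1)=2$; this bound will in any case be superseded by Proposition~\ref{prop:S5lower}. The tail $\edd d_k=0$ exactly for $d\ge120$ is Proposition~\ref{prop:d0}(c), since $k$ is perfect. For $d<120$, Proposition~\ref{prop:d0}(c) also gives $\edd d_k\ge1$. So the statement reduces to two claims: $\edd4_k(\tgen)\le1$, and $\edd3_k(\tgen)\ge2$.

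\emph{Upper bound at degree $4$.} I would realise Klein's reduction in two stages.

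First, the degree-$2$ step. Over $F_1=K(\delta)$, Lemma~\ref{lem:ind} and Corollary~\ref{cor:sub} give $\tgen\otimes F_1=\Ind_{A_5}^{S_5}(L/F_1)$. Hence $\ed_k(\tgen\otimes F_1)=\ed_k(A_5)$, and the problem becomes: show $\edd2_k(\tgen(A_5))\le1$ with a witness defined over $F_1$.

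Second, the conic step. If $\sqrt5\in k$, the icosahedral action of $A_5$ on $\PP^1$ is defined over $k$. The twist of $\PP^1$ by the $A_5$-torsor is then a conic $Q$ over $F_1$, and $Q$ acquires a point over a quadratic extension $F''/F_1$. Since the free locus of $Q$ is open and dense, I expect to choose that point in the free locus. Theorem~\ref{thm:geom} then gives $\ed_k(\tgen\otimes F'')\le1$ with $[F'':K]=4$.

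If $\sqrt5\notin k$, I would use $K(\sqrt{5\Delta})$ in place of $K(\delta)$. I would regard the icosahedral conic over $k(\sqrt5)$ as a $k$-curve carrying a semilinear $S_5$-action, in which odd permutations act through the conjugation of $k(\sqrt5)$. Twisting this curve by $\tgen$ gives a genus-$0$ curve over $K(\sqrt{5\Delta})$, and I would again take a free point on it over a quadratic extension.

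\emph{Lower bound for $d\le3$.} This is the main step, and I would carry it out as Proposition~\ref{prop:S5lower}. First reduce to $k=\kb$: pass to $Kk_0$ and then to $\tilde K$ as in Steps~1--2 of Section~\ref{sec:thmB}, using Lemma~\ref{lem:compositum} and Lemma~\ref{lem:regular}.

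Next, suppose $[F':K]\le3$ and $\ed\le1$, and let $N_0,H$ be as in Section~\ref{ss:tors}. Then $[G:H]\le3$ divides $[F':K]$, so $H\in\{S_5,A_5\}$ and $e=[N_0:L]\le3$.

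Proposition~\ref{prop:witness} then produces a curve $C$ with a faithful $H$-action and an $H$-invariant class $D$ of degree $e\le3$ with $\ell(D)\ge2$. I would exclude this configuration in three cases according to the genus of $C$.
\begin{itemize}
\item If $g(C)\ge1$, then $D$ is a $g^1_{\le3}$, and $C$ is hyperelliptic or trigonal or of low genus. The associated map $C\to\PP^1$ of degree $\le3$ would have to be $H$-equivariant for an induced action on $|D|$. An invariant complete pencil gives a homomorphism $H\to\PGL_{r+1}$, which for $A_5$ must be faithful or trivial. If it is faithful, then a degree-$\le3$ map $C\to\PP^1$ intertwines the $A_5$-actions. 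This would force the $A_5$-orbits on $C$ to be controlled by the orbit sizes on $\PP^1$ (at least $12$). These orbits lie in fibres of size $\le3$, which requires the stabilisers to be large, and I expect Riemann--Hurwitz to contradict this. If the homomorphism is trivial, then $A_5$ acts within the fibres of a map of degree $\le3$, which is impossible for a faithful action.
\item If $g(C)=0$, then $H$ acts on $\PP^1$, so $H=A_5$; and an $A_5$-invariant divisor class of degree $e\le3$ on $\PP^1$ is automatic. In this case the obstruction has to come from the rationality of the fixed divisor $\mathcal D$ over $L$ in Lemma~\ref{lem:AJ}. The effective divisor $\mathcal D$ of degree $e$ is an $L$-point of $|D|=\PP^e$, and $A_5$ acts semi-equivariantly on it. I would argue that this yields an $A_5$-equivariant rational map $V\dashrightarrow\mathrm{Sym}^e\PP^1=\PP^e$, where $A_5$ acts on the target through its binary icosahedral lift. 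Because $e\le3$, there is no suitable faithful $A_5$-action on $\PP^e$ compatible with this: the irreducible representations of $2.A_5$ have dimensions $2,2,4,6$, and those of $A_5$ have dimensions $1,3,3,4,5$, so $\mathrm{Sym}^e$ of the standard representation has no room for the required generic points. From this I would derive a contradiction.
\end{itemize}

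I expect this genus-$0$ case to be the hardest part of the whole proof, and I would attack it first by working out the $A_5$-equivariance of $\mathcal D$ explicitly.
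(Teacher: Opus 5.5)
Your treatment of the top and tail of the profile, and of the degree-$4$ upper bound, follows the paper; the case $\sqrt5\notin k$ is only sketched, but along the same lines. The gap is in the lower bound, exactly in the case you flag as hardest.

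\textbf{The case $g(C)=0$.} You correctly get $H=A_5$, but then keep $e\le3$. In fact $[G:H]=2$ must divide $[F':K]\le3$, so $[F':K]=2$ and $e=[N_0:L]=1$. Hence $N_0=L$ and $F'$ is identified with $F_1=L^{A_5}$. No geometry on $C$ is needed at this point. Since $\tgen\otimes_KF_1=\Ind_{A_5}^{S_5}(L/F_1)$, Lemma~\ref{lem:ind} and Theorem~\ref{thm:rankindex} (with $d=1$, $A=V_4$) give $\ed_{\kb}(\tgen\otimes F_1)=\ed_{\kb}(A_5)\ge2$, a contradiction. You wrote this identity yourself in your upper-bound paragraph but did not use it.

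The argument you propose instead does not work. A dimension count for irreducible representations of $2.A_5$ proves nothing. The claim that $\PP^e=\mathrm{Sym}^e\PP^1$ has no room for generic points is false for $e=2$: $\mathrm{Sym}^2$ of the binary icosahedral representation is the $3$-dimensional icosahedral representation, and $A_5$ acts generically freely on $\PP^2$ (this is how $\ed(A_5)\le2$ is proved). For $e=1$ your equivariant map $V\dashrightarrow\PP^1$ would be a compression. So the contradiction you need is precisely $\ed(A_5)\ge2$, which has to be imported, not read off from representation dimensions.

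\textbf{The case $g(C)\ge1$.} Two points need fixing.

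First, $\ell(D)\ge2$ with $g\ge1$ forces $e\ge2$, hence $H=S_5$. The kernel of $S_5\to\PGL_2$ is normal and contains $A_5$, since $S_5$ is not a finite subgroup of $\PGL_2$ (Lemma~\ref{lem:S5curves}(ii)). So your \emph{faithful} branch, with its hoped-for Riemann--Hurwitz contradiction, never occurs and should be discarded rather than argued. Your \emph{trivial} branch is the paper's argument: every effective $E\in|D|$ is $A_5$-stable of degree $\le3$, while $A_5$-orbits on a faithful $S_5$-curve have at least $12$ points.

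Second, you only treat an invariant complete pencil, $r=\dim|D|=1$. The case $r\ge2$ must be excluded separately. Clifford's inequality rules out special $D$, since it would give $2r\le e\le3$. For non-special $D$, Riemann--Roch gives $r=e-g$, forcing $g\le1$, which Lemma~\ref{lem:S5curves}(iii) excludes.
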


\subsection{Preliminaries}

\begin{remark}[the values of $\ed_k$ over a non-closed field]\label{rem:BRk}
We use $\ed_k(S_4)=\ed_k(S_5)=2$, $\ed_k(S_6)=3$ and $\ed_k(A_5)=2$ for every field $k$ of characteristic $0$ (for $A_5$ with $\sqrt5\in k$). These are \cite[Thm.~6.5 and Thm.~6.7(b)]{BR97}, which hold over every field of characteristic $0$ (the bound $\ed_k(S_n)\le n-3$ for $n\ge5$ is proved there by cross-ratios). For $A_5$ with $\sqrt5\in k$ the upper bound can also be seen directly: the icosahedral representation of $A_5$ over $k$ is faithful of dimension $3$, so $A_5$ acts generically freely on $\PP^2$, whose twist by any torsor is again $\PP^2$ and has points in the free locus over every infinite field, and Theorem~\ref{thm:geom} gives $\ed_k(A_5)\le2$. For $A_5$ and $S_4$ the lower bound $2$ also follows from Theorem~\ref{thm:rankindex} with $A=V_4$.
\end{remark}

\begin{lemma}[base change to $\bar k$]\label{lem:S5bc}
Let $k$ have characteristic $0$. Let $\bar L=\bar k(V)$, $\bar K=\bar L^G$, $\bar\tau=\Spec\bar L$. If $\edd d_k(\tau)\le j$ then $\edd d_{\bar k}(\bar\tau)\le j$. The same holds with ``$\le j$'' replaced by ``$=0$''.
\end{lemma}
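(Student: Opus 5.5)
The plan is to transport a witness over $K$ to a witness over $\bar K$ by base change along the constant-field extension $\bar k/k$, much as in Step~1 of the proof of Theorem~\ref{thm:rankindex}. Suppose $F'/K$ is a field extension with $[F':K]\le d$ and $\ed_k(\tau\otimes_KF')\le j$. Choose $k\subseteq E\subseteq F'$ with $\trdeg_kE\le j$ and $\tau\otimes_KF'\cong\tau_0\otimes_EF'$ for a $G$-torsor $\tau_0$ over $E$. Place everything in an algebraic closure $\Om$ of $F'$, with $L$ embedded over $K$, and let $\bar k\subseteq\Om$ be the algebraic closure of $k$. By Lemma~\ref{lem:regular}(b),(c), $K\bar k=\bar K$ and $\tau\otimes_K\bar K\cong\bar\tau$ is the generic torsor of $V_{\bar k}$. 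So $\bar L$ and $\bar K$ may be identified with $L\bar k$ and $K\bar k$ inside $\Om$.

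Now put $\tilde F:=F'\bar k$ and $\tilde E:=E\bar k$. Lemma~\ref{lem:compositum} gives $[\tilde F:\bar K]\le[F':K]\le d$, and since $\bar k/k$ is algebraic, $\trdeg_{\bar k}\tilde E=\trdeg_kE\le j$. Base changing the isomorphism along $F'\subseteq\tilde F$ gives
\[
\bar\tau\otimes_{\bar K}\tilde F\cong\tau\otimes_K\tilde F\cong(\tau_0\otimes_E\tilde E)\otimes_{\tilde E}\tilde F .
\]
Hence $\ed_{\bar k}(\bar\tau\otimes\tilde F)\le j$, and therefore $\edd d_{\bar k}(\bar\tau)\le j$.

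The ``$=0$'' version is the case $j=0$ of the same statement. There is nothing to worry about here: $\edd d_{\bar k}(\bar\tau)$ is a nonnegative integer, and so is $\edd d_k(\tau)$, so being $\le0$ and being $=0$ are the same condition on each side.

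The only point needing any care is that $\tilde F$ is a field of degree at most $d$ over $\bar K$, rather than an algebra. This is handled by taking the compositum inside $\Om$ instead of the tensor product, so no linear disjointness is needed. Characteristic $0$ is used only to avoid separability issues, and in fact it is not essential to the argument.
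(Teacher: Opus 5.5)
Your proof is correct and is essentially the paper's argument. The paper identifies $\bar\tau$ with $\tau\otimes_K\bar K$ via (T2), then takes a field factor $F''$ of $F'\otimes_K\bar K$ (using characteristic $0$ to get reducedness) together with $E'=E\bar k$; this is exactly your compositum $F'\bar k$ and $E\bar k$ inside $\Om$. Working with the compositum and Lemma~\ref{lem:compositum} just sidesteps the separability point, so your remark that characteristic $0$ is inessential is accurate.
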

\begin{proof}
$L\otimes_K\bar K\to\bar L$ is an equivariant map of $G$-torsors over $\bar K$, hence an isomorphism by (T2); so $\bar\tau=\tau\otimes_K\bar K$. Let $F'/K$ have degree $d'\le d$, with $\tau\otimes F'\cong P_0\otimes_EF'$ and $\operatorname{trdeg}_kE\le j$. As $F'/K$ is separable, $F'\otimes_K\bar K$ is a product of fields; let $F''$ be one factor, so $[F'':\bar K]\le d$. Let $E'=E\bar k\subseteq F''$. Then $\operatorname{trdeg}_{\bar k}E'\le\operatorname{trdeg}_kE\le j$ and $\bar\tau\otimes_{\bar K}F''=\tau\otimes_KF''\cong(P_0\otimes_EE')\otimes_{E'}F''$.
\end{proof}

\begin{lemma}[the last jump]\label{lem:S5last}
Let $k$ have characteristic $0$ and $G\ne1$ be finite. Then $\edd d_k(\tgen)\ge1$ for $d<|G|$, and $\edd{|G|}_k(\tgen)=0$.
\end{lemma}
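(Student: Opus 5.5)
The plan is to reduce both claims to Proposition~\ref{prop:d0}. A field of characteristic $0$ is perfect, so Proposition~\ref{prop:d0}(c) applies directly to $\tgen$ over $k$. It gives $d_0(\tgen)=|G|$, and that is exactly the statement, once we unpack the definition of the last jump degree $d_0$.

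For the equality $\edd{|G|}_k(\tgen)=0$, take $F'=L$. Then $[L:K]=|G|$ and $\tgen\otimes_KL$ is split. A split torsor descends to $k$, so $\ed_k(\tgen\otimes_K L)=0$; this is Proposition~\ref{prop:d0}(a) with $H=G$.

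For the lower bound, suppose $F'/K$ has $[F':K]=d<|G|$ and $\ed_k(\tgen\otimes F')=0$. By (B3) the torsor $\tgen\otimes F'$ then descends to the algebraic closure $k_1$ of $k$ in $F'$. We know $L/k$ is purely transcendental, hence regular, and $H=\im\varphi_{\tgen}=G$. So Proposition~\ref{prop:d0}(b) applies and gives
\[
[F':K]\ge|G|\,[k_1:k]\ge|G|,
\]
which contradicts $d<|G|$. Therefore $\edd d_k(\tgen)\ge1$ for every $d<|G|$. Since $G\ne1$, the range $d<|G|$ is nonempty, and the two halves together say $d_0(\tgen)=|G|$.

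There is no real obstacle here; the substantive work was already done in Proposition~\ref{prop:d0}(b). The only points to check are its hypotheses: $k$ must be perfect, which characteristic $0$ gives, and $E=L$ must be regular over $k$, which Lemma~\ref{lem:regular}(a) gives (applied to $L$ itself).

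A second route, if one prefers not to cite (b), is to pass to $\bar k$ via Lemma~\ref{lem:S5bc} in its ``$=0$'' form. Over an algebraically closed field, $\ed=0$ means split, by (B3). A torsor whose total space is the field $\bar L$ of degree $|G|$ can only be split by an extension of degree at least $|G|$, by Proposition~\ref{prop:d0}(a). I would present the first argument and mention this one as a check.
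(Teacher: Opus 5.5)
Your proposal is correct and is the paper's proof: the paper simply cites Proposition~\ref{prop:d0}(c), using that a field of characteristic $0$ is perfect, and you have unpacked (c) into its parts (a) and (b). One small citation fix: the regularity of $L/k$ is recorded in the proof of Lemma~\ref{lem:regular}(a) and in the proof of Proposition~\ref{prop:d0}(c), not in the statement of Lemma~\ref{lem:regular}(a), which is about $K/k$.
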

\begin{proof}
This is Proposition~\ref{prop:d0}(c) ($k$ is perfect, being of characteristic $0$): $d_0(\tgen)=|G|$.
\end{proof}

\begin{lemma}[free points]\label{lem:S5free}
Let $X$ be a projective $k$-scheme of pure dimension $1$ with an action of $G$ by $k$-automorphisms, $X_{\free}\subseteq X$ the open subscheme on which $G$ acts freely on geometric points, and $\pi:X_{\free}\to Y=X_{\free}/G$ the quotient torsor. If $P$ is a $G$-torsor over a field $F\supseteq k$ and there is a $G$-equivariant $k$-morphism $f:\Spec P\to X_{\free}$, then $\ed_k(P)\le1$.
\end{lemma}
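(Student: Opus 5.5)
The plan is to mimic the proof of Lemma~\ref{lem:freepoint}, with the projective curve $X$ replacing the quasi-projective $Z$. First we check that the quotient exists. $X$ is projective, so every finite set of points lies in an affine open subset, and the $G$-orbit of any point is finite. Hence $X$ is covered by $G$-stable affine opens $U$: take an affine open containing an orbit $Gx$ and intersect its $G$-translates; this intersection is affine, because $X$ is separated. The same holds for $X_{\free}$, which is a $G$-stable open subscheme of the projective scheme $X$ and therefore quasi-projective. So $Y=X_{\free}/G$ exists as a scheme of finite type over $k$. Since $G$ acts freely on geometric points of $X_{\free}$, \cite[Exp.~V, \S\S1--2]{SGA1} shows that $\pi\colon X_{\free}\to Y$ is a $G$-torsor (finite étale Galois with group $G$). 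Moreover $\dim Y=\dim X_{\free}\le1$, because $\pi$ is finite and surjective and $X$ has pure dimension $1$.

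Next we produce the descent. The morphism $f\colon\Spec P\to X_{\free}$ composed with $\pi$ is $G$-invariant. Since $P^G=F$, it factors through $\Spec F$, giving $\bar f\colon\Spec F\to Y$. Let $y$ be the image point of $\bar f$, so that $\kappa(y)\subseteq F$. The map $f$ then induces a $G$-equivariant morphism of $G$-torsors over $\Spec F$,
\[
\Spec P\to X_{\free}\times_Y\Spec F.
\]
By (T2) this morphism is an isomorphism. Hence $P\cong\pi^{-1}(y)\otimes_{\kappa(y)}F$, where $\pi^{-1}(y)\to\Spec\kappa(y)$ is the fibre torsor. Finally, $\trdeg_k\kappa(y)\le\dim Y\le1$, because $y$ is a point of a $k$-scheme of finite type and dimension $\le1$. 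This gives $\ed_k(P)\le1$.

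The only step needing care is the existence of the quotient as a scheme, together with its torsor property. Quasi-projectivity of $X_{\free}$ settles the first point. For the second, we use that $G$ is finite and acts with trivial geometric stabilisers, so the inertia is trivial; this suffices, and no primitivity or irreducibility hypothesis on $X$ is needed. Unlike Lemma~\ref{lem:freepoint}, we do not require $X$ to be generically free or primitive, since only the dimension bound on $Y$ enters the argument.
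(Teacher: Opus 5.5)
Your proof is correct and follows the paper's argument. You factor $\pi f$ through a point $y$ of $Y$, use (T2) to identify $P$ with the base change of the fibre torsor $\pi^{-1}(y)$, and bound $\trdeg_k\kappa(y)$ by $\dim Y\le1$. Your extra paragraph on the existence of $Y=X_{\free}/G$ and its torsor property via SGA1 makes explicit what the paper leaves implicit; it is the same input used in Lemma~\ref{lem:freepoint}.
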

\begin{proof}
$\pi f$ is $G$-invariant, so it factors through $\Spec F\xrightarrow{y}Y$, and $f$ is a morphism of $G$-torsors $P\to y^*X_{\free}$, an isomorphism by (T2). With $E=\kappa(y_0)\subseteq F$, $y_0$ the image of $y$, we get $P\cong\pi^{-1}(y_0)\otimes_EF$ and $\operatorname{trdeg}_kE\le\dim Y=1$.
\end{proof}

For a $G$-torsor $\Spec N$ over a field $F_0$ the twist ${}^NX=(X\times_kN)/G$ exists since $X$ is projective, and for $F\supseteq F_0$ the set ${}^NX(F)$ is the set of $G$-equivariant $k$-morphisms $\Spec(N\otimes_{F_0}F)\to X$. The open subscheme ${}^N(X_{\free})$ corresponds to maps into $X_{\free}$; it becomes $X_{\free}\times_kN$ over $N$.

\subsection{The lower bound}

In this subsection $k=\bar k$ has characteristic $0$.

\begin{lemma}[faithful $S_5$-curves]\label{lem:S5curves}
Let $k=\bar k$ have characteristic $0$.
\begin{enumerate}[label=\textup{(\roman*)}]
\item If $S_5$ acts faithfully on a smooth projective curve, every $A_5$-orbit has at least $12$ points.
\item Every homomorphism $S_5\to\PGL_2(k)$ has kernel containing $A_5$.
\item $S_5$ has no faithful action on a curve of genus $0$ or $1$.
\end{enumerate}
\end{lemma}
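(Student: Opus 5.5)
We prove (ii) first, then (i) and (iii). Throughout we use the classification of finite subgroups of $\PGL_2(k)$ for $k=\bar k$ of characteristic $0$: cyclic, dihedral, $A_4$, $S_4$, $A_5$.

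For (ii), let $\phi\colon S_5\to\PGL_2(k)$ be a homomorphism. Its kernel is a normal subgroup of $S_5$, so it is $1$, $A_5$ or $S_5$. The case $\ker\phi=1$ is excluded because $S_5$ is not on the list of finite subgroups. Hence $\ker\phi\supseteq A_5$.

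For (i), let $S_5$ act faithfully on $C$ and let $P\in C$. The stabiliser $H=\Stab_{A_5}(P)$ embeds in $k^\times$ through its action on the tangent line $\mathfrak m_P/\mathfrak m_P^2$. Faithfulness of this character is the averaging argument of Step~5(1) in Section~\ref{sec:thmB}, which works for any finite group in characteristic $0$. So $H$ is cyclic. The cyclic subgroups of $A_5$ have order $1,2,3$ or $5$. The orbit of $P$ therefore has $60/|H|\ge12$ points.

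For (iii), first take $g(C)=0$. Then $\Aut(\PP^1)=\PGL_2(k)$, so a faithful action would be an injective homomorphism $S_5\to\PGL_2(k)$. This contradicts (ii).

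Now take $g(C)=1$. Choose a point $O$ to make $C$ an elliptic curve. Then $\Aut(C)=C\rtimes\Aut(C,O)$, with $\Aut(C,O)$ cyclic of order $2$, $4$ or $6$. Consider the composite $S_5\hookrightarrow\Aut(C)\to\Aut(C,O)$. Its image is cyclic, hence abelian, so its kernel contains the commutator subgroup $A_5$. Consequently $A_5$ embeds in the translation group $C(k)$, which is abelian. This is impossible since $A_5$ is nonabelian.

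The only non-routine step is the faithfulness of the tangent character used in (i). That step is already available from Section~\ref{sec:thmB}, so no real obstacle is expected.
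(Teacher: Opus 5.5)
Your proposal is correct and follows the paper's proof essentially step for step: the faithful tangent-character argument for (i), normal subgroups of $S_5$ plus the classification of finite subgroups of $\PGL_2(k)$ for (ii), and $\Aut(C)=C\rtimes\Aut(C,O)$ for genus $1$ in (iii). The only differences are cosmetic. For (ii), the paper explicitly excludes a dihedral group of order $120$ because it would contain an element of order $60$; you should likewise say briefly why $S_5$ is not dihedral, for instance because dihedral groups are solvable. For genus $1$, you use the commutator subgroup $A_5$, whereas the paper uses solvability.
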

\begin{proof}
(i) A point stabiliser acts faithfully on the cotangent line: in characteristic $0$, if a finite-order automorphism fixes $x$ and is trivial on $\mathfrak m_x/\mathfrak m_x^2$, averaging a uniformiser gives a fixed uniformiser, so it is the identity. Stabilisers are therefore cyclic; cyclic subgroups of $A_5$ have order $\le5$, so orbits have size $\ge60/5=12$.
(ii) The kernel is normal, so it is $1$, $A_5$ or $S_5$. If it were $1$, $S_5$ would be a subgroup of order $120$ of $\PGL_2(k)$; the finite subgroups of $\PGL_2(k)$ are cyclic, dihedral, $A_4$, $S_4$, $A_5$, so it would be dihedral of order $120$ and contain an element of order $60$, whereas elements of $S_5$ have order $\le6$.
(iii) Genus $0$ is (ii). For genus $1$, $\Aut(C)=C\rtimes\Aut(C,O)$ with $\Aut(C,O)$ cyclic, so a finite subgroup is an extension of a cyclic group by an abelian group, hence solvable.
\end{proof}

\begin{proposition}\label{prop:S5lower}
Let $k=\bar k$ have characteristic $0$ and $[F':K]\le3$. Then $\ed_k(\tau\otimes F')\ge2$.
\end{proposition}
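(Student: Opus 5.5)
The plan is to argue by contradiction. Suppose $[F':K]=d\le3$ and $\ed_k(\tau\otimes F')\le1$. Write $L\otimes_KF'=\prod_{G/H}N_0$ as in Section~\ref{ss:tors}. Then $[G:H]$ divides $d\le3$. The group $S_5$ has no subgroup of index $3$, and its only subgroup of index $2$ is $A_5$, so there are two cases: $H=A_5$ with $d=2$ and $N_0=L$, or $H=S_5$ with $e:=[N_0:L]=d\le3$. Since $\operatorname{char}k=0$, $F'/K$ is separable, so Proposition~\ref{prop:witness} is available in both cases.

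\emph{Case $H=A_5$.} Here $[N_0:L]=1$, so $N_0=L$ and $F'=L^{A_5}=F_1$; hence $\tau\otimes F'=\Ind_{A_5}^{S_5}(L/F_1)$. In this case I would not use curves at all. By Lemma~\ref{lem:ind} and Corollary~\ref{cor:sub}, $\ed_k(\tau\otimes F_1)=\ed_k(A_5)$. This equals $2$ by Remark~\ref{rem:BRk}, since $\sqrt5\in k=\bar k$; alternatively, Theorem~\ref{thm:rankindex} applied to $A=V_4\le A_5$ gives $\ge2$. This contradicts the assumption. The point is that the witness-curve method gives nothing here, because $A_5$ acts on $\PP^1$ with an invariant point class of degree $1$.

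\emph{Case $H=S_5$.} Proposition~\ref{prop:witness} gives a curve $C$ with a faithful $S_5$-action and an effective divisor $D$ of degree $e\le3$ with $\ell(D)\ge2$ and $h^*D\sim D$ for all $h\in S_5$. By Lemma~\ref{lem:S5curves}(iii), $g(C)\ge2$. This forces $r=\dim|D|=1$. Indeed, $r\ge2$ with $\deg D\le3$ would make $C$ a plane curve of degree $\le3$ (or give a map to $\PP^2$ of degree $\le1$), hence of genus $\le1$; equivalently, one can invoke Clifford together with Riemann--Roch to rule out $r\ge2$ in degree $\le3$ on a curve of genus $\ge2$.

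The key step is to make the invariance of the class act on the pencil. Since each $h\in S_5$ preserves the class of $D$, it maps the complete linear system $|D|\cong\PP^1$ to itself via a linear automorphism, namely pullback of sections of $\mathcal O(D)$, well defined up to scalar. This yields a homomorphism $S_5\to\PGL_2(k)$. By Lemma~\ref{lem:S5curves}(ii), its kernel contains $A_5$, so every divisor in $|D|$ is $A_5$-invariant.

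Now take any member $D'\in|D|$. It is an effective $A_5$-invariant divisor of degree $e\in\{1,2,3\}$, so its support is a nonempty union of $A_5$-orbits. By Lemma~\ref{lem:S5curves}(i), each such orbit has at least $12$ points, so $\deg D'\ge12>3$, a contradiction. Note that this argument does not even need the members of $|D|$ to move; effectivity and $\deg\ge1$ suffice.

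I expect the main obstacle to be the bookkeeping in passing from the invariant class to an honest projective action on $|D|$. One has to check that $h\mapsto h^*$ on $\PP(H^0(C,\mathcal O(D)))$ is well defined and is a homomorphism, or an anti-homomorphism, which does not affect the kernel. One also has to confirm the case analysis of $[G:H]$ via the divisibility $[G:H]\mid[F':K]$ from Section~\ref{ss:tors}. Finally, the reduction $r=1$ needs care, although the curve-theoretic input there is standard.
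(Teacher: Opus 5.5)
Your proposal is correct and follows the paper's proof essentially step for step. The steps are: the same index case split ($H=A_5$ or $S_5$); Lemma~\ref{lem:ind} plus $\ed_k(A_5)=2$ in the case $H=A_5$; and, in the case $H=S_5$, the witness curve, with $g(C)\ge2$ and Clifford/Riemann--Roch forcing $r=1$, followed by the projective action on $|D|\cong\PP^1$, whose kernel contains $A_5$, so that some member of $|D|$ is an $A_5$-invariant effective divisor of degree $\le3$. This contradicts the orbit bound of $12$. The only difference is that you apply the genus bound before Clifford rather than after, which is immaterial.
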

\begin{proof}
Suppose $\ed_k(\tau\otimes F')\le1$ and take $N_0,H$ as in Section~\ref{ss:tors}, so $[G:H]$ divides $[F':K]\le3$. $S_5$ has no subgroup of index $3$ (the action on three cosets would have kernel in $\{1,A_5,S_5\}$ and image of order $\le2$, which is not transitive); so $H=S_5$ or $H=A_5$; in particular $H\ne1$.

Case $H=A_5$. Then $[F':K]=2$ and $N_0=L$, so $F'\to L$ identifies $F'$ with the unique quadratic subfield $F_1=L^{A_5}$, and $\tau\otimes_KF_1=\Ind_{A_5}^{S_5}(L/F_1)$. Here $L/F_1$ is the generic $A_5$-torsor of $V|_{A_5}$, so Lemma~\ref{lem:ind} gives $\ed_k(\tau\otimes F_1)=\ed_k(L/F_1)=\ed_k(A_5)\ge\rk V_4=2$ by Theorem~\ref{thm:rankindex} with $d=1$ and $A=V_4\le A_5$ (this is also the lower bound of Reichstein--Youssin \cite{RY}, so this case does not depend on the new theorem). Contradiction.

Case $H=G$. Then $N_0=L\otimes_KF'$ is a field of degree $e=[F':K]\in\{1,2,3\}$ over $L$. Proposition~\ref{prop:witness} gives a faithful $S_5$-curve $C$ and a $G_C$-invariant class $D$ of degree $e$ with $r=\dim|D|\ge1$.
If $r\ge2$: for special $D$, Clifford gives $2r\le e\le3$, impossible; so $D$ is non-special, $r=e-g(C)$ and $g(C)\le1$, contradicting Lemma~\ref{lem:S5curves}(iii).
If $r=1$ (here only (i) and (ii) of Lemma~\ref{lem:S5curves} are used, not the genus): $G_C$ acts on $|D|\cong\PP^1$ by $E\mapsto h^*E$; this action is projective-linear because $h^*\mathcal O(D)\cong\mathcal O(D)$. By Lemma~\ref{lem:S5curves}(ii) $A_5$ acts trivially, so every effective $E\in|D|$ is $A_5$-invariant; its support is a union of $A_5$-orbits, each of size $\ge12$, while $\deg E=e\le3$. Contradiction.
\end{proof}

\paragraph{Larger symmetric groups.}
For $n\ge6$ the lower bound $d_1(\tgen(S_n))\ge4$ already follows from Sylow divisibility.

\begin{lemma}\label{lem:edqSn}
Let $\operatorname{char}k=0$, $q$ a prime and $n\ge q$. Then $\ed_{k,q}(S_n)\ge\lfloor n/q\rfloor$.
\end{lemma}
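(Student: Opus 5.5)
The plan is to bound $\ed_{k,q}(S_n)$ from below by the essential dimension at $q$ of an elementary abelian subgroup, and then compute the latter with the rank--index theorem applied over a $q$-closed field. Put $r=\lfloor n/q\rfloor$. Inside $S_n$ take $r$ disjoint $q$-cycles; they generate $A\cong(\Z/q)^r$, abelian of order prime to $\operatorname{char}k=0$. It therefore suffices to prove
\[
\ed_{k,q}(S_n)\ge\ed_{k,q}(A)\quad\text{and}\quad\ed_{k,q}(A)\ge r.
\]

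For the first inequality, Lemma~\ref{lem:ind} applied torsor by torsor gives $\ed_k(\Ind_A^{S_n}\tau_A\otimes F')=\ed_k(\tau_A\otimes F')$, since induction commutes with base change by (T3). Minimising over prime-to-$q$ separable $F'/F$ then yields $\ed_{k,q}(\Ind\tau_A)=\ed_{k,q}(\tau_A)$, and taking suprema gives $\ed_{k,q}(S_n)\ge\ed_{k,q}(A)$.

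For the second, Proposition~\ref{prop:genq} gives $\ed_{k,q}(A)=\ed_{k,q}(\tgen(A))=\ed_k(\tgen\otimes F')$ for some finite separable $F'/K$ of degree prime to $q$. Theorem~\ref{thm:rankindex}(a) with $A=G$ bounds this below by $\rho_A([F':K])$. This is where the obstacle lies: $\rho_A(d)$ only decays like $r-\lfloor\log_q d\rfloor$, so we must exclude that a large prime-to-$q$ degree lowers the bound. The fix is to return to the proof of Proposition~\ref{prop:A4}(a). There $A\cap\varphi(\Dec_{F'})$ has index in $A$ bounded by an injection of $A/(A\cap I')$ into $\Gam_K/\Gam_{F'}$. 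The finer statement needed is that $[\Dec_K:\Dec_{F'}]$ divides nothing but is at most $[F':K]$; to exploit $q\nmid[F':K]$, I would let $A$ act on the set $\Gam_K/\Gam_{F'}$ of size prime to $q$ through a lift to $\Dec_K$.

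Since $A$ is a $q$-group acting on a set of size prime to $q$, it has a fixed coset $\gamma\Gam_{F'}$. Replacing $F'$ by the conjugate $\gamma F'$ (same degree, same $\ed$) makes every element of $A$ lift into $\Dec_K\cap\Gam_{F'}=\Dec_{F'}$, modulo the kernel issue handled via $\varphi$. More precisely, let $\varphi(\Dec_K)\supseteq A$ act on $\Dec_K/\Dec_{F'}\subseteq\Gam_K/\Gam_{F'}$ through the preimage $\tilde A=\varphi^{-1}(A)\cap\Dec_K$. Its image in the finite quotient is a $q$-group up to the kernel $\Dec_L$. Choose a Sylow-$q$ argument on the orbit of $\tilde A$ in the finite set, which has size prime to $q$; its stabilizer $\tilde A_x$ maps onto a subgroup of $A$ whose index is the orbit size. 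This index is both a power of $q$, since it is $[A:\text{image}]$, and at most $[F':K]$. We therefore choose the orbit of size prime to $q$, hence of size $1$, so $B'=A$ after conjugating $F'$.

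Proposition~\ref{prop:A4}(b) then puts a conjugate of $A$ into $\psi(\Dec_E)$, and Lemmas~\ref{lem:tame}, \ref{lem:abhyankar} give $\trdeg_kE\ge\rk A=r$. This follows after the reduction to $\bar k$ of Step~1 of Theorem~\ref{thm:rankindex}, which preserves prime-to-$q$ separable degree (the degree can only drop to a divisor-compatible value; one checks the bound $[\tilde F:\tilde K]\mid$-type behaviour, or argues with the orbit count on $\bar k$-embeddings directly). Hence $\ed_{k,q}(A)\ge r$.
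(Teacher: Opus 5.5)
Your proof is correct in substance, but its second step differs from the paper's. The first step is the same: $\ed_{k,q}(S_n)\ge\ed_{k,q}(A)$ via Lemma~\ref{lem:ind}. For the second step, the paper passes to $k'=k(\mu_q)$ by Lemma~\ref{lem:qinv} and quotes Karpenko--Merkurjev (Theorem~\ref{thm:KM}) to get $\ed_{k',q}((\Z/q)^m)=\rdim_{k'}((\Z/q)^m)=m$. You instead prove a prime-to-$q$ refinement of the rank--index theorem. If $[F':K]$ is prime to $q$, the embedding of $F'$ into $\Om$ can be chosen so that $B'=A$ in Proposition~\ref{prop:A4}. Then Lemmas~\ref{lem:tame} and~\ref{lem:abhyankar} give $\trdeg_kE\ge\rk A$.

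Two points need repair.

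\textbf{The index step.} Your first formulation (``$A$ acts on a set of size prime to $q$, so has a fixed coset'') is not valid, since $A$ does not act on $\Gam_K/\Gam_{F'}$. In the corrected formulation with $\tilde A=\varphi^{-1}(A)\cap\Dec_K$, the image of the stabiliser $\tilde A_x=\tilde A\cap\Gam_{\gamma F'}$ need not have index equal to the orbit size, and the orbit need not have size $1$. What is true is the following.
\begin{itemize}
\item $\varphi|_{\tilde A}$ maps onto $A$ by Lemma~\ref{lem:flag}(ii), with kernel $\tilde A\cap\Gam_L=\Dec_L$.
\item Hence $[A:\varphi(\tilde A_x)]=[\tilde A:\tilde A_x\Dec_L]$, which divides the orbit size $[\tilde A:\tilde A_x]$.
\item For an orbit of size prime to $q$, this index is a power of $q$ prime to $q$, hence equals $1$.
\item Since $\tilde A_x\subseteq\Dec_K\cap\Gam_{\gamma F'}=\Dec_{\gamma F'}$, this gives $A\subseteq\varphi(\Dec_{\gamma F'})$.
\end{itemize}

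\textbf{The reduction to $\kb$.} This step must preserve prime-to-$q$ degree; state it precisely rather than gesture at it. Let $k_0$ be the algebraic closure of $k$ in $F'$. In characteristic $0$, $F'/k_0$ is regular. Then $[F'\kb:K\kb]=[F':Kk_0]=[F':K]/[k_0:k]$, which divides $[F':K]$. This is the computation of Steps~1--2 of the proof of Theorem~\ref{thm:Zp}.

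\textbf{Comparison.} The paper's proof is two lines given Karpenko--Merkurjev, and it yields the exact value $\ed_{k,q}((\Z/q)^m)=m$. Yours stays inside the paper's own valuation machinery and avoids Theorem~\ref{thm:KM} entirely. It also proves more: $\ed_{k,q}(G)\ge\rk A$ for every abelian $q$-subgroup $A$ of any finite $G$. Because Theorem~\ref{thm:rankindex} allows any abelian $A\le G$, you can apply your argument directly to $\tgen(S_n)$ with $A\le S_n$. Then $\ed_{k,q}(S_n)\ge\ed_{k,q}(\tgen(S_n))$ holds by definition, so neither the induction step nor Proposition~\ref{prop:genq} is needed.
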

\begin{proof}
Let $m=\lfloor n/q\rfloor$; $S_n$ contains $(\Z/q)^m$, generated by $m$ disjoint $q$-cycles. (The result is also in \cite[Cor.~4.2]{MR09}.) For $H\le G$ and an $H$-torsor $\tau_H$ over $F$, $\Ind_H^G(\tau_H\otimes F')=(\Ind_H^G\tau_H)\otimes F'$, so Lemma~\ref{lem:ind} gives $\ed_k((\Ind_H^G\tau_H)\otimes F')=\ed_k(\tau_H\otimes F')$ for all finite separable $F'/F$ of degree prime to $q$; hence $\ed_{k,q}(G)\ge\ed_{k,q}(H)$. By Lemma~\ref{lem:qinv} with $k'=k(\mu_q)$ (separable of degree dividing $q-1$) and Theorem~\ref{thm:KM} over $k'$, $\ed_{k,q}((\Z/q)^m)=\ed_{k',q}((\Z/q)^m)=\rdim_{k'}((\Z/q)^m)=m$.
\end{proof}

\begin{corollary}\label{cor:SnSylow}
Let $\operatorname{char}k=0$ and $n\ge4$. Every prime $q\le n/2$ divides $d_1(\tgen(S_n))$. In particular $2\mid d_1(\tgen(S_n))$ for $n\ge4$, and $6\mid d_1(\tgen(S_n))$, so $d_1(\tgen(S_n))\ge6$, for $n\ge6$.
\end{corollary}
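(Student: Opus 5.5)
The plan is to apply Sylow divisibility, Theorem~\ref{thm:div}(b), with $j=1$: every prime $q$ with $\ed_{k,q}(S_n)>1$ divides $d_1(\tgen(S_n))$. Since $\operatorname{char}k=0$, the field $k$ is perfect, so the hypothesis of Theorem~\ref{thm:div} holds. It therefore suffices to show $\ed_{k,q}(S_n)\ge2$ for every prime $q\le n/2$.

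For a prime $q\le n/2$ we have $n\ge 2q\ge q$ and $\lfloor n/q\rfloor\ge2$. Lemma~\ref{lem:edqSn} then gives $\ed_{k,q}(S_n)\ge\lfloor n/q\rfloor\ge2>1$, so $q\mid d_1(\tgen(S_n))$. The lower bound comes from the subgroup $(\Z/q)^2$ generated by two disjoint $q$-cycles, which exist exactly because $2q\le n$; Lemma~\ref{lem:edqSn} handles it through Karpenko--Merkurjev over $k(\mu_q)$ and Lemma~\ref{lem:qinv}. This proves the first sentence.

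For the particular cases, take $q=2$, which satisfies $2\le n/2$ once $n\ge4$. For $n\ge6$, both $q=2$ and $q=3$ satisfy $q\le n/2$, so $2$ and $3$ both divide $d_1$, hence $6\mid d_1$. Finally $d_1\ge1$ is a positive integer, so it is at least $6$.

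The only point needing care is that $d_1(\tgen(S_n))$ is well defined and finite, which holds because $d_0\le|G|$ by Proposition~\ref{prop:shape}(a). Beyond that the argument is a direct combination of earlier results, and I expect no real obstacle.
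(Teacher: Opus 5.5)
Your proposal is correct and is exactly the paper's argument: Lemma~\ref{lem:edqSn} gives $\ed_{k,q}(S_n)\ge\lfloor n/q\rfloor\ge2$ for primes $q\le n/2$, and Theorem~\ref{thm:div}(b) with $j=1$ (valid since $k$ is perfect) gives $q\mid d_1(\tgen(S_n))$. The particular cases $q=2$ for $n\ge4$ and $q=2,3$ for $n\ge6$ then follow as you describe.
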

\begin{proof}
Lemma~\ref{lem:edqSn} gives $\ed_{k,q}(S_n)\ge2>1$ for $q\le n/2$, and Theorem~\ref{thm:div}(b) with $j=1$ gives $q\mid d_1$.
\end{proof}
For $n=5$ only $q=2$ qualifies, so the geometric argument of Proposition~\ref{prop:S5lower} is needed (see Remark~\ref{rem:Sngeneral}).

\begin{remark}\label{rem:Sngeneral}
The proof of Proposition~\ref{prop:S5lower} uses only that $A_n$ is simple and non-abelian (so a non-trivial action on an orbit of at most $3$ points forces a fixed point, and point stabilisers on faithful curves are cyclic) and that $S_n$ is neither a subgroup of $\PGL_2$ nor of the automorphism group of an elliptic curve (so a faithful $S_n$-curve has genus $\ge2$). It therefore gives $d_1(\tgen(S_n))\ge4$ for every $n\ge5$, independently of Karpenko--Merkurjev; for $n\ge6$ Corollary~\ref{cor:SnSylow} is stronger.
\end{remark}

\subsection{The upper bound}

\paragraph{The icosahedral conic.}
Let $\phi=(1+\sqrt5)/2$. The rotation group $A_5$ of the icosahedron with vertices the cyclic permutations of $(0,\pm1,\pm\phi)$ consists of orthogonal matrices permuting the vertices. If $v_1,v_2,v_3\in\Q(\sqrt5)^3$ are linearly independent vertices and $R$ is a rotation, then $R=[Rv_1\,Rv_2\,Rv_3][v_1\,v_2\,v_3]^{-1}$ has entries in $\Q(\sqrt5)$. This gives an absolutely irreducible $\rho:A_5\hookrightarrow\operatorname{SO}_3(\Q(\sqrt5))$ preserving $q=x_1^2+x_2^2+x_3^2$; since $\Hom_{A_5}(\rho,\rho^\vee)$ is one-dimensional, the invariant quadratic forms are $k\cdot q$. Let $Q=\{q=0\}\subset\PP^2$. Over $\bar k$, $Q\cong\PP^1$ and $A_5$ acts faithfully through $\operatorname{SO}_3\cong\PGL_2$ (an element of $\operatorname{SO}_3$ fixing the conic pointwise is a scalar, hence $1$), hence freely off a finite set of points.

\begin{proposition}\label{prop:S5upper}
For every field $k$ of characteristic $0$ there is a field extension $F_2/K$ of degree $4$ containing $K(\sqrt{5\Delta})$ with $\ed_k(\tau\otimes F_2)\le1$.
\end{proposition}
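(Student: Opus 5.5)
We follow Klein's reduction, adapted to a base field that may not contain $\sqrt5$. The extension is built in two steps: $F_2\supseteq F_3:=K(\sqrt{5\Delta})\supseteq K$, with $[F_3:K]=2$ and $F_2$ a quadratic extension of $F_3$ obtained by adjoining a point of a conic. We then show that over $F_2$ the torsor has a free point on a one-dimensional projective $S_5$-scheme, so that Lemma~\ref{lem:S5free} gives $\ed_k(\tau\otimes F_2)\le1$.

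\textbf{Step 1: a $k$-curve with an $S_5$-action.} Put $k'=k(\sqrt5)$. If $\sqrt5\in k$, take $X=Q$, the icosahedral conic with its $A_5$-action, and induce. If $\sqrt5\notin k$, let $X$ be the Weil restriction of scalars (or the conic $Q$ over $k'$ viewed as a $k$-scheme). On it we let $S_5$ act semilinearly: $A_5$ acts through $\rho$, and an odd permutation acts by the Galois conjugation $\sqrt5\mapsto-\sqrt5$ composed with a $k'$-linear map. Such a map exists because the outer automorphism of $A_5$ induced by conjugation in $S_5$ swaps the two icosahedral representations. These two representations are Galois conjugate over $\Q$, so $\rho^{\sigma}\cong\rho\circ c_g$ for odd $g$, and the intertwiner can be normalised using $q$. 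The result is a projective $k$-scheme of pure dimension $1$ with a $k$-action of $S_5$ whose free locus is the complement of a finite set.

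\textbf{Step 2: the compatibility of signs.} An equivariant map $\Spec(L\otimes_KF')\to X$ forces the sign character of $S_5$ to match the Galois character of $k'/k$. Concretely, $F'$ must contain the quadratic extension that trivialises the product of $\operatorname{sgn}$ with the $\sqrt5$-character, namely $K(\sqrt{5\Delta})$. So we work over $F_3$, on which that product character becomes trivial. There the twist ${}^{\tau\otimes F_3}X$ is a geometrically connected curve: a form of $Q$, i.e. a smooth conic over $F_3$. This sign-matching descent is the point I expect to be the main obstacle: writing down the odd-permutation intertwiner explicitly enough to check that the cocycle closes up (that the square of the semilinear lift is the right element of $A_5$) and that the twist is a conic rather than a disconnected pair of conics.

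\textbf{Step 3: a free point of degree $2$.} Any smooth conic over the infinite field $F_3$ acquires a point over a quadratic extension: intersect it with an $F_3$-rational line. Choosing the line generically, it avoids the finitely many non-free points (the images of the $A_5$-orbits of sizes $12$, $20$, $30$). This gives a closed point of degree $2$ in the free locus, with residue field $F_2$ and $[F_2:K]=4$. By Fact~\ref{fact:rs}(a), such a point is an equivariant morphism $\Spec(L\otimes_KF_2)\to X_{\free}$. Lemma~\ref{lem:S5free} then gives $\ed_k(\tau\otimes F_2)\le1$, and $F_2\supseteq K(\sqrt{5\Delta})$ by construction. When $\sqrt5\in k$ this recovers Klein's construction through $K(\sqrt\Delta)$, since then $K(\sqrt{5\Delta})=K(\sqrt\Delta)$.
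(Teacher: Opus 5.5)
Your construction is the paper's: $X=Q_{k'}$ regarded as a $k$-curve, $A_5$ acting through $\rho$, and a transposition $t$ acting by the $\sigma$-semilinear map $\mathsf S(x)=U^{-1}\sigma(x)$, where $\rho^\sigma(a)=U\rho(tat)U^{-1}$. You then use the field $K(\sqrt{5\Delta})$ (your sign-matching explanation of why it is forced is correct), a degree-$2$ point on a twisted conic, and Lemma~\ref{lem:S5free}. Drop the Weil-restriction alternative: $R_{k'/k}Q$ is a surface and would only give $\ed\le2$.

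Beyond that there is one unproved step and one false assertion, and both can be repaired. The unproved step is the one you call the main obstacle, and it is what makes $X$ an $S_5$-curve at all. It follows from Schur's lemma, with no normalisation of $U$. Indeed $\mathsf S^2(x)=(\sigma(U)U)^{-1}x$. Applying $\sigma$ to the intertwining relation and using $t^2=1$ gives $\rho(a)=(\sigma(U)U)\rho(a)(\sigma(U)U)^{-1}$ for all $a\in A_5$. So $\sigma(U)U$ is a scalar by absolute irreducibility, and $\mathsf S^2=\id$ on $\PP^2$. You also need $\mathsf S(Q)=Q$. From $\rho(a)\mathsf S=\mathsf S\rho(tat)$, the conic $\mathsf S(Q)$ is $\rho(A_5)$-stable. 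Its equation is therefore a semi-invariant quadratic form, hence invariant (as $A_5$ is perfect), hence a multiple of $q$. With these two facts the relations of $A_5\rtimes\langle t\rangle$ hold on $X$.

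The false assertion is that the twist over $F_3=K(\sqrt{5\Delta})$ is geometrically connected. Twisting commutes with base change, and over an algebraic closure of $K$ the twist becomes a base change of $X$, which has two components. Concretely, $\tau\otimes_KF_3=\Spec N$ with $N=L(\sqrt5)$. In $X\times_k\Spec N=Q_{k'}\times_{k'}\Spec(k'\otimes_kN)$ with $k'\otimes_kN\cong N\times N$, the group $S_5$ preserves both factors. So the twist is $Y_1\sqcup Y_2$ with $Y_1,Y_2$ conics over $F_3$. What your sign computation actually shows is that the Galois action interchanging these two geometric components over $K$ becomes trivial over $F_3$.

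So the check you planned (``a conic rather than a disconnected pair'') would come out the other way, but nothing depends on it. Run your Step~3 on $Y_1$, whose non-free locus is finite. An $F_2$-point of $Y_1$ in the free locus is still a $G$-equivariant morphism $\Spec(L\otimes_KF_2)\to X_{\free}$. The same holds in your $\sqrt5\in k$ case, where the induced curve $S_5\times_{A_5}Q$ likewise twists to two conics over $K(\sqrt\Delta)$.
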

\begin{proof}
Case $\sqrt5\in k$ (Klein). Then $A_5$ acts on the $k$-conic $Q$. Let $T_1=L/F_1$ be the $A_5$-torsor. The twist ${}^{T_1}Q$ is a conic over $F_1$; a rational line meets it in a subscheme of degree $2$ (it is a smooth conic in $\PP^2$), so it has a point over a quadratic extension $F_2/F_1$ and ${}^{T_1}Q\cong\PP^1_{F_2}$. The free locus is non-empty and open and $F_2$ is infinite, so it has an $F_2$-point. By Lemma~\ref{lem:S5free}, $\ed_k(T_1\otimes F_2)\le1$. As $\tau\otimes_KF_1=\prod_{S_5/A_5}L=\operatorname{Ind}_{A_5}^{S_5}T_1$, (T3) gives $\ed_k(\tau\otimes F_2)\le1$, and $[F_2:K]=4$. Here $K(\sqrt{5\Delta})=F_1$.

Case $\sqrt5\notin k$. Let $k'=k(\sqrt5)$ with generator $\sigma$ of $\Gal(k'/k)$.

Step 1: an $S_5$-action on a $k$-curve. Let $X=Q_{k'}$, viewed as a $k$-scheme; $X_{\bar k}=Q\sqcup Q^\sigma$. The two $3$-dimensional characters of $A_5$ are swapped both by $\sigma$ and by conjugation by a transposition $t$, so $\rho^\sigma\cong\rho\circ c_t$ over $k'$: there is $U\in\operatorname{GL}_3(k')$ with $\rho^\sigma(a)=U\rho(tat)U^{-1}$. Let $\mathsf S(x)=U^{-1}\sigma(x)$, a $\sigma$-semilinear automorphism of $\PP^2_{k'}$ with $\mathsf S^{-1}(y)=\sigma(U)\sigma(y)$. Then
\[\mathsf S\rho(a)\mathsf S^{-1}=U^{-1}\rho^\sigma(a)U=\rho(tat).\]
Also $\mathsf S^2(x)=(\sigma(U)U)^{-1}x$, and applying $\sigma$ to the defining relation gives
\[\rho(a)=(\sigma(U)U)\rho(a)(\sigma(U)U)^{-1};\]
by absolute irreducibility $\sigma(U)U$ is a scalar, so $\mathsf S^2=\id$ on $\PP^2$. The conic $\mathsf S(Q)$ is stable under $\rho(A_5)$ (use $\rho(a)\mathsf S=\mathsf S\rho(tat)$), so its equation is a semi-invariant quadratic form, hence invariant because $A_5$ is perfect, hence a multiple of $q$: $\mathsf S(Q)=Q$. The relations of $S_5=A_5\rtimes\langle t\rangle$ hold for $a\mapsto\rho(a)$, $t\mapsto\mathsf S$, so $S_5$ acts on $X$ by $k$-automorphisms; $A_5$ acts $k'$-linearly and odd elements $\sigma$-semilinearly. The action is faithful (a kernel meets $A_5$ trivially and is normal). Odd elements swap the two components of $X_{\bar k}$, so $X_{\free}$ is the complement of a finite set.

Step 2: the twist. Let $F_1^\flat=K(\sqrt{5\Delta})=K(\sqrt5\,\delta)$, quadratic over $K$. Since $k$ is algebraically closed in $L$, $\sqrt5\notin L$, so $N=L\otimes_KF_1^\flat=L(\sqrt5)$ is a field, $\Gal(N/K)=S_5\times\langle\sigma\rangle$, and $\Gal(N/F_1^\flat)=\{(g,\sigma^\epsilon):(-1)^\epsilon=\operatorname{sgn}g\}\cong S_5$, with $g(\sqrt5)=\operatorname{sgn}(g)\sqrt5$. This is the action of $G$ on $L\otimes_KF_1^\flat$ through the first factor, so $\tau\otimes_KF_1^\flat=\Spec N$ as $S_5$-torsors.
Now $X\times_kN=Q_{k'}\times_{k'}\Spec(k'\otimes_kN)$ and $k'\otimes_kN\cong N\times N$, $a\otimes n\mapsto(an,\sigma(a)n)$. An element $g\in S_5$ acts on $k'\otimes N$ by $g|_{k'}\otimes g|_N$ with $g|_{k'}=\sigma^{[g\text{ odd}]}$; the first coordinate of the image of $a\otimes n$ is $\sigma(a)g(n)=g(a)g(n)=g(an)$ for odd $g$ (and $g(an)$ for even $g$), so $G$ preserves both components. The first component is $Q_N$, and the second is its $\sigma$-conjugate, which is again $Q_N$ since $q$ has coefficients in $k$. By descent ${}^NX=Y_1\sqcup Y_2$ with $Y_i$ smooth projective geometrically integral of genus $0$ over $F_1^\flat$, i.e.\ conics. The conic $Y_1$ is embedded in $\PP^2_{F_1^\flat}$ by its anticanonical system, so a rational line cuts it in a subscheme of degree $2$, so there is a quadratic $F_2/F_1^\flat$ with $Y_1\cong\PP^1_{F_2}$; it has an $F_2$-point in the non-empty open $Y_1\cap{}^N(X_{\free})$. This is a $G$-equivariant $k$-morphism $\Spec(N\otimes_{F_1^\flat}F_2)\to X_{\free}$, and $N\otimes_{F_1^\flat}F_2=\tau\otimes_KF_2$. By Lemma~\ref{lem:S5free}, $\ed_k(\tau\otimes F_2)\le1$, and $[F_2:K]=4$.
\end{proof}

\begin{proof}[Proof of Theorem~\ref{thm:S5}]
If some $F'/K$ of degree $\le3$ had $\ed_k(\tau\otimes F')\le1$, Lemma~\ref{lem:S5bc} would give a field of degree $\le3$ over $\bar K$ with $\ed_{\bar k}\le1$, contradicting Proposition~\ref{prop:S5lower}. Proposition~\ref{prop:S5upper} gives $\edd4_k(\tgen)\le1$. Hence $d_1=4$. For $4\le d\le119$ the value is $\le1$ by monotonicity and $\ge1$ by Lemma~\ref{lem:S5last}, which also gives $\edd{120}_k(\tgen)=0$. Finally $\edd d_k(\tgen)\le\ed_k(S_5)=2$ for $d\le3$.
\end{proof}

\begin{corollary}[the profile of $A_5$]\label{cor:A5}
Let $k$ have characteristic $0$ and contain $\sqrt5$, and let $\tau_{A_5}$ be the generic $A_5$-torsor. Then $\edd1_k(\tau_{A_5})=\ed_k(A_5)=2$ \cite[Thm.~6.5]{BR97}, $\edd d_k(\tau_{A_5})=1$ for $2\le d\le59$ and $=0$ for $d\ge60$. In particular $d_1(\tau_{A_5})=2$ and $d_0=60$.
\end{corollary}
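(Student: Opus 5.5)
The proof has four parts: $d=1$, the upper bound $\le1$ for $2\le d\le59$, the lower bound $\ge1$ for $d<60$, and the value $0$ for $d\ge60$.

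For $d=1$ the value is $\ed_k(A_5)=2$ by \cite[Thm.~6.5]{BR97} (Remark~\ref{rem:BRk}). The lower bound also follows from Theorem~\ref{thm:rankindex} with $A=V_4$.

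For the upper bound at $d=2$, I would use the icosahedral conic $Q$ exactly as in the case $\sqrt5\in k$ of Proposition~\ref{prop:S5upper}, applied directly to the generic $A_5$-torsor $\tau_{A_5}=(L/L^{A_5})$. Since $\sqrt5\in k$, $A_5$ acts on the $k$-conic $Q$. The twist ${}^{\tau_{A_5}}Q$ is a smooth conic over $L^{A_5}$, so it acquires a rational point, and hence becomes $\PP^1$, over some quadratic extension $F_2$ (cut it with a rational line). The free locus is a non-empty open subset of $\PP^1_{F_2}$ and $F_2$ is infinite, so it has an $F_2$-point. Lemma~\ref{lem:S5free} then gives $\ed_k(\tau_{A_5}\otimes F_2)\le1$. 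Thus $\edd2_k(\tau_{A_5})\le1$, and by monotonicity (Proposition~\ref{prop:shape}(a)) $\edd d_k(\tau_{A_5})\le1$ for all $d\ge2$.

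For the lower bound $\edd d_k(\tau_{A_5})\ge1$ with $d<60$ and the vanishing for $d\ge60$, I would apply Lemma~\ref{lem:S5last} (equivalently Proposition~\ref{prop:d0}(c)) to $G=A_5$ and the generic torsor. Here $k$ is perfect, so $d_0=|A_5|=60$. This gives $\edd d_k\ge1$ for $d\le59$ and $\edd{60}_k=0$, and monotonicity gives $0$ for all $d\ge60$.

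Combining the four parts yields the stated profile, with $d_1=2$ and $d_0=60$.

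I expect no step to be a real obstacle. The only point to check is that $\tau_{A_5}$ may be taken as $L/L^{A_5}$ for the restriction of a faithful representation; this is legitimate because, by Proposition~\ref{prop:vers}, the profile does not depend on the choice of faithful representation.
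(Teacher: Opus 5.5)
Your proposal is correct and matches the paper's proof: $\edd2_k\le1$ from the icosahedral-conic twist (the case $\sqrt5\in k$ of Proposition~\ref{prop:S5upper}), the value $2$ at $d=1$ from \cite{BR97}, and $d_0=60$ from Proposition~\ref{prop:d0}(c), combined by monotonicity. Your closing appeal to Proposition~\ref{prop:vers} is harmless but not needed. The conic argument works for any $A_5$-torsor over an infinite field, and Proposition~\ref{prop:d0}(c) holds for every faithful $V$.
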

\begin{proof}
The generic $A_5$-torsor of $V|_{A_5}$ is $L/F_1$, and by the case $\sqrt5\in k$ of Proposition~\ref{prop:S5upper} there is a quadratic extension $F_2/F_1$ with $\ed_k(L\otimes_{F_1}F_2)\le1$; so $\edd2_k\le1$. The profile is non-increasing, $\edd1_k=\ed_k(A_5)=2$ by \cite{BR97}, $\edd d_k\ge1$ for $d<60$ and $\edd{60}_k=0$ by Proposition~\ref{prop:d0}(c).
\end{proof}

\begin{remark}\label{rem:strict}
The profile can drop strictly below every subgroup bound and every base-field bound. Take $k=\bar k$ of characteristic $0$ and $G=S_5$. Over an algebraically closed field the finite groups of essential dimension $1$ are the cyclic groups and the dihedral groups of order $2m$ with $m$ odd \cite[Thm.~6.2]{BR97}. The largest such subgroup of $S_5$ is $D_5$, of order $10$, so the bound of Corollary~\ref{cor:sub} for $j=1$ is $[S_5:D_5]=12$, and there are no non-trivial constant-field extensions of $k$. But $d_1(\tgen(S_5))=4$ (Theorem~\ref{thm:S5}): the optimal extension of degree $4$ is not a subgroup fixed field, since $\ed(A_5)=2$, and it is not a constant-field extension. It is a quadratic extension of the discriminant field $L^{A_5}$ that splits a twisted conic.
\end{remark}

\subsection{Symmetric groups of small degree}

\begin{proposition}\label{prop:Sn4}
Let $\operatorname{char}k=0$ and let $\tgen=\tgen(S_n)$.
\begin{enumerate}[label=\textup{(\alph*)}]
\item $n=2$: $\edd1_k=1$ and $\edd d_k=0$ for $d\ge2$.
\item $n=3$: $\edd d_k=1$ for $1\le d\le5$, $\edd d_k=0$ for $d\ge6$; $d_1=1$, $d_0=6$.
\item $n=4$: $\edd1_k=2$, $\edd d_k=1$ for $2\le d\le23$, $\edd d_k=0$ for $d\ge24$; $d_1=2$, $d_0=24$.
\end{enumerate}
\end{proposition}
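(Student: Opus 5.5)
The three small symmetric groups are handled separately. In each case the last jump comes from Proposition~\ref{prop:d0}(c), or equivalently Lemma~\ref{lem:S5last}: $\edd d_k(\tgen)\ge1$ for $d<n!$ and $\edd{n!}_k(\tgen)=0$. So only the slots above $0$ need work.

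For (a), $S_2=\Z/2$. We have $\ed_k(\Z/2)=1$ by Lemma~\ref{lem:kummer}, since $\mu_2\subset k$, together with the lower bound from the last jump at $d=1$. Hence the profile is $(1,0)$, which is also the case $n=1$ of Corollary~\ref{cor:A8}(c).

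For (b), the claim is $\ed_k(S_3)=1$ over every $k$ of characteristic $0$. The upper bound is \cite[Thm.~6.5]{BR97}. A direct route uses that $S_3$ acts faithfully on $\PP^1$ through its $2$-dimensional representation $V$ over $\Q$ (the permutation representation modulo the trivial line). The twist of $\PP(V)$ by any torsor over an infinite field is a conic with a rational point, because it is $\PP$ of a twisted vector space, hence $\PP^1$ by Hilbert~90. The free locus is dense and open, so it has points, and Theorem~\ref{thm:geom} gives $\ed_k(S_3)\le1$. Then $\edd d_k=1$ for $1\le d\le5$ follows from the last jump, and $d_1=1$.

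For (c), $\edd1_k=\ed_k(S_4)=2$ by Remark~\ref{rem:BRk}; the lower bound also follows from Theorem~\ref{thm:rankindex} with $A=V_4$. For the drop at degree $2$, use the quadratic extension $F_1=L^{A_4}$ and Corollary~\ref{cor:sub}: $\edd2_k(\tgen)\le\ed_k(A_4)$.

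The main obstacle is that $\ed_k(A_4)$ equals $2$, not $1$, since $A_4\supset V_4$. So the subgroup bound fails, and a Klein-type construction as in Proposition~\ref{prop:S5upper} is needed instead. The plan is as follows:
\begin{itemize}
\item Pass to $S_3=S_4/V_4$. Over $F_1=L^{A_4}=K(\sqrt\Delta)$ the cubic resolvent becomes cyclic.
\item Alternatively, use that $S_4$ acts faithfully on a conic through the octahedral representation $S_4\hookrightarrow\operatorname{SO}_3(\Q)$. It preserves $q=x_1^2+x_2^2+x_3^2$, and the representation is defined over $\Q$, so no $\sqrt5$-type issue arises. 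The action on $Q=\{q=0\}$ is faithful through $\operatorname{SO}_3\cong\PGL_2$ and free off finitely many points.
\item Twisting $Q$ by $\tgen$ gives a conic over $K$, which acquires a point over a quadratic extension $F_2/K$ cut out by a rational line.
\end{itemize}
The second route gives a degree-$2$ extension directly, with no need for $F_1$. The twist becomes $\PP^1_{F_2}$, its dense free locus has $F_2$-points because $F_2$ is infinite, and Lemma~\ref{lem:S5free} yields $\ed_k(\tgen\otimes F_2)\le1$. Thus $\edd2_k\le1$, and together with the last jump this gives $\edd d_k=1$ for $2\le d\le23$ and $d_1=2$. The points to check carefully are that $\operatorname{SO}_3$ preserves the conic, that the action is faithful, and that the twist is a smooth conic in $\PP^2_K$. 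These are the same verifications as in the $\sqrt5\in k$ case of Proposition~\ref{prop:S5upper}, now simpler because everything is defined over $\Q$.
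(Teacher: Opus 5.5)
Your proposal is correct and, for the substantive part (c), follows the paper's argument: the octahedral rotation group $S_4\subset\operatorname{SO}_3(\Q)$ acts on the conic $x_1^2+x_2^2+x_3^2=0$, the twisted conic acquires a point over a quadratic extension, and Lemma~\ref{lem:S5free} finishes, with the remaining slots supplied by $\ed_k(S_4)=2$ and Lemma~\ref{lem:S5last}. The only deviation is in (b), where you justify $\ed_k(S_3)\le1$ through the faithful action on $\PP(V)\cong\PP^1$ and Hilbert~90, whereas the paper uses the reduction of the generic cubic to $x^3+ax+a$; both work, and the cubic-resolvent route in your first bullet is not needed.
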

\begin{proof}
(a) is $\Z/2$. (b) $\ed_k(S_3)=1$ (the generic cubic reduces to $x^3+ax+a$), so $\edd d_k\le1$, and the lower bound and $d_0=6$ are Lemma~\ref{lem:S5last}. (c) $\ed_k(S_4)=2$: the upper bound is \cite{BR97}, and the lower bound is Theorem~\ref{thm:rankindex} with $d=1$ and $A=V_4$. The rotation group of the cube is $S_4\subset\operatorname{SO}_3(\Q)$ (signed permutation matrices of determinant $1$), preserving $q=x_1^2+x_2^2+x_3^2$; so $S_4$ acts on the conic $Q=\{q=0\}\subset\PP^2$, faithfully and freely off a finite set (as in \S\ref{sec:S5}, The icosahedral conic). The twist ${}^\tau Q$ is a conic over $K$, which has a point over a quadratic extension $F'/K$ (a rational line meets it in a subscheme of degree $2$); then ${}^\tau Q\cong\PP^1_{F'}$ has an $F'$-point in the non-empty open set of free points, and Lemma~\ref{lem:S5free} gives $\ed_k(\tau\otimes F')\le1$. Hence $\edd2_k\le1$, and $\edd d_k=1$ for $2\le d\le23$ and $=0$ for $d\ge24$ by Lemma~\ref{lem:S5last}.
\end{proof}

\begin{example}[symmetric groups]\label{ex:Sn}
Let $\operatorname{char}k=0$, $L=k(x_1,\dots,x_n)$, $F=K=k(s_1,\dots,s_n)$, so $\tgen$ is the generic degree-$n$ \'etale algebra. For $F_m=F(x_1,\dots,x_m)$, $[F_m:F]=n(n-1)\cdots(n-m+1)$, Corollary~\ref{cor:sub} gives
\[
\edd{n(n-1)\cdots(n-m+1)}_k(\tgen)\le\ed_k(\tgen\otimes F_m)=\ed_k(S_{n-m}),
\]
the equality being Lemma~\ref{lem:ind} applied to the generic $S_{n-m}$-torsor.
With $\ed_{k,q}(S_n)\ge\lfloor n/q\rfloor$ (Lemma~\ref{lem:edqSn}),
Theorem~\ref{thm:div} gives $\edd d_k(\tgen)\ge\lfloor n/q_d\rfloor$ for the least prime $q_d>d$, so the profile of the generic \'etale algebra decays no faster than $\sim n/d$. For $S_6$, with $\ed_k(S_6)=3$, $\ed_k(S_5)=\ed_k(S_4)=2$, $\ed_k(S_3)=1$ (\cite[Thm.~6.5(d)]{BR97} for $S_4,S_5,S_6$; for $S_3$ the generic cubic reduces to $x^3+ax+a$):
$\edd1_k=3$; $\edd6_k\le2$, $\edd{120}_k\le1$, $d_0=720$. Since $\ed_{k,2}(S_6)\ge3$ and $\ed_{k,3}(S_6)\ge2$ (Lemma~\ref{lem:edqSn}), Theorem~\ref{thm:div} gives $2\mid d_2$ and $6\mid d_1$, hence $\edd d_k\ge2$ for $2\le d\le5$; so $d_2\in\{2,4,6\}$ and $d_1\in6\Z\cap[6,120]$. The discriminant field $L^{A_6}$, of degree $2$, is a witness for $d_2=2$ only if $\ed_k(A_6)\le2$ (Corollary~\ref{cor:sub} and Lemma~\ref{lem:ind}).
\end{example}

\begin{remark}\label{rem:S5coh}
The lower bound of Proposition~\ref{prop:S5lower} is obtained by Abel--Jacobi rigidity, not from cohomological invariants. We have not investigated whether cohomological invariants of positive degree of $\tgen(S_5)$ over quadratic extensions $K(\sqrt u)$ could detect it; the proof does not depend on the question.
\end{remark}

% ---- end S5.tex ----

% ---- begin openprob.tex ----
\section{Open problems}\label{sec:open}

We collect the questions that the results above leave open, with what is known in each case. Throughout, $\tgen$ is the generic torsor and $d_j=d_j(\tgen)$.

\subsection*{Cyclic groups of prime order in characteristic $0$}
Question~\ref{q:sharpodd} asks whether $d_{1,k}(\tgen(\Z/p))=m$ when $m=[k(\zeta_p):k]$ is odd and $\ge3$; Remark~\ref{rem:stalls} explains where the method stops. A second question concerns characteristic $\ell>0$, $\ell\ne p$: does Theorem~\ref{thm:Zp} hold there? The upper bound does (Corollary~\ref{cor:pertorsor}). The lower bound uses characteristic $0$ in four places: the regularity of the constant field (Step~1), the linearisation of the action at a fixed point (Step~5), tame Riemann--Hurwitz (Step~6) and the differential criterion for algebraic independence (Step~7).

\subsection*{The subgroup formula for $p$-groups}
\begin{question}\label{q:subgroupformula}
Let $G$ be a $p$-group, $\operatorname{char}k\ne p$, $\zeta_p\in k$. Does equality hold in \eqref{eq:subgroupformula} for all $d\ge1$, i.e.\ are the fixed fields $L^H$ of subgroups optimal witnesses?
\end{question}
Equality is known for $d<p$ and $d\ge|G|$, for every $d$ when $G$ is abelian of exponent $p^a$ and $\mu_{p^a}\subset k$ (Corollary~\ref{cor:subgroupabelian}), and for $\Z/4$ over $\Q$ (Remark~\ref{rem:Z4Q}); for $\Z/p^2$ it follows from Example~\ref{ex:rdim} and Theorem~\ref{thm:rankindex}. The first open case is $\Z/p^n$ with $n\ge3$: is $\ed_k(\tgen\otimes F')\ge p^{n-2}$ for every extension $F'/K$ of degree $p$? The smallest instance is $\Z/8$ over $\Q$ at $d=2$, where $1\le\edd2_\Q(\tgen)\le2$. A proof needs an incompressibility statement for equivariant correspondences of degree $p^j$ into free $G$-varieties of small dimension; Theorem~\ref{thm:rankindex} is vacuous for cyclic $G$ (Remark~\ref{rem:limits}), and the method of Theorem~\ref{thm:KM} controls only prime-to-$p$ degrees (Remark~\ref{rem:KMscope}). Non-cyclic non-abelian $p$-groups have not been examined.

\subsection*{Abelian groups without enough roots of unity}
\begin{question}\label{q:elemab}
Let $q$ be an odd prime with $\zeta_q+\zeta_q^{-1}\notin k$. Then $q^{n-j}\le d_j(\tgen((\Z/q)^n))\le[k(\zeta_q+\zeta_q^{-1}):k]\,q^{n-j}$ (Remark~\ref{rem:A8}(ii)). What is the true value?
\end{question}
The smallest case is $(\Z/5)^2$ over $\Q$, where $5\mid d_1$ by Theorem~\ref{thm:div}(b), so $d_{1,\Q}\in\{5,10\}$.

\subsection*{Symmetric and alternating groups}
\begin{question}\label{q:Sn}
(a) What is $d_1(\tgen(S_n))$ for $n\ge6$? We know $6\mid d_1$ (Corollary~\ref{cor:SnSylow}) and, for $n=6$, $d_1\le120$. Does $d_1$ grow with $n$, and is it attained by an extension of degree $O(n)$? (b) Determine $d_2(\tgen(S_6))\in\{2,4,6\}$ (Example~\ref{ex:Sn}). (c) What is $d_1(\tgen(A_5))$ over fields not containing $\sqrt5$? Over fields containing $\sqrt5$ it is $2$ (Corollary~\ref{cor:A5}); the semilinear construction of Proposition~\ref{prop:S5upper} uses a transposition and needs $S_5$.
\end{question}
For $S_5$ the Abel--Jacobi argument pins down $d_1$ because faithful $S_5$-curves of genus $\le1$ can be classified. For larger $n$ and degree $\ge4$ it needs information on faithful $S_n$-actions on trigonal and tetragonal curves, which we do not have.

\subsection*{Structure of the profile and comparison with branched covers}
\begin{question}\label{q:structure}
The upper bounds we use are subgroup fixed fields (Corollary~\ref{cor:sub}) and constant-field extensions (Proposition~\ref{prop:bfe}), which combine to $\edd{[k':k]\,[G:H]}_k(\tgen)\le\ed_{k'}(H)$. For which $(G,k)$ is the profile of $\tgen$ given by this combined bound? It is for abelian groups with enough roots of unity (Corollary~\ref{cor:subgroupabelian}) and is not for $S_5$ (Remark~\ref{rem:strict}). Is there a structural description of optimal extensions in general, for instance as a tower of subgroup fixed fields, constant-field extensions and splittings of twisted curves, as in Klein's reduction?
\end{question}
A related question is whether $\ed_k(S_5;\le4)=1$ for irreducible covers in the sense of \cite{FW25}; Theorem~C does not decide it, since Klein's witness is reducible.

% ---- end openprob.tex ----

% Bibliography.

\end{document}